\documentclass[aos]{imsart}

\RequirePackage{amsthm,amsmath,amsfonts,amssymb}
\RequirePackage[numbers]{natbib}

\startlocaldefs
\usepackage{xcolor}
\usepackage{longtable}
\usepackage{graphicx}

\newcommand{\do@not@check@textbox@alter}{}
\usepackage[
left=2.5cm,
right=2.5cm,
top=2.5cm,
bottom=2.5cm
]{geometry}

\newcommand{\pl}{\parallel}

\newtheorem{definition}{Definition}

\newtheorem{example}{Example}
\newtheorem{theorem}{Theorem}
\newtheorem{corollary}{Corollary}
\newtheorem{lemma}{Lemma}

\newtheorem{assumption}{Assumption}
\newtheorem{proposition}{Proposition}
\newcommand{\nl}{\newline}
\newcommand{\openr}{\hbox{${\rm I\kern-.2em R}$}}
\newcommand{\openn}{\hbox{${\rm I\kern-.2em N}$}}

\newcommand{\Var}{\operatorname{Var}}

\usepackage{booktabs} % for ate simulations

\endlocaldefs

\begin{document}
	
	\begin{frontmatter}
		%%%%%%%%%%%%%%%%%%%%%%%%%%%%%%%%%%%%%%%%%%%%%%
		%%                                          %%
		%% Enter the title of your article here     %%
		%%                                          %%
		%%%%%%%%%%%%%%%%%%%%%%%%%%%%%%%%%%%%%%%%%%%%%%
		\title{Highly Adaptive Empirical Risk Minimization \\ with Principal Components}
		%\title{A sample article title with some additional note\thanksref{T1}}
		\runtitle{MEIXIDE, WANG, SCHULER and VAN DER LAAN}
		%\thankstext{T1}{A sample of additional note to the title.}
		
		\begin{aug}
			%%%%%%%%%%%%%%%%%%%%%%%%%%%%%%%%%%%%%%%%%%%%%%%
			%% Only one address is permitted per author. %%
			%% Only division, organization and e-mail is %%
			%% included in the address.                  %%
			%% Additional information such as            %%
			%% identifying the corresponding author must %%
			%% be included in in the Acknowledgments     %%
			%% section if necessary.                     %%
			%% ORCID can be inserted by command:         %%
			%% \orcid{0000-0000-0000-0000}               %%
			%%%%%%%%%%%%%%%%%%%%%%%%%%%%%%%%%%%%%%%%%%%%%%%
			\author[B]{\fnms{CARLOS}~\snm{GARCÍA MEIXIDE}},
			\author[A]{\fnms{MINGXUN}~\snm{WANG}},\phantom{sss}
			\author[A]{\fnms{ALEJANDRO}~\snm{SCHULER}}
			\and
			\author[A]{\fnms{MARK J. }~\snm{VAN DER LAAN}}
			%%%%%%%%%%%%%%%%%%%%%%%%%%%%%%%%%%%%%%%%%%%%%%
			%% Addresses                                %%
			%%%%%%%%%%%%%%%%%%%%%%%%%%%%%%%%%%%%%%%%%%%%%%
			\address[A]{University of California, Berkeley}
			
			\address[B]{ICMAT, National Research Council of Spain}
		\end{aug}
		
		\begin{abstract}
			{The Highly Adaptive Lasso (HAL) delivers unprecedented guarantees in nonparametric minimum loss estimation under minimal smoothness assumptions, such as dimension-free minimax optimal rates. However, the practical use of HAL has been severely limited by its exponentially growing computationally prohibitive indicator basis expansion in moderate to high dimensions. Existing screening strategies drastically reduce this dimension but lack any theoretical justification. We introduce the Principal Component Highly Adaptive (PC-HA) family of estimators, which for the first time provide a principled and theoretically valid dimension reduction. We establish formal results on the score equations solved by these PC-HA estimators, allowing to transfer plug-in efficiency and pointwise asymptotic normality results from HAL to these PC-HA estimators, under comparable complexity control.} \\
			{\bf Keywords:}  Asymptotic efficiency, asymptotic normality, nonparametric regression, high-dimensional statistics.

		\end{abstract}
		
	\end{frontmatter}
	%%%%%%%%%%%%%%%%%%%%%%%%%%%%%%%%%%%%%%%%%%%%%%
	%% Please use \tableofcontents for articles %%
	%% with 50 pages and more                   %%
	%%%%%%%%%%%%%%%%%%%%%%%%%%%%%%%%%%%%%%%%%%%%%%
	%\tableofcontents
	
	%%%%%%%%%%%%%%%%%%%%%%%%%%%%%%%%%%%%%%%%%%%%%%
	%%%% Main text entry area:

	\section{Introduction}\label{section1}
	
	The Highly Adaptive Lasso (HAL) estimator of a multivariate target function  is defined by minimizing  an empirical risk over  high dimensional linear models in spline basis functions under an $L_1$-norm constraint on the coefficients \cite{VanDerLaan:2015:GTMLE,benkeser2016highly,van2017generally}. 
	This $L_1$-norm equals the $k$-th order sectional variation norm of the function \cite{ gill1995inefficient,van2023higher} ({see Supplement \ref{app:secv} for a rigorous definition}), which represents a bound on the sectional variation norm of the $k$-th order derivatives of the  function. These higher order HAL estimators  incorporate smoothness by using a higher order spline basis \cite{Neuhaus:1971:WCSMP}. A variety of  theoretical results have been established about HAL such as plug-in efficiency, pointwise asymptotic normality,  and dimension free rates of convergence  up till $\log n$-factors, under minimal conditions \citep{bibaut2019fast,fang2021multivariate,Gao:2013:BEHDD}. Constraining the $L_2$-norm yields the Highly Adaptive Ridge estimator \cite{schuler2024highlyadaptiveridge} (HAR) that has the same rate of convergence as HAL, but selects much less sparse working models.
	
	The initial design matrices proposed for HAL can have as many as $ {N=n(2^d-1)}$ columns for the zero order splines---and even higher $N$ for the $k$-th order HAL---even though the final fit has a much smaller set of non-zero coefficients. Due to memory constraints, one is typically forced to use screeners that drastically reduce the dimension. However, current choices of such screeners lack theoretical grounding so that the resulting quasi-HAL might not have the key theoretical properties of HAL. 
	
	We address this problem through a principal component-based screener. Let $H$ be the $n\times N$ design matrix implied by the $N$ basis functions evaluated at $n$ values $x^n$ that might correspond with the $n$ observations. For example, in the standard regression context these might be the $n$ covariate vector observations.
	{In this article, we develop a principal component highly adaptive (PC-HA) estimator family that first determines the $n$ leading eigenvectors of the $N\times N$ inner-product matrix $H^{T}H$ 
		(or equivalently, the right singular vectors corresponding to the largest squared singular values of the $n \times N$ design matrix $H$). 
		We then use the $n$-dimensional linear model spanned by the corresponding PC-basis functions as the starting point for estimation, that are themselves linear combinations of the original $N$ basis functions.} This represents now an $n$-dimensional submodel of the $N$ dimensional working model typical of HAL.
	We can use techniques from multivariate analysis to compute these $n$ PCs from the  eigenvalue decomposition of the $n\times n$-analogue $HH^T$.
	We now consider estimators that minimize the empirical risk over this $n$-dimensional linear model under a bound on a norm of its coefficients, where this  bound is selected with cross-validation. In particular, we consider the Euclidean $L_2$-norm, $L_1$-norm and the $L_1$-norm on the $N$ dimensional coefficient vector implied by the $n$ dimensional vector of coefficients. These correspond with fitting a standard ridge regression; standard lasso regression; and a generalized lasso regression \cite{Tibshirani:2011:SPGL}, respectively, applied to this $n$ dimensional linear model. We refer to these estimators as PC-HAR, PC-HAL, and PC-HAGL, respectively.
	We provide general conditions under which the resulting PC-HA converges at the same rate as HAL, and we present a particular gradient descent algorithm for computing PC-HAGL (as an alternative of the generalized Lasso software). 
	
	\subsection{Statistical estimation problem}
	Suppose we observe $n$ i.i.d. copies of $O\sim P_0\in  {\cal M}$ for a specified statistical model ${\cal M}$. Let $D^{(0)}([0,1]^d)$ be the space of càdlàg functions only assuming
	that the sectional variation norm is bounded. Let $\Psi:{\cal M}\rightarrow D^{(0)}([0,1]^d)$ be a functional parameter that maps a data distribution into a function that is an element of càdlàg function space on a unit cube with finite zero order sectional variation norm. 
	Let $L(\psi)(O)$ be a loss function so that $\psi_0 \equiv \Psi(P_0)=\arg\min_{\psi\in D^{(0)}([0,1]^d)}P_0L(\psi)$. 
	For example, we might be in the least squares regression setting  with $O=(X,Y)$, $\Psi(P_0)(X)=E_0(Y\mid X)$, and
	the squared error loss $L(\psi)(O)=(Y-\psi(X))^2$, or density estimation setting with $\Psi(P_0)(Y\mid X)=p(Y\mid X)$ with log-likelihood loss $L(\psi)(O)=-\log \psi(Y\mid X)$.
	Let $d_0(\psi,\psi_0)=P_0L(\psi)-P_0L(\psi_0)$ be the loss-based dissimilarity which typically behaves as a square of an $L^2$-norm of $\psi-\psi_0$.
	
	\subsection{Sectional variation norm, smoothness classes and their spline representation}
	Let $D^{(k)}([0,1]^d)\subset D^{(0)}([0,1]^d)$ be the $k$-th order smoothness class defined in \cite{van2023higher}, $k=0,\ldots$ in which functions have a finite $k$-th order sectional variation norm defined as a sum of variation norms of measures generated by sections of $k$-th order Radon-Nikodym derivatives of the function w.r.t. Lebesgue measure. $D^{(k)}_M([0,1]^d)$ represents the subset in which it is assumed that this $k$-th order sectional variation norm is bounded by $M$.  In particular, $D^{(0)}_M([0,1]^d)$ is the class of real valued càdlàg functions with sectional variation norm bounded by $M$, where for a function $f$, we define the subset-specific sections 
	$f_s(x(s))=f(0(-s),x(s))$ that is only a function of $x(s)\in  {[0,1]^{\lvert s\rvert}}$ corresponding with a subset $s\subset\{1,\ldots,d\}$; the variation norm of $f_s$ is $ {\lVert f_s\rVert_v=\int \lvert df_s(u)\rvert}$; and the (zero-order) sectional variation norm is defined as $ {\lVert f\rVert_v^*=\lvert f(0)\rvert +\sum_{s\subset \{1,\ldots,d\}}\lVert f_s\rVert_v}$, the sum of the variation norms of the sections of $f$. The first order sectional variation norm is essentially defined as a sum of sectional variation norms of Radon-Nikodym derivatives $df_s/d\mu(s)$ w.r.t. Lebesgue measure across all subsets $s$, and so on (see Supplement \ref{app:secv}). {A representation theorem of $D^{(k)}([0,1]^d)$ as a closure of the linear span of linear combinations of tensor products of $k$-th order spline basis functions has been established (Theorem 1 in \cite{van2023higher}) for $k=0,1,\ldots$ ; where the $L_1$-norm of the coefficients equals the $k$-th order sectional variation norm. The theorem establishes that any function in the class $D^{(k)}([0,1]^d)$ can be exactly represented as an infinite additive model decomposition involving linear combinations of $\leq k$-th order tensor product spline basis functions.} 
	%\label{theoremkthorderspline}

	{Crucially, the inherent smoothness of the function, quantified by its $k$-th order sectional variation norm,  is the $L_1$-norm of the coefficients in this spline representation. For $k=0$, the latter involves} a sum over subsets $s_0\subset\{1,\ldots,d\}$ of an $s_0$-specific generalized cumulative distribution function (a difference of cumulative distribution functions that are bounded from above by some finite constant \cite{Hildebrandt1963}), where each one is approximated by linear combination of zero order splines. The first order spline representation  involves a sum over $(s_0,s_1)$, $s_0\subset\{1,\ldots,d\}$, $s_1\subset s_0$ of $(s_0,s_1)$-specific first order primitives of a generalized cumulative distribution function, where the latter is approximated by linear combination of first order splines. The general $k$-th order spline representation of $D^{(k)}([0,1]^d)$ represents each function as a sum over nested subsets $\bar{s}(k)=(s_0,\ldots,s_{k})$-specific $k$-th order primitive of a generalized CDF, where each one can be approximated by linear combinations of $k$-th order splines. As shown in \cite{van2023higher}, putting restrictions on $\bar{s}(k)$ results in all kinds of additive submodels of $D^{(k)}([0,1]^d)$ with interpretations in terms of higher order derivatives being zero on the 0-edges of the cube $[0,1]^d$.
	
	\subsection{Approximation results and covering numbers for smoothness classes}
	It has been shown (Theorem 2 in \cite{van2023higher} with the base $L_2$-approximation rate of a $d$-variate càdlàg function using $J$ zero-order splines being $r(d, J) \sim J^{-1}(\log J)^{2(d-1)}$) that the class $D^{(k)}_M([0,1]^d)$ (and its many submodels) can be approximated by a {finite dimensional} linear span of $O(J)$ such $k$-th order spline basis functions with approximation error bounded by $O^+(1/J^{k+1})$, where we use the notation $O^+(r(n))$ for a term that is bounded by a constant times $r(n)$ up till a $\log n$-factor. {This result demonstrates that increasing the assumed smoothness ($k$) allows the use of smaller finite-dimensional models ($J$) to maintain a fixed level of bias, which is essential for statistical efficiency.}

	This approximation is obtained by approximating each of the $\bar{s}(k)$-specific $k$-th order primitive of a CDF in the representation of a function in $D^{(k)}([0,1]^d)$ by a linear span of $J$ $k$-th order splines. 
	For $k\geq 1$, this approximation holds  w.r.t. uniform norm while for $k=0$ this approximation holds w.r.t. $L^2$-norm (the latter result is a consequence of the covering number for multivariate cumulative distribution functions in the literature \cite{fang2021multivariate}). These approximation results for $D^{(k)}_M([0,1]^d)$ imply bounds on covering numbers of this class as presented in Lemma 29 in \cite{van2023higher}. The covering number for $D^{(k)}_M([0,1]^d)$ is of the same order as the covering number of the class of $d$-variate $k$-th order primitives $\mu^k(F)$ of a cumulative distribution $F$ defined recursively as  $\mu^0(F)=\int_{(0,x]}dF(u)$; $\mu^{m+1}(F)=\int_{(0,x]}\mu^m(x)dx$, $m=0,\ldots,k-1$. These bounds imply bounds on the supremum norm entropy integrals $J_{\infty}(\delta,D^{(k)}_M([0,1]^d))\sim \delta^{(2k+1)/(2k+2)}$ up till $\log \delta$-factors, $k=1,\ldots$, where $J_{\infty}(\delta,{\cal F})=\int_0^{\delta}\sqrt{\log N(\epsilon,{\cal F}, {\lVert\cdot\rVert_{\infty}})} d\epsilon$. A general rate of convergence proof based on the bounds on the entropy integral establishes that an MLE over $D^{(k)}_M([0,1]^d)$ converges in loss-based dissimilarity to the true $\psi_0$ at rate $O^+(n^{-2k^*/(2k^*+1)})$, $k^*=k+1$, $k=0,
	\ldots$ as we show in {Theorem \ref{rate}}. 
	
	%\label{lemmasupnormcoveringnumber}
	
	\subsection{Finite dimensional linear starting working models}
	Let $D^{(k)}({\cal R}_N)$ be a finite dimensional submodel of $D^{(k)}([0,1]^d)$ spanned by $\{\phi_j: 1 \leq j \leq N\}$, a finite set of  $N$ $k$-th  order spline basis functions, chosen so that $\psi_{N,0}=\arg\max_{\psi\in D^{(k)}({\cal R}_N)}P_0L(\psi)$ satisfies
	$d_0(\psi,\psi_{N,0})=O_P(n^{-2k^*/(2k^*+1)})$, $k^*=k+1$. The HAL literature has proposed such a basis with (maximal)  $N=n (2^d-1)$ for the zero order HAL: specifically, in the regression context {$D^{(k)}({\cal R}_N)$ has been defined} as the linear span of the zero-order splines $\phi_{X_i(s)}^0(x)=I(x\geq X_i(s))$ across the knot-points $\{X_i(s): i=1,\ldots,n,s\subset\{1,\ldots,d\}$). This corresponds with using $n$ zero order splines to approximate the $s$-specific CDF across all $s\subset\{1,\ldots,d\}$ in the zero-order spline representation of $D^{(0)}([0,1]^d)$. Similar sets of even larger size (approximately $\sim n2^{kd}$  were also proposed for the $k$-th order HAL, $k \geq 1$. In other words, each $\bar{s}(k)$-specific $k$-th order primitive of the CDF is approximated by $n$ $k$-th order splines implied by $n$ zero-order splines for the CDF.
	The total number of elements $\bar{s}(k)$ is of the order $2^{kd}$, even though it is smaller in size due to the sets being nested.
	
	{The approximation error of these initial working  models of size $\sim n$ has been proven to be $d_0(\psi_{N,0},\psi_0)=O_P(n^{-1})$ under a absolute continuity condition on the true $\psi_0$ with respect to $P_0$ (see Lemma 23 proof, Appendix B2 in \cite{van2023higher}). This rate is found as a direct consequence of using the empirical measure's support as the knotpoint set, rather than relying on the general $k$-th order spline uniform approximation Theorem 2 in \cite{van2023higher}.}Therefore, these  initial starting model could be chosen much smaller since this particular choice generally results in a  $O_P(n^{-1/2})$ uniform approximation of $D^{(k)}([0,1]^d)$, which is more precise than the needed $n^{-k^*/(2k^*+1)}$.
	In fact, our results show that there exists  initial working models (outcome blind) that are of size $\sim n^{1/(2k^*+1)} 2^{kd}$, replacing the $n$ splines for each $\bar{s}(k)$-specific section by only $n^{1/(2k^*+1)}$ splines. 
	Such an initial set is completely determined by finding $J$ zero order splines to approximate a càdlàg cumulative distribution function at $L^2$-rate $O^+(1/J)$, where this result was shown by \cite{fang2021multivariate}.
	However, even though such sets of size $n^{1/(2k^*+1)}$ exist, the proof in \cite{fang2021multivariate} is not constructive so that it takes some effort to determine sensible sets of that size. HAL avoids having to do the careful in choosing an initial working model, but just relies on an initial working model that is rich enough. 
	Such working models allows one to just focus on this finite dimensional analogue of $D^{(k)}([0,1]^d)$. One obtains corresponding submodels by restricting the choices $\bar{s}(k)$.

	\subsection{Theoretical properties of HAL}
	
	Standard HAL fits $\psi_0$ with \\ $\psi_n=\psi_{n,C_n}$ with  $\psi_{n,C}=\arg\min_{\psi\in D^{(k)}_C({\cal R}_N)}P_n L(\psi)$ for each $C$ and selects $C$ with a cross-validation selector $C_n$ or an undersmooth selector. 
	If we define $\psi_{n,\beta}=\sum_{j\in {\cal R}_N}\beta(j)\phi_j$, we have $\psi_{n,C}=\psi_{n,\beta_n(C)}$ with
	$\beta_n(C)=\arg\min_{\beta, {\lVert \beta\rVert_1\leq C}}P_n L(\psi_{N,\beta})$ and $ {\lVert \beta\rVert_1=\sum_{j\in {\cal R}_N}\lvert \beta(j)\rvert}$. Due to the covering number of $D^{(k)}_M([0,1]^d)$ presented in \cite{van2023higher},  a  general proof for rate of convergence of MLEs over function classes establishes that
	$d_0(\psi_n,\psi_0)=O^+_P(n^{-2k^*/(2k^*+1)})$ in {Theorem 2} as long as the starting model $D^{(k)}({\cal R}_N)$ has at least this same approximation error of $D^{(k)}([0,1]^d)$ and $\psi_0\in D^{(k)}([0,1]^d)$. 
	
	Beyond the above mentioned rate of convergence in loss-based dissimilarity, many other theoretical properties have been established for HAL {based on its MLE property}, such as plug-in asymptotic normality and efficiency for pathwise differentiable features of the target function $\psi_0$; point-wise asymptotic normality of HAL as an estimator of $\psi_0$; simultaneous confidence bands for $\psi_0$. These results all rely on HAL solving a large collection of score equations that asymptotically spans a space of scores, essentially making HAL behave as a parametric MLE  for a data adaptively chosen parametric working model that has negligible approximation error. 
	
	\subsection{Screeners for HAL and discrete super-learning}
	Beyond computational reasons for using a screener, from a statistical  finite sample performance perspective it is better to start with a smaller starting model, as long as its bias is negligible relative to the standard error.  Therefore, one generally wants to use a discrete super learner with a set of submodels of $D^{(k)}({\cal R}_N)$, thereby creating an adaptive HAL, where one can also vary $k$ so that we do not rely on knowing the true smoothness of $\psi_0$. Instead of just indexing HAL by  subspaces, one could also couple HAL with a screener such a machine learning algorithm whose result gets mapped into a set of spline basis functions for initializing HAL. For example, we could first run MARS or gradient boosting and use its fits to obtain a sensible initial working model for HAL. Such algorithms are already based on the outcome so that cross-validation has to take into account the coupled algorithm that first uses a screener and then runs a HAL.

	However, for such data adaptive dimension reductions it is theoretically challenging to argue that the resulting initial working models  yield a good enough approximation of a HAL to which the above rate of convergence applies. 
	The oracle property of the cross-validation selector guarantees an asymptotically optimal choice among the candidate HALs. Nonetheless, if the library of the discrete super-learner only includes heavy dimension reductions before running HAL, then it is hard to argue theoretically that the discrete HAL-based super-learner satisfies the same theoretical properties as HAL over function spaces that contain the true target function such as the  zero-order HAL based on the $ {n(2^d-1)}$-dimensional working model.
	
	Any current implementation of HAL either finds an initial starting model of relatively low dimension (avoiding the $2^{kd}$-multiplier) or uses a computationally fast screener,  or it has to deal with a serious computational challenge of implementing HAL with  very large initial working models. The latter is particularly problematic as $d$ increases
	
	\subsection{{Historical context and literature review}}
	
	Nonparametric estimation has a long and distinguished history in statistics. Early breakthroughs include the celebrated kernel estimators of
	\cite{Davis2011,parzen1962estimation}, which established
	nonparametric density and regression estimation as a core statistical discipline,
	leading to major theoretical refinements such as optimal minimax rates under
	smoothness constraints \citep{stone1982optimal}. Spline-based procedures soon
	followed, bringing structured shape constraints, enhanced approximation
	properties, and efficient computation
	\citep{wahba1990spline}. Methods such as Random Forests \cite{breiman2001random} and Bayesian Additive Regression Trees (BART) \cite{chipman2010bart} have demonstrated strong predictive performance by utilizing ensembles of regression trees. Deep neural networks with ReLU activation functions can achieve optimal minimax convergence rates for nonparametric regression tasks when the target function possesses a compositional structure \cite{schmidt2020nonparametric}. 
	
	Parallel developments in empirical process theory and sieve estimation
	\citep{grenander1981abstract, birge} enabled M-estimators over rich
	function classes to achieve statistical efficiency. A critical milestone was the
	advent of penalization and sparsity for infinite-dimensional models: seminal
	contributions such as the Lasso \citep{tibshirani1996lasso}, trend-filtering and
	generalized Lasso \citep{mammen1997locally, Tibshirani:2011:SPGL}, and
	boosting \citep{buhlmann2003boosting} demonstrated that adaptivity to unknown
	structure could be achieved through convex regularization. These ideas have been
	central in the growth of high-dimensional inference, especially within the
	Le Cam–van der Vaart asymptotic framework
	\citep{bickel2009simultaneous,van2000asymptotic}. 
	
	Beyond spline-based methods, adaptive basis construction emerged as a powerful
	principle for high-dimensional nonparametrics. Classical decision trees and
	multivariate adaptive regression splines (MARS)
	\citep{breiman1984cart,friedman1991multivariate} introduced data-driven
	partitioning rules that automatically adapt to local structure, laying early
	foundations for the sparse, piecewise modeling strategy later adopted by HAL.
	Parallel lines of work showed that sparsity in rich dictionaries could enable
	near-optimal approximation: wavelet shrinkage
	\citep{donoho1994ideal,donoho1995noising} and neural network approximation
	results \citep{barron1993universal} formalized the idea that adaptivity to
	unknown smoothness can be achieved through selective basis activation. These
	ideas crystallized within the asymptotic decision-theoretic program
	initiated by Le~Cam \citep{lecam1973convergence}, while
	powerful empirical process techniques and concentration inequalities
	\citep{massart2000some} enabled rigorous statistical guarantees for
	penalization-based estimators in infinite-dimensional models. Critical
	complexity results for bounded-variation classes
	\citep{birman1967piecewise,vandegeer2000empirical} clarify why variation-norm
	regularization controls function class entropy. Functional estimation through variation norms marked a conceptual turning point:
	estimators controlling total variation — including the Grenander estimator in
	monotone models \citep{grenander1956theory} — reveal that strong shape
	constraints can yield dimension-free convergence guarantees. 
	
	Significant progress has been made in high-dimensional inference \cite{guo2022}, correcting for regularization bias and hidden confounding in large-scale regression settings. However, such methods are predominantly predicated on linear working models, inherently limiting their capacity. Recent theoretical advances have provided convergence guarantees for spectral methods in linear regression, specifically using a \textit{Feature Space Decomposition} to analyze the population excess risk. This framework relates to our PC-HAR estimator, as it effectively corresponds to a spectral method where the filter function selects the leading eigenspace of the covariance structure. Matching upper and lower bounds for such estimators have been  {established} in \cite{lecue}, justifying the truncation of the tail spectrum—the "noise absorption part"—when the signal is well-aligned with the leading components. 
	
	\section{A novel  {fast-to-compute} PC-screener that preserves HAL theory}
	In this article we are particularly interested in the challenging but common  case that the number of basis functions  $N$ is much larger than sample size $n$ as generally occurs when $d$ is large (e.g. $d\geq 10$).
	In particular, we aim to tackle an approximation of the  HAL-implementation with the full set of  $ {N=n(2^{d}-1)}$ basis functions for the zero order HAL, and, similarly for the $k$-th order HAL.
	In this article, we propose a fast to compute dimension reduction based on finding $n$ eigenvectors $E_N(\cdot,m)$, $m=1,\ldots,n$,  implied by the  $n\times N$-design matrix $H=(\phi_j(x^n):j\in {\cal R}_N)$ for a given set $x^n=(x_i, i=1,\ldots n)$ implied by the observations $O_i$, $i=1,\ldots,n$: i.e. $H(i,j)=\phi_j(x_i)$. Here $H:\openr^N\rightarrow\openr^n$, where we could select
	a specific inner product $\langle\cdot,\cdot\rangle_n$ in the image space , while we always use  the standard inner product $\langle\cdot,\cdot\rangle_N$ in the domain: the definition of the adjoint $H^{\top}$ of $H$ depends on this choice of inner product. A standard choice is $\langle \phi_1,\phi_2\rangle_n=1/n \sum_i \phi_1\phi_2(x_i)$, but we can also make it resemble the empirical covariance of scores $S_{Q_n}(\phi_1)$ and $S_{Q_n}(\phi_2)$ with 
	$S_{Q_n}(\phi)=d/dQ_nL(Q_n)(\phi)$ the score at a good estimator $Q_n$. These $\{E_N(\cdot,m):m=1,\ldots,n\}$ are defined as  eigenvectors of the eigenvalue decomposition of $N\times N$ matrix $H^{\top}H$ with non-zero eigenvalues.
	These eigenvectors generate $n$ basis functions $\tilde{\phi}_m=\sum_{j\in {\cal R}_N}E_N^n(j,m)\phi_j$, a linear combination of the $N$ $k$-th order spline basis functions $\{\phi_j:j\in {\cal R}_N\}$,  $m=1,\ldots,n$. These basis functions $\tilde{\phi}_m$ are orthogonal  w.r.t the inner product. $\langle \tilde{\phi}_{m_1},\tilde{\phi}_{m_2}\rangle_n$: for example, for the standard inner product, we have $ 1/n \sum_i \tilde{\phi}_{m_1}(x_i)\tilde{\phi}_{m_2}(x_i)=0$ for $m_1\not =m_2$. In addition, we also have that
	$E_N^n(\cdot,m)$, $m=1,\ldots,n$, are orthonormal w.r.t. standard inner product $\langle\cdot,\cdot\rangle_N$.

	\subsection{The PC-working model}
	These $n$ basis functions $\tilde{\phi}_m$ span an  $n$-dimensional working model $D^{(k)}({\cal E}_n)\subset D^{(k)}({\cal R}_N)$: each element is parametrized as $\theta_{n,\alpha}=\sum_{m=1}^n  {\alpha(m)}\tilde{\phi}_m$. 
	
	We then compute a regularized MLE over this working model using a constraint on its coefficient vector $\alpha$ that can be chosen with cross-validation. In particular, we can apply standard main term LASSO (i.e {.}, $ {\lVert\alpha\rVert_1}$),  a ridge regression (i.e., $ {\lVert \alpha\rVert_2}$), or a generalized LASSO constraining the $L_1$-norm $ {\lVert \beta(\alpha)\rVert_1}$ of the $N$-dimensional  coefficient vector $\beta(\alpha)$ for the underlying spline basis functions implied by the $n$-dimensional coefficient vector $\alpha$.  If the user is satisfied  {with} only need {ing} the values $\tilde{\phi}_m(x^n)$, $m=1,\ldots,n$, then 
	this can be achieved without having to store the $n\times N$-matrix, thereby truly allowing massive $N$. If one wants to know the actual coefficients of the eigenvectors, then one needs to store this matrix so that $N$ needs to be restricted to control memory  {requirements}.  We refer to these estimators as principal component Highly Adaptive estimators, abbreviated as  PC-HA-estimators: PC-HAL; PC-HAR; PC-HAGL respectively.
	
	We will show that this dimension reduction still allows for the desired rate of convergence, and presumably  gain in finite samples by carrying out an outcome blind (i.e., non-informative) effective {unsupervised} dimension reduction before fitting the target function.

	\section{PC Highly Adaptive estimators: PC-HAGL, PC-HAL and PC-HAR}\label{Section2}
	
	{We start this section discussing} a so called  \textit{sufficiency assumption}  on $x^n$: 
	\begin{assumption}[Sufficiency assumption on $x^n$]\label{sufficiencyassumption}
		Let
		$x_i\in [0,1]^d$, $i=1,\ldots,m_n$, be $m_n$ values chosen so that $L(\psi_{N,\beta})(O_i)$, $i=1,\ldots,n$, only depends on $\psi_{N,\beta}$ through 
		a design matrix $H_{m_n\times N}(i,j)=\phi_j(x_i)$, $i=1,\ldots,m_n$, $j\in {\cal R}_N$. Let $x^n=(x_i: i=1,\ldots,m_n)$ be the $m_n$-dimensional vector of these values. 
	\end{assumption}
	In various of our applications $m_n=n$ with one point $x_i$ for each observation $O_i$,  but in some cases we want to define repeated points for each $O_i$ in our sample.
	In Section \ref{section3} we will weaken this sufficiency requirement on $x^n$ substantially and present a constructive approach for deciding on this set $\{x_1,\ldots,x_{m_n}\}$. However, to satisfy this weaker condition  it is good strategy to  select $x^n$ to make it as sufficient as possible.  Therefore consideration of the sufficiency assumption helps us understand the PC dimension reduction as a sufficient statistic and that a rate of convergence could be derived from that principle alone as in our next theorem. Our weaker assumption will only require that $x^n$ is an i.i.d. sample from a probability distribution $P_X$ chosen so that $d_0(\psi,\psi_0)$ is equivalent with $\int(\psi-\psi_0)^2 dP_X$. 
	
	{Note that the superscript $n$ in $x^{n}$, and the subscript in $D^{(k)}(\mathcal{E}_n)$, do not indicate size $n$. Both objects have size $m_n$ (typically equal to $n$). The index $n$ is used solely to emphasize their dependence on the sample, not to imply that their dimension necessarily coincides with the sample size.}
	
	It is  {worthwhile} to note that we can weaken the sufficiency assumption on $x^n$ more directly by only requiring it for an approximate loss function $L_n(\psi)$ that strongly approximates the true desired loss function $L(\psi)$, as defined in the following assumption.
	
	\begin{assumption}[Approximate sufficiency and regularity of $L_n$]\label{ass:approx_sufficiency}
		Let $L_n$ be an approximation of the true loss function $L$. Define the approximate minimizer $\psi_{0n} = \arg\min_{\psi} P_0 L_n(\psi)$ and the approximate dissimilarity $d_{0n}(\psi, \psi_{0n}) = P_0 L_n(\psi) - P_0 L_n(\psi_{0n})$.
		
		We assume the following conditions hold:
		
		\begin{enumerate}
			\item {Approximation error:} The approximate minimizer converges to the true minimizer $\psi_0$ at a rate faster than the critical minimax rate. Specifically, for some $\delta > 0$: $d_0(\psi_{0n}, \psi_0) = o_P\left(n^{-\frac{2k^*}{2k^*+1} - \delta}\right),$
			where $\psi_0 \in D^{(k)}_M([0,1]^d)$ is the minimizer of the true risk $P_0 L(\psi)$.
			
			\item {Loss regularity:} The approximate loss satisfies uniform boundedness and a quadratic margin condition (Bernstein condition) with respect to the approximate dissimilarity:
			\[
			\sup_{\psi \in D^{(k)}_M, o} |L_n(\psi)(o)| = O_P(1),
			\quad
			P_0 \left( L_n(\psi) - L_n(\psi_{0n}) \right)^2 = O_P(d_{0n}(\psi, \psi_{0n})).
			\]
			
			\item {Sufficiency of design points:} There exists a set of evaluation points $x^n = (x_1, \ldots, x_{m_n}) \in [0,1]^{d \times m_n}$ such that for any spline candidate $\psi_{N,\beta} = \sum_{j \in \mathcal{R}_N} \beta(j)\phi_j$, the vector of empirical losses $(L_n(\psi_{N,\beta})(O_i))_{i=1}^n$ depends on the function $\psi_{N,\beta}$ solely through the design matrix values $H_{ij} = \phi_j(x_i)$.
		\end{enumerate}
	\end{assumption}
	
	% \begin{assumption}[Approximate sufficiency assumption on $x^n$]\label{approxsufficiencyassumption}
		% Let $L_n(\psi)$ be an approximation of the loss function $L(\psi)$. Let $d_{0n}(\psi,\psi_{0n})=P_0L_n(\psi)-P_0L(\psi_{0n})$, where
		%  $\psi_{0n}(P_0)=\arg\min_{\psi}P_0L_n(\psi)$. Recall  $\psi_0=\arg\min_{\psi}P_0L(\psi)$ and assume $d_0(\psi_{0n},\psi_0)=o_P(n^{-2k^*/(2k^*+1)-\delta})$ for some $\delta>0$, where $\psi_0\in D^{(k)}_M([0,1]^d)$.
		%  In addition, assume that the loss function $L_n$ satisfies the typical boundedness and quadratic dissimilarity assumption uniformly in $n$:
		% $\sup_{\psi\in D^{(k)}_M([0,1]^d),o}\mid L_n(\psi)(o)\mid = O_P(1)$; $P_0(L_n(\psi)-L_n(\psi_{0n}))^2=O_P(d_{0n}(\psi,\psi_{0n}))$.
		% Let
		% $x_i\in [0,1]^d$, $i=1,\ldots,m_n$, be $m_n$ values chosen so that $L_n(\psi_{N,\beta})(O_i)$, $i=1,\ldots,n$, only depends on $\psi_{N,\beta}$ through 
		% a design matrix $H_{m_n\times N}(i,j)=\phi_j(x_i)$, $i=1,\ldots,m_n$, $j\in {\cal R}_N$. Let $x^n=(x_i: i=1,\ldots,m_n)$ be the $m_n$-dimensional vector of these values. 
		% \end{assumption}

	The most straightforward case in which the sufficiency assumption {in its exact version (\ref{sufficiencyassumption})} trivially holds is that $L(\psi)(O_i)=L(O_i,\psi(X_i))$ is a function of $\psi(X_i)$ with $X_i$ determined by $O_i$. In this case one simply sets $x^n=(X_i: i=1,\ldots,n)$. 
	
	\begin{example}The squared error loss $(Y-\psi_{N,\beta}(X))^2$ only {depends on $\psi_{N,\beta}$ through $\phi_j(x_i)$, with $x_i$ the covariate vector full sample}. If the sufficiency assumption holds, we also often have that the empirical score 
		$S_{N,\beta}(O_i)=d/d\beta L(\psi_{N,\beta})(O_i)$, $i=1,\ldots,n$,  is also determined by this same design matrix $H_{n\times N}$. For example, for this squared error loss $L(\psi_{N,\beta})(O_i)=(Y_i-\psi_{N,\beta}(X_i))^2$  the score of $\beta(j)$ equals $\phi_j(X_i)(Y_i-\psi(X_i))$ so that also the scores are only depending on $\psi_{N,\beta}$ through $\phi_j(X_i)$, $j\in {\cal R}_N$, $i=1,\ldots,n$. 
		{Similarly, the log-likelihood loss for binary outcome regression $L(\psi)(X,Y)=-Y\log 1/(1+\exp(-\psi))(X) -(1-Y)\log(\exp(-\psi)/(1+\exp(-\psi))(X)$ also exactly satisfies the sufficiency assumption for $x^n=(X_i: i=1,\ldots,n)$.}\end{example}
	
	\begin{example}[Log-likelihood loss with exponential link] Let's consider \\ $L(\psi_{N,\beta})(o)=-\log\{ \exp(\psi_{N,\beta})/C_N(\beta)\}$ with $C_N(\beta)=\int_{[0,1]^d}\exp(\psi_{N,\beta}(u)) du$ the normalizing constant. This is a density estimation problem {where an exponential family model is assumed}. We can set $x_i=o_i$, $i=1,\ldots,n$ but 
		in this case we have a slight deviation from the sufficiency assumption (1) since the normalizing constant also depends on the basis functions at intermediate values. Our later theorem  shows that this choice $x_i=o_i$ is indeed correct: here we use that the KL-divergence is equivalent to the square of $L^2(P_0)$-norm provided that all densities are bounded away from $\delta> 0$ for some $\delta$, so that indeed $X_i=O_i$ applies with $P_X=P_0$. 
		Alternatively we replace the normalizing constant  by an empirical mean of $n$ draws $u_i$, $i=1,\ldots,n$, from a uniform $[0,1]^d$ distribution so that
		$C_{N,n}(\beta)=n^{-1}\sum_{i=1}^n \exp(\psi_{N,\beta}(u_i))$. One can then show that $\sup_{\beta, {\lVert \beta\rVert_1<M}}
		\mid (C_{N,n}-C_N)(\beta)\mid =O_P(n^{-1/2})$ by {arguing that $\{\exp(\psi_{N,\beta}(\cdot)) :   {\lVert \beta\rVert_1<M} \}$ is a Donsker class}. We can then apply the approximate sufficiency assumption (2) with  the $2n$ points $x^n=((o_i: i=1,\ldots,n),(u_i: i=1,\ldots,n))$.
		As a related example, consider also conditional density estimation where $O_i=(Z_i,Y_i)$ and we use log-likelihood loss $L(\psi_{N,\beta})(O)=-\log \{ \exp(\psi_{N,\beta}(Z,Y))/C_N(\beta)(Z)\}$ with exponential link, and $C_N(\beta)(Z)=\int_y \exp(\psi_{N,\beta}(Z,y)) dy$. 
		The analogue arguments  applies to this setting as well. 
	\end{example}

	\begin{example}
		Suppose $O=(X,T)$ and that the failure time $T$ is discrete with support $\{t_j: j=1,\ldots,m\}$. 
		Let our target function be given by $\Psi(P_0)(t,x)=\lambda_0(t\mid x)$, where $\lambda_0(t\mid x)=p_0(t\mid x)/P_0(T\geq t\mid X=x)$ is the conditional (discrete) hazard of the failure time. 
		Suppose we model $\lambda_0(t\mid x)$ with $\mbox{Logit}\lambda_{\beta}(t\mid x)=\sum_j\beta(j)\phi_j(t,x)$ as logistic regression, where $\psi_{N,\beta}=\sum_{j\in {\cal R}_N}\beta(j)\phi_j$. Then the likelihood is,
		\[
		p_{\beta}(t\mid x)=\prod_{t_j<t}(1-\lambda_{\beta}(t_j\mid x))\lambda_{\beta}(t_j\mid x).\]
		Let $L(\psi_{N,\beta})(T,X)=-\log p_{\beta}(T\mid X)$. The log-likelihood $L(p_{\beta})(T_i,X_i)$, $i=1,\ldots,n$, depends on $\psi_{N,\beta}$ through $\{\psi_{N,\beta}(t_j,X_i): i=1,\ldots,n, j=1,\ldots,m\}$. Therefore the sufficiency assumption holds for $x^n=\{(t_j,x_i): i=1,\ldots,n,j=1,\ldots,m\}$.
	\end{example}
	
	\begin{example} Suppose now that $T$ is continuous and we wish to estimate  the continuous conditional hazard. In that case, we would model $\lambda(t\mid x)=\exp(\psi_{N,\beta}(t,x))$. In this case, the true log-likelihood would involve a product integral over $s\leq t$ of $1-\lambda(s\mid x) ds$. We could approximate this log-likelihood by replacing this continuous product integral by a finite product $\prod_{t_j\leq t}(1-\lambda(t_j\mid x) (t_{j+1}-t_j))$ for a fine set of grid-points $t_j$, $j=1,\ldots,m$. Then, the resulting approximate log-likelihood loss satisfies the sufficiency assumption for $x^n=\{(t_j,x_i):i=1,\ldots,n,j=1,\ldots,m\}$. Moreover, an HAL-MLE for this approximate log-likelihood would be asymptotically equivalent with the actual HAL-MLE if we choose the number of grid-points fine  enough and growing with sample size. This clarifies that our proof relying on the sufficiency assumption can be applied to this approximate log-likelihood loss. This demonstrates that indeed the sufficiency assumption drives very much how we determine this set $x^n$, even in cases where the sufficiency assumption only holds by approximation. 
	\end{example}
	
	\subsection{The PC dimension reduction working model $D^{(k)}({\cal E}_n)$}
	%Compute $n$ eigenvectors of $H^{\top}H$ with non-zero eigenvalue:
	
	Consider the design matrix $H: \openr^N\rightarrow\openr^{{m_n}}$ and endow domain and image with inner products
	$\langle v_1,v_2\rangle_N\equiv \sum_j v_1(j)v_2(j)$ and $\langle \phi_1,\phi_2\rangle_n$ respectively, where the latter could be tailored for our purpose.
	A standard choice is $\langle \phi_1,\phi_2\rangle_n=1/m_n\sum_i \phi_1(x_i)\phi_2(x_i) w_n(x_i)$ for  some weight function $w_n$ where, for example, $w_n=1$. We also consider the choice $\langle \phi_1,\phi_2\rangle_n\equiv 1/m_n \sum_i S_{\theta_n}(\phi_1)S_{\theta_n}(\phi_2)$, where $S_{\theta_n}(\phi)=\frac{d}{d\theta_n}L(\theta_n)(\phi)$ is the score of the loss function at a good estimator $\theta_n$ of $Q_0$. For example, in the latter case $\theta_n$ could be a PC-HA for the standard inner product (and one could imagine iterating, but that is not needed). This latter choice arranges that the  scores of the coefficients in front of eigenvector basis functions are orthogonal in $L^2(P_n)$.
	
	We consider the case that $m_n$ is much smaller than $N$ (and typically $m_n=n$).
	Let $H^{\top}: \openr^{m_n}\rightarrow\openr^N$ be the adjoint of $H$ w.r.t. the chosen inner products. 
	We can then define the self-adjoint linear mapping $I_N=H^{\top}H:\openr^N\rightarrow\openr^N$. 
	For the standard inner product choice we have that $I_N$ is the $N\times N$ matrix defined by $I_N(j_1,j_2)=1/m_n \sum_{i=1}^{m_n} \phi_{j_1}(x_i)\phi_{j_2}(x_i):j_1,j_2)$.
	The eigenvalue decomposition allows one to write $I_N=H^{\top}H=E_N \Lambda_N E_N^{\top}$.
	This matrix $I_N$, just as $H$, has rank $m_n$. Let $E_m$, $m=1,\ldots,m_n$, be the $m_n$ $N\times 1$-eigenvectors with non-zero eigenvalue in the eigenvalue decomposition of $I_N$. Let $E_N^n$ be the $N\times m_n$ matrix whose columns are the first $m_n$ columns extracted from $E_N$. Let $\tilde{\phi}_m=\sum_{j\in {\cal R}_N}E_N^n(j,m)\phi_j$, $m=1,\ldots,m_n$, represent the linear combination of the original spline basis functions implied by the $m$-th eigenvector. We are creating an $m_n$ dimensional working model based on $m_n$ eigenvector specific basis functions. For notational convenience, we  denote $E_m=E^n_N(\cdot,m)$ for the $m$-th eigenvector so that $\tilde{\phi}_m(x^n)=H(E_m)$.
	
	Computational tricks allow us to obtain fast and memory friendly computations of the eigenvector basis functions at $x^n$ from the eigenvalue decomposition of $HH^{T}$. 
	Consider the singular value decomposition $H=UDE$. We have that $H^{\top}H= ED^2E^{\top}$ so we obtain the eigenvalue decomposition from the singular value decomposition.  Then the design matrix for PC-HA is given by $UD$. As $E$ is orthogonal, we have $HE = UD$. Then $UD(\alpha)= HE(\alpha)$, so that the new $n$ basis functions $\tilde{\phi}_m$ are given by columns of $HE$ or equivalently of $UD$. Also, it follows that the sectional variation norm of a candidate function in the minimization problem is the $L_1$-norm of the vector $E\alpha$. To ensure computational feasibility when $N >> n$, we utilize the correspondence between the singular value decomposition (SVD) of $H$ and the eigenvalue decomposition of the kernel matrix $K=HH^T$. Let $H=UDE^T$ be the SVD of $H$. The non-zero eigenvalues of $K$ are the squared singular values $D^2$. Consequently, the PC-basis vectors are given by the columns of $UD = HE$. In the special case $k=0$, it has been shown \cite{schuler2024highlyadaptiveridge} that $HH^T$ is directly computable without storing $H$.

	Let $D^{(k)}({\cal E}_n)=\{\sum_m {\alpha(m)}\tilde{\phi}_m:\alpha \in \mathbb{R}^{m_n}\}$ be the $m_n$-dimensional submodel of $D^{(k)}({\cal R}_N)$ spanned by these eigenvector functions $\tilde{\phi}_m$, $m=1,\ldots,m_n$. Therefore, $D^{(k)}({\cal E}_n)$ represents our $m_n$-dimensional desired dimension reduction of the original $N$ basis functions.

	\begin{definition}[Coefficient map from PC basis to original basis] For a given $\alpha \in \mathbb{R}^{m_n}$, we define the $j$-th component of  $\beta(\alpha)$ as
		\[
		\beta(\alpha)(j)=\sum_{m=1}^{m_n}\alpha(m)E_N^n(j,m).\]
	\end{definition}
	
	Some important properties of  this new basis $\{\tilde{\phi}_m: 1 \leq m \leq m_n \}$ are the following.

	\begin{lemma}
		\label{lem:orthogonality}
		Let $\{\tilde{\phi}_m : 1 \le m \le m_n\}$ and $\{E_m : 1 \le m \le m_n\}$ be defined as above.
		
		\medskip
		{(i) Orthogonality with respect to $\langle\cdot,\cdot\rangle_n$.}
		For $m_1 \neq m_2$,
		$$\langle \tilde{\phi}_{m_1}, \tilde{\phi}_{m_2} \rangle_n = 0,
		\qquad
		\langle E_{m_1}, E_{m_2} \rangle_N = 0.$$
		Moreover,
		$$\langle E_m, E_m \rangle_N = 1
		\quad \text{and} \quad
		\langle \tilde{\phi}_m, \tilde{\phi}_m \rangle_n = \lambda_m,$$
		where $\lambda_m$ is the eigenvalue associated with $E_m$, satisfying
		$I_N(E_m) = \lambda_m E_m$.
		
		\medskip
		\noindent
		{(ii) Orthonormality with respect to the coefficient inner product.}
		For $f_1, f_2 \in D^{(k)}({{\cal E}_{ {n}}})$, define
		$\langle f_1, f_2 \rangle_N := \langle \beta(f_1), \beta(f_2) \rangle_N.$
		Then, for $m_1 \neq m_2$,
		$$\langle \tilde{\phi}_{m_1}, \tilde{\phi}_{m_2} \rangle_N = 0,$$
		and
		$$\langle \tilde{\phi}_m, \tilde{\phi}_m \rangle_N = 1
		\quad \text{for all } m.$$
	\end{lemma}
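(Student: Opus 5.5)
The plan is to reduce everything to the spectral theorem for the self-adjoint operator $I_N=H^{\top}H$ on $(\openr^N,\langle\cdot,\cdot\rangle_N)$, together with the defining property of the adjoint. That property is $\langle Hv,w\rangle_n=\langle v,H^{\top}w\rangle_N$ for $v\in\openr^N$ and $w\in\openr^{m_n}$, where we identify a function $\phi$ in the span with its evaluation vector $\phi(x^n)$, since the inner product $\langle\cdot,\cdot\rangle_n$ only depends on $\phi$ through $\phi(x^n)$.

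The argument proceeds in three steps.

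\textbf{Step 1 (spectral theorem).} Since $I_N$ is self-adjoint and positive semidefinite with respect to the standard inner product, the eigendecomposition $I_N=E_N\Lambda_NE_N^{\top}$ can be taken with $E_N$ orthogonal. Its columns $E_m$ with nonzero eigenvalues are therefore orthonormal in $\langle\cdot,\cdot\rangle_N$. For repeated eigenvalues we choose an orthonormal basis within each eigenspace, which is exactly how $E_N$ is constructed. This gives $\langle E_{m_1},E_{m_2}\rangle_N=\delta_{m_1m_2}$.

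\textbf{Step 2 (orthogonality of the functions).} We use $\tilde\phi_m(x^n)=H(E_m)$ and the adjoint identity to compute
\[
\langle\tilde\phi_{m_1},\tilde\phi_{m_2}\rangle_n=\langle HE_{m_1},HE_{m_2}\rangle_n=\langle E_{m_1},H^{\top}HE_{m_2}\rangle_N=\lambda_{m_2}\langle E_{m_1},E_{m_2}\rangle_N=\lambda_{m_2}\delta_{m_1m_2}.
\]
This yields orthogonality for $m_1\neq m_2$ and $\langle\tilde\phi_m,\tilde\phi_m\rangle_n=\lambda_m$, which completes (i).

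\textbf{Step 3 (part (ii)).} First we check that $f\mapsto\beta(f)$ is well defined on $D^{(k)}({\cal E}_n)$. If $f=\sum_m\alpha(m)\tilde\phi_m$, then $\beta(f)=\beta(\alpha)=E_N^n\alpha$. This representation is unique because the columns of $E_N^n$ are orthonormal, hence linearly independent. As a consequence, $\alpha\mapsto\beta(\alpha)$ is injective.

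Here is the main subtlety. Distinct coefficient vectors $\beta$ could give the same function on $[0,1]^d$ if the spline basis were dependent. So $\beta(f)$ must be defined through the parametrization by $\alpha$, and we also need $\alpha$ to be determined by $f$. The latter follows from (i): $\alpha(m)=\langle f,\tilde\phi_m\rangle_n/\lambda_m$, using that $\lambda_m>0$.

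With $\beta(\tilde\phi_m)=E_N^n e_m=E_m$ in hand, we get
\[
\langle\tilde\phi_{m_1},\tilde\phi_{m_2}\rangle_N=\langle E_{m_1},E_{m_2}\rangle_N=\delta_{m_1m_2},
\]
which proves (ii).

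The only real obstacle is bookkeeping. One needs to confirm that the inner product defining $H^{\top}$ (for instance the weighted or score-based choice of $\langle\cdot,\cdot\rangle_n$) is used consistently, so that $I_N$ is self-adjoint with respect to $\langle\cdot,\cdot\rangle_N$. One also needs to handle eigenvalue multiplicities, as in Step 1. Neither requires more than linear algebra.
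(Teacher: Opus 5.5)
Your proof is correct and follows the paper's argument: the adjoint identity $\langle HE_{m_1},HE_{m_2}\rangle_n=\langle E_{m_1},H^{\top}HE_{m_2}\rangle_N=\lambda_{m_2}\langle E_{m_1},E_{m_2}\rangle_N$ combined with orthonormality of the eigenvectors, with part (ii) immediate from $\beta(\tilde\phi_m)=E_m$. Two of your additions tighten the paper's one-line proof slightly. You choose an orthonormal basis within each eigenspace, whereas the paper relies only on orthogonality across distinct eigenvalues. You also check that $\beta(f)$ is well defined on $D^{(k)}({\cal E}_n)$ via $\alpha(m)=\langle f,\tilde\phi_m\rangle_n/\lambda_m$.
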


	The result follows from the adjoint properties of the design operator. Observe that: $\langle \tilde{\phi}_{m_1},\tilde{\phi}_{m_2}\rangle_n=\langle H(E_{m_1}),H(E_{m_2})\rangle_n
	=\langle E_{m_1},H^T H (E_{m_2})\rangle_N
	=\langle E_{m_1},\lambda_{m_2}E_{m_2}\rangle_N
	=\lambda_{m_2}\langle E_{m_1},E_{m_2}\rangle_N$. Since eigenvectors corresponding to distinct eigenvalues are orthogonal, the term vanishes for $m_1\not =m_2$ and equals $\lambda_{m}$ for $m_1=m_2=m$. 
	
	%$\Box$
	
	The important feature of this dimension reduction is that if we view elements of $D^{(k)}({\cal R}_N)$  as $m_n$-dimensional vectors evaluated at $(x_i: i=1,\ldots,m_n)$, assuming the sufficiency assumption on $x^n$, we have that 
	$\{\psi(x^n): \psi\in D^{(k)}({\cal E}_n)\}=\{\psi(x^n): \psi\in D^{(k)}({\cal R}_N)\}$; i.e. the $m_n$-dimensional PC-working model $D^{(k)}({\cal E}_n)$ is sufficient under sufficiency assumption (1). As a consequence, we have that
	the set $\{L(\theta_{n,\alpha})(O^n):\alpha\}$ equals $\{L(\psi_{N,\beta})(O^n):\beta\}$, where $ {\theta_{n,\alpha}=\sum_{m=1}^n \alpha(m)\tilde{\phi}_m}$ and $\psi_{N,\beta}=\sum_{j\in {\cal R}_N}\beta(j)\phi_j$. Therefore, we can conclude that the minimum empirical risk over $D^{(k)}({\cal E}_n)$ equals the minimum empirical risk over $D^{(k)}({\cal R}_N)$.  From that perspective, we might consider $(P_n L(\theta_{n,\alpha}):\alpha)$ as a sufficient statistic for estimation of $\psi_{n,\beta_0}$ with $\beta_{0,N}=\arg\min_{\beta}P_0 L(\psi_{N,\beta})$. Note that $\theta_{n,\alpha}=\psi_{N,\beta(\alpha)}.$
	%In fact, the latter is a fair statement since we will show below that in fact PC-HAL and PC-HAR defined as MLEs over $\{\theta_{n,\alpha}:\alpha\}$ with appropriate $L_1$ and $L_2$-constraint on the $\alpha$ achieve same empirical risk as HAL and HAR.
	
	{In what follows, we set $m_n=n$ for notational convenience}. We define the estimators by imposing constraints on various norms of the coefficient vector that parameterizes the space $D^{(k)}({\cal E}_n)$. 
	For a given norm $ {\lVert\cdot\rVert}$, we can define a subset $ {D^{(k)}_{\lVert\cdot\rVert,C}({\cal E}_n)}=\{\theta_{n,\alpha}\in D^{(k)}({\cal E}_n):
	{\lVert \alpha\rVert<C}\}$. We will consider different choices of norms:
	\begin{eqnarray*}
		{\lVert\alpha\rVert_1}&\equiv&\sum_{m=1}^n  {\lvert} \alpha(m) {\rvert} \\
		{\lVert\alpha\rVert_2}&\equiv&\left( \sum_{m=1}^n \alpha(m)^2\right) ^{1/2}\\
		{\lVert \alpha\rVert_{3}}&\equiv&\sum_{j=1}^N {\lvert} \beta(\alpha)(j) {\rvert}  .
	\end{eqnarray*}
	Recall that $ {\lVert \alpha\rVert_3}$ constraints the $k$-th order sectional variation norm, just as done in in $k$-th order HAL. The other norms will indirectly constrain this $k$-th order sectional variation norm, as discussed below.
	Let's denote the corresponding $ {D^{(k)}_{\lVert\cdot\rVert,C}({\cal E}_n)}$ with  $D^{(k)}_{1,C}({\cal E}_n)$; $D^{(k)}_{2,C}({\cal E}_n)$ and $D^{(k)}_{3,C}({\cal E}_n)$, respectively.
	Recall $D^{(k)}_C({\cal R}_N)=\{\psi_{N,\beta}: {\lVert \beta\rVert_1<C}\}$ is the HAL-model constraining the $k$-th order sectional variation norm to be bounded by $C$, and $D^{(k)}_{2,C}({\cal R}_N)=\{\psi_{N,\beta}: {\lVert \beta\rVert_2<C}\}$, the space that defines HAR.

	We have the following simple facts concerning embedding $D^{(k)}_{\pl\cdot\pl,C}({\cal E}_n)$ in $D^{(k)}_M({\cal R}_N)$ for some $M=M_C<\infty$  so that the complexity  of $D^{(k)}_{\pl\cdot\pl,C}({\cal E}_n)$ is controlled by constraining the $k$-th order sectional variation norm. Recall that $D^{(k)}_M({\cal R}_N)$ exhibits covering number bounds leading to dimension free rates of convergence (Lemma 29 in \cite{van2023higher}). We also know that if we can embed it in $D^{(k)}_{2,C/n^{1/2}}({\cal R}_N)$, then we will also be controlling this $k$-th order sectional variation norm.
	
	% \begin{lemma}
		% Recall that the size of ${\cal R}_N$ is $O(n)$.
		% \newline
		% {\bf $\pl \alpha\pl_2$:} We have
		%  $\pl \beta(\alpha)\pl_2=\pl \alpha\pl_2$ and thus $D^{(k)}_{2,C}({\cal E}_n)\subset D^{(k)}_{2,C}({\cal R}_N)$.
		%  By Cauchy-Schwarz inequality,  
		% $D^{(k)}_{2,C/N^{1/2}}({\cal R}_N)\subset D^{(k)}_C({\cal R}_N)$, so that also
		% $D^{(k)}_{2,C/N^{1/2}}({\cal E}_n)\subset D^{(k)}_C({\cal R}_N)$.
		%  \newline
		%  {\bf $\pl \alpha\pl_3$:} We have 
		% $\pl \alpha\pl_3=\pl \beta(\alpha)\pl_1$, and therefore $D^{(k)}_{3,C}({\cal E}_n)\subset D^{(k)}_C({\cal R}_N)$; 
		% \newline
		% {\bf $\pl \alpha\pl_1$:}
		% We have $\sup_{\pl\alpha\pl_1<C,\max_m\mid \alpha(m)\mid <C/n}\pl\beta(\alpha)\pl_1 =O(C)$.
		% Therefore, if we define $D^{(k)}_{1,C,C/n}({\cal E}_n)=\{\theta_{n,\alpha}:\pl \alpha\pl_1<C,\max_m \mid \alpha(m)\mid \leq C/n\}$, then we have $D^{(k)}_{1,C,C/n}({\cal E}_n)\subset D^{(k)}_{M(C)}({\cal R}_N)$ for some $M(C)=O(C)$.
		% \end{lemma}
	
	\begin{lemma}[Relationships between PC models and the HAL space]\label{lem:sub}
		Let $N$ denote the cardinality of the original spline basis set $\mathcal{R}_N$, where $N = O(n \cdot 2^{kd})$. Let $\beta(\alpha)$ denote the mapping from the $n$-dimensional PC-coefficient vector to the $N$-dimensional spline coefficient vector.
		
		\begin{enumerate}
			\item {PC-HAR:} 
			Since the columns of ${E}_N^n$ are orthonormal, the Euclidean norm is preserved: $\|\beta(\alpha)\|_2 = \|\alpha\|_2$. Consequently, the PC-HAR model is a subset of the full HAR model:
			\[ D^{(k)}_{2, C}(\mathcal{E}_n) \subset D^{(k)}_{2, C}(\mathcal{R}_N). \]
			Furthermore, by the Cauchy-Schwarz inequality, $\|\beta\|_1 \leq \sqrt{N} \|\beta\|_2$, which implies:
			\[ D^{(k)}_{2, C/\sqrt{N}}(\mathcal{E}_n) \subset D^{(k)}_{1, C}(\mathcal{R}_N). \]
			
			\item {PC-HAGL:} 
			By defining the constraint directly on the induced $L_1$-norm of the original coefficients, i.e., $\|\beta(\alpha)\|_1 \leq C$, we  {ensure} that the PC-HAGL model remains a valid submodel of the HAL space:
			\[ D^{(k)}_{3, C}(\mathcal{E}_n) \subset D^{(k)}_{1, C}(\mathcal{R}_N). \]
			
			\item {PC-HAL:} 
			Suppose we constrain the PC-coefficients directly such that they belong to the subset $\tau_n(C) \equiv {\{\alpha : \|\alpha\|_1 < C, \|\alpha\|_\infty < C/n\}}$ {.} Under these conditions, the induced sectional variation norm  {is bounded as}:
			\[ \sup_{\alpha \in  {\tau_n(C)}} \|\beta(\alpha)\|_1 = O(C). \]
			Define $D^{(k)}_{1,C,C/n}({\cal E}_n)=\{\theta_{n,\alpha}: \alpha \in  {\tau_n(C)} \}$. There exists a constant $M(C) = O(C)$ such that the restricted PC-HAL model  {satisfies} $D^{(k)}_{1, C, C/n}(\mathcal{E}_n) \subset D^{(k)}_{1, M(C)}(\mathcal{R}_N)$.
		\end{enumerate}
	\end{lemma}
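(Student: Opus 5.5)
The plan is to reduce all three parts to elementary norm inequalities for the linear map $\alpha\mapsto\beta(\alpha)=E_N^n\alpha$. The one structural input is that the columns $E_1,\ldots,E_n$ of $E_N^n$ are orthonormal in $\langle\cdot,\cdot\rangle_N$, which is Lemma \ref{lem:orthogonality}(i). Throughout, membership of $\theta_{n,\alpha}=\psi_{N,\beta(\alpha)}$ in a constrained HAL/HAR model is checked by bounding the relevant norm of $\beta(\alpha)$.

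\emph{PC-HAR.} I would expand $\|\beta(\alpha)\|_2^2=\alpha^{\top}(E_N^n)^{\top}E_N^n\alpha$. Since $(E_N^n)^{\top}E_N^n$ is the $n\times n$ identity by orthonormality, this equals $\|\alpha\|_2^2$. Hence $\|\alpha\|_2<C$ gives $\|\beta(\alpha)\|_2<C$, which is the first inclusion $D^{(k)}_{2,C}(\mathcal{E}_n)\subset D^{(k)}_{2,C}(\mathcal{R}_N)$. For the second inclusion, Cauchy--Schwarz on $\mathbb{R}^N$ gives $\|\beta\|_1\le\sqrt{N}\|\beta\|_2$. So $\|\alpha\|_2<C/\sqrt N$ implies $\|\beta(\alpha)\|_1<C$, i.e.\ the model lies in the $L_1$-ball of radius $C$ in $D^{(k)}(\mathcal{R}_N)$.

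\emph{PC-HAGL.} This part is definitional. We have $\|\alpha\|_3=\|\beta(\alpha)\|_1$ and $\theta_{n,\alpha}=\psi_{N,\beta(\alpha)}$. So $\|\alpha\|_3<C$ means exactly that $\theta_{n,\alpha}$ is a spline in $D^{(k)}(\mathcal{R}_N)$ with coefficient $L_1$-norm below $C$.

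\emph{PC-HAL.} This is the only part needing an idea. The naive bound $\|\beta(\alpha)\|_1\le\sum_m|\alpha(m)|\,\|E_m\|_1\le\sqrt N\|\alpha\|_1$ loses a factor $\sqrt N$, so I would not use it. Instead I would pass through the $L_2$-norm and use the sup-norm constraint. By H\"older's inequality, $\|\alpha\|_2^2\le\|\alpha\|_1\|\alpha\|_\infty<C\cdot C/n$, so $\|\alpha\|_2<C/\sqrt n$. Combining with the PC-HAR step,
\[
\|\beta(\alpha)\|_1\le\sqrt N\,\|\beta(\alpha)\|_2=\sqrt N\,\|\alpha\|_2<C\sqrt{N/n}.
\]
Since $N=O(n2^{kd})$, this gives $\sup_{\alpha\in\tau_n(C)}\|\beta(\alpha)\|_1\le C\,O(2^{kd/2})=O(C)$. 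We may therefore take $M(C)=C\sqrt{N/n}$, and the inclusion $D^{(k)}_{1,C,C/n}(\mathcal{E}_n)\subset D^{(k)}_{1,M(C)}(\mathcal{R}_N)$ follows.

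The main point to flag is that the constant in $O(C)$ depends on $(k,d)$ through $2^{kd/2}$. It is uniform in $n$ only because $d$ and $k$ are fixed, which is how I read the statement. It is also important that $N/n$ stays bounded; this holds for the HAL basis with $N=n(2^d-1)$ and its $k$-th order analogue.
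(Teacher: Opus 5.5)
Your proof is correct and follows the same route as the paper. Norm preservation $\|\beta(\alpha)\|_2=\|\alpha\|_2$ comes from orthonormality of the columns of $E_N^n$, the PC-HAGL inclusion holds by definition, and PC-HAL reduces to $\|\alpha\|_2^2=O(1/n)$ followed by Cauchy--Schwarz with $N/n$ bounded; your explicit H\"older step $\|\alpha\|_2^2\le\|\alpha\|_1\|\alpha\|_\infty$ and the choice $M(C)=C\sqrt{N/n}$ make precise what the paper argues only informally (``$N\sim n$'', ``$\alpha(l)\sim 1/n$'').
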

	
	{The previous result poses a sparsity vs. smoothness tradeoff.  While explicitly constraining the induced variation norm $\|\beta(\alpha)\|_1 \leq M$ (as in PC-HAGL) provides the most direct control over the HAL complexity, the standard Euclidean norms on $\alpha$ offer alternative regularization mechanisms with distinct theoretical implications.
	}

	\subsection{{Definitions of PC-HA estimators}}
	
	We now define the three specific estimators based on the choice of norm constraint imposed on the coefficient vector $\alpha$ within the PC-reduced working model $D^{(k)}({\cal E}_n)$. In all definitions below, let $\theta_{n,\alpha} = \sum_{m=1}^{m_n} \alpha(m)\tilde{\phi}_m$ denote a candidate function in the reduced space, and let $C_n$ denote a regularization parameter selected via cross-validation.
	
	\begin{description}
		\item[1. PC-HAGL (Generalized Lasso):]  directly  {constrains} the implied sectional variation norm of the original basis coefficients $\beta(\alpha)$. The estimator is defined as:
		\[
		\hat{\Psi}_{\text{PC-HAGL}}(P_n) = \theta_{n,\hat{\alpha}_{GL}}, \quad \text{where} \quad \hat{\alpha}_{GL} = \mathop{\mathrm{arg\,min}}_{\alpha : \|\beta(\alpha)\|_1 \le C_n} P_n L(\theta_{n,\alpha}) = \mathop{\mathrm{arg\,min}}_{\theta_{n,\alpha}\in D^{(k)}_{3,C_n}({\cal E}_n)}P_n L(\theta_{n,\alpha}). 
		\]
		Since $\|\beta(\alpha)\|_1$ represents the $L_1$-norm of a linear transformation of $\alpha$, this optimization corresponds to a \textit{generalized Lasso} problem \cite{Tibshirani:2011:SPGL}. By construction, the resulting estimator satisfies $\theta_{n,\hat{\alpha}_{GL}} \in D^{(k)}_{C_n}({\cal R}_N)$, preserving the exact sectional variation norm constraint of the original HAL.
		
		\item[2. PC-HAL (Lasso):] 
		imposes a standard $L_1$-constraint directly on the PC-coefficients $\alpha$. The estimator is defined as:
		\[
		\hat{\Psi}_{\text{PC-HAL}}(P_n) = \theta_{n,\hat{\alpha}_{L}}, \quad \text{where} \quad \hat{\alpha}_{L} = \mathop{\mathrm{arg\,min}}_{\alpha : \|\alpha\|_1 \le C_n} P_n L(\theta_{n,\alpha}) = \mathop{\mathrm{arg\,min}}_{\theta_{n,\alpha}\in D^{(k)}_{1,C_n}({\cal E}_n)}P_n L(\theta_{n,\alpha}).
		\]
		This formulation allows for computation using the closed form in \cite{wang2026}. However, as noted in the Lemma above, ensuring that $\hat{\Psi}_{\text{PC-HAL}}$ resides in a bounded variation class $D^{(k)}_M({\cal R}_N)$ requires the additional condition that the coefficients are uniformly small, specifically $\max_m |\hat{\alpha}_{L}(m)| = O(n^{-1})$. While not explicitly enforced by the standard Lasso, this condition is generally plausible if the true target function $\psi_0$ satisfies typical smoothness assumptions.
		
		\item[3. PC-HAR (Ridge):]  imposes an $L_2$-constraint on $\alpha$. Due to the orthonormality of the PC-basis coefficients in the embedding space (Lemma \ref{lem:orthogonality}), we have $\|\beta(\alpha)\|_2 = \|\alpha\|_2$. The estimator is defined as:
		\[
		\hat{\Psi}_{\text{PC-HAR}}(P_n) = \theta_{n,\hat{\alpha}_{R}}, \quad \text{where} \quad \hat{\alpha}_{R} = \mathop{\mathrm{arg\,min}}_{\alpha : \|\alpha\|_2 \le C_n} P_n L(\theta_{n,\alpha}) = \mathop{\mathrm{arg\,min}}_{\theta_{n,\alpha}\in D^{(k)}_{2,C_n}({\cal E}_n)}P_n L(\theta_{n,\alpha}) {.}
		\]
		This estimator is implemented via standard ridge regression on the design matrix $UD$. To ensure the resulting function has bounded sectional variation (i.e., $\|\beta(\hat{\alpha}_R)\|_1 = O(1)$), the tuning parameter must satisfy $C_n = O(n^{-1/2})$, relying on the relationship $\|\beta\|_1 \leq \sqrt{N}\|\beta\|_2$ and $N \asymp n$. This estimator inherits the closed form of standard ridge, and does not need to invert any matrix due to Lemma \ref{lem:orthogonality} \cite{wang2026}. 
	\end{description}
	
	\subsection{Computational and statistical comparisons of PC-HAGL, PC-HAL {,} PC-HAR {.}}
	The constraint $ {\lVert \beta(\alpha)\rVert_1<C}$ is somewhat non-standard, while, on the other hand, $ {\lVert \beta(\alpha)\rVert_2=\lVert \alpha\rVert_2}$. This makes PC-HAR trivial to implement with standard ridge regression. By restricting $C=O(1/N^{1/2})$ one still controls the $k$-th order sectional variation norm.  The constraint $ {\lVert \beta(\alpha)\rVert_1}$ corresponds with a standard $L_1$-constraint on the linear transformation
	$E_N^n(\alpha)$: i.e. $ {\lVert \beta(\alpha)\rVert_1=\lVert E_N^n(\alpha)\rVert_1}$. In various cases, this type of generalized Lasso implementation might not be standard and easily available: in a later section we will propose a particular algorithm for computing $\hat \alpha_{GL}$.  Either way,   the easy PC-HAL controlling $ {\lVert \alpha\rVert_1}$ or PC-HAR controlling $ {\lVert \alpha\rVert_2}$ are  computationally attractive \cite{wang2026}.
	
	All three estimators appear to control $ {\lVert \beta(\alpha)\rVert_1}$, which is a measure of true complexity of the space, in particular, controlling the covering number. Therefore, all three estimators will generally achieve the desired rates of convergence. In what sense will they end up being different in controlling the $L_1$-norm of $\beta(\alpha)$? Let $\psi_{N,0}=\Psi_{N,\beta_{0,N}}$ be the loss-based projection of $\psi_0$ onto $D^{(k)}({\cal R}_N)$. This implies a best approximation $\theta_{n,\alpha_{0,n}}$ with $\alpha_{0,n}$ defined by
	$\theta_{n,\alpha_{0,n}}(x^n)=\psi_{N,0}(x^n)$. 
	Let $C_0= {\lVert \beta(\alpha_{0,n})\rVert_1}$. Then, $D^{(k)}_{3,C_0}({\cal E}_n)$ already includes $\theta_{\alpha_{0,n}}$, while controlling the $L_1$-norm (i.e {.}, $k$-th order sectional variation norm) at level $C_0$.
	Let $C_{2,0}\equiv  {\lVert \alpha_{0,n}\rVert_2}$. Then $D^{(k)}_{2,C_{2,0}}({\cal E}_n)$ includes $\theta_{\alpha_{0,n}}$, but $\sup_{ {\lVert \alpha\rVert\leq C_{2,0}}} {\lVert \beta(\alpha)\rVert_1}$ might be significantly larger than $C_0$. As a consequence the cross-validated PC-HAR might end up minimizing over a significantly larger set of functions than cross-validated PC-HAGL. This suggests that in finite samples the PC-HAGL might be better than PC-HAR. Let $C_{1,0}= {\lVert \alpha_{0,n}\rVert_1}$. Then the set $D^{(k)}_{1,C_{1,0}}({\cal E}_n)$ includes $\theta_{n,\alpha_{0,n}}$, but we might have that $\sup_{ {\lVert\alpha\rVert_1<C_{1,0}}} {\lVert \beta(\alpha)\rVert_1}$ is significantly larger than $C_0$. 
	
	%Dont think this is true: An interesting feature of PC-HAGL is that the choice of inner product $\langle\cdot,\cdot\rangle_n$, which clearly affects the resulting $\{\tilde{\phi}_m:m\}$, does not affect the definition of PC-HAGL. This follows since $\{\beta:\pl \beta(\alpha)\pl_1\leq C\}$ is invariant under reparemetrizations of $D^{(k)}({\cal E}_n)$.

	In summary, the $m_n$ principal components with non-zero eigenvalues of the $N\times N$ matrix $\Sigma_n=H^{\top}H=P_n {\bf \phi}{\bf \phi}^{\top}=(1/n\sum_i \phi_{j_1}(x_i)\phi_{j_2}(x_i) {,\ 1 \leq j_1,j_2 \leq N})$  {imply} an $m_n$-dimensional working model 
	$D^{(k)}({\cal E}_n)=\{\theta_{n,\alpha}:\alpha\}$. Under the sufficiency assumption, this working model allows us to minimize the empirical risk as much as the full HAL-model $D^{(k)}({\cal R}_N)$. Three different PC-Highly Adaptive estimators (PC-HA) are now defined by specifying a norm $ {\lVert \cdot\rVert}$ on $\alpha$; computing the constrained MLE over $ {D^{(k)}_{\lVert\cdot\rVert,C}({\cal E}_n)}$ for each $C$, thus constraining $ {\lVert \alpha\rVert\leq C}$; choosing $C$ with cross-validation. Examples of such PC-HA  {estimators} are using $ {\lVert \alpha\rVert_3\equiv \lVert \beta(\alpha)\rVert_1}$; $ {\lVert \alpha\rVert_2\equiv \lVert \beta(\alpha)\rVert_2}$; $ {\lVert \alpha\rVert_1}$. All of these will control $ {\lVert \beta(\alpha)\rVert_1}=O(1)$, but $ {\lVert\cdot\rVert_3}$ does directly control $ {\lVert \beta(\alpha)\rVert_1}$, while other norms control it indirectly, possibly coming at a cost of its cross-validated version having to optimize the empirical risk over a set of functions with higher $L_1$-norm than needed to capture the true function. In the next Section \ref{section3} we establish that these PC-HAs will converge at  {the} same rate as HAL or HAR, under our assumption that $P_n L(\psi)$ only depends on $\psi(x^n)$, and the assumption that the norm chosen effectively  {constrains} $ {\lVert \beta(\alpha)\rVert_1}$ as well, which is guaranteed by $ {\lVert \cdot\rVert_3}$ in particular.
	In the subsequent Section  we will generalize the results to  more general $x^n$ choices drastically weakening the sufficiency assumption on $x^n$.
	
	\section{Convergence rates}\label{section3}
	{This section presents two theoretical results which, under different sets of assumptions, establish the same convergence rate. The first result relies on Approximate Sufficiency Assumption 2. The second assumes that $x^n$ is an i.i.d. sample from a probability distribution $P_{X,0}$ such that the loss-based dissimilarity is equivalent to the squared $L_2(P_{X,0})$ norm.}
	\begin{assumption}\label{ass:oracle}
		Let $\psi_{N,0} = \arg\min_{\psi \in D^{(k)}({\cal R}_N)} P_0 L_n(\psi)$ be the best approximation in the finite-dimensional spline model. We assume:
		\begin{enumerate}
			\item {Boundedness:} There exists $M < \infty$ such that $\psi_{N,0} \in D^{(k)}_M({\cal R}_N)$.
			\item The oracle approximation error satisfies
			\[ d_0(\psi_{N,0}, \psi_0) = O_P^+(n^{-2k^*/(2k^*+1)}), \]
			where $\psi_0$ is the true minimizer of $P_0 L(\psi)$.
			\item {Loss regularity:} The class $\{L_n(\psi) : \psi \in D^{(k)}_M({\cal R}_N)\}$ satisfies the same entropy bounds as $D^{(k)}_M({\cal R}_N)$, and the approximate loss satisfies the local quadratic condition:
			\[ \sup_{\psi \in D^{(k)}({\cal R}_N) : d_0(\psi, \psi_{N,0}) \le 1} P_0 \{L_n(\psi) - L_n(\psi_{N,0})\}^2 = O(1). \]
		\end{enumerate}
	\end{assumption}
	
	\begin{assumption}[ {SV norm control under PC representation}]\label{ass:pc_adequacy}
		Let $\psi_{N,\beta_n}$ be the HAL-MLE on the full spline model $D^{(k)}_M({\cal R}_N)$. Let $\alpha_n^*$ be a solution to the equation $\theta_{n,\alpha}(x^n) = \psi_{N,\beta_n}(x^n)$, ensuring equivalent empirical risk: $P_n L_n(\theta_{n,\alpha_n^*}) = P_n L_n(\psi_{N,\beta_n})$.
		Let $r(n)$ be a rate function such that $\sup_{\|\alpha\| < r(n)} \|\beta(\alpha)\|_1 = O(1)$. We assume that the PC-representation of the HAL-MLE is bounded by this rate:
		\[ \|\alpha_n^*\| = O_P(r(n)). \]
	\end{assumption}
	
	{
		The HAL-MLE satisfies $\|\beta_n\|_1=O_P(M)$. The equation $\theta_{n,\alpha}(x^n)=\psi_{N,\beta_n}(x^n)$ is a linear system in $\alpha$; under full rank of the design, there is at least one solution. For example, the coefficient vector $\alpha=E^{\top}\beta_n$ (the projection of $\beta_n$ onto the span of the PC eigenvectors $\{E_m\}$ in $\mathbb{R}^N$) reproduces the same function on the design, so $\theta_{n,\alpha}(x^n)=\psi_{N,\beta_n}(x^n)$. For this choice, $\|\beta(\alpha)\|_1\leq\|\beta(\alpha)\|_2\sqrt{m_n}\leq\|\beta_n\|_2\sqrt{m_n}=O_P(1)$ under the usual scaling $\|\beta_n\|_2=O_P(n^{-1/2})$, hence $\|\alpha\|=O_P(r(n))$ for the norm in the assumption. Thus there exists a solution with the required rate; the assumption is that the chosen $\alpha_n^*$ (e.g., a minimal-norm solution) is of that order.}
	
	We start by establishing a convergence rate under the Approximate Sufficiency Assumption \ref{ass:approx_sufficiency}. The proof can be found in the Supplement.  
	
	\begin{theorem}[Convergence rate of PC-HA estimators]\label{theoremone}
		Suppose Assumptions \ref{ass:approx_sufficiency}, \ref{ass:oracle}, and \ref{ass:pc_adequacy} hold. Consider the PC-HA estimator defined by minimizing the empirical risk over the submodel $D^{(k)}_{\|\cdot\|, C_n}({\cal E}_n)$ with a norm constraint $\|\cdot\|$. Then,
		
		\begin{enumerate}
			\renewcommand{\theenumi}{\roman{enumi}}
			
			\item There exists a deterministic sequence $C_n = O(r(n))$ such that the estimator $\theta_{n,\alpha_n}$ with $\alpha_n = \arg\min_{\|\alpha\| < C_n} P_n L_n(\theta_{n,\alpha})$ converges to the oracle approximation $\psi_{N,0}$ at the optimal nonparametric rate. Furthermore, if the constraint parameter is selected via $V$-fold cross-validation ($C_{n,cv}$), the resulting estimator $\theta_{n,\alpha_{n,cv}}$ achieves the same adaptive rate.
			
			\item Specifically, the loss-based dissimilarity satisfies:
			\begin{equation}
				d_0(\theta_{n,\alpha_{n,cv}}, \psi_{N,0}) = O_P^+\left( n^{-\frac{2k^*}{2k^*+1}} \right)
			\end{equation}
			where $k^* = k+1$ and $k$ is the order of the spline basis. 
		\end{enumerate}
	\end{theorem}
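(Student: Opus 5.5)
The plan is to reduce the PC-HA estimator to a minimum empirical risk estimator over a class whose entropy is that of $D^{(k)}_M({\cal R}_N)$. The reduction has two ingredients. First, the PC submodel is contained in a sectional-variation ball. Second, the PC submodel contains a point with empirical risk equal to that of the HAL-MLE.

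\textbf{Step 1 (complexity embedding).} Choose $C_n=c\,r(n)$ with $c$ large. The defining property of $r(n)$ in Assumption \ref{ass:pc_adequacy}, together with Lemma \ref{lem:sub} for the specific norm, gives $D^{(k)}_{\|\cdot\|,C_n}({\cal E}_n)\subset D^{(k)}_{M'}({\cal R}_N)$ for some fixed $M'<\infty$. Concretely, for PC-HAR take $r(n)=n^{-1/2}$; for PC-HAGL take $r(n)=1$; for PC-HAL use the restricted set $\tau_n(C)$. Consequently the loss class $\{L_n(\theta_{n,\alpha}):\|\alpha\|<C_n\}$ is contained in $\{L_n(\psi):\psi\in D^{(k)}_{M'}({\cal R}_N)\}$. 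By Assumption \ref{ass:oracle}(3), the latter has the entropy bound $J_\infty(\delta)\lesssim\delta^{(2k+1)/(2k+2)}$ up to log factors.

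\textbf{Step 2 (the PC model is rich enough).} Apply Assumption \ref{ass:pc_adequacy}. Since $\|\alpha_n^*\|=O_P(r(n))$, we have $\alpha_n^*$ in the constraint set with probability tending to one for $c$ large. By sufficiency (Assumption \ref{ass:approx_sufficiency}(3)), $P_nL_n(\theta_{n,\alpha_n^*})=P_nL_n(\psi_{N,\beta_n})\le P_nL_n(\psi_{N,0})$. The inequality holds because $\psi_{N,0}\in D^{(k)}_M({\cal R}_N)$ by Assumption \ref{ass:oracle}(1), and $\psi_{N,\beta_n}$ is the MLE over that class. Hence $P_nL_n(\theta_{n,\alpha_n})\le P_nL_n(\psi_{N,0})$. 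This is the basic inequality
\[
0\le d_{0}(\theta_{n,\alpha_n},\psi_{N,0})\lesssim -(P_n-P_0)\{L_n(\theta_{n,\alpha_n})-L_n(\psi_{N,0})\},
\]
where $d_0$ relative to $\psi_{N,0}$ is understood through $P_0L_n$. Passing between $L_n$ and $L$ costs $o(n^{-2k^*/(2k^*+1)})$ by Assumption \ref{ass:approx_sufficiency}(1).

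\textbf{Step 3 (rate via peeling).} Combine the basic inequality with the Bernstein condition of Assumptions \ref{ass:approx_sufficiency}(2) and \ref{ass:oracle}(3), i.e.\ variance $\lesssim d_0$, and with uniform boundedness. A standard peeling/modulus-of-continuity argument then applies: a maximal inequality for the empirical process indexed by the class from Step 1 gives $E\sup_{d_0\le\delta^2}|(P_n-P_0)f|\lesssim n^{-1/2}J(\delta)(1+J(\delta)/(\delta^2\sqrt n))$. Solving $\delta^2\asymp n^{-1/2}\delta^{(2k+1)/(2k+2)}$ yields $\delta^2=O^+(n^{-2k^*/(2k^*+1)})$. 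This is exactly the argument of the general rate theorem (Theorem \ref{rate}), which I would invoke directly with the class $D^{(k)}_{M'}({\cal R}_N)$. Adding the oracle error $d_0(\psi_{N,0},\psi_0)$ from Assumption \ref{ass:oracle}(2) gives the rate relative to $\psi_0$ as well.

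\textbf{Step 4 (cross-validation).} Consider the grid of constraints $C\le c\,r(n)$. Every candidate lies in the uniformly bounded class of Step 1, so the loss is bounded with Bernstein variance. The finite-sample oracle inequality for $V$-fold cross-validation over a grid of size polynomial in $n$ then gives $d_0(\theta_{n,\alpha_{n,cv}},\psi_{N,0})\le(1+\epsilon)\min_C d_0(\theta_{n,\alpha_n(C)},\psi_{N,0})+O(\log n/n)$. The cross-validated estimator therefore inherits the rate of (ii).

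The main obstacle is Step 2. I expect it to be the \emph{coupling} of the data-dependent feasibility of $\alpha_n^*$ with the deterministic choice of $C_n$: Assumption \ref{ass:pc_adequacy} only gives an $O_P$ bound. I would handle this with an event of probability at least $1-\epsilon$ on which $\|\alpha_n^*\|\le c_\epsilon r(n)$, and then let $C_n=c_\epsilon r(n)$. A second concern is that the cross-validation grid must stay within $O(r(n))$ so that Step 1's embedding holds for all candidates.
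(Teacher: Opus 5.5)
Your proposal is correct in substance and follows the paper's proof of Theorem~\ref{theoremone} essentially step for step: you embed the $O(r(n))$-ball of PC coefficients into $D^{(k)}_M({\cal R}_N)$, use the PC representation $\alpha_n^*$ of the HAL-MLE to get $P_nL(\theta_{n,\alpha_n})\le P_nL(\theta_{n,\alpha_n^*})=P_nL(\psi_{N,\beta_n})\le P_nL(\psi_{N,0})$, run the entropy-based basic-inequality argument, and invoke the cross-validation oracle inequality, and you are more explicit than the paper about both the CV step and the feasibility event (the paper simply asserts that a deterministic $C_n=O(r(n))$ contains $\theta_{n,\alpha_n^*}$). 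Three small corrections: first, despite the introduction's loose description, Theorem~\ref{rate} is a PC-HA result under Assumptions~\ref{assumption1}, \ref{assumption2} and~\ref{assumption4}, not a general MLE rate theorem you can apply to the class $D^{(k)}_{M'}({\cal R}_N)$, so rely on your own peeling computation instead; second, Assumption~\ref{ass:approx_sufficiency}(1) controls only the minimizer $\psi_{0n}$ and does not justify a uniform swap of $P_0L_n$ for $P_0L$, so keep the whole argument in terms of $L_n$ (the paper's proof simply works with a single loss throughout); and third, an $\epsilon$-dependent $C_n=c_\epsilon r(n)$ does not give a single sequence with an $O_P$ rate, whereas $C_n=c_n r(n)$ with $c_n\to\infty$ slowly enough that the induced inflation of the entropy and Bernstein constants stays polylogarithmic contains $\alpha_n^*$ with probability tending to one and yields the rate within $O_P^+$, though $C_n=O(r(n))$ then holds only up to that slowly growing factor.
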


	Applying this theorem to the three norms on $D^{(k)}({\cal E}_n)$ yields the following corollary.
	\begin{corollary}
		Consider the setting of the above theorem for $ {\lVert \alpha\rVert_3=\lVert \beta(\alpha)\rVert_1}$ with $r(n)=1$; $ {\lVert \alpha\rVert_2}$ with $r(n)=1/n^{1/2}$; $ {\lVert \alpha\rVert_1}$ with $r(n)=1$.
		Let $\alpha_{n,j}=\arg\min_{ {\lVert \alpha\rVert_j}<C_{n,j}}P_n L(\theta_{n,\alpha})$ with $C_{n,j}$ the $V$-fold cross-validation selector, $j=1,2,3$. In particular, recall the definition of
		$\alpha_n^*$ of Theorem \ref{theoremone} chosen so that $P_n L(\theta_{n,\alpha_n^*})=P_n L(\psi_{N,\beta_n})$ where $\psi_{N,\beta_n}$ is the HAL-MLE over a set contained in $D^{(k)}_M([0,1]^d)$.

		If $ {\lVert \beta(\alpha_n^*)\rVert_1}=O(1)$, then $d_0(\theta_{n,\alpha_{n,3}},\psi_{N,0})=O^+_P(n^{-2k^*/(2k^*+1)})$.
		
		If $ {\lVert \alpha_n^*\rVert_2}=O_P(n^{-1/2})$, then $d_0(\theta_{n,\alpha_{n,2}},\psi_{N,0})=O^+_P(n^{-2k^*/(2k^*+1)})$.
		
		If $ {\lVert \alpha_n^*\rVert_1}=O(1)$ and $\max_{j\in {\cal E}_n}\mid \alpha_n^*(j)\mid =O(1/n)$, then $d_0(\theta_{n,\alpha_{n,1}},\psi_{N,0})=O^+_P(n^{-2k^*/(2k^*+1)})$.

		If $d_0(\psi_{N,0},\psi_0)=O_P(n^{-2k^*/(2k^*+1)})$, then the above rates apply to $d_0(\theta_{n,\alpha_{n,j}},\psi_0)$, $j=1,2,3$.
	\end{corollary}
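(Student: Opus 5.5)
The corollary follows by instantiating Theorem \ref{theoremone} once for each norm. For each $j=1,2,3$ I take the rate function $r(n)$ listed in the statement and check two things. First, the embedding property required by Assumption \ref{ass:pc_adequacy} holds: $\sup_{\|\alpha\|_j<r(n)}\|\beta(\alpha)\|_1=O(1)$, so that $D^{(k)}_{\|\cdot\|_j,C}({\cal E}_n)\subset D^{(k)}_{M}({\cal R}_N)$ for $C=O(r(n))$. Second, the stated hypothesis on $\alpha_n^*$ is exactly the bound $\|\alpha_n^*\|=O_P(r(n))$. Once both hold, part (ii) of Theorem \ref{theoremone} gives $d_0(\theta_{n,\alpha_{n,j}},\psi_{N,0})=O_P^+(n^{-2k^*/(2k^*+1)})$ for the cross-validated selector $C_{n,j}$.

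The embeddings come from Lemma \ref{lem:sub}.

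For $j=3$, $\|\alpha\|_3=\|\beta(\alpha)\|_1$ by definition. The embedding with $r(n)=1$ is then immediate from part 2 of the lemma, and the hypothesis $\|\beta(\alpha_n^*)\|_1=O(1)$ is literally $\|\alpha_n^*\|_3=O(1)$.

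For $j=2$, part 1 of the lemma gives $\|\beta(\alpha)\|_1\le\sqrt N\|\beta(\alpha)\|_2=\sqrt N\|\alpha\|_2$. This is $O(1)$ when $\|\alpha\|_2\lesssim n^{-1/2}$, provided $N\asymp n$ up to the fixed factor $2^{kd}$, which is absorbed into the constant. So $r(n)=n^{-1/2}$ works, and the hypothesis is $\|\alpha_n^*\|_2=O_P(n^{-1/2})$.

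For $j=1$, the plain $\ell_1$ ball does not embed. Here I use part 3 of the lemma and work with the restricted set $\tau_n(C)$, which adds the side constraint $\|\alpha\|_\infty<C/n$. The added hypothesis $\max_m|\alpha_n^*(m)|=O(1/n)$ places $\alpha_n^*$ in $\tau_n(C)$ for a suitable constant $C$. I then rerun the argument of Theorem \ref{theoremone} with the constraint set $\tau_n(C_n)$ in place of a norm ball. The proof of the theorem only uses two properties of the constraint set: it contains $\alpha_n^*$ (so the empirical risk matches the HAL-MLE), and it sits inside $D^{(k)}_{M(C)}({\cal R}_N)$ (so the entropy bounds apply). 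Both survive the replacement.

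Within each case I would also verify that the theorem's oracle step goes through. Under the approximate sufficiency Assumption \ref{ass:approx_sufficiency}, $\theta_{n,\alpha_n^*}(x^n)=\psi_{N,\beta_n}(x^n)$. Hence $\theta_{n,\alpha_n^*}$ attains the HAL empirical risk and is feasible for $C_n\asymp r(n)$. The basic inequality then compares $\theta_{n,\alpha_n}$ with $\psi_{N,0}$ over a class with the HAL entropy.

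The final claim uses the decomposition $d_0(\theta,\psi_0)=d_0(\theta,\psi_{N,0})+\{P_0L(\psi_{N,0})-P_0L(\psi_0)\}$, which holds exactly because $d_0$ is a difference of risks. The second term is $O_P(n^{-2k^*/(2k^*+1)})$ by hypothesis, and the first is the rate just obtained. There is a subtlety: the theorem measures $d_0$ against $\psi_{N,0}$, which is defined through $L_n$ rather than $L$. I would handle the resulting discrepancy with part 1 of Assumption \ref{ass:approx_sufficiency}, which says it is $o_P(n^{-2k^*/(2k^*+1)-\delta})$ and hence negligible.

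I expect the $j=1$ case to be the main obstacle. The cross-validation selector ranges over the $\ell_1$ radius $C_{n,1}$, but nothing in the plain Lasso enforces $\|\alpha\|_\infty\lesssim 1/n$ on the fitted $\alpha_{n,1}$. My first attempt would be to read $\alpha_{n,1}$ as the minimiser over $\tau_n(C_{n,1})$, consistent with the definition of $D^{(k)}_{1,C,C/n}({\cal E}_n)$. If the unrestricted Lasso must be treated instead, I would argue that with high probability its solution lies in $\tau_n(C)$ whenever $\alpha_n^*$ does, using the orthogonality of the PC design (Lemma \ref{lem:orthogonality}). That orthogonality makes the Lasso a coordinatewise soft-thresholding of the least-squares coefficients, at least for squared-error loss, so the soft-thresholded coefficients would be controlled by the corresponding least-squares coefficients.
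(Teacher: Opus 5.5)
Your handling of $j=3$, $j=2$ and the final claim follows the paper's route: the corollary is obtained by feeding the embeddings of Lemma~\ref{lem:sub} into Theorem~\ref{theoremone}, and those parts are fine. The gap is $j=1$. You flag it correctly, but neither fallback closes it.

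\textbf{Reading (a)} proves a statement about a different estimator. With $m_n=n$, the condition $\|\alpha\|_\infty<C/n$ already forces $\|\alpha\|_1<C$. So $\tau_n(C)$ is just an $\ell_\infty$-box, and its minimiser is a box-constrained fit, not the $\ell_1$-constrained $\alpha_{n,1}$ of the statement. Moreover, the bullet-3 hypotheses imply the bullet-2 hypothesis, since $\|\alpha_n^*\|_2^2\le\|\alpha_n^*\|_\infty\|\alpha_n^*\|_1=O(1/n)$. The only genuinely new content of this bullet is therefore the claim about the $\ell_1$-constrained fit, which is exactly what reading (a) sidesteps.

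\textbf{Reading (b)} fails as stated, and it covers only squared error. Soft-thresholding gives $|\alpha_{n,1}(m)|\le|\langle Y,\tilde\phi_m\rangle_n|/\lambda_m$. That bounds $\alpha_{n,1}$ by the coefficients of the unpenalized interpolating fit, not by $|\alpha_n^*(m)|$, so nothing transfers from the hypothesis on $\alpha_n^*$. Those least-squares coefficients are not uniformly $O(1/n)$ either. Their noise part has standard deviation of order $(n\lambda_m)^{-1/2}$, and since $\sum_m\lambda_m=\mathrm{tr}(I_N)=O(N)=O(n)$, most $\lambda_m$ are $O(1)$. Any control would have to come from the CV-selected threshold, which you have not bounded.

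What is missing is a condition on the fitted Lasso, not on $\alpha_n^*$. In the proof of Theorem~\ref{theoremone} the constraint set does two jobs. First, it makes $\alpha_n^*$ feasible, so the basic inequality against $\psi_{N,0}$ holds. Second, it places the fit $\theta_{n,\alpha_n}$ in a class with the entropy of $D^{(k)}_M({\cal R}_N)$. Hypotheses on $\alpha_n^*$ secure only the first job. For the plain $\ell_1$ ball the second fails in general: $\sup_{\|\alpha\|_1\le C}\|\beta(\alpha)\|_1=C\max_m\|E_m\|_1$, and $\|E_m\|_1$ can be of order $\sqrt N$ for delocalized eigenvectors.

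The basic-inequality argument only needs $\|\beta(\alpha_{n,1})\|_1=O_P(1)$ for the fit at the radius used in the CV oracle comparison. You must either add this as an explicit hypothesis or prove it. The paper does not prove it either; it simply posits $\max_m|\hat\alpha_L(m)|=O(n^{-1})$ as ``generally plausible''. Membership in $\tau_n(C)$ is likely the wrong target anyway: the paper's own scaling study reports $\|\alpha_n\|_\infty\asymp n^{-1/2}$ for PC-HAL while $\|\beta(\alpha_n)\|_1$ stays bounded.

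A minor point on the last claim: the decomposition is exact for the $L$-based $d_0$ whatever $\psi_{N,0}$ is. Part~1 of Assumption~\ref{ass:approx_sufficiency} concerns the unrestricted minimiser $\psi_{0n}$, not $\psi_{N,0}$, so it is neither needed nor applicable there.
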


Below we analyze rate of convergence without relying on the assumption that $P_n L(\psi)$ is determined by $\psi(x^n)$, but instead we will assume that $x^n$ represents an i.i.d. sample from a probability distribution $P_{X,0}$ for which the loss-based dissimilarity is equivalent with the square of the $L^2(P_{X,0})$. In this manner, we establish results in cases in which the empirical risk $P_n L(\psi)$  is not exactly determined by $\psi(x^n)$. One would first study the loss based dissimilarity to determine such a $P_{X,0}$ distribution and then one could take a sample from this $P_{X,0}$ or determine $x_i$ from the observations $o_i$ that would satisfy this.

Recall the definition $\psi_{n,0}=\theta_{n,\alpha_{0,n}}=\arg\min_{\psi\in D^{(k)}({\cal E}_n)}P_0L(\psi)$. To prove our desired result we need to show  $d_0(\theta_{n,\alpha_{n,0}},\psi_{N,0})=O^+(n^{-k^*/(2k^*+1)})$ and $\theta_{n,\alpha_{n,0}}\in D^{(k)}_M({\cal R}_N)$ for some $M<\infty$.
To achieve this we make the following two assumptions.

%Let's now propose a strategy for analyzing PC-HAR and PC-HAL.A typical loss based dissimilarity is equivalent with the square of an $L^2$-norm when restricting to functions for which the loss-function is universally and uniformly bounded so that the above assumption is not a strong assumption. This makes the following assumption a weak assumption.
\begin{assumption}\label{assumption1}
	$\psi_{N,0}\in D^{(k)}_M({\cal R}_N)$. Assume that $d_0(\psi,\psi_{N,0})=O^+(d_{2,0}(\psi,\psi_{N,0}))$ uniformly in $\psi\in D^{(k)}_M({\cal R}_N)$ for some $M<\infty$, where
	$d_{2,0}(\psi,\psi_{N,0})\equiv P_{X,0}(\psi-\psi_{N,0})^2$ for some probability distribution $P_{X,0}$.
\end{assumption}

%Regarding our choice of the points $\{x_i: i=1,\ldots,n\}$ we now only require that it equals an i.i.d. sample of $P_{X,0}$, thereby weakening the previous assumption that these $n$ points should be chosen so that the empirical risk $P_n L(\psi)$ only depends on $(\psi(x_i); i=1,\ldots,n)$ for all $\psi\in D^{(k)}({\cal R}_N)$. Nonetheless, inspecting the empirical risk would often suggest this set of points.
\begin{assumption}\label{assumption2}
	Assume that $x^n=\{x_i:1 \leq i \leq m_n\}$ is an i.i.d. sample of $m_n$ observations from the distribution $P_{X,0}$. 
\end{assumption}

{In the Supplement \ref{app:loss}, we verify that Assumption \ref{assumption1} and the i.i.d. sampling requirement(Assumption \ref{assumption2}) hold for squared loss, logistic regression, and density estimation with exponential families.}

For a particular norm $ {\lVert\cdot\rVert}$ on $\alpha$, we define a $C$-specific MLE $\psi_{n,C}=\arg\min_{\theta_{n,\alpha}\in D^{(k)}({\cal E}_n), {\lVert\alpha\rVert}\leq C}P_n L(\theta_{n,\alpha})$. 

\begin{assumption}\label{assumption4}
	Suppose that $ {\lVert \alpha_{0,n}\rVert}=O_P(r(n))$ for a rate $r(n)$ implying 
	$\sup_{ {\lVert \alpha\rVert}<r(n)} {\lVert \beta(\alpha)\rVert_1}=O(1)$. 
\end{assumption}

{Assumption \ref{assumption4} is automatically satisfied under PC-HAGL because $
	\|\alpha\|_2 = \|\beta(\alpha)\|_2 \le \|\beta(\alpha)\|_1 \le C$, but it needs to be explicitly imposed for PC-HAR and PC-HAL. These estimators constrain the coefficients of the principal components. However, we require a constraint on the sectional variation norm (the coefficients $\beta$ of the original splines). There is no automatic guarantee that keeping the PC-coefficients small keeps the spline coefficients small.} 

%Under this assumption, there exists a $C_n$ so that $\psi_{n,C_n}$ is an MLE over a subset of $D^{(k)}_{M_n}({\cal E}_n)$ that contains the true $\psi_{N,0}\in D^{(k)}({\cal E}_n)$ and that $M_n=O_P(1)$. We can then analyze this MLE of $\psi_{N,0}$ over this class as usual using  the covering number of $D^{(k)}_M([0,1]^d)$, giving the rate $O^+(n^{-k^*/(2k^*+1)})$.

%The proof of Theorem \ref{rate} can be found in the Supplement and exploits that the PC-HAGL estimator  $ \hat{\Psi}_{\text{PC-HAGL}}(P_n) =\theta_{n,\alpha_n}$ is an MLE of $\theta_{n,\alpha_{n,0}}$ over a class $D^{(k)}_{3,C_n}({\cal E}_n)\subset D^{(k)}_{C_n}({\cal R}_N)\subset D^{(k)}_M([0,1]^d)$. 

\begin{theorem}\label{rate}
	Consider the $ {\lVert\cdot\rVert}$-specific regularized MLE $\psi_n=\psi_{n,C_n}$ with $C_n$ chosen with cross-validation. Under Assumptions \ref{assumption1}, \ref{assumption2},  \ref{assumption4}  {we have for the three PC-HA estimators}
	$$d_0(\psi_n,\psi_{N,0})=O_P^+(n^{-2k^*/(2k^*+1)}).$$
\end{theorem}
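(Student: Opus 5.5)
The proof splits the estimation error into an approximation (bias) part inside the PC working model $D^{(k)}({\cal E}_n)$ and an estimation part for an MLE over a class of bounded sectional variation norm. Recall that $\psi_{n,0}=\theta_{n,\alpha_{0,n}}$ minimizes $P_0L$ over $D^{(k)}({\cal E}_n)$. Write
\[
d_0(\psi_n,\psi_{N,0})=\{P_0L(\psi_n)-P_0L(\psi_{n,0})\}+d_0(\psi_{n,0},\psi_{N,0}).
\]
Two claims carry the argument:
\begin{itemize}
\item[(a)] $\psi_{n,0}\in D^{(k)}_M({\cal R}_N)$ for some $M=O_P(1)$, with $d_0(\psi_{n,0},\psi_{N,0})=O^+_P(n^{-2k^*/(2k^*+1)})$;
\item[(b)] for a deterministic $C_n\asymp r(n)$, the constrained MLE $\psi_{n,C_n}$ attains the rate relative to $\psi_{n,0}$, and the cross-validation selector inherits it by the oracle inequality.
\end{itemize}

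For (a), the bound $\lVert \beta(\alpha_{0,n})\rVert_1=O_P(1)$ follows directly from Assumption \ref{assumption4}. For PC-HAGL, Lemma \ref{lem:sub} gives it trivially. The approximation error is the substantive part.

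Under Assumption \ref{assumption1} it suffices to bound $P_{X,0}(\psi_{n,0}-\psi_{N,0})^2$. Let $\tilde\psi$ be the $L^2(P_{X,m_n})$-projection of $\psi_{N,0}$ onto $D^{(k)}({\cal E}_n)$, where $P_{X,m_n}$ is the empirical measure of $x^n$. By the rank argument (the span of $\{\tilde\phi_m\}$ evaluated at $x^n$ equals the range of $H$), we can choose $\tilde\psi$ with $\tilde\psi(x^n)=\psi_{N,0}(x^n)$ and sectional variation norm $O_P(1)$; this is exactly what Assumption \ref{assumption4} supplies. So $\tilde\psi-\psi_{N,0}$ lies in $D^{(k)}_{2M}({\cal R}_N)$ and vanishes on the i.i.d. sample $x^n$ drawn from $P_{X,0}$ (Assumption \ref{assumption2}).

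The key step is a uniform ratio-type empirical-process bound over $\{f^2: f\in D^{(k)}_{2M}({\cal R}_N)\}$, using the entropy bounds of Lemma 29 in \cite{van2023higher}. For functions $f$ of bounded sectional variation norm,
\[
P_{X,0}f^2\lesssim P_{X,m_n}f^2+O^+_P\bigl(m_n^{-2k^*/(2k^*+1)}\bigr),
\]
obtained by a peeling argument with $J_\infty(\delta)\sim\delta^{(2k+1)/(2k+2)}$. Applying this with $P_{X,m_n}f^2=0$ gives $P_{X,0}(\tilde\psi-\psi_{N,0})^2=O^+_P(n^{-2k^*/(2k^*+1)})$. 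Since $\psi_{n,0}$ minimizes $P_0L$ over the working model, $d_0(\psi_{n,0},\psi_{N,0})\le d_0(\tilde\psi,\psi_{N,0})$, which yields (a). It remains to confirm that $\psi_{n,0}$ itself has bounded norm within the $\lVert\cdot\rVert$-ball, which again comes from Assumption \ref{assumption4}.

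For (b), take $C_n=c\,r(n)$ large enough that $\psi_{n,0}$ lies in the constrained set $\{\theta_{n,\alpha}:\lVert\alpha\rVert\le C_n\}$. By Lemma \ref{lem:sub} and the defining property of $r(n)$, this set is contained in $D^{(k)}_{M'}({\cal R}_N)$. The classical MLE rate argument then applies: the basic inequality $P_0\{L(\psi_{n,C_n})-L(\psi_{n,0})\}\le -(P_n-P_0)\{L(\psi_{n,C_n})-L(\psi_{n,0})\}$, the quadratic variance bound implied by Assumption \ref{assumption1}, and the same entropy integral with peeling give the $O^+_P(n^{-2k^*/(2k^*+1)})$ rate. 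We already use this argument for HAL, so it can be quoted. The grid of $C$ values is polynomial in size, so the $V$-fold cross-validation oracle inequality for bounded losses transfers the rate to $C_{n,cv}$ up to $\log n/n$.

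The main obstacle is the approximation step in (a). We need $\psi_{n,0}$ to be both close to $\psi_{N,0}$ in $L^2(P_{X,0})$ and of bounded variation norm. The data-dependent working model means the ratio-type bound must hold uniformly, and it needs $\lVert\beta(\alpha)\rVert_1$ control of the interpolant, which is exactly what Assumption \ref{assumption4} is designed to supply.
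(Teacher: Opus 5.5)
Your architecture is the paper's: bound the bias of $\psi_{n,0}$ through the $P_{X,m_n}$-interpolant $\tilde\psi$ of $\psi_{N,0}$, use a uniform empirical-process bound with $P_{X,m_n}(\tilde\psi-\psi_{N,0})^2=0$, invoke minimality of $\psi_{n,0}$, then run the standard MLE rate argument over a bounded-variation ball and finish with the cross-validation oracle inequality.

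The gap is the step asserting that $\tilde\psi$ has sectional variation norm $O_P(1)$ ``exactly by Assumption \ref{assumption4}''. That assumption concerns $\alpha_{0,n}$ with $\theta_{n,\alpha_{0,n}}=\psi_{n,0}=\arg\min_{D^{(k)}({\cal E}_n)}P_0L$, not the interpolant. Nor is there any freedom to choose $\tilde\psi$. The map $\alpha\mapsto\theta_{n,\alpha}(x^n)=UD\alpha$ is a bijection of $\mathbb{R}^{m_n}$, so $\tilde\psi$ is unique, and $\beta(\tilde\psi)=E_N^n(E_N^n)^{\top}\beta_{0,N}$ is the minimum-$\ell_2$-norm solution of $H\beta=H\beta_{0,N}$. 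From $\psi_{N,0}\in D^{(k)}_M({\cal R}_N)$ you only get $\lVert\beta(\tilde\psi)\rVert_1\le\sqrt{N}\lVert\beta_{0,N}\rVert_2\le\sqrt{N}M$, which is unbounded. Without an $O_P(1)$ bound, neither your ratio inequality nor Assumption \ref{assumption1} (which holds only uniformly over $D^{(k)}_M({\cal R}_N)$) applies to $\tilde\psi-\psi_{N,0}$. Agreement on $x^n$ says nothing about $L^2(P_{X,0})$ distance for functions of unbounded variation. You also use Assumption \ref{assumption4} a second time, to put $\psi_{n,0}$ in the $C_n$-ball. Whichever way one reads $\alpha_{0,n}$ (the paper elsewhere defines it by $\theta_{n,\alpha_{0,n}}(x^n)=\psi_{N,0}(x^n)$), one assumption cannot control both the interpolant and the $P_0L$-projection.

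The paper closes this with an extra HAR-type input, $\lVert\beta_{0,N}\rVert_2=O(n^{-1/2})$, which is the bound quoted after Theorem \ref{rate} and is not among the listed assumptions. Since $\beta(\tilde\psi)$ is the Euclidean projection of $\beta_{0,N}$ onto the span of the eigenvectors $E_m$, you get $\lVert\beta(\tilde\psi)\rVert_2\le\lVert\beta_{0,N}\rVert_2$. Hence $\lVert\beta(\tilde\psi)\rVert_1\le\sqrt{N}\,O(n^{-1/2})=O(1)$ for $N\asymp n$, and Assumption \ref{assumption4} is reserved for $\psi_{n,0}$.

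Alternatively, read Assumption \ref{assumption4} as bounding the interpolation coefficient in the estimator's norm, and bypass $\psi_{n,0}$ by using $\tilde\psi$ as the comparator in (b). From $P_nL(\psi_{n,C_n})\le P_nL(\tilde\psi)$ you get $d_0(\psi_{n,C_n},\psi_{N,0})\le -(P_n-P_0)\{L(\psi_{n,C_n})-L(\psi_{N,0})\}+(P_n-P_0)\{L(\tilde\psi)-L(\psi_{N,0})\}+d_0(\tilde\psi,\psi_{N,0})$. Each term is handled by your uniform bounds, so a single norm bound suffices.

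Separately, the variance bound you use in (b), $P_0\{L(\psi)-L(\psi')\}^2\lesssim d_0(\psi,\psi')$, requires $d_{2,0}\lesssim d_0$ together with a Lipschitz loss. That is the reverse of the one-sided bound in Assumption \ref{assumption1}, so you need the two-sided equivalence the text describes.
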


{Note that  {Assumption \ref{assumption4}} holds directly for PC-HAGL by construction.} We can apply this theorem to PC-HAR with $r(n)\sim n^{-1/2}$ in Assumption \ref{assumption4}. 
One then needs to establish that $ {\lVert \alpha_{0,n}\rVert_2}=O(n^{-1/2})$ which then allows that we select a constraint $C_n$ on $ {\lVert \alpha\rVert_2}$ that controls $ {\lVert\beta(\alpha)\rVert_1}=O(1)$ and still does not exclude the true $\alpha_{0,n}$. In \cite{schuler2024highlyadaptiveridge} it was shown that $\psi_{N,0}=\psi_{N,\beta_{0,n}}$ satisfies $ {\lVert \beta_{0,N}\rVert_2}=O(n^{-1/2})$ under a condition on $\psi_0$. Therefore one would expect that its projection onto the subset $D^{(k)}({\cal E}_n)$ would also satisfy $ {\lVert \beta(\alpha_{0,n})\rVert_2}=O(n^{-1/2})$ so that Assumption 4 holds. In general, this theorem makes it plausible that a collection of penalized MLEs over $D^{(k)}({\cal E}_n)$ would converge at our desired rate, the same rate as HAL and HAR achieve.

\section{Algorithm for computing PC-HAGL}\label{section5}
Let $R_n(\psi)$ be the empirical risk (for example, the empirical MSE
$R_n(\psi)=P_n (Y-\psi(X))^2$). 
{Closed forms for the empirical risk minimizer have been established for PC-HAL and PC-HAR in \cite{wang2026}}. For PC-HAGL we have to optimize $R_n(\theta_{n,\alpha})$ over $\{\alpha:  {\lVert \beta(\alpha)\rVert_1}=C\}$ for a user supplied $C$ where $\theta_{n,\alpha}=\sum_m \alpha(m)\tilde{\phi}_m$.
Here we propose a universal steepest descent algorithm. 
For notational convenience, let $R_n(\alpha)=R_n(\theta_{n,\alpha})$.

We define a collection of paths parametrized by $\alpha$ that preserve $ {\lVert \beta(\alpha)\rVert_1}=C$. Consider $\alpha_{\delta}^h(m)=(1+\delta h(m))\alpha(m)$ through $\alpha$ indexed by a direction $h$. 
We want to restrict $h$ so that $ {\lVert \beta(\alpha_{\delta}^h)\rVert_1}=C$ for $\delta\approx 0$. 
We have
\begin{eqnarray*}
	\beta(\alpha_{\delta}^h)(j)&=& \sum_m \alpha_{\delta}^h(m) {E_N^n}(j,m)\\
	&=& \sum_m (1+\delta h(m))\alpha(m)  {E_N^n}(j,m)\\
	&=&\beta(\alpha)(j)+\delta \sum_m h(m) \alpha(m) {E_N^n}(j,m)\\
	&=&\left(1+\delta \frac{\sum_m h(m)\alpha(m)E(j,m)}{\beta(\alpha)(j)}\right) \beta(\alpha)(j).
\end{eqnarray*}
For small $\delta\approx 0$, we have that this preserves the $L_1$-norm of $\beta(\alpha)$ if the $N$ dimensional vector
$(\sum_m h(m)\alpha(m) {E_N^n}(j,m)/\beta(\alpha)(j):j\in {\cal R}_N)$ is orthogonal to $( {\lvert} \beta(\alpha) {\rvert} (j):j\in {\cal R}_N)$: in that case, this path has the form $(1+\delta h^*(j))\beta(j)$ {with $h^*(j)= \frac{\sum_m h(m)\alpha(m) {E_N^n}(j,m)}{\beta(\alpha)(j)}$}and we have $ {\lVert} (1+\delta h^*)\beta {\rVert_1}= {\lVert}\beta {\rVert_1}+\delta \sum_j h^*(j) {\lvert} \beta(j) {\rvert}= {\lVert} \beta {\rVert_1}$ if $h^*\perp  {\lvert} \beta {\rvert}$ and ${\delta}<1/\max_j  {\lvert} h^*(j) {\rvert}$). 
%{Because $\delta > 0$, the problematic cases are those such that $h^*(j)<0$, so that we require $1 + \delta h^* (j) = 1 - \delta |h^*(j)| > 0 $. We therefore need $\delta < \frac{1}{|h^* (j)|}$ for all $j$, so that asking for ${\delta}<1/\max_j \mid h^*(j)\mid$ is enough.}
Let $\ell(\beta)(j)= {\mathrm{sign}}(\beta(j))$ be the sign of $\beta(j)$ so that $\ell(\beta)\in  {\{-1,0,1\}^N}$.
Thus, we need that $h$ satisfies:
\begin{eqnarray*}
	0&=& \sum_j \sum_m h(m)\alpha(m)  {E_N^n}(j,m) \ell(\beta(\alpha))(j) \\
	&=&\sum_m \alpha(m) h(m)\left\{  \sum_j  {E_N^n}(j,m)\ell(\beta(\alpha))(j)\right\}\\
	&\equiv& \sum_m h(m) a(\alpha)(m),
\end{eqnarray*}
where we defined
\[
a(\alpha)(m)=\alpha(m) \sum_j  {E_N^n}(j,m) \ell(\beta(\alpha))(j).\]
Thus, we can conclude that $h$ needs to be an element of the following $n-1$-dimensional subspace of $ {\openr^n}$:
\[
{\cal H}(\alpha)\equiv \{h-\langle h,a(\alpha)\rangle/\langle a(\alpha),a(\alpha)\rangle a(\alpha): h\}.\]
We now compute the pathwise derivative $d/d\delta R_n(\alpha_{\delta}^h)$ at $\delta=0$ and write it as
an inner product:
\[
{\frac{d}{d\delta}R_n(\alpha_{\delta}^h)\big\vert_{\delta=0}}=\langle D^*(\alpha), h\rangle,\]
where we restrict $D^*(\alpha)\in {\cal H}(\alpha)$. For any empirical risk it is straightforward to express  this derivative as $\langle D(\alpha),h\rangle$ and then $D^*(\alpha)=D(\alpha)-\Pi(D(\alpha)| a(\alpha))$ is obtained by  subtracting its projection onto $a(\alpha)$. A local steepest descent path is then defined by $\alpha_{\delta}^{D^*(\alpha)}=(1+\delta D^*(\alpha))\alpha$.

A universal steepest descent algorithm starts with an initial estimator $\alpha_n^0$ with $ {\lVert \beta(\alpha^0)\rVert_1}=C$; update $\alpha_n^0$ into $\alpha_n^1$ by moving a small amount $\delta$ along this path $\alpha_{\delta}^{D^*(\alpha)}$ decreasing the $R_n(\alpha)$; and iterate this updating process until convergence. At convergence, the resulting $\alpha_n$ satisfies $\langle D^*(\alpha_n),D^*(\alpha_n)\rangle =0$. Therefore at the limit $\alpha_n$ we have that the pathwise derivative of $R_n(\alpha_{n,\delta}^h)$ equals zero at $\delta=0$ for all allowed directions $h\in {\cal H}(\alpha_n)$. This proves that $\alpha_n$ is a local minimum of $R_n(\alpha)$ over the subset $\{\alpha:  {\lVert \beta(\alpha)\rVert_1}=C\}$.

{We suggest to first compute PC-HAR (which does not even need matrix inversion) for a given regularization parameter $\lambda_n$ and obtain $\beta_n(\alpha_n)$. We use this as a starting point to run the above steepest descent algorithm to compute PC-HAGL for this single $C_n(\lambda_n) = {\lVert \beta_n\rVert_1} $. We determine the cross-validation selector for this whole two-step procedure.} 
%In order to avoid having to run the above algorithm for a collection of $C$-values and choosing it with cross-validation, we suggest to first compute PC-HAR and determine the cross-validation selector for PC-HAR, resulting in a $\beta_n$ representing the PC-HAR fit. This is straightforward and fast since PC-HAR is nothing else than running ridge with an $n$-dimensional linear model. Given $\beta_n$ we compute $C_n=\pl \beta_n\pl_1$ and then run the above steepest descent algorithm to compute PC-HAL for this single $C_n$.
%To refine this, we can then select a few values $C_j\leq C_n$ (e.g., $C_j=\delta_j C_n$ with $\delta_j\in \{0.5,0.6,0.7,0.8,0.9,1\}$) and use cross-validation to select the $L_1$-norm constraint. 

%\subsection{Generalized Lasso}
%Recall the $N\times n$-matrix $E_N^n=(E(j,m):j=1,\ldots,N, m=1,\ldots,n)$ whose $(j,m)$-th  matrix element is the $j$-th component of the $m$-th eigenvector, $E(j,m)$, among the $n$ eigenvectors with non-zero eigenvalues.
%$\beta(\alpha)=E_N^n(\alpha)$ is $N\times 1$-vector with $j$-th component $\beta(\alpha)(j)$, and it is obtained by applying the matrix $E_N^n$ to $\alpha$.  The generalized LASSO optimizes
%the empirical risk over a model $\beta^t X$  plus $\lambda$ times an $L_1$-norm of a matrix transformation of the coefficient vector. Therefore, we can use the generalized LASSO software to optimize
%  \[
%R_n(\alpha,\lambda)\equiv R_n(\theta_{n,\alpha})+\lambda \pl E_N^n(\alpha)\pl_1.\]
%If such software is available it might be preferable over the above universal steepest descent algorithm. 

\subsection{{Gradients for specific risks}}

We facilitate the optimization of the PC-HAGL estimator by deriving the explicit gradients for common loss functions. We begin by establishing a general relationship between the coordinate-wise partial derivatives of the empirical risk and the derivative along a multiplicative path, denoted as $D(\alpha)(m)$.

\begin{proposition}
	\label{prop:general_gradient}
	Let $R_n(\alpha)$ be an empirical risk function that is differentiable with respect to the coefficient vector $\alpha$. Consider the multiplicative perturbation path defined by $\alpha_\delta^h(m)=(1+\delta h(m))\alpha(m)$ for a direction $h$. The coordinate of the gradient representer $D(\alpha)(m)$, corresponding to the derivative along this path at $\delta=0$, is given by {:} $D(\alpha)(m) = \alpha(m) \frac{\partial R_n(\alpha)}{\partial\alpha(m)}.$
\end{proposition}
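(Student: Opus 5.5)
The plan is a direct chain-rule computation followed by reading off the representer with respect to the standard inner product $\langle\cdot,\cdot\rangle$ on $\openr^n$, the one used to define $D^*(\alpha)$ in the steepest descent construction above. Fix $\alpha\in\openr^n$ and a direction $h\in\openr^n$. The path $\delta\mapsto\alpha_\delta^h$ is affine in $\delta$:
\[
\alpha_\delta^h=\alpha+\delta\,(h\odot\alpha),
\]
where $(h\odot\alpha)(m)=h(m)\alpha(m)$. Hence it is differentiable, with velocity $\frac{d}{d\delta}\alpha_\delta^h(m)=h(m)\alpha(m)$ for every $\delta$, and in particular at $\delta=0$, where $\alpha_0^h=\alpha$.

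Since $R_n$ is assumed differentiable at $\alpha$, the chain rule applied to the composition $\delta\mapsto R_n(\alpha_\delta^h)$ gives
\[
\frac{d}{d\delta}R_n(\alpha_\delta^h)\Big|_{\delta=0}=\sum_{m=1}^{n}\frac{\partial R_n(\alpha)}{\partial\alpha(m)}\,h(m)\alpha(m)=\Big\langle \Big(\alpha(m)\frac{\partial R_n(\alpha)}{\partial\alpha(m)}\Big)_{m=1}^n,\,h\Big\rangle .
\]
Differentiability, not mere existence of partial derivatives, is what justifies this chain rule. It holds in the cases of interest: $R_n(\alpha)$ is a composition of the smooth loss with the linear map $\alpha\mapsto\theta_{n,\alpha}(x^n)=HE_N^n\alpha$.

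The final step is identification. The displayed expression is linear in $h$ and holds for all $h\in\openr^n$, so the vector $D(\alpha)$ with $D(\alpha)(m)=\alpha(m)\,\partial R_n(\alpha)/\partial\alpha(m)$ is a representer of this derivative. It is also the unique one: if two vectors both represented it, their difference would be orthogonal to every $h$, for example to each standard basis vector $e_m$, and so would vanish.

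There is no real obstacle here; the only point needing care is bookkeeping. The representer $D(\alpha)$ is the unprojected gradient of the multiplicative path. The constrained direction $D^*(\alpha)=D(\alpha)-\Pi(D(\alpha)\mid a(\alpha))$ is obtained from it only afterwards, by the projection described earlier. Coordinates with $\alpha(m)=0$ give $D(\alpha)(m)=0$, consistent with the multiplicative path being unable to move them.
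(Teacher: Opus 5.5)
Your proposal is correct and is essentially the paper's proof: both apply the chain rule to $\delta\mapsto R_n(\alpha_\delta^h)$, use that the path's velocity at $\delta=0$ is $h(m)\alpha(m)$, and read off $D(\alpha)(m)=\alpha(m)\,\partial R_n(\alpha)/\partial\alpha(m)$ as the coefficient of $h(m)$. Your extra remarks, that differentiability justifies the chain rule and that the representer is unique, are sound refinements the paper leaves implicit.
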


We focus on two specific empirical risk functions, corresponding to regression and classification. Let $\theta_{n,\alpha} = \sum_m \alpha(m)\tilde{\phi}_m$ denote an arbitrary element in the PC-HA class.

\begin{enumerate}
	\item {Empirical mean squared error (MSE):} Used for regression tasks: $ R_{n}(\alpha) = P_n (Y - \theta_{n,\alpha})^2 $
	
	\item {Logistic risk:} Used for binary classification where $Y \in \{-1,1\}$. Following \cite{mccullagh}, the negative log-likelihood risk is: $R_{n}(\alpha) = P_n \log \left(1+\exp \left(-Y \theta_{n,\alpha}\right)\right)$
\end{enumerate}

Applying Proposition \ref{prop:general_gradient} to these specific risks yields the following closed-form expressions for $D(\alpha)(m)$.

\begin{proposition}[Derivatives for MSE and logistic risk]\label{prop:grad}
	For the empirical risk functions defined above, the gradient coordinates $D(\alpha)(m)$ are given by:
	
	\begin{description}
		\item[MSE:] $D(\alpha)(m) = -2 \alpha(m) P_n \left[ \tilde{\phi}_m \left(Y-\sum_k \alpha(k)\tilde{\phi}_k\right) \right] $
		
		\item[Logistic:]$D(\alpha)(m) = \alpha(m) P_n \left[ \frac{-Y \tilde \phi_m}{1+\exp \left(Y \sum_k \alpha(k) \tilde \phi_k \right)} \right] $
	\end{description}
\end{proposition}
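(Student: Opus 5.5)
The proof is a direct application of Proposition~\ref{prop:general_gradient} combined with the chain rule through the linear parametrization $\theta_{n,\alpha}=\sum_k \alpha(k)\tilde{\phi}_k$. The plan is to compute, for each risk, the partial derivative $\partial R_n(\alpha)/\partial\alpha(m)$ and then multiply it by $\alpha(m)$. First note that $\partial \theta_{n,\alpha}/\partial\alpha(m)=\tilde{\phi}_m$ pointwise. This holds because $\tilde{\phi}_m$ does not depend on $\alpha$: the eigenvectors $E_N^n$ are computed from $x^n$ alone. Second, note that $P_n$ is a finite average, so differentiation passes inside $P_n$ without any regularity argument.

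\textbf{MSE.} With $R_n(\alpha)=P_n(Y-\theta_{n,\alpha})^2$, the chain rule gives
\[
\frac{\partial R_n(\alpha)}{\partial\alpha(m)} = P_n\Bigl[2\,(Y-\theta_{n,\alpha})\cdot(-\tilde{\phi}_m)\Bigr] = -2\,P_n\Bigl[\tilde{\phi}_m\Bigl(Y-\sum_k\alpha(k)\tilde{\phi}_k\Bigr)\Bigr].
\]
Multiplying by $\alpha(m)$, as Proposition~\ref{prop:general_gradient} prescribes, yields the stated expression.

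\textbf{Logistic.} Write $g(u)=\log(1+e^{u})$, so that $g'(u)=e^{u}/(1+e^{u})=1/(1+e^{-u})$. Take $u=-Y\theta_{n,\alpha}$, which has $\partial u/\partial\alpha(m)=-Y\tilde{\phi}_m$. The chain rule then gives
\[
\frac{\partial R_n(\alpha)}{\partial\alpha(m)} = P_n\Bigl[\frac{-Y\tilde{\phi}_m}{1+\exp\bigl(Y\sum_k\alpha(k)\tilde{\phi}_k\bigr)}\Bigr].
\]
Multiplying by $\alpha(m)$ gives the claim.

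The only step that needs any care is the sign and simplification in the logistic case: showing that $e^{u}/(1+e^{u})$ with $u=-Y\theta$ equals $1/(1+e^{Y\theta})$. We will verify this identity explicitly. Everything else is routine.
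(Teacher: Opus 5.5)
Your proof is correct and takes the same route as the paper: compute $\partial R_n(\alpha)/\partial\alpha(m)$ by the chain rule through the linear map $\alpha\mapsto\theta_{n,\alpha}$, then multiply by $\alpha(m)$ as Proposition~\ref{prop:general_gradient} prescribes. Your version is in fact slightly more complete than the written proof, which stops at the ordinary gradient and leaves implicit both the factor $\alpha(m)$ and the simplification $e^{u}/(1+e^{u})=1/(1+\exp(Y\theta_{n,\alpha}))$ at $u=-Y\theta_{n,\alpha}$.
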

% For example, for the empirical MSE we obtain $D(\alpha)(m)=2 \alpha(m) P_n \tilde{\phi}_m(Y-\sum_m \alpha(m)\tilde{\phi}_m)$.
% {Let $Y \in \{-1,1\}$. The empirical risk is in this case} \citep{mccullagh}: 

% \begin{equation}\label{eq:logistic}
	% P_n \log \left(1+\exp \left(-Y \sum_m \alpha(m) \tilde \phi_m\right)\right)
	% \end{equation}

% \begin{proposition}
	% Let $R_n(\alpha)$ be differentiable in $\alpha$. For the path $\alpha_\delta^h(m)=(1+\delta h(m))\alpha(m)$, the coordinate $D(\alpha)(m)$ is $\alpha(m)\,\frac{\partial R_n(\alpha)}{\partial\alpha(m)}.$
	% \end{proposition}

% \begin{proposition}\label{prop:grad}
	% For the empirical risk (\ref{eq:logistic}), we have 
	
	% $$D(\alpha)(m) = \alpha(m) P_n \left(\frac{-Y \tilde \phi_m}{1+\exp \left(Y \sum_m \alpha(m) \tilde \phi_m \right)}\right) $$
	% \end{proposition}

\section{Score equations solved by PC-HAGL}\label{section6}
%For the analysis of PC-HAGL we can assume that $\tilde{\phi}_m$, $m=1,\ldots,n$, satisfies the following:1) the linear span of $(\tilde{\phi}_m(x^n):m=1,\ldots,n)$ is identical to linear span of $(\phi_j(x^n):j)$; 2) $\tilde{\phi}_m$ are orthogonal  with respect to  inner product $\langle f_1,f_2\rangle_{n}\equiv P_n S_{\theta_n}(f_1)S_{\theta_n}(f_2)$. This choice of inner product $\langle f_1,f_2\rangle_n$ does not affect $D^{(k)}({\cal E}_n)$ and moreover does also not affect the MLE $\theta_n$ PC-HAGL  over $\pl \beta(\alpha)\pl_1<C$. This is interesting fact about the GL, while for standard LASSO and HAR it will have a real effect on the resulting MLE. This speaks in favor of the PC-HA-GL. This orthonormality will be helpful for establishing that the PC-HAGL solves the desired set of score equations. 

%In the proof below we work with a $\lambda_n$. I think we can keep it that but we should still add that by choosing the above score covariance inner product we can argue $\lambda_n$ might be fully bounded or at least be slow. $\lambda_n$ is all about bounding the variance of the score $S_{Q_0}(\sum_m h(m)\tilde{\phi}_m)$ by $\sum_m h(m)^2$.  SO by having this orthonormality w.r..t empirical score covariance we get close to this.

The analysis of plug-in PC-HA estimators of pathwise differentiable target parameters relies on their ability to solve score equations so that the linear span of these score equations approximates the canonical gradient of the pathwise derivative of the target parameter. The plug-in efficiency proof for HAL could thus be generalized to all PC-HA estimators if one can show that they solve a class of score equations whose linear span can be used to approximate any canonical gradient of a target feature. The pointwise asymptotic normality of $\hat{\Psi}_{PC-HA}(P_n)(x)$ as an estimator of $\Psi(P_0)(x)$ relies on the linear span of score equations approximating the scores of a sparse working model with the desired approximation error. We show that PC-HAs solve score equations for a large set of directions: exactly for directions in the tangent subspace and with error $o_P\left(n^{-1 / 2}\right)$ for any $f^* \in D^{(k)}\left( {{\cal E}_n}\right)$ with $\left\|f^*\right\|_2 \leq J_n^{-1 / 2}$.

% analogous to that established for HAL in \cite{van2023higher}, would also rely on the linear span of the score equations solved by the estimator approximating the scores of a sparse $J$-dimensional parametric working model that achieves the desired approximation error $O^+(1/J^{k+1})$ of the target function $\psi_0$---not on the estimator itself being an MLE of that sparse model. Such a working model exists by virtue of Theorem 2 in \cite{van2023higher}, which states that for any function in the class $D^{(k)}_M([0,1]^d)$ there exists a finite discrete measure (corresponding to a set of $J$ knot points/splines) such that the approximation error is bounded by $O(J^{-(k+1)})$ (up to logarithmic factors).

% This said,  PC-HAGL (say $k$-th order) needs to solve enough score equations so that the linear span of its scores approximates the scores of this growing sparse   parametric working model where the size $J$ of the working model is trading off bias $O^+(1/J^{k+1})$ with standard error $O((J/n)^{1/2})$.

We assume that $R_n(\psi)=P_n L(\psi)$ for a loss function $L()$.
PC-HAGL $\theta_{n,\alpha_n}$ optimizes  $R_n(\theta_{n,\alpha})$ over $\{\alpha \in \mathbb{R}^n:  {\lVert \beta(\alpha)\rVert_1}=C_n\}$ for a given $C_n$,  where $\theta_{n,\alpha}=\sum_m \alpha(m)\tilde{\phi}_m$ and $\tilde{\phi}_m=\sum_{j} {E_N^n}(j,m)\phi_j$ corresponding with the $n$ eigenvectors $E_m$, $m=1,\ldots,n$, of $H^{\top}H$ and $H(i,j)=\phi_j(x_i)$. 
For notational convenience, let $R_n(\alpha)=R_n(\theta_{n,\alpha})$.

\begin{theorem}[Approximate score solution for PC-HA]
	\label{score:gl}
	
	Let $\theta_n = \theta_{n,\alpha_n}$ be a PC-HA estimator. Let $\mathcal{E}(J_n) = \{m : \alpha_n(m) \neq 0\}$ denote the set of active PC-basis indices.
	We define the geometry of the constraint boundary as follows:
	\begin{enumerate}
		\item {Constraint gradient:} Define the vector $a_{1n} \in \mathbb{R}^{| \mathcal{E}(J_n) |}$ coordinate-wise by
		\[ a_{1n}(m) = \sum_{j \in  {\mathcal{R}_N}}  {E_N^n}(j,m) \ell(\beta(\alpha_n)(j)), \]
		where $\ell(\cdot)$ denotes the subgradient of the $L_1$ norm.
		\item {Unsolved direction:} Let $f_{a_n}$ be the normalized function in the PC-space corresponding to the constraint gradient:
		\[ f_{a_n} = \frac{1}{\|a_{1n}\|_2} \sum_{m \in \mathcal{E}(J_n)} a_{1n}(m) \tilde{\phi}_m, \quad \text{satisfying } \|f_{a_n}\|_2 = 1. \]
		Define the scaled version $f_{a_n}^* = n^{-1/2} f_{a_n}$, which satisfies $\|\beta(f_{a_n}^*)\|_1 = O_P(1)$.
		\item {Tangent subspace:} Let $D_n(\mathcal{E}(J_n)) = \{ f \in D(\mathcal{E}(J_n)) : \langle f, f_{a_n} \rangle = 0 \}$ be the subspace of score directions orthogonal to the constraint gradient. Let $\tilde{f}_{a_n}$ denote the $L^2(P_0)$-projection of $f_{a_n}^*$ onto this subspace.
	\end{enumerate}
	
	Furthermore, we make the following assumptions: 
	
	\begin{enumerate}
		\item {Complexity:} The class of score functions $\mathcal{F}_{score} = \{ \frac{d}{d\theta} L(\theta)(f) : f \in D_M(\mathcal{E}(J_n)), \theta \in D_M(\mathcal{E}(J_n)) \}$ satisfies the same entropy bound conditions as the primary function class $D_M^{(k)}([0,1]^d)$.
		\item {Projection approximation:} The projection error of the unsolved direction is controlled by the convergence rate of the estimator. Specifically, we assume either:
		\[ \| \theta_n - \psi_0 \|_\infty \| f_{a_n}^* - \tilde{f}_{a_n} \|_{1, P_0} = o_P(n^{-1/2}), \]
		or, using the loss-based dissimilarity:
		\[ d_0^{1/2}(\theta_n, \psi_0) \| f_{a_n}^* - \tilde{f}_{a_n} \|_{P_0} = o_P(n^{-1/2}). \]
	\end{enumerate}

	Then, the estimator $\theta_n$ solves the empirical score equations exactly for all directions in the tangent subspace:
	\[ P_n \frac{d}{d\theta_n} L(\theta_n)(h) = 0 \quad \text{for all } h \in D_n(\mathcal{E}(J_n)). \]
	Furthermore, for any target function $f^* \in D_M(\mathcal{E}(J_n))$ with bounded Euclidean norm $\|f^*\|_2 \le J_n^{-1/2}$, the empirical score is asymptotically negligible:
	\[ \sup_{f^* \in D_M(\mathcal{E}_n), \|f^*\|_2 \le J_n^{-1/2}} \left| P_n \frac{d}{d\theta_n} L(\theta_n)(f^*) \right| = o_P(n^{-1/2}). \]
\end{theorem}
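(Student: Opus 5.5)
The plan has three steps: a Lagrange/KKT argument on the active face for the exact part, a decomposition along the single unsolved direction for the approximate part, and empirical-process control of the residual term. Take the steepest descent characterization of Section \ref{section5} as the starting point. At the PC-HAGL solution $\alpha_n$, restrict to the active set $\mathcal{E}(J_n)$ and consider the multiplicative paths $\alpha_\delta^h(m)=(1+\delta h(m))\alpha_n(m)$. For $\delta$ small these keep $\lVert\beta(\alpha_\delta^h)\rVert_1=C_n$ exactly whenever $\sum_m h(m)a(\alpha_n)(m)=0$, where $a(\alpha_n)(m)=\alpha_n(m)a_{1n}(m)$. Since $\alpha_n$ minimizes $R_n$ on the constraint set, $\frac{d}{d\delta}R_n(\alpha^h_\delta)\vert_{\delta=0}=0$ for all such $h$.

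By Proposition \ref{prop:general_gradient}, this derivative equals $\sum_m h(m)\alpha_n(m)\,P_n \frac{d}{d\theta_n}L(\theta_n)(\tilde\phi_m)$. Reparametrize with $g(m)=h(m)\alpha_n(m)$, which is a bijection on the active set. The conclusion becomes: $P_n\frac{d}{d\theta_n}L(\theta_n)(\sum_m g(m)\tilde\phi_m)=0$ for every $g\perp a_{1n}$. Equivalently, the empirical score vector $(P_n\frac{d}{d\theta_n}L(\theta_n)(\tilde\phi_m))_m$ is proportional to $a_{1n}$ (a Lagrange multiplier $\lambda_n$). By Lemma \ref{lem:orthogonality}(ii), orthogonality of coefficient vectors to $a_{1n}$ is the same as $\langle f,f_{a_n}\rangle=0$ in the coefficient inner product. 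This gives the exact statement on $D_n(\mathcal{E}(J_n))$. Here I must be careful with sign changes of $\beta(\alpha_n)(j)$ and zero coordinates; I would handle these with the subgradient $\ell$, using one-sided derivatives.

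For the approximate statement, decompose a general $f^*$ with $\lVert f^*\rVert_2\le J_n^{-1/2}$ as $f^*=f^*_\perp+c\,f_{a_n}$, where $c=\langle f^*,f_{a_n}\rangle$ and $\lvert c\rvert\le J_n^{-1/2}$. The first piece has zero score by the exact part. It therefore suffices to show $P_n\frac{d}{d\theta_n}L(\theta_n)(f_{a_n}^*)=o_P(n^{-1/2})$ after the appropriate scaling of $c$ against $n^{-1/2}$; the bound $\lVert \beta(f_{a_n}^*)\rVert_1=O_P(1)$ keeps $f_{a_n}^*$ in a class of bounded sectional variation. Write $f_{a_n}^*=\tilde f_{a_n}+(f_{a_n}^*-\tilde f_{a_n})$. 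Then $\tilde f_{a_n}$ lies in the tangent subspace, so its empirical score vanishes, and only $P_n S_{\theta_n}(f_{a_n}^*-\tilde f_{a_n})$ remains.

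Split that remainder as $(P_n-P_0)S_{\theta_n}(\cdot)+P_0 S_{\theta_n}(\cdot)$. The empirical-process term is $o_P(n^{-1/2})$ by the complexity assumption. The argument is an asymptotic-equicontinuity argument on a Donsker class, using that $\lVert f_{a_n}^*-\tilde f_{a_n}\rVert_{P_0}\to0$ and $\theta_n\to\psi_0$ (Theorem \ref{rate}). For the drift term, use $P_0S_{\psi_0}(h)=0$ for all $h$, since $\psi_0$ minimizes the risk. Then $P_0S_{\theta_n}(h)=P_0(S_{\theta_n}-S_{\psi_0})(h)$, which is bounded either by $\lVert\theta_n-\psi_0\rVert_\infty\lVert h\rVert_{1,P_0}$ or by Cauchy--Schwarz with $d_0^{1/2}(\theta_n,\psi_0)\lVert h\rVert_{P_0}$, using local Lipschitz behavior of the score. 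Either way it is $o_P(n^{-1/2})$ by the projection approximation assumption. Taking the supremum over $f^*$ is then uniform, because only the single direction $f_{a_n}$ enters, multiplied by $\lvert c\rvert\le J_n^{-1/2}$.

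The main obstacle is the first step. I need to make the Lagrange argument rigorous at nonsmooth points of $\lVert\beta(\alpha)\rVert_1$ and verify the claimed normalization $\lVert\beta(f^*_{a_n})\rVert_1=O_P(1)$ (via $\lVert\beta\rVert_1\le\sqrt{N}\lVert\beta\rVert_2$). I also need to match the scaling between $c$, $n^{-1/2}$ and the assumption stated for $f^*_{a_n}$. I would first try restricting to a neighborhood where the signs $\ell(\beta(\alpha_n))$ are constant, which makes the constraint locally linear.
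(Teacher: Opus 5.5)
Your route is the paper's own. You use the zero derivative along the norm-preserving paths $(1+\delta h)\alpha_n$, the reparametrization $g=h\alpha_n$ that turns $a(\alpha_n)=\alpha_n a_{1n}$ into $a_{1n}$, and the split $f^*=f^*_\perp+c\,f_{a_n}$ with $c=\langle f^*,f_{a_n}\rangle$. You then replace $f^*_{a_n}$ by $f^*_{a_n}-\tilde f_{a_n}$ and finish with an empirical-process term plus a second-order drift term.

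\textbf{The scaling step cannot be closed.} You have the exact identity $P_n\frac{d}{d\theta_n}L(\theta_n)(f^*)=c\,n^{1/2}P_n\frac{d}{d\theta_n}L(\theta_n)(f^*_{a_n})$. The projection assumption controls $f^*_{a_n}=n^{-1/2}f_{a_n}$ only at $o_P(n^{-1/2})$, so $\lvert c\rvert\le J_n^{-1/2}$ gives only $o_P(J_n^{-1/2})$. That is $o_P(n^{-1/2})$ only if $J_n\asymp n$ (the generic PC-HAGL case, with all $\alpha_n(m)\neq0$) or if the supremum is restricted to $\lVert f^*\rVert_2\le n^{-1/2}$. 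The paper's proof makes the same leap: it writes $\lvert P_n\frac{d}{d\theta_n}L(\theta_n)(f^*)\rvert\le\lvert P_n\frac{d}{d\theta_n}L(\theta_n)(f^*_{a_n})\rvert$, which needs $\lVert f^*\rVert_2\le n^{-1/2}$. That $n^{-1/2}$ version is also what the paper actually uses, both in the least-squares application right after the theorem and in the normality proof.

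\textbf{Your fix for the nonsmooth step fails at zeros of $\beta(\alpha_n)$.} Restricting to a neighbourhood where $\ell(\beta(\alpha_n))$ is constant is impossible when some $\beta(\alpha_n)(j)=0$, which is typical for a generalized-lasso solution. For $g\perp(E_N^n)^{\top}\mathrm{sign}(\beta(\alpha_n))$, the norm $\lVert\beta(\alpha_n)+\delta E_N^n g\rVert_1$ grows by $\lvert\delta\rvert\sum_{j:\beta(\alpha_n)(j)=0}\lvert(E_N^n g)(j)\rvert$ for both signs of $\delta$. So these are not two-sided feasible directions, and with $\ell(0)=0$ the exact claim can fail.

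The clean fix is to read $\ell$ as the KKT subgradient. Since $\alpha=0$ is strictly feasible, first-order optimality gives $\nabla R_n(\alpha_n)=-\lambda_n(E_N^n)^{\top}\nu$ with $\lambda_n\ge0$ and $\nu\in\partial\lVert\cdot\rVert_1(\beta(\alpha_n))$. Taking $a_{1n}=(E_N^n)^{\top}\nu$ gives the exact score equations for every $g\perp a_{1n}$ directly, with no paths needed. This holds on all of $D(\mathcal{E}_n)$, which also covers the supremum over $D_M(\mathcal{E}_n)$ in the statement. The paper simply asserts that the paths preserve the norm.

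\textbf{Two empirical-process facts are not implied by the hypotheses.} First, $\lVert f^*_{a_n}-\tilde f_{a_n}\rVert_{P_0}\to0$ does not follow from the stated assumptions. Second, the complexity assumption applies only if the $L^2(P_0)$ projection $\tilde f_{a_n}$ stays in $D_M(\mathcal{E}(J_n))$, which is not automatic. The paper handles the first by adding $\lVert\frac{d}{d\theta_n}L(\theta_n)(f^*_{a_n}-\tilde f_{a_n})\rVert_{P_0}=o_P(1)$ as an extra assumption inside its proof. You should state both conditions explicitly.
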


\subsection{Application of Theorem \ref{score:gl} to least squares PC-HAGL.}
Consider the MSE $P_n (Y-\sum_m \alpha_{n,\delta}^h(m)\tilde{\phi}_m)^2$ with 
$h\in {\cal H}(\alpha_n)$. The scores are given by
\[
0=P_n S_h(\alpha_n)=2 P_n \sum_{m\in {\cal E}(J_n)}(h-\Pi(h\mid a_1(\alpha_n)))(m)\tilde{\phi}_m\left(Y-\sum_m \alpha_n(m)\tilde{\phi}_m\right)\]
for all $h\in \openr^n$ with $h=hI(\alpha_n\not =0)$. So $\alpha_n$ solves the following score equations:
\begin{eqnarray*}
	0&=&P_n \left(\sum_m h(m)\tilde{\phi}_m-\sum_m h(m) \frac{a_1(\alpha_n)(m)}{ {\lVert a_1(\alpha_n)\rVert_2^2}} \sum_m a_1(\alpha_n)(m)\tilde{\phi}_m)  \right)\\ 
	&& \hspace*{6cm} \left(Y-\sum_m \alpha_n(m)\tilde{\phi}_m\right).\end{eqnarray*}

Consider a score equation $P_n \sum_m f^*(m)\tilde{\phi}_m (Y-\sum_m \alpha_n(m)\tilde{\phi}_m)$ where 
$ {\lVert \beta(f^*)\rVert_1}=O(1)$ and $f^*(m)=0 $ for $m\not\in {\cal E}(J_n)$.
We assume $ {\lVert \beta(f^*)\rVert_2}=O(n^{-1/2})$ which is equivalent with assuming $ {\lVert f^*\rVert_2}=O(n^{-1/2})$.
%\begin{theorem}Consider an $f^*\in D^{(k)}({\cal E}(J_n))$ so that $\pl f^*\pl_2=O({n^{-1/2})$, which implies (and corresponds with) $\pl \beta(f^*)\pl_1=O(1)$. We assume that the number of non-zero coefficients in $\beta(f^*)$ is $\sim n$, recall$\beta(f^*)(j)=\sum_{m\in {\cal E}_n}E(j,m)f^*(m)$. 
Define $f_{a_n}=\sum_m a(\alpha_n)(m)\tilde{\phi}_m/ {\lVert a(\alpha_n)\rVert_2}$.
As shown above, 
\begin{eqnarray*}
	\mid P_n \sum_m f^*(m)\tilde{\phi}_m(Y-\theta_{n,\alpha_n}(X)) \mid &\leq&
	\mid P_n f_{a_n}^*(Y-\theta_n)\mid 
\end{eqnarray*}
Application of the above theorem demonstrates the conditions under which  \[
\sup_{ {\lVert f^*\rVert_2}=O(n^{-1/2})}\mid P_n f^*(Y-\theta_n(X))\mid = o_P(n^{-1/2}).\]

\section{{Plug-in asymptotic normality and efficiency of PC-HA for pathwise differentiable features}}

In previous sections we established the rate of convergence of the PC-HA estimators w.r.t. loss-based dissimilarity, which generally behaves as the square of an $L^2$-norm. In this section we discuss how the established plug-in efficiency for regular HAL of pathwise differentiable features of the target function $\psi_0$ also holds for the PC-HA estimators. 
Beyond having an initial rate of convergence w.r.t.\ loss-based dissimilarity, the other key condition of this proof is that we need the PC-HA estimator to approximately solve the efficient influence-curve score equation. Given Theorem \ref{score:gl}  {and} the capability of $\theta_n$ to solve a large class of score equations that approximate any desired score equation $P_n S_{\theta_n}(f^*)$ with $f^*\in D^{(k)}({\cal E}_n)$, we can now argue that indeed PC-HA will solve this key efficient score equation at the desired level. 

Let $\Phi:{\cal M}\rightarrow\openr$ be a real valued pathwise differentiable parameter with canonical gradient $ {D^*_{\Phi,P}}$
at $P\in {\cal M}$.
Suppose that $\Phi(P)=\Phi^F(\Psi(P))$ is a function of $\Psi(P)=\arg\min_{\psi\in D^{(k)}([0,1]^d)}PL(\psi)$.
Moreover, suppose that the loss function $L(\psi)$ is such that its collection of scores
\[
\left\{ S_{\Psi(P)}(f)=d/d\Psi(P) L(\Psi(P))(f):f\in D^{(k)}_M([0,1]^d)\right\}\]  contains
$ {D^*_{\Phi,P}}$. 
That means that the loss function 1) behaves as a log-likelihood loss so that its scores are scores in the tangent space of the model and 2) its collection of scores is rich enough to cover the canonical gradient of our target parameter $\Phi(P)$.
Then, we can write $ {D^*_{\Phi,P}}=S_{\Psi(P)}(f_P)$ for some $f_P\in D^{(k)}_M([0,1]^d)$.
This allows us to parametrize $ {D^*_{\Phi,P}}= {S_{\Psi(P)}(f_P)}\equiv D^*_{\Psi(P),f_P}$.
Let $R_{\Phi()}(P,P_0)=\Phi(P)-\Phi(P_0)+P_0 {D^*_{\Phi,P}}$ be the exact second order remainder. Assume that 
we can parametrize this remainder as $R_{\Phi(),0}(\Psi(P),f(P),\psi_0,f_0)$ involving integrals of square differences $\psi-\psi_0$ and $f(P)-f_0$ or cross-products of these two differences. 

\begin{theorem}[Asymptotic efficiency of plug-in PC-HA estimators]\label{thm:efficiency}
	Let $\Phi: \mathcal{M} \to \mathbb{R}$ be a pathwise differentiable parameter with canonical gradient $D^*_{\Phi,P}$ at $P \in \mathcal{M}$. Assume the canonical gradient can be represented as a score in the tangent space: $D^*_{\Phi,P} = S_{\Psi(P)}(f_P)$ for some index function $f_P \in D^{(k)}_M([0,1]^d)$.
	Let $\theta_n$ be a PC-HA estimator converging to $\psi_0 = \Psi(P_0)$ at a rate $r_n = n^{-k^*/(2k^*+1)}$.
	
	Assume the following conditions hold for the estimator $\theta_n$ and the true influence curve component $f_0 = f_{P_0}$:
	\begin{enumerate}
		\item {Approximate score solution:} The estimator $\theta_n$ solves the empirical score equation along the direction of the canonical gradient:
		\[ P_n D^*_{\theta_n, f_0} = o_P(n^{-1/2}). \]
		\textit{(Note: This condition is satisfied if the PC-basis is sufficiently rich to approximate $f_0$ and the regularization parameter allows for adequate fit, as discussed in Supplement \ref{app:score}.)}
		
		\item {Empirical process regularity:} The class of plug-in influence curves $\{ D^*_{\theta, f_0} : \theta \in D^{(k)}_M(\mathcal{E}_n) \}$ is $P_0$-Donsker, and the $L_2(P_0)$ convergence holds:
		\[ P_0 \left( D^*_{\theta_n, f_0} - D^*_{\psi_0, f_0} \right)^2 \xrightarrow{P} 0. \]
		
		\item {Second-order remainder:} The linearization remainder satisfies:
		\[ R_{\Phi}(\theta_n, f_0, \psi_0, f_0) = \Phi(\theta_n) - \Phi(\psi_0) + P_0 D^*_{\theta_n, f_0} = o_P(n^{-1/2}). \]
	\end{enumerate}

	Under these conditions, the plug-in estimator $\Phi(\theta_n)$ is asymptotically linear and efficient:
	\[ \Phi(\theta_n) - \Phi(\psi_0) = \frac{1}{n} \sum_{i=1}^n D^*_{\Phi, P_0}(O_i) + o_P(n^{-1/2}). \]
	Consequently, $\sqrt{n}(\Phi(\theta_n) - \Phi(\psi_0)) \xrightarrow{d} \mathcal{N}(0, \sigma^2_{eff})$, where $\sigma^2_{eff} = \text{Var}_{P_0}(D^*_{\Phi, P_0})$.
\end{theorem}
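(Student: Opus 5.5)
The proof follows the standard TMLE/plug-in efficiency template. We start from the exact expansion defining the second-order remainder, evaluated at $P=P_{\theta_n}$ with $f(P)$ frozen at $f_0$:
\[
\Phi(\theta_n)-\Phi(\psi_0) = -P_0 D^*_{\theta_n,f_0} + R_{\Phi}(\theta_n,f_0,\psi_0,f_0).
\]
Adding and subtracting $P_n D^*_{\theta_n,f_0}$ gives the three-term decomposition
\[
\Phi(\theta_n)-\Phi(\psi_0) = (P_n-P_0)D^*_{\theta_n,f_0} - P_n D^*_{\theta_n,f_0} + R_{\Phi}(\theta_n,f_0,\psi_0,f_0).
\]
Each term is then handled by one of the stated conditions.

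The second and third terms are disposed of directly. Condition 1 gives $P_n D^*_{\theta_n,f_0}=o_P(n^{-1/2})$, and condition 3 gives $R_\Phi=o_P(n^{-1/2})$.

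For the first term, we write
\[
(P_n-P_0)D^*_{\theta_n,f_0} = (P_n-P_0)D^*_{\psi_0,f_0} + (P_n-P_0)\bigl(D^*_{\theta_n,f_0}-D^*_{\psi_0,f_0}\bigr).
\]
Note that $D^*_{\psi_0,f_0}=S_{\psi_0}(f_0)=D^*_{\Phi,P_0}$. Since $\psi_0$ minimizes $P_0L$, its score has mean zero, $P_0 D^*_{\Phi,P_0}=0$. The first piece is therefore $P_n D^*_{\Phi,P_0}$, the desired linear term.

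For the second piece, we use condition 2. The class $\{D^*_{\theta,f_0}:\theta\in D^{(k)}_M(\mathcal{E}_n)\}$ is $P_0$-Donsker. The estimator lies in this class with probability tending to one, because by Lemma \ref{lem:sub} and Theorem \ref{rate} (or Assumption \ref{assumption4}) $\theta_n$ has sectional variation norm $O_P(1)$. We also have $L_2(P_0)$ consistency, $P_0(D^*_{\theta_n,f_0}-D^*_{\psi_0,f_0})^2\to_P 0$. Asymptotic equicontinuity of the empirical process indexed by a Donsker class then yields $(P_n-P_0)(D^*_{\theta_n,f_0}-D^*_{\psi_0,f_0})=o_P(n^{-1/2})$.

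One technical wrinkle is that $\mathcal{E}_n$ is data-dependent, so the index class is random. We handle this by embedding it into the fixed class $\{D^*_{\theta,f_0}:\theta\in D^{(k)}_M([0,1]^d)\}$ via $D^{(k)}({\cal E}_n)\subset D^{(k)}({\cal R}_N)\subset D^{(k)}([0,1]^d)$. This fixed class is Donsker by the entropy bounds of Lemma 29 in \cite{van2023higher}, in line with the complexity assumption of Theorem \ref{score:gl}. This embedding is the main point needing care.

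Combining the three terms gives the asymptotic linearity expansion. The central limit theorem then yields $\sqrt n(\Phi(\theta_n)-\Phi(\psi_0))\to_d N(0,\Var_{P_0} D^*_{\Phi,P_0})$. Efficiency follows because the influence curve equals the canonical gradient.

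The substantive difficulty sits in verifying condition 1 rather than in this argument. That verification would proceed through Theorem \ref{score:gl}. First approximate $f_0$ by some $f^*\in D_M(\mathcal{E}(J_n))$ with $\|f^*\|_2\le J_n^{-1/2}$, so that its score is solved to $o_P(n^{-1/2})$. Then bound $P_n(S_{\theta_n}(f_0)-S_{\theta_n}(f^*))$. Its $P_0$ part is of order $d_0^{1/2}(\theta_n,\psi_0)\,\|f_0-f^*\|$ by the rate of Theorem \ref{rate}, and its empirical process part is controlled by the same equicontinuity argument. Here we treat condition 1 as given, as the theorem does.
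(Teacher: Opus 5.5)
Your proof is correct and follows the same route as the paper's. You use the exact remainder expansion $\Phi(\theta_n)-\Phi(\psi_0)=(P_n-P_0)D^*_{\theta_n,f_0}-P_nD^*_{\theta_n,f_0}+R_\Phi$, dispose of the score term and the remainder by conditions 1 and 3, reduce the empirical process term to $P_n D^*_{\Phi,P_0}$ via the Donsker property plus $L_2(P_0)$-consistency, and your sketch for condition 1 mirrors the paper's lemma approximating $f_0$ within the PC span. Your explicit embedding of the data-dependent class into the fixed class indexed by $D^{(k)}_M([0,1]^d)$ is a more careful version of the paper's remark that $D^*_{\theta_n,f_n}$ falls in a Donsker class with probability tending to one; both arguments implicitly require that $\theta\mapsto D^*_{\theta,f_0}$ preserves the entropy bounds.
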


%There is an interesting point to make when $f_0$ is not as smooth but e.g element of $D^{(0)}([0,1]^d)$. What can now go wrong? or does it still work. Maybe now harder to approximate $f_0$ with $\tilde{f}_{0,n}\in D^{(k)}([0,1]^n)$ and we have not solved the scores for $f\in D^{(0)}({\cal E}_n)$ or so.

\section{{Pointwise asymptotic normality of PC-HA as estimators of the target function itself}}
{We exploit the fact that all our PC-HA estimators approximately solve the score equations over the working model $D^{(k)}({\cal E}_n)$. This score equation property is crucial for deriving pointwise asymptotic normality, since it allows us to treat $\theta_n$ as an (approximate) $M$-estimator for a suitable finite-dimensional working model and to express $(\theta_n-\psi_{0,n})(x_0)$ as a sum of empirical means of scores plus small remainder terms.}{A key novelty of the argument is that we run sums of empirical process terms over a sparse working model that is fixed (non-random) and whose existence is guaranteed by Theorem 2 in \cite{van2023higher}.} 

{ {We define 
		${\cal R}_{0,n}=\{\phi_j:1\leq j\leq J_n\}$ as a collection of $J_n$ spline basis functions such that $D^{(k)}({\cal R}_{0,n})\subset D^{(k)}({\cal R}_N)$ is the sparsest model satisfying: (i) \emph{Approximation:} $\sup_{f \in D^{(k)}_M({\cal R}_N)} \inf_{\psi \in D^{(k)}_M({\cal R}_{0,n})} \| f - \psi \|_\infty = O_P(n^{-k^*/(2k^*+1)})$; (ii) \emph{Oracle convergence:} the oracle MLE $\psi_{0,n}=\arg\min_{\psi\in D^{(k)}({\cal R}_{0,n})}P_0 L(\psi)$ satisfies $\| \psi_{0,n} - \psi_0 \|_\infty = o_P((J_n/n)^{1/2})$. We choose $J_n \asymp n^{1/(2k^*+1)}$ (up to powers of $\log n$) so that the approximation bias satisfies
		$ {\lVert} \psi_{0,n}-\psi_0 {\rVert_\infty} = o_P\!\left(n^{-k^*/(2k^*+1)}\right)$; since the sharp rate is
		$n^{-k^*/(2k^*+1)}$ up to $\log n$ factors, suppressing these logs only strengthens the statement and keeps the bias of smaller order than the leading stochastic term. Existence of such a sparse working model is guaranteed by Theorem 2 in \cite{van2023higher}. We then obtain an orthonormal basis ${\cal R}_{0,n}^*=\{\phi_j^*:1\leq j\leq J_n\}$ by a linear change of basis. Since this change of basis preserves the linear span, we have $D^{(k)}({\cal R}_{0,n}^*)=D^{(k)}({\cal R}_{0,n})$, and with a slight abuse of notation we will also denote the orthonormal basis by ${\cal R}_{0,n}$ in what follows. We will use the same notation ${\cal R}_{0,n}$ both for the index set $\{1,\ldots,J_n\}$ and for the corresponding (orthonormal) basis $\{\phi_j^* :1\leq j\leq J_n\}$. Thus, whenever we write $\sum_{j\in {\cal R}_{0,n}}$, this is shorthand for summing over indices $j=1,\ldots,J_n$, with $\phi_j^*$ denoting the basis element associated with index $j$.}}

The analysis proceeds by a first-order Taylor expansion of the score equations established in Section 3, followed by a decomposition of the error into a leading empirical process term and asymptotically negligible remainders. {Specifically, we define the projection of the estimator onto the sparse working model $D^{(k)}\left(\mathcal{R}_{0, n}\right)$ and orthogonalize the basis with respect to the covariance inner product.}
{In the Supplement \ref{proof:asymnor}, the basis {$\{\phi_j: 1 \leq j \leq J_n\}$} is orthogonalized w.r.t.\ the inner product $\langle f_1,f_2\rangle_{sc,P_0}\equiv P_0 \frac{d}{d\delta_0}S_{\psi_{0,n}+\delta_0 f_2}(f_1)$ to obtain an orthonormal basis {$\{\phi_j^*: 1 \leq j \leq J_n\}$}. Write $\phi^{*,n}(x_0)=(\phi_j^*(x_0): 1\leq j \leq J_n)^{\top}$. The normalized representer of the evaluation functional at $x_0$ is $\bar{\phi}_{n,x_0}^*\equiv J_n^{-1/2}\sum_{j\in {\cal R}_{0,n}}\phi_j^*(x_0)\phi_j^*$. The projected score (influence curve) at $x_0$ is}
{$$IC_{0,n}(x_0) \equiv S_{\psi_{0,n}}(\bar{\phi}_{n,x_0}^*).$$}
{The $J_n\times J_n$ covariance matrix of the score vector $S_{\psi_{0,n}}(\phi^{*,n})$ under $P_0$ is $\Sigma_{0,n}\equiv P_0 S_{\psi_{0,n}}(\phi^{*,n}) S^{\top}_{\psi_{0,n}}(\phi^{*,n})$.} Let $\sigma_{0,n}^2(x_0)$ denote the variance of this projected score at $x_0$,
{$$\sigma_{0,n}^2(x_0) \equiv \Var_0\{IC_{0,n}(x_0)\}= P_0\{S_{\psi_{0,n}}(\bar{\phi}_{n,x_0}^*)\}^2 = J_n^{-1}\phi^{*,n}(x_0)^{\top}\Sigma_{0,n}\phi^{*,n}(x_0). $$}
{We derive an asymptotically linear representation of the estimator around the sparse oracle MLE.
	$$\psi_{0,n}=\arg\min_{\psi\in D^{(k)}({\cal R}_{0,n})}P_0L(\psi)$$

	The key quantities are the following. The sparse working model $D^{(k)}({\cal R}_{0,n})$ is a fixed $J_n$-dimensional spline model (existence guaranteed by Theorem 2 in \cite{van2023higher}) with orthonormal basis {$\{\phi_j^*: 1\leq j \leq J_n\}$}; the empirical score residual on that basis $r_n(\phi^{*,n})\equiv P_n S_{\theta_n}(\phi^{*,n})$, 
	\[R_{1n}(\phi^{*,n})\equiv -P_0\left\{S_{\tilde{\theta}_n}(\phi^{*,n})-S_{\psi_{0,n}}(\phi^{*,n})-\frac{d}{d\beta_{0,n}}S_{\beta_{0,n}}(\phi^{*,n})(\tilde{\beta}_n-\beta_{0,n})\right\} \quad \textrm{(first-order Taylor remainder)},
	\]
	\[R_{2n}(\bar{\phi}_{n,x_0}^*)=n^{1/2}(P_n-P_0)\{S_{\theta_n}(\bar{\phi}_{n,x_0}^*)-S_{\psi_{0,n}}(\bar{\phi}_{n,x_0}^*)\} \quad \textrm{(empirical process remainder)},
	\]
	
	and 
	\[
	r_n(x_0)\equiv n^{1/2}r_n(\bar{\phi}_{n,x_0}^*) -n^{1/2}R_{1n}(\bar{\phi}_{n,x_0}^*) +n^{1/2}R_{2n}(\bar{\phi}_{n,x_0}^*).
	\]
	
	{The detailed derivations are in the Supplement \ref{proof:asymnor}.}

	\begin{theorem}[Pointwise asymptotic normality and uniform convergence]\label{thm:normality}
		Let $\theta_n$ be the PC-HA estimator obtained by constraining a chosen norm ($ {\lVert\alpha\rVert_2}$, $ {\lVert\alpha\rVert_1}$, or $ {\lVert\beta(\alpha)\rVert_2}$) by a cross-validated bound $C_n$. Let $D^{(k)}(\mathcal{R}_{0,n})$ and $\psi_{0,n}$ be the sparse working model and oracle MLE.
		
		\begin{enumerate}
			\item {Variance stability: }
			We assume $\sigma_{0,n}^2(x_0) = O_P(1)$ and is bounded away from zero.
			
			\item {Regularity:} We assume $r_n(x_0) = o_P(1)$.
		\end{enumerate}
		
		Under the variance stability and regularity conditions above, the following results hold:
		
		\begin{enumerate}
			\item {Asymptotically linear representation:}
			The estimator admits the following linear expansion at any point $x_0 \in [0,1]^d$:
			\[ \sqrt{\frac{n}{J_n}}(\theta_n - \psi_{0,n})(x_0) = \sqrt{n}(P_n - P_0) S_{\psi_{0,n}}(\bar{\phi}_{n,x_0}^*) + o_P(1). \]
			{Equivalently, the leading term can be written as the empirical mean of the influence curve $IC_{0,n}(x_0)$ defined in condition 1 above:}
			{$$ \sqrt{\frac{n}{J_n}}(\theta_n - \psi_{0,n})(x_0) = \sqrt{n}(P_n - P_0) IC_{0,n}(x_0) + o_P(1).$$}
			
			\item {Pointwise asymptotic normality:}
			The standardized estimator converges in distribution to a standard normal:
			\[ \frac{\sqrt{n/J_n}(\theta_n(x_0) - \psi_0(x_0))}{\sigma_{0,n}(x_0)} \xrightarrow{d} \mathcal{N}(0,1). \]
			Consequently, $\theta_n(x_0) \pm 1.96 \hat{\sigma}_n(x_0) \sqrt{J_n/n}$ is a valid asymptotic 95\% confidence interval.
			
			\item {Uniform convergence and simultaneous bands:}
			If the remainder conditions hold uniformly over a subset $\mathcal{A} \subset [0,1]^d$, then:
			\[  {\lVert} \theta_n - \psi_{0,n}  {\rVert}_{\infty, \mathcal{A}} = O_P\left(\sqrt{\frac{J_n \log n}{n}}\right). \]
			Furthermore, $\theta_n(x) \pm \sqrt{\log n} \cdot 1.96 \hat{\sigma}_n(x) \sqrt{J_n/n}$ constitutes a valid simultaneous 95\% confidence band.
		\end{enumerate}
	\end{theorem}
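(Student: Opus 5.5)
The plan follows the HAL pointwise-normality argument, built around the fixed sparse oracle model $D^{(k)}({\cal R}_{0,n})$. Let $\tilde\theta_n$ be the projection of $\theta_n$ onto $D^{(k)}({\cal R}_{0,n})$, with coefficient vector $\tilde\beta_n$ in the orthonormal basis $\phi^{*,n}$, and let $\beta_{0,n}$ be the coefficients of $\psi_{0,n}$. Since $\psi_{0,n}$ minimizes $P_0L$ over $D^{(k)}({\cal R}_{0,n})$, we have $P_0S_{\psi_{0,n}}(\phi^{*,n})=0$. This gives the exact identity
\[
P_nS_{\theta_n}(\phi^{*,n}) = (P_n-P_0)S_{\psi_{0,n}}(\phi^{*,n}) + (P_n-P_0)\{S_{\theta_n}-S_{\psi_{0,n}}\}(\phi^{*,n}) + P_0\{S_{\theta_n}-S_{\tilde\theta_n}\}(\phi^{*,n}) + P_0\{S_{\tilde\theta_n}-S_{\psi_{0,n}}\}(\phi^{*,n}).
\]
I will Taylor expand the last term around $\beta_{0,n}$. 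Its linear part is $\frac{d}{d\beta_{0,n}}P_0S_{\beta_{0,n}}(\phi^{*,n})(\tilde\beta_n-\beta_{0,n})$, and the rest is $-R_{1n}(\phi^{*,n})$. Because $\phi^*$ is orthonormal with respect to $\langle\cdot,\cdot\rangle_{sc,P_0}$, this derivative matrix is (minus) the identity. The linear term therefore equals $\pm(\tilde\beta_n-\beta_{0,n})$. I will take the third term, the gap between $\theta_n$ and its projection, to be absorbed into $R_{1n}$ or into the projection bias; it is controlled by Theorem \ref{rate} together with the approximation property (i).

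Next I will contract with $J_n^{-1/2}\phi^{*,n}(x_0)$. Since $(\tilde\theta_n-\psi_{0,n})(x_0)=\phi^{*,n}(x_0)^\top(\tilde\beta_n-\beta_{0,n})$, multiplying by $n^{1/2}$ gives
\[
\sqrt{n/J_n}\,(\tilde\theta_n-\psi_{0,n})(x_0)=\sqrt n(P_n-P_0)S_{\psi_{0,n}}(\bar\phi^*_{n,x_0}) + r_n(x_0),
\]
up to signs. The remainder $r_n(x_0)$ collects three pieces: the score residual $n^{1/2}r_n(\bar\phi^*_{n,x_0})$, which is small by Theorem \ref{score:gl} because $\|\bar\phi^*_{n,x_0}\|_2\lesssim J_n^{-1/2}$ (after approximating it inside $D^{(k)}({\cal E}_n)$); the Taylor remainder $R_{1n}$; and the empirical process term $R_{2n}$. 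The regularity assumption gives $r_n(x_0)=o_P(1)$ directly. Replacing $\tilde\theta_n$ by $\theta_n$ at $x_0$ costs $\sqrt{n/J_n}\,\|\theta_n-\tilde\theta_n\|_\infty$. I will argue this is $o_P(1)$ using the sup-norm approximation (i), with $J_n\asymp n^{1/(2k^*+1)}$, or else fold it into $r_n$. This yields part 1.

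For part 2, the leading term is a normalized sum of i.i.d. mean-zero terms $IC_{0,n}(x_0)(O_i)$, drawn from a triangular array because the terms depend on $n$. Their variance is $\sigma^2_{0,n}(x_0)$, which is bounded and bounded away from zero by variance stability. Lindeberg's condition follows from uniform boundedness of the scores: $\|\bar\phi^*_{n,x_0}\|_\infty\lesssim J_n^{1/2}$, so each term is at most of order $J_n^{1/2}$, and $J_n/n\to0$. Lindeberg--Feller then gives normality for $\theta_n-\psi_{0,n}$. To center at $\psi_0$ instead, I use oracle property (ii), $\|\psi_{0,n}-\psi_0\|_\infty=o((J_n/n)^{1/2})$. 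The confidence interval follows by Slutsky once $\hat\sigma_n(x_0)$, the plug-in estimate of $J_n^{-1}\phi^{*,n\top}\hat\Sigma\phi^{*,n}$, is consistent; I will show that by the same Donsker and entropy arguments.

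For part 3, I will take the supremum over $x\in\mathcal A$. The process $x\mapsto\sqrt n(P_n-P_0)IC_{0,n}(x)$ is indexed by a class of dimension $J_n$ with bounded envelope. A maximal inequality, for example Bernstein's inequality combined with a union bound over a polynomial grid, with the Lipschitz behaviour of the splines handled between grid points, gives $O_P(\sqrt{\log n})$. Multiplying by $\sqrt{J_n/n}$ gives the stated rate, and the simultaneous band follows. The main obstacle I expect is the link between the score residual on the sparse basis $\phi^{*,n}$ and Theorem \ref{score:gl}, which only controls directions inside $D_M({\cal E}(J_n))$. I would first approximate $\bar\phi^*_{n,x_0}$ by an element of $D^{(k)}({\cal E}_n)$, using the sufficiency of the PC span at $x^n$. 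I would then bound the approximation error through the score class entropy and the rate of Theorem \ref{rate}; failing that, I would simply leave it inside the assumption $r_n(x_0)=o_P(1)$.
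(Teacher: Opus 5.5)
You have a genuine gap in how you handle the third term of your identity, $P_0\{S_{\theta_n}-S_{\tilde\theta_n}\}(\phi^{*,n})$, which you propose to absorb into $R_{1n}$ or into the projection bias. After contracting with $J_n^{-1/2}\phi^{*,n}(x_0)$ and multiplying by $n^{1/2}$, it becomes $n^{1/2}P_0\{S_{\theta_n}-S_{\tilde\theta_n}\}(\bar\phi^*_{n,x_0})$.

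This term is not part of the Taylor remainder $R_{1n}$, which involves only $\tilde\theta_n$ and $\psi_{0,n}$. It is also not part of $r_n(x_0)$ as the theorem defines it, so the regularity hypothesis does not cover it. The tools you cite do not control it either:
\begin{itemize}
\item For squared-error loss the term equals $2n^{1/2}P_0\,\bar\phi^*_{n,x_0}(\theta_n-\tilde\theta_n)$.
\item Under variance stability, $\|\bar\phi^*_{n,x_0}\|_{P_0}^2\asymp J_n^{-1}\sum_j\phi_j^*(x_0)^2$ is of order one.
\item Cauchy--Schwarz with an approximation error of order $n^{-k^*/(2k^*+1)}$ then gives only $O(n^{1/2-k^*/(2k^*+1)})=O(J_n^{1/2})$, which diverges.
\item Theorem~\ref{rate} bounds $\theta_n-\psi_{N,0}$, not the projection gap, so it does not help.
\end{itemize}

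The paper removes this term through the choice of projection. It \emph{defines} $\tilde\theta_n\in D^{(k)}(\mathcal R_{0,n})$ as the solution of $P_0\{S_{\theta_n}(\phi_j^*)-S_{\tilde\theta_n}(\phi_j^*)\}=0$, $j=1,\ldots,J_n$; for least squares this is the $L^2(P_0)$-projection. Then $R_{3n}\equiv 0$, and the whole cost of passing from $\tilde\theta_n$ to $\theta_n$ is the pointwise bias $(n/J_n)^{1/2}b_n(x_0)$. That bias is handled by approximation property (i) with a log-adjusted $J_n$.

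You should adopt that definition. If you keep a sup-norm-best approximant instead, you need an extra $L^\infty$-stability (bounded Lebesgue constant) of the $L^2(P_0)$-projection onto $D^{(k)}(\mathcal R_{0,n})$ to bring the term down to the order of $b_n$. This is also the ingredient the paper tacitly uses when it controls $b_n$ for its specific projection via (i).

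There are also some smaller issues.
\begin{itemize}
\item Your heuristic $\|\bar\phi^*_{n,x_0}\|_2\lesssim J_n^{-1/2}$ does not hold in general. In the basis $\phi^{*,n}$ the coefficient vector is $J_n^{-1/2}\phi^{*,n}(x_0)$, whose Euclidean norm is of order one. Moreover, Theorem~\ref{score:gl} is stated in PC coefficients, with $J_n$ the number of active PCs rather than the oracle dimension. Since you ultimately rely on $r_n(x_0)=o_P(1)$, as the paper does, this is harmless, but it is not a derivation.
\item Your plug-in $\hat\sigma_n$ cannot be computed, because $\mathcal R_{0,n}$ and $\phi^{*,n}$ are only known to exist. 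The paper takes a consistent $\hat\sigma_n$ as given and recommends the bootstrap.
\item Your Lindeberg step is more explicit than the paper, which never verifies the CLT. It does, however, need $\sup_x J_n^{-1}\sum_j\phi_j^*(x)^2=O(1)$ and bounded or moment-controlled scores, and neither is among the hypotheses.
\item For part 3, your Bernstein-plus-union-bound route differs from the paper's entropy-integral argument for the $d$-dimensional class $\{J_n^{-1/2}S_{\psi_{0,n}}(\bar\phi^*_{n,x_0})/\sigma_{0,n}(x_0):x_0\in\mathcal A\}$. Your route delivers the $\sqrt{\log n}$ rate of the statement, whereas the paper's proof only concludes $o_P(\log n)$ with a $\log n$-inflated band.
\item The constant in front of $\sqrt{\log n}$ in your union bound grows with the grid cardinality. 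For $k=0$ the grid has up to $(J_n+1)^d$ cells, and there you should use that $x_0\mapsto\bar\phi^*_{n,x_0}$ is piecewise constant rather than Lipschitz. So an $O_P(\sqrt{\log n})$ bound alone does not certify coverage for the specific multiplier $1.96\sqrt{\log n}$.
\end{itemize}
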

	
	While Theorem \ref{thm:normality} establishes asymptotic normality, the limiting variance $\sigma_{0, n}^2\left(x_0\right)$ depends on the projection onto the unknown sparse oracle model $D^{(k)}\left(\mathcal{R}_{0, n}\right)$. Consequently, a closed-form plug-in variance estimator is difficult to construct without explicit knowledge of the sparsity pattern. For practical inference, we recommend using the nonparametric bootstrap. By fixing the PC-working model $D^{(k)}\left(\mathcal{E}_n\right)$ and bootstrapping the PC-HA optimization procedure, one can obtain valid pointwise confidence intervals and simultaneous bands that mirror the asymptotic theory.
	
	\section{{Empirical results}}
	\subsection{ {Norm scaling}}

	The purpose of this simulation study is to investigate how the different cross-validated  {PC-HA} estimators control the complexity of the induced HAL representation as the sample size increases. In particular, we study whether penalizing the PC coefficients $\alpha$ is sufficient to control the sectional variation norm of the implied HAL coefficient vector $\beta(\alpha)$.
	
	The central question is: does cross-validation over an $L_1$ or $L_2$ constraint on $\alpha$ implicitly keep $\|\beta(\alpha)\|_1$ bounded, or can $\|\beta(\alpha)\|_1$ grow with $n$?

	If $\|\beta(\alpha_n)\|_1$ diverges, the estimator effectively belongs to a sequence of expanding HAL model classes with sectional variation bound $M_n \to \infty$, which may impact entropy bounds and convergence rates. We therefore study the empirical scaling with $n$ of
	\[
	\|\alpha_n\|_2, 
	\quad 
	\|\alpha_n\|_1, 
	\quad 
	\|\alpha_n\|_\infty, 
	\quad 
	\|\beta(\alpha_n)\|_1,
	\quad 
	J_n = \#\{j:  {\alpha_n(j)} \neq 0\}.
	\]
	
	Let $J_n$ denote the number of selected PC coefficients and define $r_n = \max_{j \le J_n}  {\lvert \alpha_n(j)\rvert}.$
	Then $\|\alpha_n\|_1 \asymp J_n r_n, 
	\|\alpha_n\|_2^2 \asymp J_n r_n^2.$ Because the PC basis is orthonormal, $\|\beta(\alpha_n)\|_2 = \|\alpha_n\|_2$. Thus, the parametric stochastic order $\|\alpha_n\|_2^2 = O_P(n^{-1})$ implies $J_n r_n^2 = O_P(n^{-1})$ and yields $r_n = O_P\!\left(\frac{1}{\sqrt{n J_n}}\right) {,}
	$ so that 
	\begin{equation}\label{onenorm}
		\|\alpha_n\|_1 = O_P\!\left(\sqrt{\frac{J_n}{n}}\right).
	\end{equation}
	
	If $J_n = n^\gamma$, then $\|\alpha_n\|_1 = O(n^{(\gamma-1)/2})$. Hence, $\|\alpha_n\|_1$ may shrink to zero even while $\|\beta(\alpha_n)\|_1$ remains bounded. In contrast, if cross-validation selects $\|\alpha_n\|_2^2$ decaying slower than $1/n$, then $\|\beta(\alpha_n)\|_1$ must grow with $n$, implying an expanding effective sectional variation bound. The simulations are designed to determine which regime occurs in practice under the different penalties. For each sample size $n$, we generate $X_i \sim  {\mathrm{Unif}}(0,1), Y_i = \frac{2\sin(8\pi X_i^2)}{X_i} + \varepsilon_i,
	\varepsilon_i \sim N(0, 2^2).$
	This regression function exhibits rapid oscillations and localized curvature, requiring a rich spline representation. We estimate empirical scaling exponents by regressing $\log$-transformed quantities on $\log n$.
	
	{In the Supplement (Figure~\ref{fig:scaling_all}) we report the scaling behavior of different norms and metrics as a function of sample size $n$: each row corresponds to a different quantity---(1) $\|\alpha_n\|_2$ vs $n$, (2) $\|\alpha_n\|_1$ vs $n$, (3) $\|\alpha_n\|_\infty$ vs $n$, (4) $\|\beta(\alpha_n)\|_1$ vs $n$, and (5) $J_n$ (number of selected coefficients) vs $n$---and each column to a regularization method (PC-HAGL, PC-HAL, PC-HAR); all plots are log-log.}
	
	Figure \ref{fig:scaling_all} reports the observed scaling behavior. The simulation reveals three distinct regimes of behavior regarding how each estimator manages the complexity  {budget}. While all three estimators satisfy the fundamental  {non-overfitting} requirement $\|\beta(\alpha_n)\|_1 = O(1)$ asymptotically, their finite-sample trajectories differ significantly.
	
	The PC-HAGL estimator demonstrates superior adaptivity, particularly in the finite-sample regime ($n < 1000$). Cross-validation actively selects \textit{lower} sectional variation norms when data is scarce, effectively undersmoothing relative to the asymptotic target to minimize Mean Squared Error (MSE). This indicates that less complexity is needed to optimize the bias-variance tradeoff in small samples. Consequently, PC-HAGL emerges as the {least overfitting option}, progressively releasing the complexity constraint only as $n$ increases to converge to the true oracle complexity ($\|\beta\|_1 \approx 55$).

	The PC-HAR estimator consistently selects the {highest} sectional variation norm ($\|\beta\|_1 \approx 60$). This inflation is structural rather than adaptive: because the $L_2$ penalty cannot set coefficients to zero, the estimator retains the entire ``spectral tail'' of high-frequency basis functions. Even though these tail components contribute minimally to the fit, their non-zero weights aggregate to inflate the variation norm. Thus, PC-HAR satisfies the $O(1)$ bound but settles at an inefficiently high constant due to this ``spectral dead weight.''
	
	The PC-HAL estimator appears structurally rigid, selecting a sectional variation norm that remains roughly uniform across sample sizes ($\|\beta\|_1 \approx 55$). Unlike PC-HAGL, it lacks the flexibility to reduce its complexity significantly in small samples because the sparsity constraint forces the few selected coefficients to be large (``spiky'') to capture the signal, essentially locking the estimator into a fixed level of roughness.
	
	A quantitative analysis of the decay rates provides striking corroboration of our theoretical derivation for sparse estimators.
	
	Recall the derived necessary condition for bounding the variation norm in a sparse model  {(\ref{onenorm})}.
	In our simulation for PC-HAL, we observe that the number of selected components scales approximately as $J_n \asymp n^{1/5}$ (empirically $n^{0.193}$). Substituting this sparsity rate into our derived condition yields a predicted decay rate for the $L_1$ norm of the coefficients $\|\alpha_n\|_1 \asymp \sqrt{\frac{n^{1/5}}{n}} = \sqrt{n^{-4/5}} = n^{-2/5} = n^{-0.4}$. This theoretical prediction matches the empirical results with remarkable precision. The observed slope of $\log(\|\alpha_n\|_1)$ versus $\log(n)$ is {-0.426}, almost exactly the predicted {-0.4}. Furthermore, the $L_2$ and $L_\infty$ norms of the coefficients are observed to decay at the standard parametric rate ($n^{-1/2}$). 
	
	\subsection{Convergence rates}
	
	In this section, we empirically verify the convergence rates of the zero-order proposed PC-HA estimator. We focus on confirming two central theoretical properties: the dimension-free nature of the convergence rate for functions of bounded sectional variation, and the adaptivity of the estimator to the spectral structure of the target function.

	We consider a regression setting $Y = \psi_0(X) + \epsilon$, where $X \sim \text{Uniform}([0,1]^d)$ and $\epsilon \sim \mathcal{N}(0, \sigma^2)$ with $\sigma=0.3$. The estimator $\psi_n$ is the zero-order PC-HA for the different regularization modes, where  {the} hyperparameter is selected via $3$-fold cross-validation. We evaluate the performance using the Mean Squared Error (MSE) computed on an independent test set of size $ {N_{\mathrm{test}}}=1000$: $\text{MSE}_n = \frac{1}{ {N_{\mathrm{test}}}} \sum_{i=1}^{ {N_{\mathrm{test}}}} (\psi_n(X_i) - \psi_0(X_i))^2.$
	We investigate two distinct ground truth functions $\psi_0$ to illustrate different convergence regimes:
	\begin{enumerate}
		\item {Linear ground truth:} $\psi_0(x) = d^{-1/2} \sum_{j=1}^d x_j$. This function represents a standard target within the class of bounded variation functions but does not align perfectly with the spectral basis of the zero-order kernel (Figure \ref{fig:convergence_mean_nonfast}). 
		\item {Harmonic ground truth:} $\psi_0(x) = d^{-1/2} \sum_{j=1}^d \sin(2\pi x_j)$. This function possesses a structure that is intimately related to the eigenfunctions of the underlying covariance kernel used in the PC decomposition (Figure \ref{fig:convergence_mean_fast}  {in the Supplement}).
	\end{enumerate}
	Sample sizes range from $n=400$ to $n=1500$, and we perform 10 repetitions for dimensions $d \in \{3, 10, 20\}$.

	Theorem \ref{theoremone} establishes the convergence rate for the loss-based dissimilarity $d_0(\psi_{n}, \psi_{0})$. In the context of squared error loss, the loss-based dissimilarity $d_0(\psi, \psi_0) = P_0 L(\psi) - P_0 L(\psi_0)$ is equivalent to the squared $L_2(P_0)$-norm $\|\psi - \psi_0\|_{P_0}^2$, which corresponds precisely to the MSE computed on the noise-free test set. Theorem \ref{theoremone} dictates that this dissimilarity decays at the rate $n^{-\frac{2k^*}{2k^*+1}}$ (modulo terms involving powers of $\log n$ depending on $d$) where $k^* = k+1$ depends on the smoothness order of the spline basis. For the zero-order PC-HAGL estimator utilized in this study ($k=0$), we have $k^*=1$, implying a theoretical minimax rate of $n^{-2/3}$. Consequently, the MSE is expected to scale as $C n^{-2/3}$. Taking the logarithm of this power-law relationship yields $\log(\text{MSE}_n) \approx \log(C) - \frac{2}{3} \log(n)$, justifying the use of the slope in a log-log plot to empirically verify the convergence rate. Formally, this implies $\text{MSE}_n \approx C n^{-\gamma}.$
	Taking the natural logarithm of both sides yields a linear relationship $\log(\text{MSE}_n) \approx \log(C) - \gamma \log(n).$
	Consequently, in a log-log plot of MSE versus sample size $n$, the slope of the regression line corresponds to $-\gamma$, the negative of the convergence rate. We compare our empirical slopes against the theoretical minimax rate for zero-order HAL estimators derived in Theorem 2. Specifically, for a smoothness order $k=0$, we have $k^* = k+1 = 1$, yielding a critical rate of $n^{-2k^*/(2k^*+1)} = n^{-2/3} \approx n^{-0.66}$.

	The simulation results yield two primary conclusions regarding the efficiency and adaptivity of the PC-HA estimator. The first key finding is that the \textit{convergence rate does not degrade as the dimension $d$ increases}, validating the central premise of the HAL theory. Standard nonparametric smoothers typically suffer from the curse of dimensionality, with rates scaling as $n^{-2/(2+d)}$. In contrast, our PC-HA estimators maintain a slope steeper (more negative) than the theoretical baseline of $-2/3$ across all tested dimensions ($d=3, 10, 20$). While the estimators using $L_1$ and $L_2$ norms on the coefficients perform comparably in lower dimensions ($d=3, 5$), the explicit control of the sectional variation norm (PC-HAGL) proves most robust in maintaining the dimension-free rate in $d=10$.

	As for adaptivity, the second conclusion arises from the comparison between the linear and sine ground truths. For the sine target $\psi_0(x) = d^{-1/2} \sum \sin(2\pi x_j)$, we observe convergence rates approaching the parametric rate $n^{-1}$, significantly outperforming the worst-case nonparametric rate of $n^{-2/3}$.
	
	This phenomenon can be attributed to the spectral properties of the zero-order kernel. The principal components used in our working model are derived from the design matrix of zero-order splines, which we conjecture to be asymptotically equivalent to the covariance operator of a Brownian motion. Consequently, the target function $\psi_0$ acts as a (perhaps almost) finite linear combination of the leading basis functions of our PC working model. Because the Fourier series of the sine target decays instantly (it is sparse in the eigenbasis), the bias term in the MSE is negligible, and the error is dominated by the variance term $O_P(n^{-1})$.
	
	Conversely, for the linear target $\psi_0(x) = \sum x_j$, the representation in the sinusoidal PC basis requires a dense expansion (akin to the slow decay of the Fourier series of a sawtooth wave). In this case, the estimator must trade off bias and variance, resulting in a rate closer to the theoretical nonparametric baseline of $n^{-2/3}$. This demonstrates that HAL theory is not only minimax optimal but also adaptive: it achieves fast rates $O_P(n^{-1})$ when the target function aligns well with the PC basis, and maintains the rate $O_P(n^{-2/3})$ otherwise.

	\begin{figure}[t!]
		\centering
		
		% Row d = 3
		\begin{minipage}{\textwidth}
			\centering
			\includegraphics[width=0.3\textwidth]{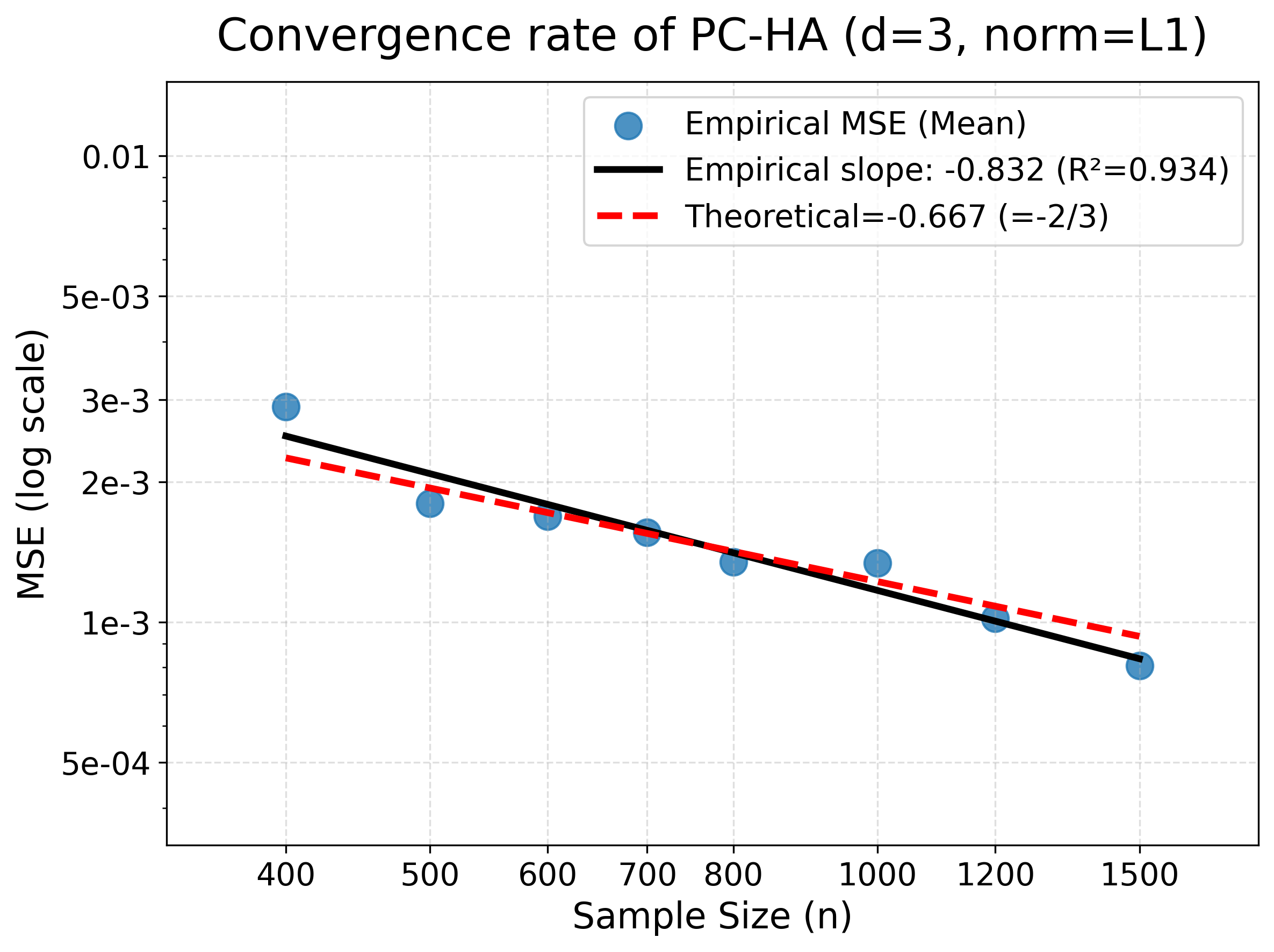}%
			\includegraphics[width=0.3\textwidth]{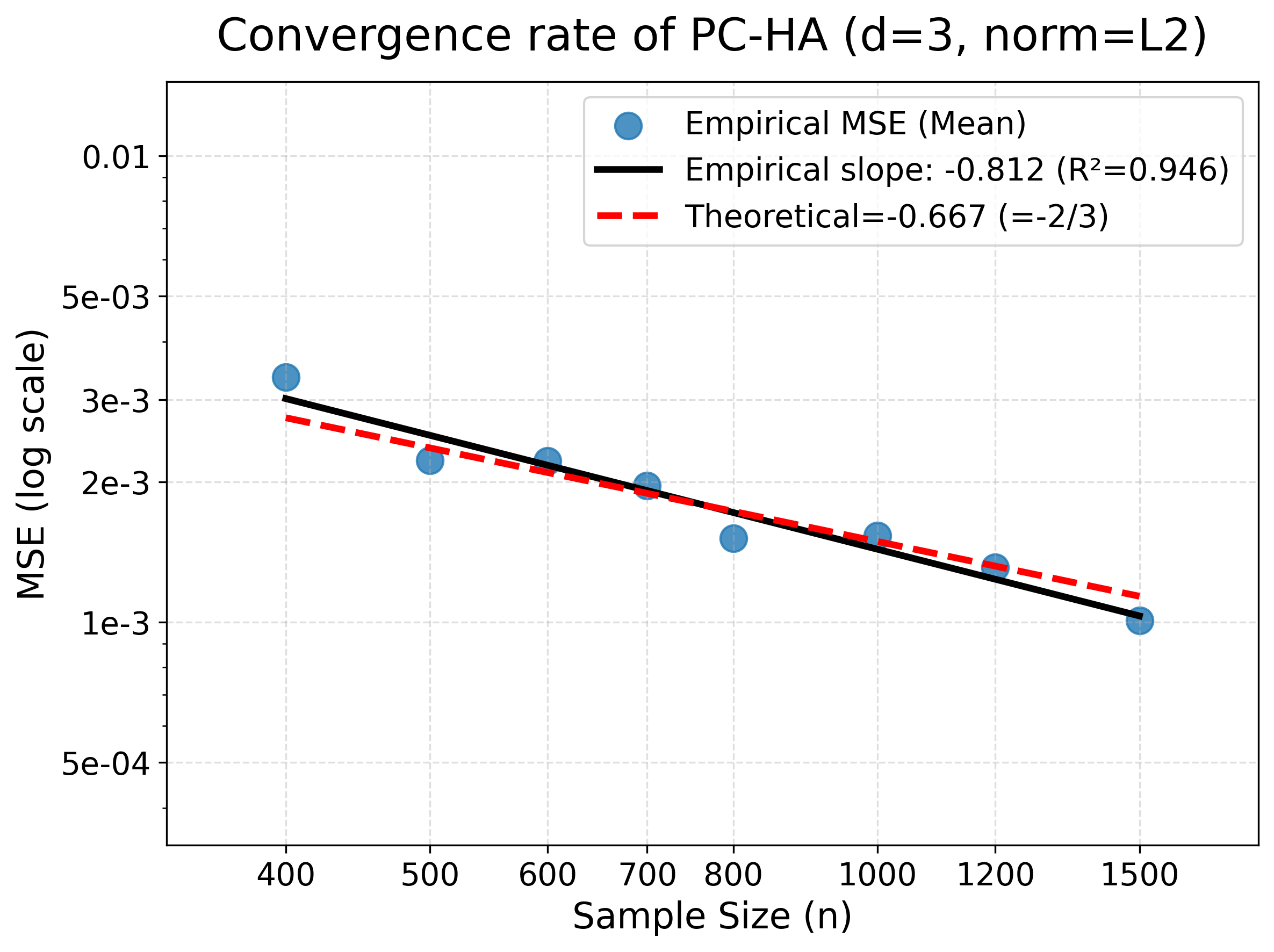}%
			\includegraphics[width=0.3\textwidth]{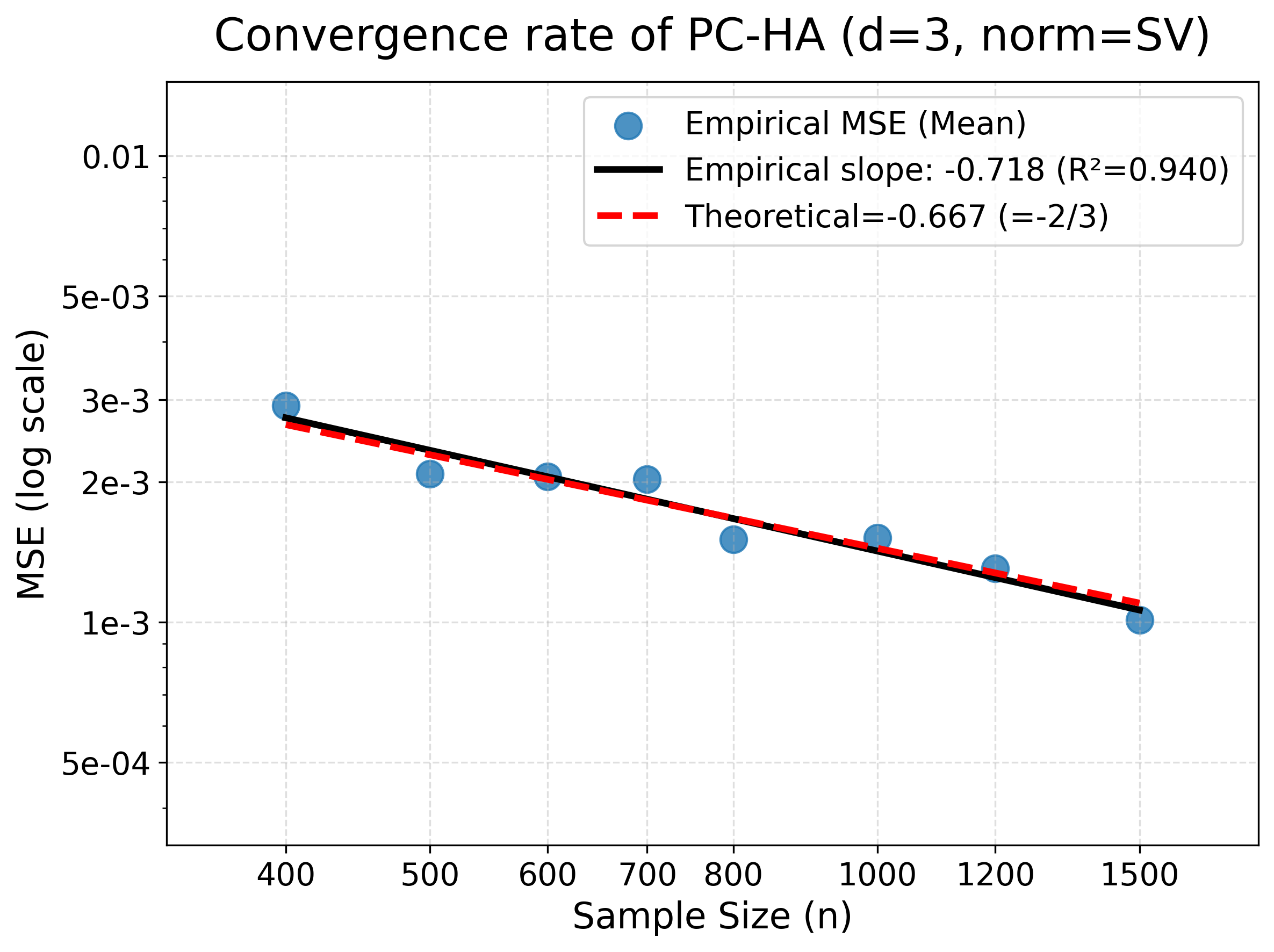}
			
			\vspace{0.2em}
			
			\includegraphics[width=0.3\textwidth]{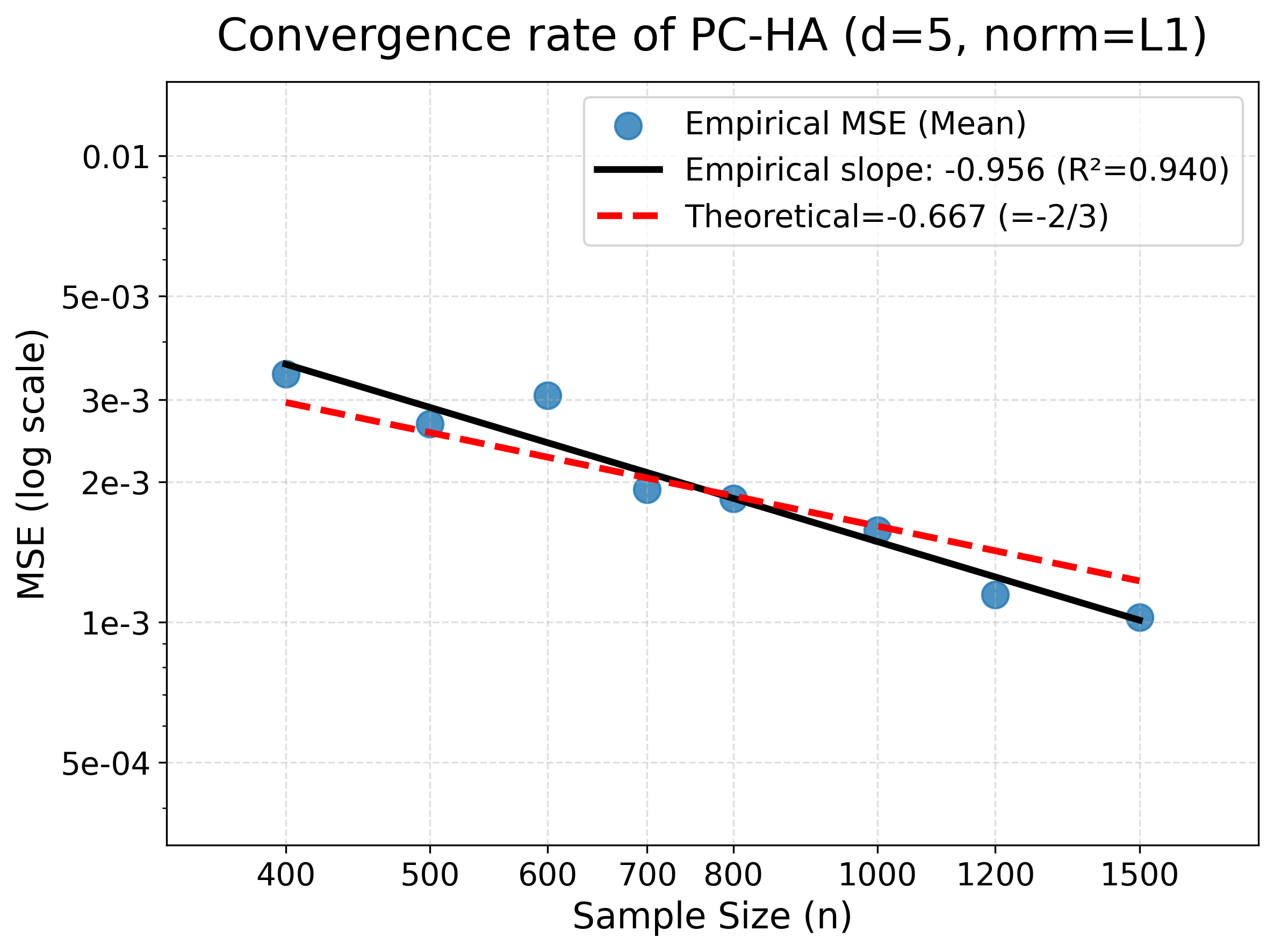}%
			\includegraphics[width=0.3\textwidth]{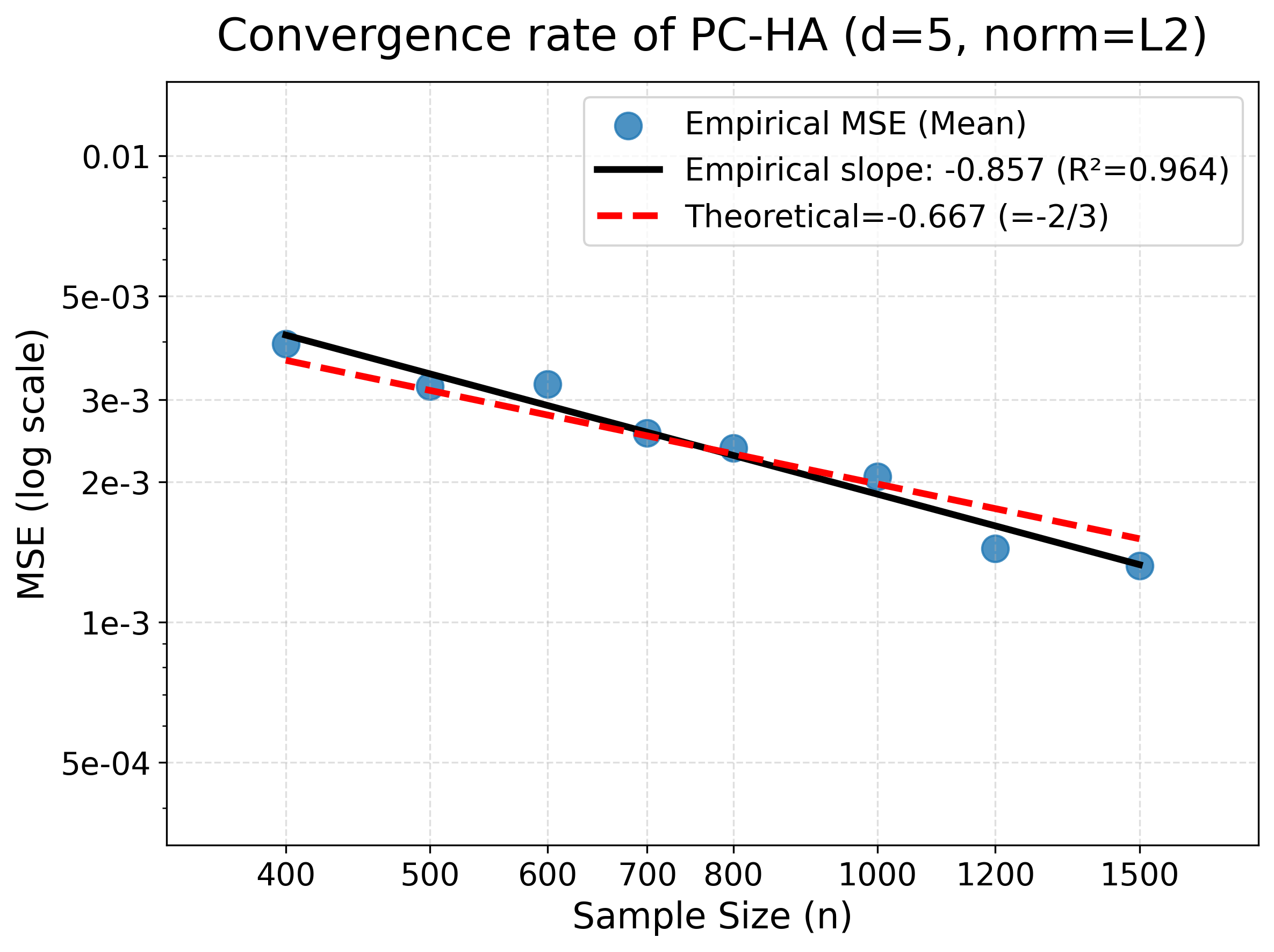}%
			\includegraphics[width=0.3\textwidth]{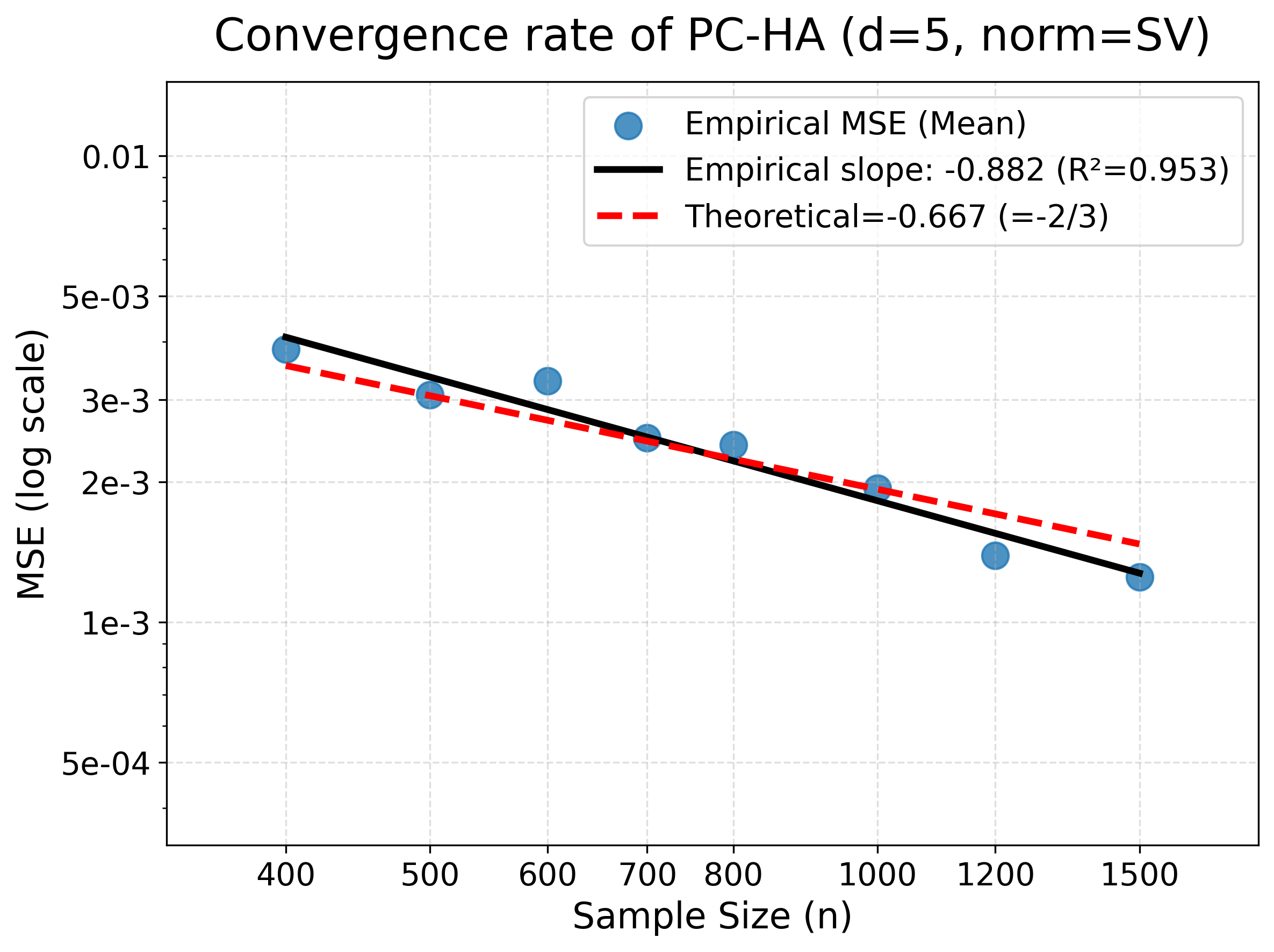}
			
			\vspace{0.2em}
			
			\includegraphics[width=0.3\textwidth]{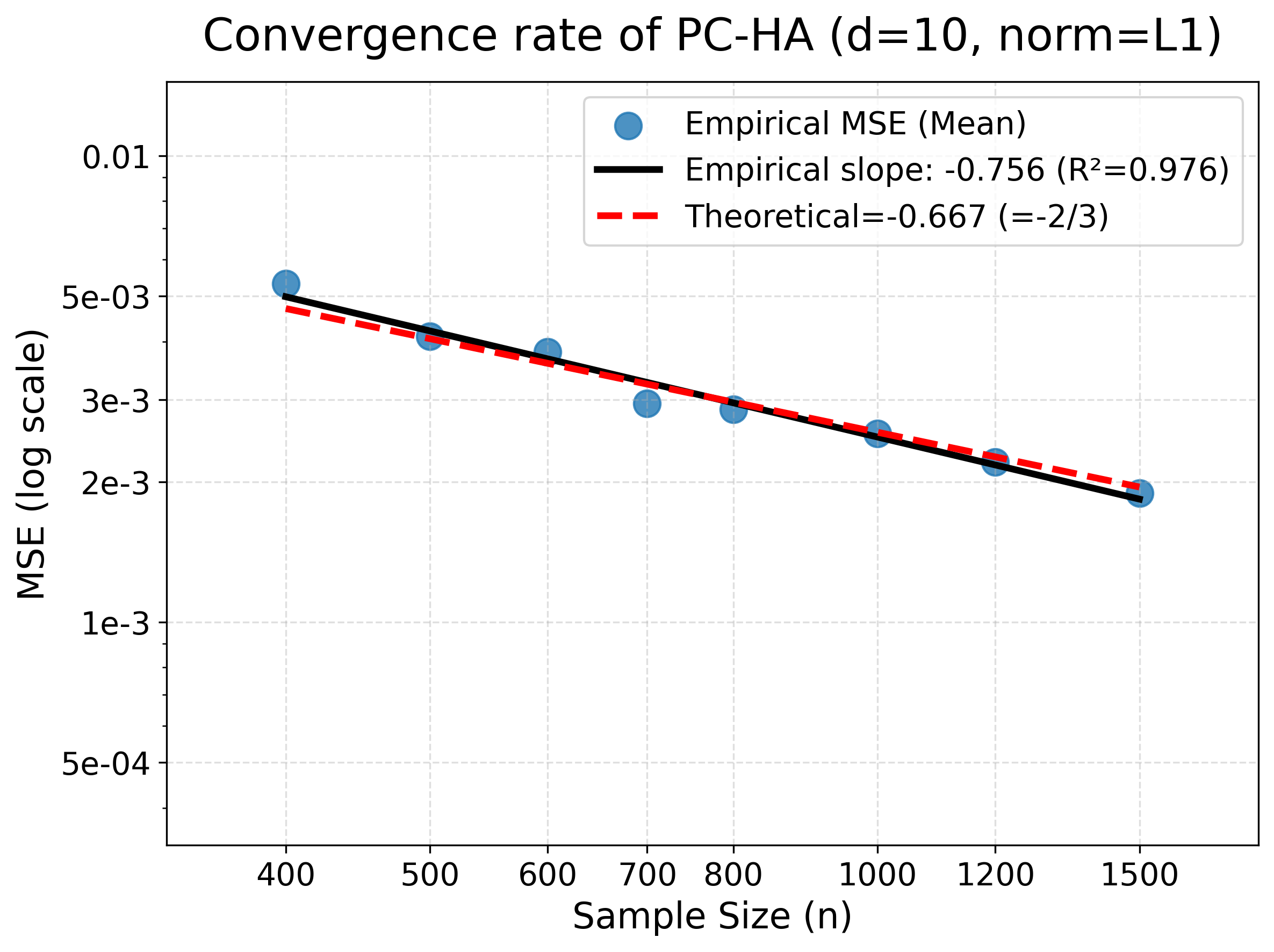}%
			\includegraphics[width=0.3\textwidth]{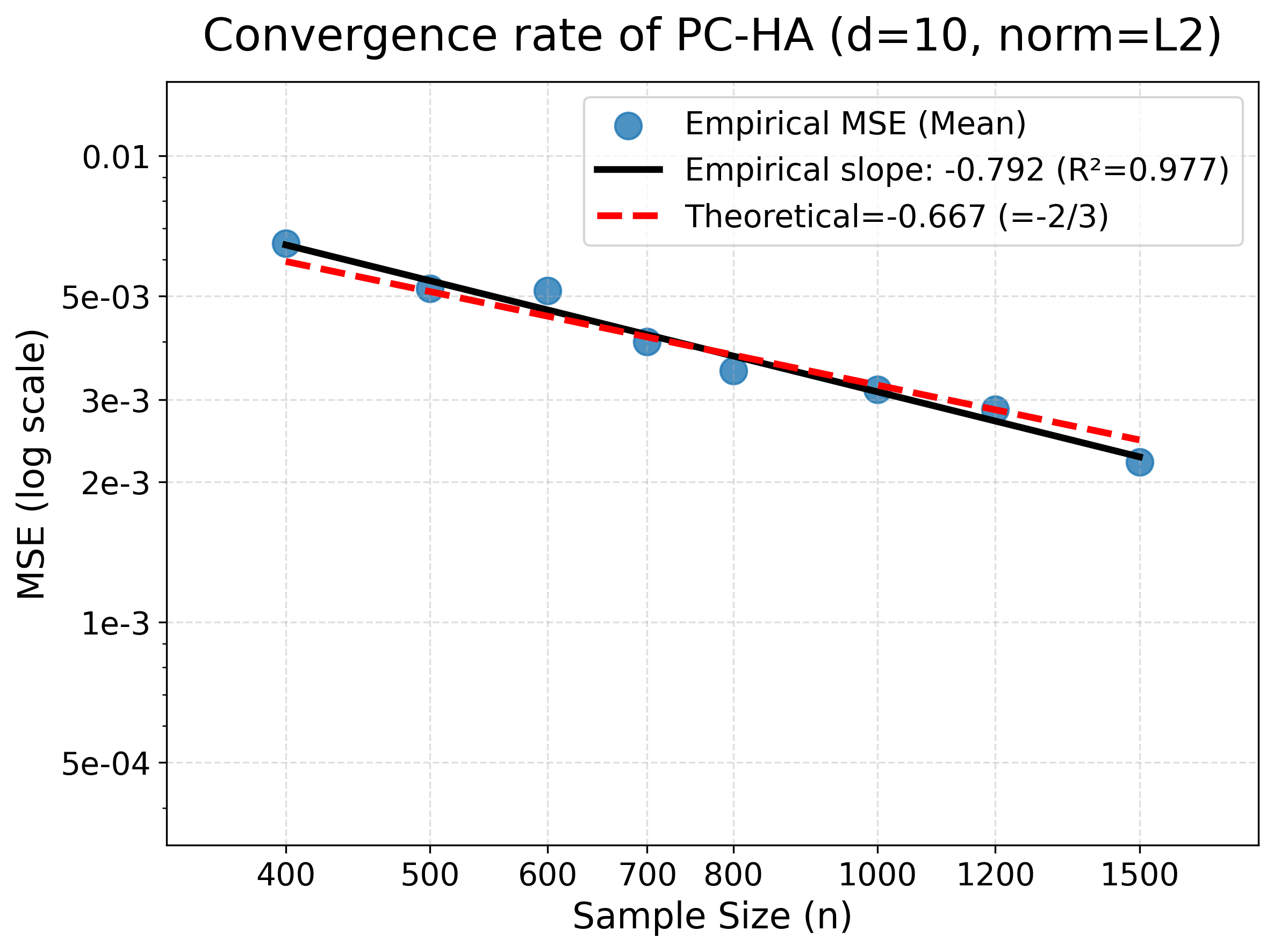}%
			\includegraphics[width=0.3\textwidth]{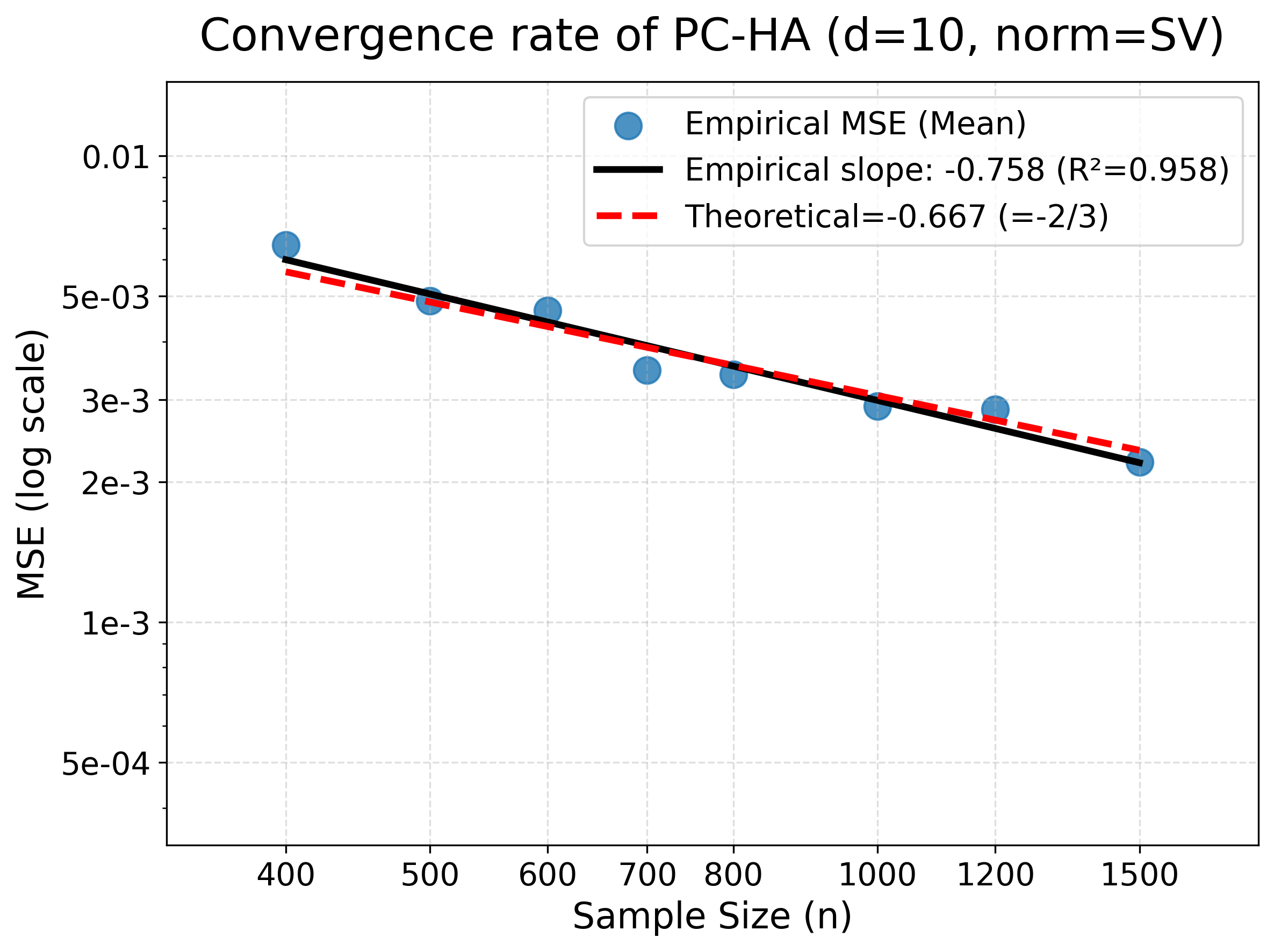}
		\end{minipage}
		
		\caption{Convergence rate of  {cross-validated} PC-HAs using the
			\emph{mean} MSE across repetitions (linear target function). Each panel
			shows log--log MSE versus sample size $n$ for a given dimension $d \in \{3,5,10\}$
			(rows) and norm choice $\{L_1,L_2,\text{sectional variation}\}$ (columns).
			The black line is the empirical convergence slope, while the red dashed line
			is the theoretical reference with exponent $-2/3$.}
		\label{fig:convergence_mean_nonfast}
	\end{figure}
	The similarity in convergence rates across the three regularization schemes is fully consistent with the theoretical guarantees provided in Theorem \ref{theoremone}. This theorem establishes that the optimal minimax rate $O_P(n^{-2/3})$ is achievable for any PC-HA estimator, provided there exists a sequence of radius constraints $r(n)$ such that the resulting parameter space both contains the projection of the true function $\psi_0$ and implies a uniform bound on the sectional variation norm $ {\lVert\beta(\alpha)\rVert_1}$. For the {PC-HAGL} estimator, this condition is satisfied by definition, as it explicitly constrains the sectional variation norm; thus, choosing $r(n)=O(1)$ suffices to capture bounded variation targets while controlling metric entropy. For {PC-HAL} ($L_1$) and {PC-HAR} ($L_2$), while the norms do not measure sectional variation directly, they act as effective surrogate controls. Under standard regularity conditions, the $L_1$ or $L_2$ balls of radius $r(n)$ that contain $\psi_0$ are embedded within a slightly larger ball of bounded sectional variation. Theoretically, this ``mismatch'' in norms may inflate the constant factor of the risk bound (requiring a slightly richer working model to approximate the truth), but it does not alter the fundamental polynomial rate of the bias-variance tradeoff. Furthermore, since the regularization parameter $C_n$ is selected via $3$-fold cross-validation---which satisfies an asymptotic oracle inequality \cite{vdlaan_dudoit_vandervaart}---the estimator adaptively selects the constraint level that minimizes risk. Consequently, as long as the rate-optimal solution lies within the regularization path, the cross-validated estimator will achieve the optimal convergence slope, rendering the specific choice of norm asymptotically equivalent in terms of rate.
	
	{In the Supplement (Figure \ref{fig:convergence_mean_fast}), we include similar convergence plots for the ``fast'' sinusoidal target function. Those plots illustrate that in cases where the target aligns well with the leading PC basis, the empirical rate can become almost parametric (slope close to $-1$) and nearly independent of dimension.}
	
	\subsection{ATE estimation with PC-HA}\label{sec:ate}
	
	We describe the data-generating process and the two estimation strategies compared in the simulation study: a plug-in estimator based on cross-validated PC-HA outcome regression, and a variant that undersmooths the outcome fit so that the bias of the plug-in, as measured by the efficient influence curve, is controlled.

	Data are drawn from a structural model with two covariates, binary treatment, and a continuous outcome. Covariates are $W = (W_1, W_2)$ with $W_1 \sim \text{Uniform}(-2, 2)$ and $W_2 \sim \mathcal{N}(0, 0.25)$. Treatment $A \in \{0,1\}$ is Bernoulli with propensity score $\pi_1(W) = P(A=1 \mid W)$ given by $\operatorname{logit} \pi_1(W) = W_1 + 0.5\,W_2 + W_1 W_2 + 0.3\,W_2^2,$ so that treatment assignment depends on $W$ in a moderately nonlinear way. The outcome is $Y = 2 W_1 - 2 W_2^2 + W_2 + W_1 W_2 + 0.5 + \varepsilon, \varepsilon \sim \mathcal{N}(0, 0.25)$.
	The structural equation for $A$ does not include $A$, so the average treatment effect $E[Y^1] - E[Y^0]$ is zero. This setup lets us assess bias and coverage of the estimators around a known truth.

	We focus on the role of the outcome model by using the \emph{known} propensity score $\pi_1(W)$ throughout. {We will evaluate the empirical mean of the EIC at the true propensity score and therefore how well PC-HA solves the EIC at the true $g_0$. Note if one solves $P_n  {D^*}(Q_n,g_0)$ then we have exact expansion $\Psi(Q_n)-\Psi(Q_0)=(P_n-P_0) {D^*}_{Q_n,g_0}$ with no exact remainder.} The outcome regression $E[Y \mid A=a, W]$ is estimated with PC-HA, $Y$ fitted on $(A, W)$. We compare the three different choices of norm. For a given outcome fit $\widehat{\mu}_1(W)$, $\widehat{\mu}_0(W)$, the plug-in estimator of the ATE is $\widehat{\psi} = P_n \bigl( \widehat{\mu}_1(W) - \widehat{\mu}_0(W) \bigr) = \frac{1}{n} \sum_{i=1}^n \bigl( \widehat{\mu}_1(W_i) - \widehat{\mu}_0(W_i) \bigr),$ where $\widehat{\mu}_a(W)$ is the predicted outcome under $A=a$ from the fitted model.
	
	The efficient influence curve (EIC) is in this case $D(O; \pi, \mu) = \varphi_1(O; \pi, \mu) - \varphi_0(O; \pi, \mu)$, where $O = (W, A, Y)$ and $\varphi_a(O; \pi, \mu) = \frac{\mathbf{1}(A=a)}{\pi_a(W)} \bigl( Y - \mu_a(W) \bigr) + \bigl( \mu_a(W) - E[\mu_a(W)] \bigr), \quad a \in \{0,1\}.$
	Replacing $\mu$ by an estimated outcome $\widehat{\mu}$ and expectations by empirical means, we obtain the estimated EIC $D(O_i; \widehat{\mu})$ at each observation and its empirical mean $P_n D = \frac{1}{n} \sum_{i=1}^n D(O_i; \widehat{\mu})$.
	The quantity $-P_n D$ reflects the bias of the plug-in estimator: when the outcome model is correctly specified, $P_n D$ is close to zero, whereas under regularization (e.g.\ cross-validation) the fit can be undersmoothed and the plug-in biased, with $P_n D$ deviating from zero. Manually controlling the $\ell_1$ norm of the outcome fit is therefore a natural way to guide the amount of  {plug-in} bias.

	We run two sets of experiments. In the first, we fit the outcome regression with PC-HA for each of the three norms and select the regularization parameter $\lambda$ by $K$-fold cross-validation (minimizing mean squared prediction error). We then form the plug-in estimator using this CV-selected fit and report its performance over replications. No undersmoothing is applied; the outcome model is used exactly as chosen by CV.
	
	In the second experiment we again obtain a CV-selected fit and its $\lambda_{\text{CV}}$, but we then \emph{undersmooth} the outcome model by relaxing the penalty: we \emph{decrease} $\lambda$ (i.e.\ increase the allowed $\ell_1$ norm of the solution) until the empirical mean of the EIC is small enough. Specifically, let $\widehat{\mu}^{(\lambda)}$ denote the  {PC-HA} fit at regularization $\lambda$ and $D^{(\lambda)}(O) = D(O; \widehat{\mu}^{(\lambda)})$ the EIC evaluated at that fit. We compute the sample standard deviation of the EIC at the CV fit, $\widehat{\sigma}(D)$, and define the threshold $\tau = \frac{ \widehat{\sigma}(D) }{ \sqrt{n}\, \log n }$.
	We then decrease $\lambda$ along a grid (starting from $\lambda_{\text{CV}}$) until we find $\lambda^*$ such that $\bigl| P_n D^{(\lambda^*)} \bigr| \leq \tau$.
	We take the smallest such $\lambda$ (the most undersmoothed fit that still satisfies the constraint) and use $\widehat{\mu}^{(\lambda^*)}$ as the plug-in. Thus we relax the penalty until the bias $-P_n D$ falls within the band $\pm \tau$, where $\tau$ is scaled by the variability of the EIC at the CV fit and by $\sqrt{n}\log n$. When we perform multiple replications, CV and $\tau$ are computed once on an initial sample; in each replication we then carry out this undersmoothing search and report the plug-in ATE at the selected $\lambda^*$.
	
	In the Supplement (Section~\ref{supp:simulation_plots}, Figure~\ref{fig:ate_eic_lambda}) we include one plot per estimator illustrating the undersmoothing process: the empirical mean of the EIC is plotted against $\lambda$, with the CV choice and the $\tau$ band indicated. Tables 1 and 2 report Monte Carlo results for training sample size $n=300$, 500 dataset replications, true propensity, and 95\% nominal coverage. Left: no undersmoothing (CV-selected outcome fit). Right: with undersmoothing (outcome $\lambda$ decreased until $|P_n D| \leq \tau$).
	Oracle coverage measures how often a nominal 95\% confidence interval would contain the true parameter (0) if we knew the true standard error of the estimator. It is computed as follows. From the Monte Carlo replications we obtain the empirical variance of the estimator (the ``true'' variance) and take its square root to get the true standard error $\sigma_{\text{MC}}$. In each replication $b$, we form the Wald-style interval $\widehat{\psi}_b \pm 1.96\,\sigma_{\text{MC}}$ and check whether the known true parameter lies in that interval. Oracle coverage is the proportion of replications where it does. Because the interval uses the true variance rather than an estimated one, any shortfall from the nominal 95\% is due to bias and/or non-normality of the estimator, not to standard error estimation. Thus oracle coverage summarizes both the bias-to-SE ratio and the quality of the normal approximation.
	
	\begin{table}[htbp]
		\footnotesize
		\centering
		\small
		\begin{minipage}[t]{0.48\textwidth}
			\centering
			\caption{No undersmoothing}
			\label{tab:nounder}
			\begin{tabular}{lccc}
				\toprule
				Metric & PCHAL & PCHAR & PCHAGL  \\
				\midrule
				Bias & 0.0357 & 0.0472 & 0.0345 \\
				
				True SE & 0.0772 & 0.0776 & 0.0779 \\
				Bias / SE & 0.453 & 0.612 & 0.442 \\
				{Oracle coverage} (95\%) & 92.6\% & 90.6\% & 92.4\% \\
				Mean of  {EIC} mean & $-0.0147$ & $-0.0256$ & $-0.0143$ \\
				Mean of  {EIC} std & 1.028 & 0.965 & 1.047 \\
				\bottomrule
			\end{tabular}
		\end{minipage}
		\hfill
		\begin{minipage}[t]{0.48\textwidth}
			\centering
			\caption{With undersmoothing}
			\label{tab:under}
			\begin{tabular}{ccc}
				\toprule
				PCHAL  & PCHAR  & PCHAGL \\
				\midrule
				0.0265 & 0.0411 & 0.0277 \\
				0.0814 & 0.0792 & 0.0807 \\
				0.326 & 0.519 & 0.344 \\
				93.6\% & 91.6\% & 93.6\% \\
				$-0.0137$ & $-0.0240$ & $-0.0134$ \\
				0.835 & 0.934 & 0.863 \\
				\bottomrule
			\end{tabular}
		\end{minipage}
		%\caption{Simulation results: no undersmoothing (left) and with undersmoothing (right). $n=300$, $n_{\text{sim}}=500$, true propensity, 95\% nominal coverage.}
		\label{tab:results}
	\end{table}
	
	{Overall, the three penalty norms yield broadly similar performance, with modest bias and oracle coverage not far from the nominal 95\%. Across both the CV-only and undersmoothing strategies, however, the PCHAL and PCHAGL norms provide the best control of plug-in bias, as reflected in smaller bias-to-SE ratios and oracle coverage closest to 95\%, together with empirical EIC means closest to zero. In contrast, PCHAR remains somewhat more biased and undercovers. These findings indicate that enforcing an $\ell_1$-type constraint on the PC-HA coefficients or on the implied HAL coefficients is most effective at (approximately) solving the EIC score equation and delivering well-calibrated ATE inference in this design.}

	\bibliographystyle{imsart-number} % Style BST file (imsart-number.bst or imsart-nameyear.bst)
	%\bibliography{bib}

	\section{Notation Index}
	
	\renewcommand{\arraystretch}{1.5}
	\begin{longtable}{p{0.25\textwidth} p{0.7\textwidth}}
		\hline
		\textbf{Symbol} & \textbf{Description} \\
		\hline
		$n$ & Sample size (number of observations). \\
		$d$ & Dimension of the multivariate target function / covariates. \\
		$N$ & Number of basis functions in the initial high-dimensional spline model (for $k=0$, $ {N=n(2^d-1)}$; more generally $ {N=O(n\,2^{kd})}$). \\
		$m_n$ & Number of evaluation points in the vector $x^n$ (typically $m_n=n$). \\
		$O$ & Observed data structure, $O \sim P_0$. \\
		$P_0$ & True data distribution. \\
		$P_n$ & Empirical distribution (based on $n$ i.i.d. observations). \\
		$x^n$ & Vector of design points $(x_i : i=1, \ldots, m_n)$. \\
		$k$ & Order of smoothness for the spline basis ($k=0, 1, \ldots$). \\
		$k^*$ & Defined as $k+1$. \\
		
		\hline
		$D^{(k)}([0,1]^d)$ & Space of càdlàg functions with finite $k$-th order sectional variation norm. \\
		$D^{(k)}_M([0,1]^d)$ & Subset of $D^{(k)}([0,1]^d)$ with sectional variation norm bounded by $M$. \\
		$D^{(k)}({\cal R}_N)$ & Finite-dimensional submodel spanned by the initial $N$ spline basis functions (HAL space). \\
		$D^{(k)}({\cal E}_n)$ & $m_n$-dimensional submodel spanned by the PC-basis functions (PC-HA working model). \\
		$D^{(k)}_{M}({\cal R}_N)$ & Subset of the HAL space with coefficient $L_1$-norm bounded by $M$. \\
		$D^{(k)}_{ {\lVert\cdot\rVert}, C}({\cal E}_n)$ & Subset of the PC-HA space where coefficients $\alpha$ satisfy the norm constraint $ {\lVert \alpha \rVert \leq C}$. \\
		$\|\cdot\|_v^*$ & Zero-order sectional variation norm. \\
		$\|\cdot\|_v$ & Variation norm. \\
		
		\hline
		
		$\phi_j$ & The $j$-th spline basis function from the original set ${\cal R}_N$. \\
		$H$ & Design matrix ($m_n \times N$), where $H_{ij} = \phi_j(x_i)$. \\
		$E_m$ & The $m$-th eigenvector of $H^\top H$ (also denoted $E_N(\cdot, m)$). \\
		$E_N^n$ & $N \times m_n$ matrix containing the leading eigenvectors. \\
		$\tilde{\phi}_m$ & The $m$-th PC-basis function (linear combination of $\phi_j$ using $E_m$). \\
		
		\hline
		
		$\beta$ & Coefficient vector of dimension $N$ for the spline basis functions. \\
		$\alpha$ & Coefficient vector of dimension $m_n$ for the PC-basis functions. \\
		$\beta(\alpha)$ & Mapping from PC coefficients $\alpha$ to spline coefficients $\beta$. \\
		$\psi_{N,\beta}$ & Function in the HAL model parameterized by $\beta$: $\sum \beta(j)\phi_j$. \\
		$\theta_{n,\alpha}$ & Function in the PC-HA model parameterized by $\alpha$: $\sum \alpha(m)\tilde{\phi}_m$. \\
		$ {\lVert \alpha \rVert_1}$ & Standard $L_1$-norm of $\alpha$ (used for PC-HAL). \\
		$ {\lVert \alpha \rVert_2}$ & Standard $L_2$-norm of $\alpha$ (used for PC-HAR). \\
		$ {\lVert \alpha \rVert_3}$ & Defined as $ {\lVert \beta(\alpha)\rVert_1}$, the implied sectional variation norm. \\
		
		\hline
		
		$\psi_0$ & $\arg\min P_0 L(\psi)$. \\
		$\psi_{N,0}$ & Best approximation of $\psi_0$ in the finite spline model (Oracle). \\
		$\psi_{N,\beta_n}$ & Standard HAL-MLE over $D^{(k)}_M({\cal R}_N)$. \\
		$\theta_{n,\alpha_n^*}$ & PC-representation of the HAL-MLE such that $P_n L(\theta_{n,\alpha_n^*}) = P_n L(\psi_{N,\beta_n})$. \\
		$\hat{\Psi}_{\text{PC-HAGL}}$ & PC-Highly Adaptive Generalized Lasso estimator ($\theta_{n, \alpha_{GL}}$). \\
		$\hat{\Psi}_{\text{PC-HAL}}$ & PC-Highly Adaptive Lasso estimator ($\theta_{n, \alpha_{L}}$). \\
		$\hat{\Psi}_{\text{PC-HAR}}$ & PC-Highly Adaptive Ridge estimator ($\theta_{n, \alpha_{R}}$). \\
		$\alpha_{n,cv}$ & Coefficient vector selected via Cross-Validation. \\
		$C_n$ & Regularization parameter selected via Cross-Validation. \\
		$r(n)$ & Rate function for the bound on $\alpha$. \\
		
		\hline
		$L(\psi)$ & True loss function. \\
		$L_n(\psi)$ & Approximation of the loss function. \\
		$d_0(\psi, \psi_0)$ & Loss-based dissimilarity $P_0 L(\psi) - P_0 L(\psi_0)$. \\
		$d_{0n}(\psi, \psi_{0n})$ & Approximate loss-based dissimilarity. \\
		\hline
	\end{longtable}

	\begin{appendix}
		
		\newpage
		
		\section{Proof of Pointwise Asymptotic Normality}\label{proof:asymnor}
		
		Theorem 2 in { \cite{van2023higher} shows that $J$-dimensional $k$-th order spline working models $D^{(k)}({\cal R}(d,{\bf J}))$ provide supremum-norm approximations of the smoothness class $D^{(k)}_M([0,1]^d)$ at rate $O\{r(d,J)^{k+1}\}\approx O(1/J^{k+1})$ (up to powers of $\log J$). Thus, for each $n$ there exists a $J_n$-dimensional sparse working model $D^{(k)}({\cal R}_{0,n})$ that approximates the true target function at the rate $O(1/J_n^{k+1})$, even though this model is not constructed by the estimator and is only known to exist.}
		{Because this oracle model $D^{(k)}({\cal R}_{0,n})$ is fixed and does not depend on the data, all sums of empirical process terms in the proof run over a non-random index set ${\cal R}_{0,n}$. We then orthonormalize the basis functions in $D^{(k)}({\cal R}_{0,n})$ with respect to an inner product based on the covariance (or information) of the scores, so that the score vector has an identity covariance (or information) matrix in this basis. After this orthonormalization, the central objects in the analysis are simply the empirical means $P_n S_{\theta_n}(\phi_j^*)$ of the scores evaluated at the orthonormal basis functions $\phi_j^*$, and we can track their contribution to $(\theta_n-\psi_{0,n})(x_0)$ through linear algebra.}

		The PC-HA estimator will never end up with a sparse working model in the original basis functions $\{\phi_j:1 \leq j \leq J_n\}$.
		We know that the working model $D^{(k)}({\cal E}_n)$ provides a close enough approximation of $D^{(k)}([0,1]^d)$ or smaller order than the rate of convergence. However, this is still an $n$-dimensional working model. 
		However, this says nothing about the existence of a  $J_n$ dimensional working model $D^{(k)}({\cal R}_n)$ of $D^{(k)}({\cal R}_N)$ with $J_n \sim^+ n^{1/(2k^*+1)}$ that has an approximation error of $D^{(k)}([0,1]^d)$ that is of same order as rate $n^{-k^*/(2k^*+1)}$.
		%Therefore, it might be reasonable to assume that there exists a $J_n$ dimensional working model $D^{(k)}({\cal E}(J_n))$
		%of $D^{(k)}({\cal E}_n)$ with $J_n\sim^+ n^{1/(2k^*+1)}$ that has an approximation error of $D^{(k)}({\cal E}_n)$ and thereby of
		%$D^{(k)}_M([0,1]^d)$%. This working model should then  be found with standard Lasso with $D^{(k)}({\cal E}_n)$ as in PC-HAL, and it might closely correspond with selecting the top $J_n$ eigenvectors w.r.t. eigenvalues. If true, then this would provide a sound strategy for establishing the asymptotic normality of $\theta_n$, analogue to our proof in next subsection with $D^{(k)}({\cal R}_{0,n})$ replaced by $D^{(k)}({\cal E}(J_n))$. 
		
		\noindent\fbox{\parbox{\dimexpr\linewidth-2\fboxsep-2\fboxrule}{
				{\bf Definition of ${\cal R}_{0,n}$ (sparse working model).}
				{We define ${\cal R}_{0,n}$ to be a set of indices of size $J_n$ such that the working model $D^{(k)}({\cal R}_{0,n})\subset D^{(k)}({\cal R}_N)$ is the sparsest (or any) model satisfying: (i)~{Approximation:} $\sup_{f \in D^{(k)}_M({\cal R}_N)} \inf_{\psi \in D^{(k)}_M({\cal R}_{0,n})}  {\lVert f - \psi \rVert_\infty} = O_P(n^{-k^*/(2k^*+1)})$; (ii)~{Oracle convergence:} the oracle MLE $\psi_{0,n}=\arg\min_{\psi\in D^{(k)}({\cal R}_{0,n})}P_0 L(\psi)$ satisfies $ {\lVert \psi_{0,n}-\psi_0\rVert_{\infty}}=o_P((J_n/n)^{1/2})$. {We choose $J_n \asymp n^{1/(2k^*+1)}$ (up to powers of $\log n$) so that the approximation bias satisfies
						$ {\lVert} \psi_{0,n}-\psi_0 {\rVert_\infty} = o_P\!\left(n^{-k^*/(2k^*+1)}\right)$; since the sharp rate is
						$n^{-k^*/(2k^*+1)}$ up to $\log n$ factors, suppressing these logs only strengthens the statement and keeps the bias of smaller order than the leading stochastic term.}
		}}}
		
		Existence of such an ${\cal R}_{0,n}$ is guaranteed by Theorem 2 in \cite{van2023higher}: $J$-dimensional $k$-th order spline working models approximate $D^{(k)}_M([0,1]^d)$ in supremum norm at rate $O(1/J^{k+1})$, so we may take ${\cal R}_{0,n}$ to be any such set (the definition is non-constructive).

		In virtue of the arguments above, let's aim an analysis not relying on $\theta_n$ being an MLE for a sparse working model $D^{(k)}({\cal E}(J_n))$.
		Instead let's rely on the independent sparse working model $D^{(k)}({\cal R}_{0,n})\subset D^{(k)}({\cal R}_N)$ of size $ {J_n\sim^+ n^{1/(2k^*+1)}}$. 
		We have a PC-HAGL $\theta_n=\theta_{n,\alpha_n}$ with $ {\lVert \beta(\alpha_n)\rVert_1}=C_n$ and $C_n=O_P(1)$.
		Moreover $ {\lVert \alpha_n\rVert_2^2=\lVert \beta(\alpha_n)\rVert_2^2}=O_P(1/n)$.
		By our result on score equations solved by PC-HAGL 
		we know that $\theta_n$ solves score equations $\sup_{f^*\in D^{(k)}({\cal E}_n), {\lVert f^*\rVert_2}\leq n^{-1/2}}P_n S_{\theta_n}(f^*)=o_P(n^{-1/2})$, and we can establish more precise rates. 
		We also know that $\sup_{f\in D^{(k)}_M({\cal R}_N)} \inf_{\theta\in D^{(k)}_M({\cal E}_n)} {\lVert f-\theta\rVert_{\infty}}=O_P(n^{-k^*/(2k^*+1)})$. If we do not screen out PC basis functions, then  $\alpha_n$ will not have zero components, which makes this score equation result better by not having to replace  $D^{(k)}({\cal E}_n)$ by a submodel $D^{(k)}({\cal E}(J_n))$. Due to $D^{(k)}({\cal R}_N)$ fast approximating $D^{(k)}([0,1]^d)$, the above bound also implies  $\sup_{f^*\in D^{(k)}_M([0,1]^d)}P_n S_{\theta_n}(f^*)=o_P(n^{-1/2})$.
		
		Let $\psi_{0,n}=\arg\min_{\psi\in D^{(k)}({\cal R}_{0,n})}P_0L(\psi)$ be the oracle MLE for the sparse working model
		$D^{(k)}({\cal R}_{0,n})\subset D^{(k)}({\cal R}_N)$.
		We wish to analyze $(\theta_n-\psi_{0,n})(x_0)$ at a point $x_0\in (0,1)^d$.
		Given that $ {\lVert} \psi_{0,n}-\psi_0 {\rVert_{\infty}}=O_P(n^{-k^*/(2k^*+1)})$, this will then also yield a linear approximation of 
		$(\theta_n-\psi_0)(x)$. 
		
		The analysis is based on treating $\theta_n$ as an approximate MLE of $\psi_{0,n}$ over  $D^{(k)}({\cal R}_{0,n})$. 
		Therefore we are concerned with score equations $P_n S_{\theta_n}(\phi_j)\approx 0$, $j\in {\cal R}_{0,n}$. Let $\phi^n=(\phi_j: j\in {\cal R}_{0,n})$.
		We know that $P_0 S_{\psi_{0,n}}(\phi^n)=0$. 
		We also have that $P_n S_{\theta_n}(\phi^n)=o_P(n^{-1/2})$ by the score equation result for $\theta_n$. 
		We have that $\theta_n\not \in D^{(k)}({\cal R}_{0,n})$ so that we will also use a projection $\tilde{\theta}_n\in D^{(k)}({\cal R}_{0,n})$ of $\theta_n$ onto $D^{(k)}({\cal R}_{0,n})$, where $ {\lVert}\tilde{\theta}_n-\theta_n {\rVert_{\infty}}=O_P(n^{-k^*/(2k^*+1)})$.
		We then somehow need that the $\tilde{\theta}_n$ still solves the scores $\theta_n$ solves: $P_n S_{\tilde{\theta}_n}(\phi^n)=o_P(n^{-1/2})$. Therefore we propose to define $\tilde{\theta}_n$ as the solution of 
		$P_n S_{\tilde{\theta}_n}(\phi^n)-S_{\theta_n}(\phi^n)=0$ so that the score equations are fully preserved by $\tilde{\theta}_n$.
		This generally corresponds with defining $\tilde{\theta}_n$ as a loss based projection of $\theta_n$ onto $D^{(k)}({\cal R}_{0,n})$.

		{\bf Inner product for orthogonalizing $D^{(k)}({\cal R}_{0,n})$:}
		Finally, we will replace $\{\phi_j:1 \leq j \leq J_n\}$ by an orthonormal $\{\phi^*_j:1 \leq j \leq J_n\}$ w.r.t. an appropriate inner product
		$\langle f_1,f_2\rangle_{sc,P_0}$. This does not affect the linear span but just the representation. 
		We have two candidate choices for this inner product.
		The first choice is $\langle f_1,f_2\rangle_{sc,P_0}=P_0 S_{\psi_{0,n}}(f_1)S_{\psi_{0,n}}(f_2)$ and the second possible choice is
		$\langle f_1,f_2\rangle_{sc,P_0}\equiv P_0 \frac{d}{d\delta_0}S_{\psi_{0,n}+\delta_0 f_2}(f_1)$. For likelihood behaving loss functions these two inner product are close approximations of each other, due to the information matrix being equal to the covariance of scores for a correct parametric model, and our $D^{(k)}({\cal E}_n)$ is approximately correct. 
		
		{The first inner product prioritizes the orthogonality of the scores in the linear approximation,  while the latter prioritizes getting a diagonal information matrix in the Taylor expansion analysis. The proof applies to both choices. 
			We will select the second one since it allows us to obtain a clean linear approximation, and we then have to deal with the scores not being fully orthogonal as we will do below. }
		
		Let $\tilde{\beta}_n$ be such that $\tilde{\theta}_n=\sum_{j\in {\cal R}_{0,n}}\tilde{\beta}_n(j)\phi_j^*$
		and $\beta_{0,n}$ so that $\psi_{0,n}=\sum_{j\in {\cal R}_{0,n}}\beta_{0,n}(j)\phi_j^*$.
		We will  {interchangeably} use $S_{\tilde{\theta}_n}(\phi^{*,n})$ and $S_{\tilde{\beta}_n}(\phi^{*,n})$ and $S_{\psi_{0,n}}(\phi^{*,n})$ and $S_{\beta_{0,n}}(\phi^{*,n})$.
		
		{\[
			\fbox{$r_n(\phi^{*,n})\equiv P_n S_{\theta_n}(\phi^{*,n})$.}
			\]}
		{\bf Starting equation:}
		We have the typical identity for $M$-estimator analysis:
		\[
		-P_0\{S_{\theta_n}(\phi^{*,n})-S_{\psi_{0,n}}(\phi^{*,n})\}=(P_n-P_0) S_{\theta_n}(\phi^{*,n})-r_n(\phi^{*,n}).\]
		We write $P_0 S_{\theta_n}(\phi^{*,n})=P_0 S_{\tilde{\theta}_n}(\phi^{*,n})+P_0 (S_{\theta_n}-S_{\tilde{\theta}_n})(\phi^{*,n})$.
		Then,
		\begin{eqnarray*}
			-P_0\{S_{\tilde{\theta}_n}(\phi^{*,n})-S_{\psi_{0,n}}(\phi^{*,n})\}
			&=&(P_n-P_0) S_{\theta_n}(\phi^{*,n})-r_n(\phi^{*,n})\\
			&&+P_0(S_{\theta_n}-S_{\tilde{\theta}_n})(\phi^{*,n}).
		\end{eqnarray*}
		{\bf First order Taylor expansion of Score Equation in $\beta_{0,n}$:}
		Let
		\[
		{\fbox{$R_{1n}(\phi^{*,n})\equiv -P_0\left\{S_{\tilde{\theta}_n}(\phi^{*,n})-S_{\psi_{0,n}}(\phi^{*,n})-d/d\beta_{0,n}S_{\beta_{0,n}}(\phi^{*,n})(\tilde{\beta}_n-\beta_{0,n})\right\}$}}.\]
		This is the exact remainder of a first order Taylor expansion of $\beta\rightarrow -P_0S_{\beta}(\phi^{*,n})$ at $\beta=\beta_{0,n}$
		evaluated at $\tilde{\beta}_n$.
		By definition of our inner product, we have that this derivative (the information matrix)  is the identity matrix, which is an $J_n\times J_n$-matrix.
		So we have
		\begin{eqnarray*}
			\tilde{\beta}_n-\beta_{0,n}&=&(P_n-P_0)S_{\theta_n}(\phi^{*,n})-r_n(\phi^{*,n})-R_{1n}(\phi^{*,n})\\
			&&+P_0(S_{\theta_n}-S_{\tilde{\theta}_n})(\phi^{*,n}).
		\end{eqnarray*}
		{\bf Equation in terms of $\tilde{\theta}_n-\psi_{0,n}$:}
		Now, on both sides we take the standard inner product with $\phi^{*,n}$ and evaluate it at $x_0$. 
		This yields
		\begin{eqnarray*}
			\sum_{j\in {\cal R}_{0,n}}(\tilde{\beta}_n-\beta_{0,n})(j)\phi_j^*(x_0)&=&(P_n-P_0)\sum_{j\in {\cal R}_{0,n}}S_{\theta_n}(\phi^*_j)\phi_j^*(x_0)-\sum_{j\in {\cal R}_{0,n}}r_n(\phi^*_j)\phi_j^*(x_0)\\
			&&-\sum_{j\in {\cal R}_{0,n}}R_{1n}(\phi^*_j)\phi_j^*(x_0)
			+\sum_{j\in {\cal R}_{0,n}}P_0(S_{\theta_n}-S_{\tilde{\theta}_n})(\phi^*_j)\phi_j^*(x_0).
		\end{eqnarray*}
		The left-hand side equals $(\tilde{\theta}_n-\psi_{0,n})(x_0)$. 
		{Notice that in the term $\sum_{j\in {\cal R}_{0,n}}r_n(\phi^*_j)\phi_j^*(x_0)$ the scalars are the point evaluations $\phi_j^*(x_0)$ and the functions are the score equations $r_n(\phi^*_j)$. In contrast, in the next definition the functions are the basis elements $\phi_j^*$ themselves, and the coefficients are again the scalars $\phi_j^*(x_0)$. Thanks to the existence result for the fixed sparse set ${\cal R}_{0,n}$, all these sums are taken over a non-random index set, which allows us to control these empirical process terms. }
		Let {\[
			\fbox{$\bar{\phi}_{n,x_0}^*\equiv J_n^{-1/2}\sum_{j\in {\cal R}_{0,n}}\phi_j^*(x_0)\phi_j^*$.}
			\]} a linear combination of the basis functions $\phi_j^*$ with coefficients equal to $ {J_n^{-1/2}\phi_j^*(x_0)}$. Thus $\bar{\phi}_{n,x_0}^*\in D^{(k)}({\cal R}^*_{0,n})$.
		Due to linearity of the objects in $\phi_j^*$, this can also be  more succinctly represented as:
		\begin{eqnarray*}
			(n/J_n)^{1/2}(\tilde{\theta}_n-\psi_{0,n})(x_0)&=&n^{1/2}(P_n-P_0)S_{\theta_n}(\bar{\phi}_{n,x_0}^*)-n^{1/2}r_n(\bar{\phi}_{n,x_0}^*) -n^{1/2}R_{1n}(\bar{\phi}_{n,x_0}^*) \\
			&&+n^{1/2}P_0 \{S_{\theta_n}(\bar{\phi}^*_{n,x_0})-S_{\tilde{\theta}_n}(\bar{\phi}^*_{n,x_0})\} .
		\end{eqnarray*}
		{\bf Empirical process remainder:}
		Let
		\[
		{\fbox{$R_{2n}(\bar{\phi}_{n,x_0}^*)\equiv n^{1/2}(P_n-P_0)\{S_{\theta_n}(\bar{\phi}_{n,x_0}^*)-S_{\psi_{0,n}}(\bar{\phi}_{n,x_0}^*)\}$}}.\]
		Then,
		\begin{eqnarray*}
			(n/J_n)^{1/2}(\tilde{\theta}_n-\psi_{0,n})(x_0)&=&n^{1/2}(P_n-P_0)S_{\psi_{0,n}}(\bar{\phi}_{n,x_0}^*)-n^{1/2}r_n(\bar{\phi}_{n,x_0}^*) -n^{1/2}R_{1n}(\bar{\phi}_{n,x_0}^*) \\
			&&+n^{1/2}R_{2n}(\bar{\phi}_{n,x_0}^*)+n^{1/2}P_0 \{S_{\theta_n}(\bar{\phi}^*_{n,x_0})-S_{\tilde{\theta}_n}(\bar{\phi}^*_{n,x_0}) \}.
		\end{eqnarray*}
		{\bf Dealing with the score difference at $\theta_n$ and $\tilde{\theta}_n$:}
		Consider the term
		\[
		\begin{array}{l}
			R_{3n}(\bar{\phi}_{n,x_0}^*)\equiv n^{1/2}P_0 \{S_{\theta_n}(\bar{\phi}^*_{n,x_0})-S_{\tilde{\theta}_n}(\bar{\phi}^*_{n,x_0}) \}.\\
			%=P_0 \{(S_{\theta_n}-S_{\psi_{0,n}})(\bar{\phi}^*_{n,x_0})-(S_{\tilde{\theta}_n}-S_{\psi_{0,n}})(\bar{\phi}^*_{n,x_0}) \}
		\end{array}
		\]
		For example, for least squares regression this term equals $P_0\bar{\phi}_{n,x_0}^*(\theta_n-\tilde{\theta}_n)$.
		For example, if we define $\tilde{\theta}_n$ as the loss-based projection of $\theta_n$ onto $D^{(k)}({\cal R}_{0,n}^*)$ with squared error loss, then
		$P_0\phi_j^*(\tilde{\theta}_n-\theta_n)=0$ for all $j\in {\cal R}_{0,n}^*$ and thus
		$P_0\bar{\phi}_{n,x_0}^*(\theta_n-\tilde{\theta}_n)=0$.
		In general, we could define the projection $\tilde{\theta}_n$ as the solution of
		\[
		{P_0\{S_{\theta_n}(\phi_j^*)-S_{\tilde{\theta}_n}(\phi_j^*)\}=0\mbox{ for all } j\in {\cal R}_{0,n}.}
		\]
		Let's assume that this is how we define $\tilde{\theta}_n$. Then, we have $R_{3n}(\bar{\phi}_{n,x_0}^*)=0$.
		Then,
		\begin{eqnarray*}
			(n/J_n)^{1/2}(\tilde{\theta}_n-\psi_{0,n})(x_0)&=&n^{1/2}(P_n-P_0)S_{\psi_{0,n}}(\bar{\phi}_{n,x_0}^*)-n^{1/2}r_n(\bar{\phi}_{n,x_0}^*) -n^{1/2}R_{1n}(\bar{\phi}_{n,x_0}^*) \\
			&&+n^{1/2}R_{2n}(\bar{\phi}_{n,x_0}^*).
		\end{eqnarray*}
		{\bf Substituting $\theta_n$ back into the left-hand side to obtain expression for $\theta_n-\psi_{0,n}$:}
		Finally, we want a linear approximation for our actual estimator $\theta_n$ instead of its projection onto $D^{(k)}({\cal R}_{0,n}^*)$. 
		Let $b_n(x_0)\equiv \tilde{\theta}_n(x_0)-\theta_n(x_0)$. We have 
		\begin{eqnarray*}
			(n/J_n)^{1/2}(\theta_n-\psi_{0,n})(x_0)&=&n^{1/2}(P_n-P_0)S_{\psi_{0,n}}(\bar{\phi}_{n,x_0}^*)-r_n(x_0)-(n/J_n)^{1/2}b_n(x_0),\end{eqnarray*}
		where
		\[
		{\fbox{$r_n(x_0)\equiv n^{1/2}r_n(\bar{\phi}_{n,x_0}^*) -n^{1/2}R_{1n}(\bar{\phi}_{n,x_0}^*) 
				+n^{1/2}R_{2n}(\bar{\phi}_{n,x_0}^*)$}}
		\]
		{\bf Dealing with bias $\tilde{\theta}_n(x_0)-\theta_n(x_0)$:}
		By assumption our working model $D^{(k)}({\cal R}_{0,n})$ was chosen so that its bias w.r.t. $\theta_n$ or any other function in $D^{(k)}({\cal R}_N)$ is $O_P(n^{-k^*/(2k^*+1)})$. Then, for a $log n$ optimal choice $J_n\sim^+ n^{1/(2k^*+1)}$ we obtain $b_n(x_0)(n/J_n)^{1/2}=o_P(1)$ and in fact, $\sup_{x_0}b_n(x_0)(n/J_n)^{1/2}=o_P(1)$.
		\nl

		{\bf Dealing with remainder term $r_n(x_0)$:}
		We will also  assume $r_n(x_0)=o_P(1)$.
		For this, we will need that
		\begin{eqnarray}
			n^{1/2}r_n(\bar{\phi}_{n,x_0}^*) &=&o_P(1)\\
			n^{1/2}R_{1n}(\bar{\phi}_{n,x_0}^*)&=&o_P(1)\\
			n^{1/2}R_{2n}(\bar{\phi}_{n,x_0}^*)&=&o_P(1).
		\end{eqnarray}
		{We summarise these three conditions in the requirement below, which is referenced in the main text as ``B. Regularity condition'', needed for asymptotic normality Theorem 5.
			\[r_n(x_0)=o_P(1)\]} 
		
		The second and third term are clearly second order differences making it plausible. The first term has to be derived from having bounds on the score equations solved by $\theta_n$ we established earlier. Precise methods for bounding these remainders are carried out in \cite{van2023higher} and will not be repeated here.
		We just suffice with stating that these assumptions are reasonable and expected to hold in great generality.
		We then have
		\begin{eqnarray*}
			(n/J_n)^{1/2}(\theta_n-\psi_{0,n})(x_0)&=&n^{1/2}(P_n-P_0)S_{\psi_{0,n}}(\bar{\phi}_{n,x_0}^*)-r_n(x_0)-(n/J_n)^{1/2}b_n(x_0),\end{eqnarray*}
		where $r_n(x_0)=o_P(1)$ and $(n/J_n)^{1/2}b_n(x_0)=o_P(1)$.
		%We assume that indeed the size $J_n$ is chosen so that $(n/J_n)^{1/2}b_n(x_0)=o_P(1)$.
		\nl
		{\bf Understanding the leading empirical mean of independent random variables:}
		We now note that the leading term is a sum of independent mean zero random variables $S_{\psi_{0,n}}(\bar{\phi}_{n,x_0}^*)$.
		Let
		\[
		\sigma^2_{0,n}(x_0)\equiv P_0\{S_{\psi_{0,n}}(\bar{\phi}_{n,x_0}^*)\}^2.\]
		Suppose that we would have selected the inner product $\langle f_1,f_2\rangle_{sc,P_0}=P_0 S_{\psi_{0,n}}(f_1)S_{\psi_{0,n}}(f_2)$ so that the covariance of the score vector is the identity matrix. Then, we would have
		\begin{eqnarray*}
			\sigma^2_{0,n}(x_0)&=&J_n^{-1}\sum_{j_1,j_2\in {\cal R}_{0,n}}P_0S_{\psi_{0,n}}(\phi_{j_1}^*)S_{\psi_{0,n}}(\phi_{j_2}^*)\phi_{j_1}^*(x_0)\phi_{j_2}^*(x_0)\\
			&=& J_n^{-1}\sum_{j\in {\cal R}_{0,n}}P_0\{S_{\psi_{0,n}}(\phi_j^*)\}^2\{\phi_j^*(x_0)\}^2\\
			&=& J_n^{-1} \sum_{j\in {\cal R}_{0,n}}\{\phi_j^*(x_0)\}^2.
		\end{eqnarray*}
		In general, for log-likelihood behaving loss functions the covariance score inner product is approximately equal to the derivative inner product we used.
		In general, one can still show that the expression for $\sigma^2_0(x_0)$ is bounded.
		We have
		\[
		\sigma^2_{0,n}(x_0)= J_n^{-1}\phi^{*,n}(x_0)^{\top}\Sigma_{0,n}\phi^{*,n}(x_0),\]
		where
		\[
		\Sigma_{0,n}\equiv P_0 S_{\psi_{0,n}}(\phi^{*,n}) S^{\top}_{\psi_{0,n}}(\phi^{*,n})\]
		is the $J_n\times J_n$ covariance matrix of $S_{\psi_{0,n}}(\phi^{*,n})$. (Using $\bar{\phi}_{n,x_0}^*= J_n^{-1/2}\sum_j \phi_j^*(x_0)\phi_j^*$, one has $\Var_0(S_{\psi_{0,n}}(\bar{\phi}_{n,x_0}^*))= J_n^{-1}\phi^{*,n}(x_0)^{\top}\Sigma_{0,n}\phi^{*,n}(x_0)$.)
		Let $\Sigma_{0,n}=S_n D_n S_n^t$ be the eigenvalue decomposition with diagonal eigenvalue matrix $D_n$. 
		Then, $\sigma^2_{0,n}(x_0)= J_n^{-1} {\lVert D_n^{1/2} S_n\phi^{*,n}(x_0)\rVert_2^2}$ is $J_n^{-1}$ times the square of the Euclidean norm of $D_n^{1/2} S_n\phi^{*,n}(x_0)$. The operator norm of $D_n^{1/2}$ is given by  {$\lambda_n^{1/2}$}, the square root of the maximal eigenvalue of $\Sigma_{0,n}$. The Euclidean norm of $S_n\phi^{*,n}(x_0)$ equals the norm of $\phi^{*,n}(x_0)$.
		Therefore, 
		\[
		\sigma^2_{0,n}(x_0)\leq J_n^{-1}\lambda_n \sum_{j\in {\cal R}_{0,n}}\{\phi_j^*(x_0)\}^2 = \lambda_n \frac{1}{J_n}\sum_{j\in {\cal R}_{0,n}}\{\phi_j^*(x_0)\}^2.
		\]
		We can assume that $\lambda_n=O_P(1)$ which then implies that $\sigma^2_{0,n}(x_0)=O_P(1)$. Either way, we have
		\[
		(n/J_n)^{1/2}(\theta_n-\psi_{0,n})(x_0)/\sigma_{0,n}(x_0)\Rightarrow_d N(0,1).\]
		If $\sigma^2_{0,n}(x_0)=O_P(1)$ as expected then this corresponds with a rate of convergence given by $(J_n/n)^{1/2}$.

		{\bf Pointwise confidence intervals for $\psi_{0,n}(x_0)$:}
		This yields pointwise confidence intervals $\theta_n(x_0)\pm 1.96 \sigma_n(x_0)(J_n/n)^{1/2}$ for $\psi_{0,n}(x_0)$, where
		$\sigma_n^2(x_0)$ is a consistent estimator of $\sigma^2_{0,n}(x_0)$. 
		
		{\bf Variance estimation:}
		The construction of a variance estimator $\sigma^2_n(x_0)$ is not as straightforward as with HAL. When we use regular HAL we obtain a sparse working model $D^{(k)}({\cal R}_n)$ that can be viewed as an approximation of the fixed working model $D^{(k)}({\cal R}_{0,n})$. This allows one then to use a delta-method working model based confidence interval as if it was a fixed parametric working model. For the PC-HA estimators we do not have a sparse working model $D^{(k)}({\cal R}_{n,0})$ and we cannot view a delta-method variance estimator based on a  potentially sparse working model $D^{(k)}({\cal E}(J_n))$ as an approximation of this variance $\sigma^2_{0,n}(x_0)$ corresponding with $D^{(k)}({\cal R}_{n,0}^*)$. Moreover, we want to allow that PC-HA does not use any screening of the PC- basis functions (for example, PC-HAR applied to $D^{(k)}({\cal E}_n)$)  as a first step. Therefore, the above theoretical derivation does not provide a constructive variance estimator. 
		% This might be an argument for going for PC-HA estimators applied to a screened working model$D^{(k)}({\cal E}(J_n))$ so that one can use the $\delta$-method variance estimator for the selected working model. 
		Therefore, we recommend using the nonparametric bootstrap. For this one can fix the working model $D^{(k)}({\cal E}_n)$ and only bootstrap the PC-HAL starting from this working model construction. This should still make it relatively fast to compute.
		
		\subsection{Simultaneous confidence bands}
		Above we obtained the exact identity:
		\begin{eqnarray*}
			(n/J_n)^{1/2}(\theta_n-\psi_{0,n})(x_0)&=&n^{1/2}(P_n-P_0)S_{\psi_{0,n}}(\bar{\phi}_{n,x_0}^*)-r_n(x_0)-(n/J_n)^{1/2}b_n(x_0),\end{eqnarray*}
		Suppose that $\sup_{x_0}\mid r_n(x_0)\mid =o_P(1)$ and we already assumed $\sup_{x_0}(n/J_n)^{1/2}b_n(x_0)=o_P(1)$. We could assume this uniformity in a subset ${\cal A}\subset [0,1]^d$ instead of uniformly over $[0,1]^d$.
		Then, we have
		\[
		(n/J_n)^{1/2}(\theta_n-\psi_{0,n})(x_0)=n^{1/2}(P_n-P_0)S_{\psi_{0,n}}(\bar{\phi}_{n,x_0}^*)+o_P(1),\]
		where the remainder is $o_P(1)$ uniformly in $x_0$.
		
		{\bf Supremum norm rate of convergence and simultaneous confidence band:}
		This can now be used to obtain a rate of convergence in supremum norm and a simultaneous confidence band. 
		To see this, consider the class of functions ${\cal F}_n=\{J_n^{-1/2} S_{\psi_{0,n}}(\bar{\phi}_{n,x_0}^*)/\sigma_{0,n}(x_0): x_0\in {\cal A}\}$.
		This is a nice $d$-dimensional class of uniformly bounded functions. Therefore its covering number satisfies $N(\epsilon,{\cal F}_n, {\lVert\cdot\rVert_{\infty}})\sim \epsilon^{-d}$. Moreover, $\sup_{x_0\in {\cal A}} {\lVert} S_{\psi_{0,n}}(\bar{\phi}_{n,x_0}^*)/J_n^{1/2} {\rVert_{P_0}}=
		O_P(J_n^{-1/2})$, so that $\sup_{f\in {\cal F}_n} {\lVert} f {\rVert_{P_0}}=O(J_n^{-1/2})$. Therefore we can bound the expectation of 
		$\sup_{f\in {\cal F}_n}\mid G_n(f)\mid$ by $O(J_{\infty}(J_n^{-1/2},{\cal F}_n))$, where $J(\delta,{\cal F})$ denotes the entropy integral 
		$\int_0^{\delta} \sqrt{\log N(\epsilon,{\cal F}_n, {\lVert\cdot\rVert_{\infty}})}d\epsilon$, which behaves as $\delta$ up till $\log \delta$ factor. 
		Therefore, $\sup_{x_0\in {\cal A}}(n/J_n)^{1/2}(\theta_n-\psi_{0,n})(x_0)/\sigma_{0,n}(x_0)=o_P(\log n)$.
		This proves that
		\[
		{\lVert} \theta_n-\psi_{0,n} {\rVert_{\infty}}=O_P^+((J_n/n)^{1/2}), \]
		establishing converges in supremum norm at the rate $O_P^+(n^{-k^*/(2k^*+1)})$. 
		Moreover, it shows that $\theta_n(x_0)\pm \log n 1.96 \sigma_n(x_0)(J_n/n)^{1/2}$ represents a simultaneous $0.95$ confidence band, possibly asymptotically conservative. One could create a sharper confidence band by basing it on a multivariate normal approximation for a grid of $x_0(j)$-values estimating the correlation  matrix of $S_{\psi_{0,n}}(\bar{\phi}_{n,x_0}^*)$.

		Now we present an extended version of Theorem 6. 
		Let $\theta_n=\theta_{n,\alpha_n}$ be a PC-HA estimator 
		defined by $\theta_n=\arg\min_{\theta\in  {D^{(k)}_{\lVert\cdot\rVert,C_n}}({\cal E}_n)}P_n L(\theta)$, constraining one of the three norms (i.e.,  {$\lVert\alpha\rVert_2$},  {$\lVert \alpha\rVert_1$},  {$\lVert\beta(\alpha)\rVert_2$}) by a cross-validated bound $C_n$.
		Recall $S_{\theta}(f)=\frac{d}{d\theta}L(\theta)(f)$ as the score operator.
		There exists a  $D^{(k)}({\cal R}_{0,n})\subset D^{(k)}({\cal R}_N)$  of size $ {J_n\sim^+ n^{1/(2k^*+1)}}$ that strongly approximates $D^{(k)}_M([0,1]^d)$ in the sense that
		$\sup_{f\in D^{(k)}_M([0,1]^d)}\inf_{\psi\in D^{(k)}_M({\cal R}_{0,n})} {\lVert} f-\psi {\rVert_{\infty}}=O_P(n^{-k^*/(2k^*+1)})$.
		
		%By our result on score equations solved by PC-HAGL we know that $\theta_n$ solves score equations $\sup_{f^*\in D^{(k)}({\cal E}_n),\pl \beta(f^*)\pl_2\leq n^{-1/2}}P_n S_{\theta_n}(f^*)=o_P(n^{-1/2})$.
		%We also know that $\sup_{f\in D^{(k)}_M({\cal R}_N)} \inf_{\theta\in D^{(k)}_M({\cal E}_n)}\pl f-\theta\pl_{\infty}=O_P(n^{-k^*/(2k^*+1)})$. 
		
		Let $\psi_{0,n}=\arg\min_{\psi\in D^{(k)}({\cal R}_{0,n})}P_0L(\psi)$ be the oracle MLE for this sparse working model
		$D^{(k)}({\cal R}_{0,n})\subset D^{(k)}({\cal R}_N)$.
		We have that $ {\lVert} \psi_{0,n}-\psi_0 {\rVert_{\infty}}=O_P(n^{-k^*/(2k^*+1)})$. Let $\tilde{\theta}_n\in D^{(k)}({\cal R}_{0,n})$ be a projection of $\theta_n$ onto this sparse working model satisfying $P_n \{S_{\tilde{\theta}_n}(\phi^n)-S_{\theta_n}(\phi^n)\}=0$ so that the score equations are fully preserved by $\tilde{\theta}_n$.
		
		We replace $\{\phi_j:1 \leq j \leq J_n\}$ by an orthonormal $\{\phi^*_j:1 \leq j \leq J_n\}$ w.r.t. inner product
		$\langle f_1,f_2\rangle_{sc,P_0}\equiv P_0 \frac{d}{d\delta_0}S_{\psi_{0,n}+\delta_0 f_2}(f_1)$.
		Let $\tilde{\beta}_n$ be such that $\tilde{\theta}_n=\sum_{j\in {\cal R}_{0,n}}\tilde{\beta}_n(j)\phi_j^*$
		and $\beta_{0,n}$ so that $\psi_{0,n}=\sum_{j\in {\cal R}_{0,n}}\beta_{0,n}(j)\phi_j^*$.
		We will interchangeably use $S_{\tilde{\theta}_n}(\phi^{*,n})$ and $S_{\tilde{\beta}_n}(\phi^{*,n})$ and $S_{\psi_{0,n}}(\phi^{*,n})$ and $S_{\beta_{0,n}}(\phi^{*,n})$.
		We define the following quantities: 
		\[
		\begin{array}{l}
			r_n(\phi^{*,n})\equiv P_n S_{\theta_n}(\phi^{*,n})\\
			R_{1n}(\phi^{*,n})\equiv -P_0\left\{S_{\tilde{\theta}_n}(\phi^{*,n})-S_{\psi_{0,n}}(\phi^{*,n})-d/d\beta_{0,n}S_{\beta_{0,n}}(\phi^{*,n})(\tilde{\beta}_n-\beta_{0,n})\right\}\\
			\bar{\phi}_{n,x_0}^*\equiv J_n^{-1/2}\sum_{j\in {\cal R}_{0,n}}\phi_j^*(x_0)\phi_j^*\\
			R_{2n}(\bar{\phi}_{n,x_0}^*)\equiv n^{1/2}(P_n-P_0)\{S_{\theta_n}(\bar{\phi}_{n,x_0}^*)-S_{\psi_{0,n}}(\bar{\phi}_{n,x_0}^*)\}\\
			R_{3n}(\bar{\phi}_{n,x_0}^*)\equiv n^{1/2}P_0 \{S_{\theta_n}(\bar{\phi}^*_{n,x_0})-S_{\tilde{\theta}_n}(\bar{\phi}^*_{n,x_0}) \}\\
			b_n(x_0)\equiv \tilde{\theta}_n(x_0)-\theta_n(x_0)\\
			r_n(x_0)\equiv n^{1/2}r_n(\bar{\phi}_{n,x_0}^*) -n^{1/2}R_{1n}(\bar{\phi}_{n,x_0}^*) 
			+n^{1/2}R_{2n}(\bar{\phi}_{n,x_0}^*)\\
			\Sigma_{0,n}=P_0 S_{\psi_{0,n}}(\phi^{*,n})S^{\top}_{\psi_{0,n}}(\phi^{*,n})\\
			\sigma^2_{0,n}(x_0)=\phi^{*,n}(x_0)^{\top}\Sigma_{0,n}\phi^{*,n}(x_0).
		\end{array}
		\]
		
		\begin{lemma}
			Assume  {$\lVert \alpha_n\rVert_2^2=\lVert \beta(\alpha_n)\rVert_2^2$}$=O_P(1/n)$;
			$J_n\sim^+  n^{1/(2k^*+1)}$; maximal eigenvalue $\lambda_n $ of $\Sigma_{0,n}$ satisfies $\lambda_n=O_P(1)$;
			$n^{1/2}r_n(\bar{\phi}_{n,x_0}^*) =o_P(1)$;
			$n^{1/2}R_{1n}(\bar{\phi}_{n,x_0}^*)=o_P(1)$;
			$n^{1/2}R_{2n}(\bar{\phi}_{n,x_0}^*)=o_P(1)$.
			
			Then 
			\begin{eqnarray*}
				(n/J_n)^{1/2}(\theta_n-\psi_{0,n})(x_0)&=&n^{1/2}(P_n-P_0)S_{\psi_{0,n}}(\bar{\phi}_{n,x_0}^*)-r_n(x_0)-(n/J_n)^{1/2}b_n(x_0),\end{eqnarray*}
			where $r_n(x_0)=o_P(1)$ and $(n/J_n)^{1/2}b_n(x_0)=o_P(1)$.
		\end{lemma}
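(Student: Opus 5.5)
The plan is to reproduce the chain of identities from the appendix derivation and then check that the hypotheses of the lemma make each remainder vanish. First fix the sparse oracle model $D^{(k)}({\cal R}_{0,n})$, which is non-random and exists by Theorem 2 in \cite{van2023higher}. Orthonormalize its basis with respect to $\langle f_1,f_2\rangle_{sc,P_0}=P_0\frac{d}{d\delta_0}S_{\psi_{0,n}+\delta_0 f_2}(f_1)$. In this basis the derivative $d/d\beta_{0,n}\,P_0S_{\beta_{0,n}}(\phi^{*,n})$ is $-I_{J_n}$. Next define $\tilde\theta_n\in D^{(k)}({\cal R}_{0,n})$ as the solution of $P_0\{S_{\theta_n}(\phi^*_j)-S_{\tilde\theta_n}(\phi^*_j)\}=0$ for all $j$, so that $R_{3n}\equiv0$.

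Second, start from the $M$-estimator identity. Using $P_0S_{\psi_{0,n}}(\phi^{*,n})=0$ (the first-order condition defining $\psi_{0,n}$) and the definition of $r_n(\phi^{*,n})$,
\[
-P_0\{S_{\theta_n}-S_{\psi_{0,n}}\}(\phi^{*,n})=(P_n-P_0)S_{\theta_n}(\phi^{*,n})-r_n(\phi^{*,n}).
\]
Replace $\theta_n$ by $\tilde\theta_n$ on the left; this is exact because $R_{3n}=0$. Then Taylor expand in $\beta$, using the identity information matrix and the exact remainder $R_{1n}$. This gives $\tilde\beta_n-\beta_{0,n}=(P_n-P_0)S_{\theta_n}(\phi^{*,n})-r_n(\phi^{*,n})-R_{1n}(\phi^{*,n})$. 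Take the inner product with $\phi^{*,n}(x_0)$, use linearity of $f\mapsto S_\theta(f)$, and multiply by $(n/J_n)^{1/2}$. The left side becomes $(n/J_n)^{1/2}(\tilde\theta_n-\psi_{0,n})(x_0)$, and every term on the right is now evaluated at $\bar\phi^*_{n,x_0}$. Adding and subtracting $S_{\psi_{0,n}}$ in the empirical process term produces $R_{2n}$. Finally, writing $\tilde\theta_n=\theta_n+b_n$ yields the displayed identity exactly, with $r_n(x_0)$ as boxed. The identity holds algebraically, so what remains is to control its two remainder terms.

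Third, the term $r_n(x_0)=o_P(1)$ follows immediately from the three assumed bounds on $n^{1/2}r_n$, $n^{1/2}R_{1n}$ and $n^{1/2}R_{2n}$. The substantive step, and the one I expect to be the main obstacle, is $(n/J_n)^{1/2}b_n(x_0)=o_P(1)$.

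For this I would argue as follows. By $\|\alpha_n\|_2^2=O_P(1/n)$ and Cauchy--Schwarz with $N\asymp n$, we get $\|\beta(\alpha_n)\|_1=O_P(1)$, so $\theta_n\in D^{(k)}_M({\cal R}_N)$ with probability tending to one. Property (i) of ${\cal R}_{0,n}$ then gives some $\psi^\dagger\in D^{(k)}_M({\cal R}_{0,n})$ with $\|\theta_n-\psi^\dagger\|_\infty=O_P(n^{-k^*/(2k^*+1)})$. One must still transfer this to the specific projection $\tilde\theta_n$. Its defining equations are $P_0(S_{\theta_n}-S_{\tilde\theta_n})(\phi_j^*)=0$, with an information operator that is approximately the identity; I expect this to give $\|\tilde\theta_n-\psi^\dagger\|$ bounded by a constant times $\|\theta_n-\psi^\dagger\|$, first in $L^2$ and then in sup norm via $\lambda_n=O_P(1)$ and the orthonormal basis.

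This only yields $b_n=O_P(n^{-k^*/(2k^*+1)})$, which equals $(J_n/n)^{1/2}$. To get $o_P$ I would use the slack in $J_n\sim^+n^{1/(2k^*+1)}$: choose $J_n$ larger by a $\log n$ factor, exactly as the paper does when suppressing logs, so that the bias is of strictly smaller order. The condition $\lambda_n=O_P(1)$ is also what guarantees $\sigma^2_{0,n}(x_0)=O_P(1)$, so that the leading term has the claimed scale.
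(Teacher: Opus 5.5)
Everything up to and including $r_n(x_0)=o_P(1)$ follows the paper's derivation step for step. Your last move, enlarging $J_n$ by a power of $\log n$ so that an $O_P((J_n/n)^{1/2})$ bias becomes $o_P$, is also the paper's ``$\log n$ optimal choice''. (One small slip: orthonormality for $\langle\cdot,\cdot\rangle_{sc,P_0}$ as written gives $P_0\frac{d}{d\beta}S_\beta(\phi^{*,n})=+I_{J_n}$, not $-I_{J_n}$. The paper is equally loose here, and it only flips signs of terms that are all $o_P(1)$.) The genuine gap is the step where you transfer the approximation rate from the best approximant $\psi^\dagger$ to the particular score-defined projection $\tilde\theta_n$, evaluated at $x_0$.

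The mechanism you sketch cannot deliver this.
\begin{itemize}
\item Contraction in the information norm gives at best $\|\tilde\beta_n-\beta^\dagger\|_2=O_P(n^{-k^*/(2k^*+1)})$ for the coefficient vectors in the orthonormal basis.
\item Passing to the point costs a factor: $|(\tilde\theta_n-\psi^\dagger)(x_0)|\le\|\tilde\beta_n-\beta^\dagger\|_2\,\|\phi^{*,n}(x_0)\|_2$.
\item In the regime of Theorem~\ref{thm:normality}, $\sigma^2_{0,n}(x_0)$ is bounded away from zero. The inequality $\sigma^2_{0,n}(x_0)\le\lambda_n J_n^{-1}\|\phi^{*,n}(x_0)\|_2^2$ together with $\lambda_n=O_P(1)$ then forces $\|\phi^{*,n}(x_0)\|_2\gtrsim J_n^{1/2}$.
\item So this route only yields $(n/J_n)^{1/2}|b_n(x_0)|=O_P(n^{1/2-k^*/(2k^*+1)})=O_P(n^{1/(2(2k^*+1))})$, which diverges. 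A polynomial factor $J_n^{1/2}$ cannot be absorbed by log-factor undersmoothing.
\end{itemize}

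Two further points explain why neither hypothesis rescues the step. The quantity $\lambda_n$ is an $L^2$-type object (the top eigenvalue of the score covariance). It carries no information about the $L^\infty$ operator norm (Lebesgue constant) of the projection onto $D^{(k)}({\cal R}_{0,n})$. Property (i) only says that some good sup-norm approximant exists, not that this projection attains it.

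To close the step you need an extra hypothesis, and there are two options. One is a uniformly bounded Lebesgue constant for that projection. This can be verified in classical settings such as full spline spaces on quasi-uniform knots, but nothing in the lemma implies it for a sparse collection from Theorem 2 of \cite{van2023higher}. The other, which is what the paper in effect does, is to build into the choice of ${\cal R}_{0,n}$ that the bias of this specific projection, $\|\tilde\theta_n-\theta_n\|_\infty$, is $O_P(n^{-k^*/(2k^*+1)})$. With either in place, your log-factor choice of $J_n$ completes the argument.
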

		%We assume that indeed the size $J_n$ is chosen so that $(n/J_n)^{1/2}b_n(x_0)=o_P(1)$.
		The variance of the empirical mean satisfies
		\[
		\sigma^2_{0,n}(x_0)\leq \frac{\lambda_n}{J_n}\sum_{j\in {\cal R}_{0,n}}\{\phi_j^*(x_0)\}^2
		=O_P(1).\]
		In particular, 
		\[
		(n/J_n)^{1/2}(\theta_n-\psi_{0,n})(x_0)/\sigma_{0,n}(x_0)\Rightarrow_d N(0,1).\]
		Since $ {\lVert} \psi_{0,n}-\psi_0 {\rVert_{\infty}}=o_P((J_n/n)^{1/2})$ this implies
		\[
		(n/J_n)^{1/2}(\theta_n-\psi_0)(x_0)/\sigma_{0,n}(x_0)\Rightarrow_d N(0,1).\]
		
		{\bf Pointwise confidence intervals for $\psi_{0,n}(x_0)$:}
		$\theta_n(x_0)\pm 1.96 \sigma_n(x_0)(J_n/n)^{1/2}$ is a consistent $0.95$-confidence interval for $\psi_{0,n}(x_0)$ (and also for $\psi_0(x_0)$)  if 
		$\sigma_n^2(x_0)$ is a consistent estimator of $\sigma^2_{0,n}(x_0)$.

		{\bf Sup-norm rate of convergence:}
		Suppose that $\sup_{x_0\in {\cal A}}\mid r_n(x_0)\mid =o_P(1)$ for a subset ${\cal A}\subset [0,1]^d$.
		%and we already assumed $\sup_{x_0}(n/J_n)^{1/2}b_n(x_0)=o_P(1)$.
		Then, we have
		\[
		(n/J_n)^{1/2}(\theta_n-\psi_{0,n})(x_0)=n^{1/2}(P_n-P_0)S_{\psi_{0,n}}(\bar{\phi}_{n,x_0}^*)+o_P(1),\]
		where the remainder is $o_P(1)$ uniformly in $x_0$.
		Moreover,  $\sup_{x_0\in {\cal A}}(n/J_n)^{1/2}(\theta_n-\psi_{0,n})(x_0)/\sigma_{0,n}(x_0)=o_P(\log n)$, so that 
		\[
		{\lVert} \theta_n-\psi_{0,n} {\rVert_{\infty}}=O_P^+((J_n/n)^{1/2}). \]
		
		{\bf Simultaneous confidence band:}
		$\theta_n(x_0)\pm \log n 1.96 \sigma_n(x_0)(J_n/n)^{1/2}$ represents a simultaneous asymptotic $0.95$ confidence band, possibly asymptotically conservative.
		
		\subsection{Understanding the transformation to inner product that orthogonalizes the scores}
		One can define an orthonormal basis $\phi^*_m$ of $\tilde{\phi}_m$ w.r.t. the inner product the covariance of
		$S_{Q_0}(f_1)$ and $S_{Q_0}(f_2)$. This makes the scores $S_{Q_0}(\phi^*_m)$ orthogonal.
		Let $\Sigma$ be the $J_n\times J_n$ covariance matrix of $S_{Q_0}(\tilde{\phi}_m)$, $m\in {\cal E}(J_n)$. 
		One might define 
		\[
		\lambda_n=\arg\max_{ {\lVert h\rVert_2}=1} h^{\top} \Sigma h\]
		as the maximal eigenvalue of the matrix $\Sigma$.
		One can bound the variance of $J_n^{-1/2} \sum_m S_{Q_0}(\tilde{\phi}_m)$ by the variance of $J_n^{-1/2}\sum_m S_{Q_0}(\phi^*_m)$ 
		times $\lambda_n$. 
		Given that $\tilde{\phi}_m$ is an orthogonal  basis w.r.t. an inner product $\langle\cdot,\cdot\rangle_n$, we expect that $\lambda_n$ will grow slowly with $J_n$.
		In particular, when we select $\langle f_1,f_2\rangle_n=P_n S_{\theta_{n,\alpha_n}}(f_1)S_{\theta_{n,\alpha_n}}(f_2)$ as the empirical covariance of the scores at a good estimator $\theta_{n,\alpha_n}$ of $\psi_0$, so that the matrix $A$ becomes close to the identity matrix, we should see a slow convergence of $\lambda_n$.
		
		\section{Proof of Theorem \ref{theoremone}}
		
		Let's consider a general norm PC-HA estimator that computes an MLE over $ {D^{(k)}_{\lVert\cdot\rVert,C}}({\cal E}_n)$ for all $C$ and selects $C$ with the cross-validation selector. 
		So
		\[
		\alpha_{n,C}=\arg\min_{\alpha, {\lVert \alpha\rVert}<C}P_n L(\theta_{n,\alpha}).\]
		Then, $\theta_{n,\alpha_{n,C}}$ is the minimum empirical risk estimator over $ {D^{(k)}_{\lVert\cdot\rVert,C}}({\cal E}_n)$. 
		Let a rate $r(n)$ be defined so that 
		\[
		\sup_{\alpha, {\lVert\alpha\rVert}<r(n)} {\lVert} \beta(\alpha) {\rVert_1}=O(1).\]
		Here $r(n)$ represents the size of constraint on $ {\lVert\alpha\rVert}$ that guarantees that $ {D^{(k)}_{\lVert\cdot\rVert,r(n)}}({\cal E}_n)\subset D^{(k)}_M({\cal R}_N)$ for some $M<\infty$. This is not yet an assumption but it requires bounding $ {\lVert} \beta(\alpha) {\rVert_1}$ by a constant times $ {\lVert\alpha\rVert}$, where this constant gives the size $r(n)$.
		Define the HAL-MLE $\psi_{N,\beta_n}$ over $D^{(k)}_M({\cal R}_N)$ for an $M$ large enough so that $\psi_{N,0}\in D^{(k)}_M({\cal R}_N)$.
		Then $P_n L(\psi_{N,\beta_n})\leq P_n L(\psi_{N,0})$.  
		Determine a PC solution $\alpha_n^*$ so that
		\[
		P_n L(\theta_{n,\alpha_n^*})=P_n L(\psi_{N,\beta_n}).\]
		This is possible since $L(\psi_{N,\beta_n})(O^n)$ only depends on $\psi_{N,\beta_n}(x^n)$ and the linear span of $\{\tilde{\phi}_m(x^n):m=1,\ldots,m_n\}$ is identical to the linear span of $\{\phi_j(x^n): j\in  {{\cal R}_N}\}$.
		The key assumption we make is that
		\[
		{\lVert} \alpha_n^* {\rVert}=O_P(r(n)).\]
		Then, we can select a $C(n)=O(r(n))$ so that $D^{(k)}_{\pl\cdot\pl,C_n}({\cal E}_n)$ contains $\theta_{n,\alpha_n^*}$ and we also have that
		$D^{(k)}_{\pl \cdot\pl,C_n}({\cal E}_n)\subset D^{(k)}_M({\cal R}_N)$ for some $M$. 
		Let $\alpha_n=\arg\min_{\alpha, {\lVert \alpha\rVert}<C_n}P_n L(\theta_{n,\alpha})$ so that $\theta_{n,\alpha_n}$ represents the minimizer of the empirical risk over $ {D^{(k)}_{\lVert\cdot\rVert,C_n}}({\cal E}_n)$. 
		Since $P_n L(\theta_{n,\alpha_n^*})=P_n L(\psi_{N,\beta_n})$ we have
		\[
		P_nL(\theta_{n,\alpha_n})\leq P_n L(\theta_{n,\alpha_n^*})=P_n L(\psi_{N,\beta_n})\leq P_n L(\psi_{N,0}).\]
		We can now carry out the rate of convergence proof for $d_0(\theta_{n,\alpha_n},\psi_{N,0})$ as follows.
		Let $L(\theta,\psi)\equiv L(\theta)-L(\psi)$ as short-hand notation. 
		We have 
		\begin{eqnarray*}
			d_0(\theta_{n,\alpha_n},\psi_{N,0})&\leq& -(P_n-P_0)L(\theta_{n,\alpha_n},\psi_{N,0})\\
			&&+P_nL(\theta_{n,\alpha_n})-P_n L(\psi_{N,0})\\
			&\leq & -(P_n-P_0)L(\theta_{n,\alpha_n},\psi_{N,0}).
		\end{eqnarray*}
		In addition, since $\{\theta_{n,\alpha}: {\lVert\alpha\rVert}<C_n\}\subset D^{(k)}_M({\cal R}_N)$, under a weak condition on $\psi\rightarrow L(\psi)$, we also have that $\{L(\theta_{n,\alpha},\psi_{N,0}): {\lVert \alpha\rVert}< C_n\}$ is contained in a class of functions with same covering number as
		$\{\theta_{n,\alpha}-\psi_{N,0}: {\lVert \alpha\rVert}<C_n\}\subset D^{(k)}_M({\cal R}_N)$.
		We also assumed $P_0\{L(\psi,\psi_0)\}^2=O(d_0(\psi,\psi_0))$. 
		The iterative general proof for establishing a rate of convergence of an MLE over a class of functions with specified bound on entropy integral now applies. This results in the rate of convergence $d_0(\theta_{n,\alpha_n},\psi_{N,0})=O^+_P(n^{-2k^*/(2k^*+1)})$ implied by the covering number/entropy integral of $D^{(k)}_M([0,1]^d)$.
		
		\section{Proof of Theorem \ref{rate}}

		%\subsection{Rate of convergence for general PC-HA minimum empirical risk estimators}
		We have the following lemma for establishing the desired rate of convergence for PC-HA minimum empirical risk estimators. It shows that the main challenge to be addressed is that the  loss-based dissimilarity projection $\psi_{n,0}$ onto $D^{(k)}({\cal E}_n)$ yields the desired approximation of $\psi_{N,0}$. 
		
		\begin{lemma}
			Define $\psi_{n,0}=\arg\min_{\psi\in D^{(k)}({\cal E}_n)}P_0L(\psi)$ and let $\alpha_{0,n}$ be so that $\psi_{n,0}=\theta_{n,\alpha_{0,n}}$. Recall
			\[
			\theta_{n,\alpha_{0,n}}=\sum_{j\in {\cal R}_N}\beta(\alpha_{0,n})(j)\phi_j=\sum_{m\in {\cal E}_n}\alpha_{0,n}(m)\tilde{\phi}_m.\]
			Propose a norm $ {\lVert \alpha\rVert}$. Compute a corresponding regularized MLE $\psi_n=\theta_{n,\alpha_n}$ with $\alpha_n=\arg\min_{ {\lVert \alpha\rVert}<C_n}P_n L(\theta_{n,\alpha})$ with $C_n$ a cross-validation selector. We wish to analyze $d_0(\psi_n,\psi_{N,0})$ with $\psi_{N,0}=\arg\min_{\psi\in D^{(k)}({\cal R}_N)}P_0 L(\psi)$.
			
			Assume $\psi_{N,0}\in D^{(k)}_M({\cal R}_N)$ for some $M<\infty$, and
			$\psi_{n,0}\in D^{(k)}_{M}({\cal E}_n)$ for some finite $M<\infty$ (i.e., $ {\lVert} \beta(\alpha_{0,n}) {\rVert_1}<M$). In addition, show $d_0(\psi_{n,0},\psi_{N,0})=O^+(n^{-k^*/(2k^*+1)})$.

			%{\bf Step 2:} Assume there exists a $C_{0,n}$ so that $\pl \alpha_{0,n}\pl<C_{0,n}$ and that $\sup_{\pl\alpha\pl<C_{0,n}}\pl \beta(\alpha)\pl_1 =O(1)$. 

			Then, $d_0(\psi_n,\psi_{N,0})=O^+_P(n^{-k^*/(2k^*+1)})$. Combined with $d_0(\psi_{N,0},\psi_0)=O^+_P(n^{-k^*/(2k^*+1)})$ this also yields
			$d_0(\psi_n,\psi_0)=O^+_P(n^{-k^*/(2k^*+1)})$. 
		\end{lemma}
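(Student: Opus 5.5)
The plan is the standard empirical-risk-minimization argument, applied to the PC working model with the projection $\psi_{n,0}$ playing the role of an oracle that lies in a class of bounded sectional variation. First decompose
\[
d_0(\psi_n,\psi_{N,0}) = d_0(\psi_n,\psi_{n,0}) + \{P_0L(\psi_{n,0})-P_0L(\psi_{N,0})\},
\]
where the second term is $d_0(\psi_{n,0},\psi_{N,0})$. By the hypothesis to be verified this term is $O^+(n^{-2k^*/(2k^*+1)})$; the exponent is read as the squared rate, since $d_0$ behaves as a squared $L^2$-norm. It therefore suffices to control $d_0(\psi_n,\psi_{n,0})$, or, more directly, $d_0(\psi_n,\psi_{N,0})$.

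Next fix an oracle constraint level $C_{0,n}$ with $\|\alpha_{0,n}\|\le C_{0,n}$ and $\sup_{\|\alpha\|<C_{0,n}}\|\beta(\alpha)\|_1=O(1)$. For PC-HAGL take $C_{0,n}=M$. For PC-HAR and PC-HAL take $C_{0,n}\asymp r(n)$, as guaranteed by Assumption \ref{assumption4}. Then the set $\{\theta_{n,\alpha}:\|\alpha\|<C_{0,n}\}$ is contained in $D^{(k)}_{M'}({\cal R}_N)$ for some $M'<\infty$, by Lemma \ref{lem:sub}. For the fixed-$C_{0,n}$ minimizer $\psi_{n,C_{0,n}}$ we have $P_nL(\psi_{n,C_{0,n}})\le P_nL(\psi_{n,0})$, and hence
\[
d_0(\psi_{n,C_{0,n}},\psi_{N,0}) \le -(P_n-P_0)\{L(\psi_{n,C_{0,n}})-L(\psi_{n,0})\} + d_0(\psi_{n,0},\psi_{N,0}).
\]
Bound the empirical process term by a peeling argument over the class $\{L(\psi)-L(\psi_{n,0}):\psi\in D^{(k)}_{M'}({\cal R}_N)\}$. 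This class has the entropy of $D^{(k)}_{M'}([0,1]^d)$, so $J_\infty(\delta)\sim\delta^{(2k+1)/(2k+2)}$ up to logs. Its variance is bounded by $O(d_0)$ through the quadratic condition, using Assumption \ref{assumption1} and the triangle inequality in $L^2(P_{X,0})$ to pass between $\psi_{n,0}$ and $\psi_{N,0}$. Solving the resulting fixed-point inequality gives $d_0(\psi_{n,C_{0,n}},\psi_{N,0})=O_P^+(n^{-2k^*/(2k^*+1)})$.

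Then pass to the cross-validated $C_n$ via the oracle inequality for the $V$-fold cross-validation selector. The candidate grid is restricted to constraint levels whose classes lie in a bounded sectional-variation ball, so losses are uniformly bounded. The selected estimator is then within a $(1+o(1))$ factor of the best candidate, plus $O(\log n/n)$, and the best candidate is no worse than $C_{0,n}$. Finally, combine with $d_0(\psi_{N,0},\psi_0)=O^+$ of the same order, using equivalence with the squared $L^2(P_{X,0})$ norm, to get the statement for $\psi_0$.

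The main obstacle is verifying the hypothesis $d_0(\psi_{n,0},\psi_{N,0})=O^+(n^{-2k^*/(2k^*+1)})$. The PC span is built from a random sample $x^n$, whereas the projection is with respect to $P_0$. My first attempt is to take the interpolant $\bar\theta\in D^{(k)}({\cal E}_n)$ with $\bar\theta(x^n)=\psi_{N,0}(x^n)$ and its minimal-norm coefficients $\alpha=E^\top\beta_{0,N}$, which satisfy $\|\beta(\alpha)\|_2\le\|\beta_{0,N}\|_2$. Then $d_0(\psi_{n,0},\psi_{N,0})\le d_0(\bar\theta,\psi_{N,0})\lesssim P_{X,0}(\bar\theta-\psi_{N,0})^2$. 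Since $\bar\theta-\psi_{N,0}$ vanishes on the i.i.d. sample $x^n$ (Assumption \ref{assumption2}), write this as $(P_{X,0}-P_{X,n})(\bar\theta-\psi_{N,0})^2$. This is bounded by a uniform deviation over squares of a bounded-variation class, which is $O^+(n^{-2k^*/(2k^*+1)})$ by the same local entropy argument, provided $\|\beta(\alpha)\|_1=O(1)$. That requirement is exactly where Assumption \ref{assumption4} is needed.
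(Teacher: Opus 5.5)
Your argument is correct and follows the paper's proof. You fix an oracle level $C_{0,n}$ whose $\lVert\cdot\rVert$-ball contains $\alpha_{0,n}$ and lies in a bounded sectional-variation class, run the standard empirical-risk rate argument for the fixed-$C_{0,n}$ minimizer, pass to $C_n$ through the cross-validation oracle inequality, and add the assumed term $d_0(\psi_{n,0},\psi_{N,0})$. Your explicit use of Assumption~\ref{assumption4} for the non-GL norms, and absorbing the approximation error into the fixed-point inequality, only refine the paper's ``by assumption'' step.

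Your sketch for verifying that approximation term matches the paper's subsequent lemma and Lemma~\ref{lemmaa1}: the interpolant with coefficients $E^{\top}\beta_{0,N}$, followed by the $(P_{X,0}-P_{X,n})$ argument. One correction: there the bound $\lVert\beta(\alpha)\rVert_1=O(1)$ for the interpolant comes from $\lVert\beta_{0,N}\rVert_2=O(n^{-1/2})$ plus Cauchy--Schwarz with $N\asymp n$. It does not come from Assumption~\ref{assumption4}, which controls $\alpha_{0,n}$ rather than the interpolant.
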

		
		{\bf Proof:}
		Let $C_{0,n}= {\lVert} \alpha_{0,n} {\rVert}$. Then, by assumption, $ {D^{(k)}_{\lVert\cdot\rVert,C_{0,n}}}({\cal E}_n)$ contains $\psi_{n,0}$ and it is contained in $D^{(k)}_M({\cal R}_N)$ for some $M<\infty$. 
		We can then define $\alpha_{n,C_{0,n}}=\arg\min_{ {\lVert \alpha\rVert}<C_{0,n}}P_n L(\theta_{n,\alpha})$ and it can be analyzed as an estimator of $\alpha_{0,n}$, where $ {D^{(k)}_{\lVert\cdot\rVert,C_{0,n}}}({\cal E}_n)$ contains $\psi_{n,0}$ and it is contained in $D^{(k)}_M({\cal R}_N)$ for some $M<\infty$. 
		The typical rate of convergence proof shows $d_0(\theta_{n,\alpha_{n,C_{0,n}}},\theta_{n,\alpha_{0,n} } )=O^+_P(n^{-2k^*/(2k^*+1)})$.  Let $\alpha_n=\alpha_{n,C_{n,cv}}$ be the above estimator using the cross-validation selector $C_{n,cv}$. 
		By the oracle inequality for the cross-validation selector the same rate applies to $\theta_{n,\alpha_n}$: $d_0(\theta_{n,\alpha_n},\theta_{n,\alpha_{0,n}})=O^+_P(n^{-2k^*/(2k^*+1)})$. 
		By assumption, we have $d_0(\psi_{n,0},\psi_{N,0})=O^+(n^{-k^*/(2k^*+1)})$ so that we also obtain
		$d_0(\theta_{n,\alpha_n},\psi_{N,0})=O^+(n^{-k^*/(2k^*+1)})$.
		This proves the lemma. $\Box$
		
		Recall $\alpha_{0,n}=\arg\min_{\alpha}P_0 L(\theta_{n,\alpha})$ and $\psi_{n,0}=\theta_{n,\alpha_{n,0}}=\sum_{m=1}^n \alpha_{0,n}(m)\tilde{\phi}_m$.\nl
		We now also  define $\tilde{\alpha}_{0,n}=\arg\min_{\alpha}P_{X,n}(\sum_{m=1}^{m_n}\alpha(m)\tilde{\phi}_m-\psi_{N,0})^2$ and $\tilde{\psi}_{n,0}=\theta_{n,\tilde{\alpha}_{0,n}}$.
		
		\begin{lemma}
			Given $ {\lVert} \beta(\psi_{N,0}) {\rVert_2}=O(n^{-1/2})$, and thereby also $\psi_{N,0}\in D^{(k)}_M({\cal R}_N)$, we have that $\psi_{n,0},\tilde{\psi}_{n,0}\in D^{(k)}_M({\cal E}_n)$ for some $M$, or, equivalently, that $ {\lVert} \beta(\alpha_{0,n}) {\rVert_1}<M$ and $ {\lVert} \beta(\tilde{\alpha}_{0,n}) {\rVert_1}<M$ with probability tending to 1. Specifically, $ {\lVert} \beta(\psi_{n,0}) {\rVert_2}=O(n^{-1/2})$ and $ {\lVert} \beta(\tilde{\psi}_{n,0}) {\rVert_2}=O(n^{-1/2})$. In addition, $d_{2,0}(\tilde{\psi}_{n,0}, {\psi_{N,0}})= {O^+_P(n^{-2k^*/(2k^*+1)})}$, where
			$d_{2,0}(\psi,\psi_{N,0})=\int (\psi-\psi_{N,0})^2dP_X(x)$. This implies also $d_0(\psi_{n,0},\psi_{N,0})= {O^+_P(n^{-2k^*/(2k^*+1)})}$.\end{lemma}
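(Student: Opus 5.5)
The plan rests on two facts. First, the PC working model is the image of the orthogonal projection $\Pi_E=E_N^nE_N^{n\top}$ onto the row space of $H$. Second, $\Pi_E$ is a contraction in $\lVert\cdot\rVert_2$. Write $\beta_{0,N}=\beta(\psi_{N,0})$, so that by hypothesis $\lVert\beta_{0,N}\rVert_2=O(n^{-1/2})$.

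\textbf{Step 1: the empirical projection $\tilde{\psi}_{n,0}$.} Because $\tilde\phi_m(x^n)=H E_m$ spans the column space of $H$, the empirical least-squares projection reproduces $\psi_{N,0}$ exactly on the design points: $\tilde\psi_{n,0}(x^n)=\psi_{N,0}(x^n)$. One minimizer is therefore $\tilde\alpha_{0,n}=E_N^{n\top}\beta_{0,N}$. Indeed,
\[
H E_N^n E_N^{n\top}\beta_{0,N}=H\beta_{0,N},
\]
since $H(I-\Pi_E)=0$. Orthonormality of the columns of $E_N^n$ (Lemma~\ref{lem:orthogonality}) gives
\[
\lVert\beta(\tilde\alpha_{0,n})\rVert_2=\lVert\Pi_E\beta_{0,N}\rVert_2\le\lVert\beta_{0,N}\rVert_2=O(n^{-1/2}).
\]
Cauchy--Schwarz, applied as in Lemma~\ref{lem:sub} with the effective dimension of order $n$, then gives $\lVert\beta(\tilde\alpha_{0,n})\rVert_1=O(1)$, i.e. $\tilde\psi_{n,0}\in D^{(k)}_M({\cal E}_n)$.

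\textbf{Step 2: the $L^2(P_X)$ rate for $\tilde{\psi}_{n,0}$.} Both $\tilde\psi_{n,0}$ and $\psi_{N,0}$ lie in $D^{(k)}_M({\cal R}_N)$, and they agree on the i.i.d.\ sample $x^n$ from $P_{X,0}$ (Assumption~\ref{assumption2}). Hence $P_{X,n}(\tilde\psi_{n,0}-\psi_{N,0})^2=0$, and
\[
d_{2,0}(\tilde\psi_{n,0},\psi_{N,0})\le\sup_{f\in\mathcal F_\delta}(P_{X,0}-P_{X,n})f^2,
\qquad
\mathcal F_\delta=\{f\in D^{(k)}_{2M}({\cal R}_N):\lVert f\rVert_{P_{X,0}}\le\delta\}.
\]
I would bound this supremum using the entropy integral $J_\infty(\delta,D^{(k)}_M)\sim\delta^{(2k+1)/(2k+2)}$ together with a peeling/iteration argument. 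This is the same fixed-point computation as in the HAL rate proof, and it yields $\delta^2=O^+_P(n^{-2k^*/(2k^*+1)})$. A delicate point is that $\tilde\psi_{n,0}$ depends on $x^n$ through $E_N^n$. The supremum over the fixed class $D^{(k)}_{2M}({\cal R}_N)$ handles this, provided ${\cal R}_N$ is itself controlled by the entropy bounds of $D^{(k)}_M([0,1]^d)$. If the knots are sample-based, I would either use an independent sample for $x^n$ or note that the entropy bound is uniform over knot sets.

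\textbf{Step 3: the loss projection $\psi_{n,0}$.} By minimality of $\psi_{n,0}$ over $D^{(k)}({\cal E}_n)$ and Assumption~\ref{assumption1},
\[
d_0(\psi_{n,0},\psi_{N,0})\le d_0(\tilde\psi_{n,0},\psi_{N,0})=O^+\bigl(d_{2,0}(\tilde\psi_{n,0},\psi_{N,0})\bigr),
\]
which is the claimed rate. For the norm bound on $\psi_{n,0}$, use that $\psi_{N,0}$ minimizes $P_0L$ over a space containing $D^{(k)}({\cal E}_n)$. For least squares, $\psi_{n,0}$ is then the $L^2(P_{X,0})$ projection of $\psi_{N,0}$ onto the PC span. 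More generally, I would argue that the loss-based projection is close to $\tilde\psi_{n,0}$ in coefficients: write $\beta(\alpha_{0,n})=\Pi_E\gamma$ for some $\gamma$, and show $\lVert\Pi_E\gamma\rVert_2=O(n^{-1/2})$ by comparing it with the $L^2(P_{X,0})$ projection.

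\textbf{Main obstacle.} I expect the hardest step to be this last coefficient bound $\lVert\beta(\alpha_{0,n})\rVert_2=O(n^{-1/2})$ for the population projection. A small $L^2(P_{X,0})$ norm does not control coefficients in general. My first attempt is to use that $\lVert\cdot\rVert_{P_{X,n}}^2=\alpha^\top\Lambda\alpha$ with the eigenvalues $\lambda_m$. I would restrict to components with $\lambda_m$ bounded below and show the others contribute negligibly. Failing that, I would take as $\alpha_{0,n}$ a minimal-norm minimizer within the ball $\{\lVert\beta(\alpha)\rVert_2\le Cn^{-1/2}\}$, which contains $\tilde\alpha_{0,n}$, so that the rate in Step 3 still holds.
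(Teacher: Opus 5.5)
Your Steps 1--2 and the $d_0$ part of Step 3 match the paper's proof. The paper identifies $\tilde\psi_{n,0}$ with the coefficient-space projection $\sum_{m\le m_n}\langle\beta_{0,N},E_m\rangle_N\tilde\phi_m$, which is your $\Pi_E\beta_{0,N}$; it justifies this by noting that the zero-eigenvalue directions vanish on $x^n$, which is your $H(I-\Pi_E)=0$. It then bounds the coefficients by Bessel's inequality. The $d_{2,0}$ rate comes from $P_{X,n}(\tilde\psi_{n,0}-\psi_{N,0})^2=0$ plus the standard empirical-process argument (Lemma~\ref{lemmaa1}), and the $d_0$ claim from $d_0(\psi_{n,0},\psi_{N,0})\le d_0(\tilde\psi_{n,0},\psi_{N,0})$. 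Your data-dependence worry in Step 2 needs no independent sample. Since $\tilde\psi_{n,0}-\psi_{N,0}\in D^{(k)}_{2M}({\cal R}_N)\subset D^{(k)}_{2M}([0,1]^d)$ whatever the knots, you can take the supremum over that fixed class.

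The gap is the remaining claim $\lVert\beta(\alpha_{0,n})\rVert_2=O(n^{-1/2})$, i.e.\ $\psi_{n,0}\in D^{(k)}_M({\cal E}_n)$, for the \emph{unconstrained} minimizer $\alpha_{0,n}=\arg\min_\alpha P_0L(\theta_{n,\alpha})$; your proposal does not prove it.

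The eigenvalue-truncation idea is only sketched, and the direct route fails. For squared error, write $G=(P_{X,0}\tilde\phi_{m_1}\tilde\phi_{m_2})_{m_1,m_2}$ and $\tilde\phi=(\tilde\phi_m)_m$. Then $\alpha_{0,n}-\tilde\alpha_{0,n}=G^{-1}P_{X,0}\{\tilde\phi\,(\psi_{N,0}-\tilde\psi_{n,0})\}$, and the natural bound is $\lambda_{\min}(G)^{-1/2}\lVert\psi_{N,0}-\tilde\psi_{n,0}\rVert_{P_{X,0}}$. Already for $d=1$, $k=0$, one has $\lambda_{\min}(G)\lesssim 1/n$: take the normalized difference of two adjacent indicator basis functions. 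So this bound is no better than $O^+(n^{1/2-k^*/(2k^*+1)})$. Reaching $O(n^{-1/2})$ would require showing that $\psi_{N,0}-\tilde\psi_{n,0}$ is nearly orthogonal to the ill-conditioned directions, which your sketch does not address.

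Your fallback, redefining $\alpha_{0,n}$ as a minimal-norm minimizer inside $\{\lVert\beta(\alpha)\rVert_2\le Cn^{-1/2}\}$, changes the object. It would suffice to run the rate argument behind Theorem~\ref{rate}; so would simply using $\tilde\psi_{n,0}$ as the comparison element in the basic inequality. But it proves neither the lemma's claim about $\psi_{n,0}$ nor Assumption~\ref{assumption4}, both of which concern the unconstrained $\alpha_{0,n}$.

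The paper instead follows a universal steepest-descent path of $\alpha\mapsto P_0L(\theta_{n,\alpha})$ from $\tilde\alpha_{0,n}$ to $\alpha_{0,n}$. It bounds the displacement by risk gap over gradient norm, $r(n)/(n^{1/2}r(n))=n^{-1/2}$ with $r(n)=n^{-2k^*/(2k^*+1)}$. Be aware that this extrapolates a first-order expansion at $\delta=0$ to the endpoint, and it posits rather than derives the gradient size. So it too leaves unresolved the ill-conditioning you correctly singled out; still, as written, your proposal gives no argument for this claim.
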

		{\bf Proof:}
		\begin{itemize}
			\item Let's first consider the properties of $\tilde{\phi}_m(x^n)=HE_m$, $m=1,\ldots,N$, in $L^2(P_{X,n})$, where $H$ is an $n\times N$-matrix with values $H(i,j)=\phi_j(x_i)$. 
			We have $\langle HE_{m_1},HE_{m_2}\rangle_n=\langle H^{\top}H E_{m_1},E_{m_2}\rangle =\lambda_{m_1} \langle E_{m_1},E_{m_2}\rangle =0$ for all $m_1\not =m_2$. 
			Moreover, for $m>n$ with $\lambda_m=0$, we have $\langle HE_m,HE_m\rangle_n=\langle H^{\top}HE_m,E_m\rangle
			=\lambda_m \langle E_m,E_m\rangle =\lambda_m=0$. So we know that all the eigenvectors with eigenvalues zero result in basis functions $\tilde{\phi}_m$ that equal zero in $L^2(P_{X,n})$.
			We will use this result below.
			\item We can consider $D^{(k)}({\cal E}_n)$ as functions in $L^2(P_{X,n})$ and carry out a projection of $\psi_{N,0}$ onto $D^{(k)}({\cal E}_n)$. This projection $\Pi_{L^2(P_{X,n})}(\psi_{N,0}\mid D^{(k)}({\cal E}_n))$ is the minimizer of 
			$P_{X,n}(\psi_{N,0}-\theta_{n,\alpha})^2$ over $D^{(k)}({\cal E}_n)=\{\theta_{n,\alpha}:\alpha\}$.
			Thus, $\tilde{\psi}_{n,0}=\arg\min_{\psi\in D^{(k)}({\cal E}_n) }P_{X,n}(\psi-\psi_{N,0})^2$ equals this projection.
			One approach for obtaining an expression for $ {\lVert} \beta(\tilde{\psi}_{n,0}) {\rVert_2^2}$ is the following.
			We have that $\tilde{\phi}_m$ are orthogonal in $L^2(P_{X,n})$. Thus,
			\[
			\tilde{\psi}_{n,0}=\sum_{m=1}^n \langle \psi_{N,0},\tilde{\phi}_m\rangle_n/ {\lVert}\tilde{\phi}_m {\rVert_n^2}\, \tilde{\phi}_m.\]
			Thus, $ {\lVert} \alpha(\tilde{\psi}_{n,0}) {\rVert_2^2}= {\lVert} \beta(\tilde{\psi}_{n,0}) {\rVert_2^2}$ equals
			\[
			{\lVert} \beta(\tilde{\psi}_{n,0}) {\rVert_2^2}=\sum_{m=1}^n \langle \psi_{N,0},\tilde{\phi}_m^*\rangle_n^2,\] where we use $\tilde{\phi}_m^*=\tilde{\phi}_m/ {\lVert} \tilde{\phi}_m {\rVert_n}$. 
			This strongly suggests that indeed $ {\lVert} \beta(\tilde{\psi}_{n,0}) {\rVert_2^2}=O(n^{-1})$, but below we provide a formal proof through another argument.
			\item Let's now do the projection of $\psi_{N,0}$ onto $D^{(k)}({\cal E}_n)$ w.r.t. norm (and inner product) $ {\lVert\cdot\rVert_N}$ instead of $ {\lVert \cdot\rVert_n}$. We will show that the resulting projection $\tilde{\psi}_{n,0}^*$ equals $\tilde{\psi}_{n,0}$ defined above subsequently.
			In this Hilbert space with inner product $\langle\dot,\cdot\rangle_N$ we can write $\psi_{N,0}=\sum_{m=1}^N \langle \beta(\psi_{N,0}),E_m\rangle_N \tilde{\phi}_m$, where we now work in Hilbert space with inner product $\langle f_1,f_2\rangle=\sum_j \beta(f_1)(j)\beta(f_2)(j)$. In that space $\tilde{\phi}_m, m=1,\ldots,N$ is an orthonormal basis.
			If we project $\psi_{N,0}$ in that space on $D^{(k)}({\cal E}_n)$, then we obtain
			$\tilde{\psi}_{n,0}^*\equiv \Pi_N(\psi_{N,0}\mid D^{(k)}({\cal E}_n))=\sum_{m=1}^n \langle \beta(\psi_{N,0}),E_m\rangle_N \tilde{\phi}_m$.
			For this projection we have
			\[
			{\lVert} \beta(\tilde{\psi}_{n,0}^*) {\rVert_2^2}=\sum_{m=1}^n \langle \beta(\psi_{N,0}),E_m\rangle_N^2.\]
			This is smaller than $\sum_{m=1}^N  {\langle} \beta(\psi_{N,0}) {,}E_m {\rangle_N^2}= {\lVert} \beta(\psi_{N,0}) {\rVert_2^2}=O(1/n)$ (by assumption).
			Thus, this proves  that this projection  $\tilde{\psi}_{0,n}^*$ does have an $L_2$-norm that is $O(n^{-1/2})$ and thereby also that
			$ {\lVert}\beta(\tilde{\psi}_{0,n}^*) {\rVert_1}=O(1)$.
			
			\item We will now prove that $\tilde{\psi}_{n,0}^*$ equals $\tilde{\psi}_{n,0}$ so that we obtain the desired $ {\lVert} \beta(\tilde{\psi}_{n,0}) {\rVert_2^2}=O(n^{-1})$.  Above we showed that $\tilde{\phi}_m(x_i)=0$ for all $x_i$ and for all $m>n$.
			Therefore, \[
			P_{X,n}\left(\psi_{N,0}-\theta_{n,\alpha}\right)^2=
			P_{X,n}\left( \psi_{n,0}^*-\theta_{n,\alpha}\right)^2.\]
			But this is minimized at 0 by $\theta_{n,\alpha}=\psi_{n,0}^*$. So this proves that indeed
			$\tilde{\psi}_{n,0}=\psi_{n,0}^*$. 
			This then also proves that 
			\[
			{\lVert} \beta(\tilde{\psi}_{n,0}) {\rVert_2^2}=O(1/n),\]
			and thereby that $ {\lVert} \beta(\tilde{\psi}_{n,0}) {\rVert_1}=O(1)$.
			This proves the first claim  about  $\tilde{\psi}_{0,n}$ of the lemma.
			\item Lemma \ref{lemmaa1} below proves that if $\tilde{\psi}_{n,0}\in D^{(k)}_M({\cal E}_n)$, then   $d_{2,0}(\tilde{\psi}_{n,0},\psi_{N,0})=O^+_P(n^{-2k^*/(2k^*+1)})$.
			Thus, we can now also claim that $d_{2,0}(\tilde{\psi}_{n,0}, {\psi_{N,0}})= {O^+_P(n^{-2k^*/(2k^*+1)})}$, where
			$d_{2,0}(\psi,\psi_{N,0})=\int (\psi-\psi_{N,0})^2dP_X(x)$.  This proves this claim of the lemma. At this point, we   have not shown yet that $\psi_{0,n}\in D^{(k)}_M({\cal E}_n)$. 
			\item By our equivalence assumption,   $d_0(\tilde{\psi}_{n,0},\psi_{N,0})=O^+(d_{2,0}(\tilde{\psi}_{n,0},\psi_{N,0}))$. 
			This proves that $d_0(\tilde{\psi}_{n,0},\psi_{N,0})=O^+(n^{-2k^*/(2k^*+1)})$.  
			This implies that
			$d_0(\psi_{n,0},\psi_{N,0})\leq d_0(\tilde{\psi}_{n,0},\psi_{N,0})=O^+(n^{-2k^*/(2k^*+1)})$.
			So we succeeded to prove the desired rate of convergence for $\psi_{n,0}$ as well. The only claim we did not show yet is that $\psi_{n,0}$ is an element in $D^{(k)}_M({\cal E}_n)$.
			\item We have proven all claims of the lemma, except that $ {\lVert}\beta(\psi_{n,0}) {\rVert_2^2}=O(n^{-1})$. Even though we proved the desired rate of convergence for $d_0(\psi_{n,0},\psi_{N,0})$, for the analysis of the MLE of $\psi_{n,0}$ we will need this result.
			Our approach is to characterize the minimizer $\psi_{n,0}=\theta_{n,\alpha_{n,0}}$ over $\alpha\rightarrow P_0 L(\theta_{n,\alpha})$
			as the result of a steepest descent algorithm that starts at $\tilde{\psi}_{n,0}$ and show that this algorithm cannot change $ {\lVert} \beta(\tilde{\psi}_{n,0}) {\rVert_2}$ by an order.
			Consider the criterion $\alpha\rightarrow R_{0n}(\alpha)\equiv P_0 L(\theta_{n,\alpha})$. Define a collection of paths $\alpha_{\delta}^h=\alpha+\delta h$ for any $h$ where $h$ lives in $\openr^n$ with standard Euclidean norm $ {\lVert h\rVert_2^2}=\sum_{m=1}^n h(m)^2$.
			The pathwise derivative of the criterion is given by $d/d\delta_0R_{0n}(\alpha_{\delta_0}^h)=
			P_0 \frac{d}{d\theta_{n,\alpha}}L(\theta_{n,\alpha})(\sum_{m=1}^n h(m)\tilde{\phi}_m)$. 
			This can be written as an inner product $\langle D^*_{\alpha},h\rangle=\sum_{m=1}^n h(m) P_0 d/d\theta_{n,\alpha}L(\theta_{n,\alpha})(\tilde{\phi}_m)$.
			Thus, the canonical gradient of this pathwise derivative of $R_{0n}(\alpha)$ is given by $D^*_{\alpha}(m)\equiv P_0 d/d\theta_{n,\alpha}L(\theta_{n,\alpha})(\tilde{\phi}_m)$, $m=1,\ldots,n$. 
			A local steepest descent path is given by $\alpha^{lsd}_{\delta}=\alpha+\delta D^*_{\alpha}$.
			We have $\frac{d}{d\delta_0}R_{0n}(\alpha^{lsd}_{\delta_0})=\langle D^*_{\alpha},D^*_{\alpha}\rangle$ at $\delta_0=0$.
			This defines a universal steepest descent path $\alpha^{usd}_{\delta}$ by recursively following the local steepest descent path.
			The universal steepest descent path satisfies for all $\delta$, not just for $\delta=\delta_0=0$:
			\[
			\frac{d}{d\delta}R_{0n}(\alpha^{usd}_{\delta})=\langle D^*_{\alpha^{usd}_{\delta}},D^*_{\alpha^{usd}_{\delta}}\rangle .
			\]
			Now define $\alpha^0=\tilde{\alpha}_{0,n}$, which we know satisfies $ {\lVert} \tilde{\alpha}_{0,n} {\rVert_2^2}=O(n^{-1})$.
			Then, we have a path $\alpha^{0,usd}_{\delta}$ starting at $\tilde{\alpha}_{0,n}$ with $\delta=0$. 
			Let $\delta_n=\arg\min_{\delta}R_{0n}(\alpha^{0,usd}_{\delta})$. Then, $\alpha_{0,n}=\alpha^{0,usd}_{\delta_n}$.
			This defines a path from $\tilde{\alpha}_{0,n}$ at $\delta=0$ to $\alpha_{0,n}$ at $\delta=\delta_n$.
			We want to bound $ {\lVert} \alpha^{0,usd}_{\delta_n} {\rVert_2}$. Due to the rate of convergence established for $\theta_{n,\tilde{\alpha}_{0,n}}$ we have $R_{0n}(\tilde{\alpha}_{0,n})-R_{0n}(\alpha_{0,n})\leq R_{0n}(\tilde{\alpha}_{0,n})-R_{0n}(\psi_{N,0})=O^+(n^{-2k^*/(2k^*+1)})$.
			Consider the pessimistic scenario for this initial $\tilde{\alpha}_{0,n}$ that
			this rate is not faster than $r(n)=n^{-2k^*/(2k^*+1)}$ (up till $\log n$-factors), thereby allowing a maximal move $\delta_n$ to change this initial $ {\lVert\cdot\rVert_2}$-norm. This corresponds with
			$P_0 d/d\theta_{n,\alpha}L(\theta_{n,\alpha})(\tilde{\phi}_m)\sim r(n)$ due to using $P_0d/d\psi_0L(\psi_0)(\tilde{\phi}_m)=0$ (for example,  for the  least squares loss we have $D^*_{\tilde{\alpha}_{0,n}}(m)= P_0 \tilde{\phi}_m(\tilde{\psi}_{0,n}-\psi_0)$).
			Therefore, this pessimistic scenario corresponds with $ {\lVert} D^*_{\tilde{\alpha}_{0,n}} {\rVert_2}\sim n^{1/2}r(n)$. 
			
			For notational convenience, let $r(n)=n^{-2k^*/(2k^*+1)}$ and $R_{0n}(\tilde{\alpha}_{0,n})=R_{0n}(\delta_0)$ and 
			$R_{0n}(\alpha_{0,n})=R_{0n}(\delta_n)$. So we have
			$R_{0n}(\delta_n)-R_{0n}(0)=O(r(n))$.
			We also know that, by  {Taylor} expansion at $\delta=0$, $R_{0n}(\delta)-R_{0n}(0)\sim \delta \langle D^*_{\tilde{\alpha}_{0,n}},D^*_{\tilde{\alpha}_{0,n}}\rangle$. 
			So $\delta_n$ is of order $r(n)/\langle D^*_{\tilde{\alpha}_{0,n}},D^*_{\tilde{\alpha}_{0,n}}\rangle$. 
			So $\alpha_{0,n}\sim \tilde{\alpha}_{0,n}+r(n)/ {\lVert} D^*_{\tilde{\alpha}_{0,n}} {\rVert_2^2}\, D^*_{\tilde{\alpha}_{0,n}}$. 
			So we need to bound $r(n)/ {\lVert} D^*_{\tilde{\alpha}_{0,n}} {\rVert_2^2}\, {\lVert} D^*_{\tilde{\alpha}_{0,n}} {\rVert_2}$, which equals
			$r(n)/ {\lVert} D^*_{\tilde{\alpha}_{0,n}} {\rVert_2}$. Given that $ {\lVert} D^*_{\tilde{\alpha}_{0,n}} {\rVert_2}\sim n^{1/2} r(n)$, this yields the bound
			$ {\lVert} \alpha_{0,n}-\tilde{\alpha}_{0,n} {\rVert_2}=O(n^{-1/2})$, as desired. 
		\end{itemize}
		This completes the proof of the lemma. $\Box$

		The above proof used the  following lemma.
		\begin{lemma}\label{lemmaa1}
			Under Assumptions \ref{assumption1},\ref{assumption2}, and if  $\tilde{\psi}_{n,0},\psi_{n,0}\in D^{(k)}_M({\cal R}_N)$ for some $M<\infty$, we have $d_{2,0}(\tilde{\psi}_{n,0},\psi_{N,0})=O^+_P(n^{-2k^*/(2k^*+1)})$ and  $d_0(\psi_{n,0},\psi_{N,0})=O^+_P(n^{-2k^*/(2k^*+1)})$. 
		\end{lemma}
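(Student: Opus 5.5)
The plan is to treat $\tilde{\psi}_{n,0}$ as a least-squares estimator of $\psi_{N,0}$ based on the ``design sample'' $x^n$, which is i.i.d.\ from $P_{X,0}$ by Assumption \ref{assumption2}. There is a key structural fact, already shown in the proof of the previous lemma: the basis functions $\tilde{\phi}_m$ with $m>m_n$ vanish at every $x_i$. Hence the projection of $\psi_{N,0}$ onto $D^{(k)}({\cal E}_n)$ in $L^2(P_{X,n})$ interpolates $\psi_{N,0}$ on $x^n$. That is,
\[
P_{X,n}(\tilde{\psi}_{n,0}-\psi_{N,0})^2=0,
\]
since $\tilde{\psi}_{n,0}=\tilde{\psi}^*_{n,0}$ and the discarded components $\sum_{m>m_n}\langle\beta(\psi_{N,0}),E_m\rangle_N\tilde{\phi}_m$ are zero on $x^n$. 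So the empirical $L^2(P_{X,n})$ distance is exactly zero, and the task is to transfer this to the population $L^2(P_{X,0})$ distance.

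The transfer uses a uniform ratio-type (local) empirical process bound. By hypothesis $\tilde{\psi}_{n,0}$ and $\psi_{N,0}$ lie in $D^{(k)}_M({\cal R}_N)$, so the difference $f_n=\tilde{\psi}_{n,0}-\psi_{N,0}$ lies in ${\cal F}=D^{(k)}_{2M}({\cal R}_N)\subset D^{(k)}_{2M}([0,1]^d)$. This class is uniformly bounded, and its squares form a class with the same sup-norm entropy as $D^{(k)}_{2M}([0,1]^d)$ (Lemma 29 in \cite{van2023higher}). With $x^n$ i.i.d.\ $P_{X,0}$, I would apply the standard peeling argument combined with the maximal inequality based on $J_\infty(\delta,{\cal F})\sim\delta^{(2k+1)/(2k+2)}$ up to logs. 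This gives
\[
\sup_{f\in{\cal F}:\,P_{X,0}f^2\le\delta^2}\big|(P_{X,n}-P_{X,0})f^2\big|=O_P\!\big(m_n^{-1/2}J_\infty(\delta,{\cal F})\big),
\]
where we also use $\mathrm{Var}(f^2)\le\|f\|_\infty^2P_{X,0}f^2$. Applying this to $f_n$ with $P_{X,n}f_n^2=0$ yields $\delta_n^2\lesssim m_n^{-1/2}\delta_n^{(2k+1)/(2k+2)}$ up to logs. Solving gives $\delta_n^2=O^+_P(m_n^{-2k^*/(2k^*+1)})$, which is the first claim since $m_n\asymp n$.

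For the second claim, Assumption \ref{assumption1} gives
\[
d_0(\tilde{\psi}_{n,0},\psi_{N,0})=O^+\big(d_{2,0}(\tilde{\psi}_{n,0},\psi_{N,0})\big),
\]
because $\tilde{\psi}_{n,0}\in D^{(k)}_M({\cal R}_N)$. Since $\psi_{n,0}$ minimizes $P_0L$ over $D^{(k)}({\cal E}_n)$, we then get
\[
d_0(\psi_{n,0},\psi_{N,0})\le d_0(\tilde{\psi}_{n,0},\psi_{N,0}).
\]
The hypothesis $\psi_{n,0}\in D^{(k)}_M$ is only needed so that $d_0$ is meaningful and nonnegative there.

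The main obstacle is the first step. Both $\tilde{\psi}_{n,0}$ and the dimension-reduced model depend on $x^n$ itself, so we cannot use a fixed-function LLN. This is why the bound must be uniform over the whole fixed class $D^{(k)}_{2M}$, independent of $x^n$, with the peeling done carefully to get the localized rate rather than the crude $m_n^{-1/2}$ rate.
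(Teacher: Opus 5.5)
Your proposal is correct and follows the paper's proof. Interpolation on $x^n$ gives $P_{X,n}(\tilde{\psi}_{n,0}-\psi_{N,0})^2=0$, so $d_{2,0}(\tilde{\psi}_{n,0},\psi_{N,0})=-(P_{X,n}-P_{X,0})(\tilde{\psi}_{n,0}-\psi_{N,0})^2$ is controlled by a localized entropy/peeling argument over a fixed $D^{(k)}_{2M}$ class, and Assumption~\ref{assumption1} together with $\psi_{n,0}$ minimizing $P_0L$ over $D^{(k)}({\cal E}_n)$ gives the $d_0$ claim; you just spell out what the paper calls ``the typical rate of convergence proof.'' One small aside: $d_0(\psi_{n,0},\psi_{N,0})\ge 0$ holds automatically because $D^{(k)}({\cal E}_n)\subset D^{(k)}({\cal R}_N)$, so the hypothesis $\psi_{n,0}\in D^{(k)}_M({\cal R}_N)$ is not needed for nonnegativity, and in fact neither your argument nor the paper's uses it.
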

		{\bf Proof:}
		Consider $d_{2,n}(\psi,\psi_{N,0})=1/n \sum_i (\psi(x_i)-\psi_{N,0}(x_i))^2$. 
		Let  $\tilde{\psi}_{n,0}=\arg\min_{\psi\in D^{(k)}({\cal E}_n)}d_{2,n}(\psi,\psi_{N,0})$. We have that $\tilde{\psi}_{n,0}(x)=\psi_{N,0}(x)$ for all $x\in \{x_i: i\}$.
		Let $P_{X,n}$ be the empirical measure for $\{x_i: i\}$. Let $P_{X,0}$ be its counterpart. 
		Let $d_{2,0}(\psi,\psi_{N,0})=P_{X,0}(\psi-\psi_{N,0})^2$. Then, 
		\begin{eqnarray*}
			0&\leq &d_{2,0}(\tilde{\psi}_{n,0},\psi_{N,0})\\
			&=& -(P_{X,n}-P_{X,0})(\tilde{\psi}_{n,0}-\psi_{N,0})^2+P_{X,n}(\tilde{\psi}_{n,0}-\psi_{N,0})^2\\
			&=&-(P_{X,n}-P_{X,0})(\tilde{\psi}_{n,0}-\psi_{N,0})^2\\
		\end{eqnarray*}
		By assumption $\{x_i: i\}$ is an i.i.d. sample from $P_{X,0}$, then we can carry out the typical rate of convergence proof, using that $\tilde{\psi}_{n,0},\psi_{N,0}\in D^{(k)}_M({\cal R}_N)$ for some $M<\infty$. Then, it follows that $d_{2,0}(\tilde{\psi}_{n,0},\psi_{N,0})=O^+_P(n^{-2k^*/(2k^*+1)})$.
		By assumption,  $d_0(\tilde{\psi}_{n,0},\psi_{N,0})=O^+(d_{2,0}(\tilde{\psi}_{n,0},\psi_{N,0}))$. Then, we also have that $d_0(\tilde{\psi}_{n,0},\psi_{N,0})=O^+(n^{-2k^*/(2k^*+1)})$. This implies that
		$d_0(\psi_{n,0},\psi_{N,0})\leq d_0(\tilde{\psi}_{n,0},\psi_{N,0})=O^+(n^{-2k^*/(2k^*+1)})$, which completes the proof. 
		$\Box$
		\section{Proof of Theorem \ref{score:gl}}

		Consider paths $\alpha_{\delta}^h(m)=(1+\delta h(m))\alpha(m)$ through $\alpha$ that preserve $ {\lVert} \beta(\alpha) {\rVert_1}=C$ and are indexed by a direction $h$ that satisfies
		$\langle h,a(\alpha)\rangle=0$. We showed above that these paths preserve the $L_1$-norm of $\beta(\alpha)$ locally. 
		%\[a(\alpha)(m)\equiv \sum_j E(j,m) \ell(\beta(\alpha)(j)).\]
		
		Let's reparametrize the paths  $(1+\delta h)\alpha$ as $\alpha_{\delta}^{h}=\alpha(m)+\delta h_1(m)$ with $h_1(m)=\alpha(m)h(m)$. 
		Then, we obtain a class of paths  $\alpha_{\delta}^{h_1}=\alpha+\delta h_1$, where $h_1$ needs to be an element of the following $n-1$-dimensional subspace of $\openr^n$:
		\[
		{\cal H}_1(\alpha)\equiv \{h-\langle h,a_1(\alpha)\rangle/\langle a_1(\alpha),a_1(\alpha)\rangle a_1(\alpha): h=h I(\alpha\not =0)\},\]
		where $\langle h_1,h_2\rangle=\sum_m h_1(m)h_2(m)$ is the standard inner product and
		\[
		a_1(\alpha)(m)=\sum_{j\in {\cal R}_N}E(j,m)\ell(\beta(\alpha))(j).\]
		Thus, $a(\alpha)=\alpha a_1(\alpha)$. 
		Note that for $h\in {\cal H}_1(\alpha)$ we have that $h(m)$ equals zero if $\alpha(m)=0$.

		Thus the set of score equations solved by the generalized LASSO solution $\theta_n=\theta_{n,\alpha_n}$  is given by \[
		P_n \frac{d}{d\theta_n}L(\theta_n)(f)=0\mbox{ with }
		f\in {\cal T}_{\theta_n}({\cal E}_n)\equiv \left\{\sum_m I(\alpha_n(m)\not =0) h(m)\tilde{\phi}_m: h\perp a_1(\alpha)\right\}.
		\]
		This proves the following lemma about the class of score equations solved by PC-HAGL.
		\begin{lemma}
			Let $\tilde{\phi}_m=\sum_j E(j,m)\phi_j$ for a  given $p\times n$-matrix $(E(j,m):j=1,\ldots,p,m=1,\ldots,n)$, and, accordingly, define the vector $a_1(\alpha)(m)=\sum_j E(j,m)\ell(\beta(\alpha)(j))$.
			% Here $E$ could be the eigenvectors of of $H^t H$ defined above, but we could have orthonormalized $\tilde{\phi}_m$ w.r.t. another inner product as well. We have to keep in  mind that to have$\pl \beta(\alpha)\pl_2=\pl \alpha\pl_2$ we need the original eigenvector definition.
			
			Let $\alpha_n$ be the optimizer of $R_n(\alpha)=P_n L(\sum_m \alpha(m)\tilde{\phi}_m)$ under constraint $ {\lVert} \beta(\alpha) {\rVert_1}=C$. 
			Consider now any such path $\alpha_{n,\delta}^h=\alpha_n+\delta h$ with $h\in {\cal H}_1(\alpha_n)=\{h\in \openr^n: h\perp a_1(\alpha_n), h=hI(\alpha_n\not =0)\}$ through $\alpha_n$ at $\delta=0$.
			Since these paths preserve the $L_1$-norm constraint, $ {\lVert}\beta(\alpha_{n,\delta}^h) {\rVert_1}=C$ for small enough $\delta\approx 0$, we have that $\alpha_n$ solves the following score equations
			\[
			\frac{d}{d\delta_0}R_n(\alpha_{n,\delta_0}^h)=0\mbox{ for all $h\in {\cal H}_1(\alpha_n)$.}\]
			We have $d/d\delta_0L(\theta_{n,\alpha_{n,\delta_0}^h})=\frac{d}{d\theta_{n,\alpha_n} }L(\theta_{n,\alpha_n})(\sum_m h(m)\tilde{\phi}_m)$. Let's denote the latter term with $\dot{L}(\theta_{n,\alpha_n})(\sum_m h(m)\tilde{\phi}_m)$.
			Let $S_h(\alpha_n)=\dot{L}(\theta_{n,\alpha_n})(\sum_m h(m)\tilde{\phi}_m)$. 
			Then, we have
			\[
			P_n S_h(\alpha_n)=0\mbox{ for all $h\in {\cal H}_1(\alpha_n)=\{h:h\perp a_1(\alpha_n),h(m)=I( {\alpha_n(m)\neq 0})h(m)\}$}.\]
		\end{lemma}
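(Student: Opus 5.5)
The claim is a first-order stationarity statement: I would prove it by differentiating the empirical risk along each admissible additive path $\alpha_{n,\delta}^h=\alpha_n+\delta h$ and invoking optimality of $\alpha_n$. The plan has three steps.

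\emph{Step 1: the paths stay on the constraint set.} Fix $h\in{\cal H}_1(\alpha_n)$ and write $\beta_\delta=\beta(\alpha_n+\delta h)=\beta(\alpha_n)+\delta E h$, by linearity of the coefficient map. Let $J_+=\{j:\beta(\alpha_n)(j)\neq 0\}$. For $|\delta|$ below $\min_{j\in J_+}|\beta(\alpha_n)(j)|/\max_j|(Eh)(j)|$, no sign flips on $J_+$, so
\[
\sum_{j\in J_+}|\beta_\delta(j)|=\sum_{j\in J_+}|\beta(\alpha_n)(j)|+\delta\sum_j \ell(\beta(\alpha_n))(j)(Eh)(j)=C+\delta\langle h,a_1(\alpha_n)\rangle=C,
\]
using $h\perp a_1(\alpha_n)$. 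On the zero set $J_0$ of $\beta(\alpha_n)$, however, $|\delta (Eh)(j)|$ adds a nonnegative contribution of order $|\delta|$.

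\emph{Step 2: the main obstacle, the zero set $J_0$.} The algorithmic derivation in Section \ref{section5} sidesteps $J_0$ by using multiplicative perturbations of $\beta$. I would handle it by taking $\ell(\cdot)$ to be a subgradient, as in the statement. Then $\delta\mapsto\|\beta_\delta\|_1$ is convex, equals $C$ at $0$, and has directional derivatives
\[
\pm\langle h,a_1\rangle+\sum_{j\in J_0}|(Eh)(j)| .
\]
Two routes from here. The first is to restrict attention to $h$ with $(Eh)(j)=0$ on $J_0$, which is the generic situation and appears to be what the lemma implicitly uses. Then $\|\beta_\delta\|_1=C$ exactly for small $|\delta|$, and the constraint is preserved in both directions. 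The second route keeps general $h$: $\alpha_n$ minimizes $R_n$ over $\{\|\beta(\alpha)\|_1\le C\}$, since the equality and inequality versions coincide when the constraint is active. Combining Lagrange/KKT stationarity with a subgradient $\ell$ chosen on $J_0$ so that the multiplier condition holds then gives the same conclusion. I would state the first route and remark on the second.

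\emph{Step 3: differentiate and use optimality.} With the path feasible for $\delta$ in a neighborhood of $0$, the function $\delta\mapsto R_n(\alpha_n+\delta h)$ has a local minimum at $\delta=0$. Since $R_n$ is differentiable (the differentiability assumption of Proposition \ref{prop:general_gradient}), its derivative there vanishes. The chain rule gives
\[
\frac{d}{d\delta_0}L(\theta_{n,\alpha_n+\delta_0 h})=\dot L(\theta_{n,\alpha_n})\Big(\sum_m h(m)\tilde\phi_m\Big),
\]
because $\theta_{n,\alpha}$ is linear in $\alpha$ with $\frac{d}{d\delta}\theta_{n,\alpha_n+\delta h}=\sum_m h(m)\tilde\phi_m$. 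The interchange of $d/d\delta$ with $P_n$ is trivial, since $P_n$ is a finite sum. Hence $P_nS_h(\alpha_n)=0$ for every $h\in{\cal H}_1(\alpha_n)$.

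Finally, the support condition $h=hI(\alpha_n\neq0)$ is only used to match the reparametrization $h_1=\alpha h$ from the multiplicative paths. It plays no role in the validity of the argument, so it can simply be imposed.
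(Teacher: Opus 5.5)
Your argument is sound and more careful than the paper's, but it takes a different route. The paper never differentiates along $\alpha_n+\delta h$ directly. It takes the multiplicative paths $\alpha^h_\delta=(1+\delta h)\alpha$ of Section~\ref{section5} and writes $\beta(\alpha^h_\delta)(j)=(1+\delta h^*(j))\beta(\alpha)(j)$ with $h^*(j)=\sum_m h(m)\alpha(m)E(j,m)/\beta(\alpha)(j)$. From this it concludes that the $L_1$-norm is preserved when $\langle h,\alpha\, a_1(\alpha)\rangle=0$, and then reparametrizes $h_1=\alpha h$ to obtain additive paths with $h_1\perp a_1(\alpha)$.

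That derivation hides two things your version exposes. First, the division by $\beta(\alpha)(j)$ silently assumes your $J_0$ is empty. Second, the support condition $h_1=h_1I(\alpha\neq 0)$ is only an artifact of the reparametrization: with additive paths and $J_0=\emptyset$ you get $P_nS_h(\alpha_n)=0$ for every $h\perp a_1(\alpha_n)$, whether or not $\alpha_n$ has zeros. What the paper's parametrization buys is a direct link to its steepest-descent algorithm, whose fixed points are exactly these multiplicative stationarity conditions.

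What you should change is the weighting of your two routes. $J_0=\emptyset$ is not the generic situation for an $L_1$-type constraint. Minimizers over the polytope $\{\alpha:\|\beta(\alpha)\|_1\le C\}$ land on its lower-dimensional faces, where $\beta(\alpha_n)$ has zeros, with positive probability. There route~1 only gives the score equations on the proper subspace $\{h\in\mathcal{H}_1(\alpha_n):(Eh)(j)=0,\ j\in J_0\}$.

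With the convention $\ell\in\{-1,0,1\}$ of Section~\ref{section5}, the full claim is actually false. Take $E$ with rows $(1,0),(0,1),(1,1)$ and the least-squares-type risk $R_n(\alpha)=(\alpha_1-K)^2+(\alpha_2+2K)^2$ with $K>C/2$. Then:
\begin{itemize}
\item $\alpha_n=(C/2,-C/2)$ and $\beta(\alpha_n)=(C/2,-C/2,0)$, so $a_1(\alpha_n)=(1,-1)$;
\item $h=(1,1)$ lies in $\mathcal{H}_1(\alpha_n)$;
\item yet $\|\beta(\alpha_n+\delta h)\|_1=C+2|\delta|$, and the directional derivative of $R_n$ along $h$ is $2K\neq 0$.
\end{itemize}

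So route~2 should be the proof, not a remark. Fix $\ell$ on $J_0$ to be the KKT subgradient, which is consistent with the wording ``the subgradient'' in Theorem~\ref{score:gl}. Then $\nabla R_n(\alpha_n)=-\lambda\, a_1(\alpha_n)$ for some $\lambda\ge 0$, and every $h\perp a_1(\alpha_n)$ works, with no support restriction needed. This uses that the constraint is active in the ball-constrained problem; Slater's condition holds for $C>0$, so KKT is necessary even without convexity of $R_n$. For the equality-constrained problem as literally posed, Clarke's multiplier rule gives the same conclusion with a multiplier of either sign.
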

		
		{We prove next that a particular non-penalized score equation is solved at $o_P(n^{-1/2})$.} Let $J_n\leq n$ be the number of non-zero coefficients among $\alpha_n(j)$, $j=1,\ldots,n$ in the fit $\theta_n=\sum_{m=1}^n \alpha_n(m)\tilde{\phi}_m$. If we run the MLE over all $D^{(k)}({\cal E}_n)$, then one generally will have that $J_n=n$, but one could screen out some of the PC-basis functions  {beforehand} (e.g {.}, based on  {the} size  {of the} eigenvalue). Let ${\cal E}(J_n)=\{j: \alpha_n(j)\not =0\}\subset {\cal E}_n$ be the indices for which the coefficient $\alpha_n$ is non-zero. Let $D^{(k)}({\cal E}(J_n))$ be the $J_n$-dimensional linear subspace of $D^{(k)}({\cal E}_n)$ defined by 
		$\sum_{m\in {\cal E}(J_n)}\alpha(m)\tilde{\phi}_m$.  For a $f\in D^{(k)}({\cal E}(J_n))$ we define the corresponding vector $(f(m): m\in {\cal E}(J_n))$ by $f=\sum_{m\in {\cal E}(J_n)} f(m)\tilde{\phi}_m$.
		Consider the inner product $\langle f_1,f_2\rangle \equiv \sum_{m\in {\cal E}_n} f_1(m)f_2(m)$ on $D^{(k)}({\cal E}_n)$ and thus also on $D^{(K)}({\cal E}(J_n))$.
		Then, $\tilde{\phi}_m$, $m=1,\ldots,n$, is an orthonormal basis of $D^{(k)}({\cal E}_n)$ w.r.t.\ this inner product. Let $f_{a(\alpha)}\equiv \sum_{m\in {\cal E}(J_n)} a_1(\alpha)(m)/ {\lVert a_1(\alpha)\rVert_2}\, \tilde{\phi}_m$, and define
		${\cal D}_n({\cal E}(J_n))=\{f\in D^{(k)}({\cal E}(J_n)): f\perp f_{a(\alpha_n)}\}$. For notational convenience, we will now and then suppress the dependence of $f_{a(\alpha)}$ on $\alpha$ and denote $f_{a(\alpha_n)}=f_{a_n}$. Note that $f_{a_n}$ is normalized to have norm 1.
		Let $T_{\theta_n}({\cal E}(J_n))=\{S_h(\alpha_n):h\in {\cal H}_1(\alpha_n)\}$ be the set of scores whose empirical means are set to zero under  $\theta_n$ be the previous lemma. 
		Let $D_n({\cal E}(J_n))=\{\sum_{m\in {\cal E}(J_n)}h(m)\tilde{\phi}_m: h\perp a_1(\alpha_n)\}$ so that
		$T_{\theta_n}({\cal E}(J_n))=\left\{\frac{d}{d\theta_n}L(\theta_n)(f):f\in  {D_n({\cal E}(J_n))}\right\}$.
		
		{\bf Goal:}
		We wish to analyze $P_n \frac{d}{d\theta_n}L(\theta_n)(f^*=\sum_{m\in {\cal E}(J_n)} f^*(m)\tilde{\phi}_m)$ for a given $f^*\in D^{(k)}_M({\cal E}(J_n))$, where we do not require that $f^*$ is orthogonal to $f_{a_n}$.
		
		{\bf Analysis:}
		We can decompose $f^*=\Pi(f^*\mid f_{a_n}^{\perp})+\Pi(f^*\mid f_{a_n})$, where  latter equals $\langle f^*,f_{a_n}\rangle f_{a_n}$. We have that $\Pi(f^*\mid f_{a_n}^{\perp})\in T_{\theta_n}({\cal E}(J_n))$ so that $P_n \frac{d}{d\theta_n}L(\theta_n)(\Pi(f^*\mid f_{a_n}^{\perp})) =0$. 
		Thus, it remains to bound $P_n \frac{d}{d\theta_n}L(\theta_n)(\Pi(f^*\mid f_{a_n}))$. This shows that for $f^*\in D^{(k)}({\cal E}(J_n))$
		\[
		P_n \frac{d}{d\theta_n}L(\theta_n)(f^*)=
		\langle f^*,f_{a_n}\rangle P_n \frac{d}{d\theta_n}L(\theta_n)(f_{a_n}).
		\]
		A conservative bound is $ {\lvert\langle f^*,f_{a_n}\rangle\rvert\leq \lVert f^*\rVert_2\lVert f_{a_n}\rVert_2=\lVert f^*\rVert_2}$, using that $ {\lVert} f_{a_n} {\rVert_2}=1$.
		Let $f_{a_n}^*=f_{a_n}/n^{1/2}$ so that $ {\lVert} \beta(f_{a_n}^*) {\rVert_1}=O(1)$.
		Then we obtain
		\[
		\mid P_n d/d\theta_nL(\theta_n)(f^*)\mid \leq 
		\mid P_n d/d\theta_nL(\theta_n)(f_{a_n}^*)\mid .
		\]
		Let $\tilde{f}_{a_n}\equiv \Pi_{P_0}(f_{a_n}^*\mid D_n({\cal E}(J_n)))$, where the projection is in $L^2(P_0)$.
		Then, we can write
		\[
		P_n d/d\theta_nL(\theta_n)(f_{a_n}^*)=P_n d/d\theta_nL(\theta_n)(f_{a_n}^*-\tilde{f}_{a_n}).\]
		This results in the following expression:
		\begin{eqnarray*}
			P_n d/d\theta_nL(\theta_n)(f_{a_n}^*)&=&(P_n-P_0)d/d\theta_nL(\theta_n)(f_{a_n}^*-\tilde{f}_{a_n})\\
			&&+P_0 (d/d\theta_nL(\theta_n)-d/d\psi_0L(\psi_0))(f_{a_n}^*-\tilde{f}_{a_n}).
		\end{eqnarray*}
		Under the assumption that $ {\lVert} \frac{d}{d\theta_n}L(\theta_n)( f_{a_n}^*-\tilde{f}_{a_n}) {\rVert_{P_0}}=o_P(1)$, it follows that the empirical process term is $o_P(n^{-1/2})$. 
		For the second term we assume that it can be bounded by
		$ {\lVert} \theta_n-\psi_0 {\rVert_{\infty}}$ times $ {\lVert} f_{a_n}^*-\tilde{f}_{a_n} {\rVert_{1,P_0}}$, the $L_1(P_0)$-norm. 
		Alternatively, one uses Cauchy--Schwarz inequality to bound it by $d_0^{1/2}(\theta_n,\psi_0)$ times $ {\lVert} f_{a_n}^*-\tilde{f}_{a_n} {\rVert_{P_0}}$, the $L^2(P_0)$-norm. We obtain a better result using the $L_1(P_0)$-norm bound so we will use that, utilizing that we have sup-norm rates of convergence for $\theta_n-\psi_0$.
		For example, for first order HAL, we would have
		$ {\lVert} \theta_n-\psi_0 {\rVert_{\infty}}=O^+_P(n^{-2/5})$ so that we only need $ {\lVert} f_{a_n}^*-\tilde{f}_{a_n} {\rVert_{1,P_0}}=O_P(n^{-1/10-\delta})$ for some $\delta>0$.
		
		\section{Approximate score solution for PC-HAL and PC-HAR}\label{app:score}

		\subsection{Result for PC-HAL}
		Let $\ell_{\alpha_n}(m)=I(\alpha_n(m)>0)-I(\alpha_n(m)<0)$ be the sign of $\alpha_n(m)$, $m\in {\cal E}(J_n)$. 
		The set ${\cal E}(J_n)$ can be as large as $n$ in the PC-HAGL and PC-HAR case but for PC-HAL, the Lasso will set various coefficients equal to zero, so that even when one starts with all $n$ PC-basis functions ${\cal E}_n$, the set ${\cal E}(J_n)$ will have been downsized.
		We will also denote $\ell_{\alpha_n}$ with $\ell_n$.
		Let $f_{\ell_n}=\sum_{m\in {\cal E}(J_n)}\ell_n(m)/J_n^{1/2}\, \tilde{\phi}_m$ and note that $ {\lVert} f_{\ell_n} {\rVert_2^2}=\sum_{m\in {\cal E}(J_n)}(\ell_n(m)/J_n^{1/2})^2=1$, so $f_{\ell_n}$ is normalized to have norm 1 w.r.t.\ the standard inner product. 
		We also define $f^*_{\ell_n}=f_{\ell_n}/J_n^{1/2}= {J_n^{-1}}\sum_{m\in {\cal E}(J_n)} \ell_n(m)\tilde{\phi}_m$, which has bounded $L_1$-norm $ {\lVert} \beta(f^*_{\ell_n}) {\rVert_1}=1$, while $ {\lVert} f^*_{\ell_n} {\rVert_2}=J_n^{-1/2}$. 
		
		Consider an $f^*\in D_M({\cal E}(J_n))$ so that $f^*=\sum_{m\in {\cal E}(J_n)}\alpha(f^*)(m)\tilde{\phi}_m$ and $ {\lVert} \alpha(f^*) {\rVert_1}\leq M$.
		We will assume the slightly stronger assumption that $ {\lVert} \alpha(f^*) {\rVert_2}=O(J_n^{-1/2})$, which implies $\pl\alpha(f^*)\pl_1=O(1)$.
		Let $\theta_n=\arg\min_{\theta\in D_{C_n}({\cal E}(J_n))}P_n L(\theta)$ be the PC-HAL constraining the $L_1$ norm to be bounded by $C_n$ such as a cross-validation selector. We note that $\theta_n$ solves the following score equations:
		\[
		P_n \frac{d}{d\theta_n}L(\theta_n)\left(\sum_{m\in {\cal E}(J_n)}\alpha(m)\tilde{\phi}_m\right)\mbox{ for $\alpha\perp\ell_{\alpha_n}$.}\]
		Let's denote $D({\cal E}(J_n))$ with an inner product $\langle f_1,f_2\rangle_n=\sum_{m\in {\cal E}(J_n)}\alpha(f_1)(m)\alpha(f_2)(m)$ for $f_1=\sum_{m\in {\cal E}(J_n)}\alpha(f_1)(m)\tilde{\phi}_m$ and similarly for $f_2$.
		Let $D_n({\cal E}(J_n))\equiv \{f\in D({\cal E}(J_n)): f\perp f_{\ell_n}\}$ be the $J_n-1$-dimensional subspace of $D({\cal E}(J_n))$. Then, 
		$P_n d/d\theta_nL(\theta_n)(f)=0$ for all $f\in D_n({\cal E}(J_n))$.
		We can decompose $f^*=(f^*-\langle f^*,f_{\ell_n}\rangle_n f_{\ell_n})+\langle f^*,f_{\ell_n}\rangle_n f_{\ell_n}$ where the first term is an element of $D_n({\cal E}(J_n))$. Therefore
		\begin{eqnarray*}
			P_n d/d\theta_nL(\theta_n)(f^*)
			&=&J_n^{1/2} \langle f^*,f_{\ell_n}\rangle_n  P_n d/d\theta_nL(\theta_n)(f_{\theta_n}^*).
		\end{eqnarray*}
		A conservative bound is $ {\lvert\langle f^*,f_{\ell_n}\rangle_n\rvert \le \lVert f^*\rVert_2\lVert f_{\ell_n}\rVert_2 = \lVert f^*\rVert_2 = J_n^{-1/2}}$, using that $ {\lVert} f_{\ell_n} {\rVert_2}=1$.
		Then we obtain
		\[
		\mid P_n d/d\theta_nL(\theta_n)(f^*)\mid \leq 
		\mid P_n d/d\theta_nL(\theta_n)(f_{\ell_n}^*)\mid .
		\]
		Let $\tilde{f}_{\ell_n}\equiv \Pi_{P_0}(f_{\ell_n}^*\mid D_n({\cal E}(J_n)))$, where the projection is in $L^2(P_0)$.
		Then, we can write
		\[
		P_n d/d\theta_nL(\theta_n)(f_{\ell_n}^*)=P_n d/d\theta_nL(\theta_n)(f_{\ell_n}^*-\tilde{f}_{\ell_n}).\]
		This results in the following expression:
		\begin{eqnarray*}
			P_n d/d\theta_nL(\theta_n)(f_{\ell_n}^*)&=&(P_n-P_0)d/d\theta_nL(\theta_n)(f_{\ell_n}^*-\tilde{f}_{\ell_n})\\
			&&+P_0 (d/d\theta_nL(\theta_n)-d/d\psi_0L(\psi_0))(f_{\ell_n}-\tilde{f}_{\ell_n}).
		\end{eqnarray*}
		Under the assumption that $ {\lVert} \frac{d}{d\theta_n}L(\theta_n)( f_{\ell_n}^*-\tilde{f}_{\ell_n}) {\rVert_{P_0}}=o_P(1)$, it follows that the empirical process term is $o_P(n^{-1/2})$. 
		For the second term we assume that it can be bounded by
		$ {\lVert} \theta_n-\psi_0 {\rVert_{\infty}}$ times $ {\lVert} f_{\ell_n}^*-\tilde{f}_{\ell_n} {\rVert_{1,P_0}}$, the $L_1(P_0)$-norm. 
		Alternatively, one uses Cauchy--Schwarz inequality to bound it by $d_0^{1/2}(\theta_n,\psi_0)$ times $ {\lVert} f_{\ell_n}^*-\tilde{f}_{\ell_n} {\rVert_{P_0}}$, the $L^2(P_0)$-norm. We obtain a better result using the $L_1(P_0)$-norm bound so we will use that, utilizing that we have sup-norm rates of convergence for $\theta_n-\psi_0$.
		For example, for first order HAL, we would have
		$ {\lVert} \theta_n-\psi_0 {\rVert_{\infty}}=O^+_P(n^{-2/5})$ so that we only need $ {\lVert} f_{\ell_n}^*-\tilde{f}_{\ell_n} {\rVert_{1,P_0}}=O_P(n^{-1/10-\delta})$ for some $\delta>0$.
		
		Let's now present a conservative bound for $ {\lVert} f_{\ell_n}^*-\tilde{f}_{\ell_n} {\rVert_{1,P_0}}$.
		For a given $f_{\alpha}=\sum_{m\in {\cal E}(J_n)}\alpha(m)\tilde{\phi}_m$, we have
		$f_{\ell_n}^*-f_{\alpha}=\sum_{m\in {\cal E}(J_n)}(\ell_n(m)/J_n -\alpha(m))\tilde{\phi}_m$ for some $\alpha$.
		Suppose we select $\alpha(m)=\ell_n(m)/J_n$ for all $m\not =m^*$ so that $f_{\ell_n}^*-f_{\alpha}=(\ell_n(m^*)/J_n-\alpha(m^*)) \tilde{\phi}_{m^*}$. Then, it remains to set $\alpha(m^*)$ so that 
		$\sum_{m\in {\cal E}(J_n)}\alpha(m)\ell_n(m)=0$. It follows that $\alpha(m^*)=-\sum_{m\not =m^*}\ell_n(m)^2 J_n^{-1}/\ell_n(m^*)$. Recall $\ell_n(m)^2=1$ so that $\alpha(m^*)=-\sum_{m\not =m^*}J_n^{-1}/\ell_n(m^*)$.
		Then, we obtain
		\[
		f_{\ell_n}^*-f_{\alpha}=\ell_n(m^*)\tilde{\phi}_{m^*}.\]
		Thus, we obtain the bound
		\[
		{\lVert} f_{\ell_n}^*-\tilde{f}_{\ell_n} {\rVert_{P_0}}=O_P(1)\, {\lVert} \tilde{\phi}_{m^*} {\rVert_{1,P_0}}.\]
		Thus, our sufficient condition is satisfied when $\min_{m} {\lVert}\tilde{\phi}_m {\rVert_{1,P_0}}=O_P(n^{-1/10-\delta})$ for some $\delta>0$.
		
		This proves the following theorem.
		\begin{theorem}
			{\bf Definitions;}
			Let $\ell_n=\ell_{\alpha_n}(m)=I(\alpha_n(m)>0)-I(\alpha_n(m)<0)$ be the sign of $\alpha_n(m)$, $m\in {\cal E}(J_n)$. 
			Let $f_{\ell_n}(m)=\ell_n(m)/J_n^{1/2}$ so that $ {\lVert} f_{\ell_n} {\rVert_2}=1$ is normalized to have norm 1 w.r.t.\ the standard inner product. 
			We also define $f^*_{\ell_n}=f_{\ell_n}/J_n^{1/2}= {J_n^{-1}}\sum_{m\in {\cal E}(J_n)} \ell_n(m)\tilde{\phi}_m$ which now has bounded $L_1$-norm $ {\lVert} \beta(f^*_{\ell_n}) {\rVert_1}=1$, while $ {\lVert} f^*_{\ell_n} {\rVert_2}=J_n^{-1/2}$.  
			Consider an $f^*\in D_M({\cal E}(J_n))$ so that $f^*=\sum_{m\in {\cal E}(J_n)}\alpha(f^*)(m)\tilde{\phi}_m$ and $ {\lVert} \alpha(f^*) {\rVert_1}\leq M$.
			We will assume the slightly stronger assumption that $ {\lVert} \alpha(f^*) {\rVert_2}=O(n^{-1/2})$, which implies $\pl\alpha(f^*)\pl_1=O(1)$.
			Let $\theta_n$ be an PC-HAL constraining the $L_1$-norm  by $C_n$, and let ${\cal E}(J_n)$ be the set of basis functions $\tilde{\phi}_m$ with non-zero coefficients. Then, $\theta_n=\arg\min_{Q\in D_{1,C_n}({\cal E}(J_n))}P_n L(Q)$. Due to latter,  $\theta_n$ solves the following score equations:
			\[
			P_n \frac{d}{d\theta_n}L(\theta_n)\left(\sum_{m\in {\cal E}(J_n)}\alpha(m)\tilde{\phi}_m\right )\mbox{ for $\alpha\perp\ell_{\alpha_n}$.}\]
			Let's endow $D({\cal E}(J_n))$ with an inner product $\langle f_1,f_2\rangle_n=\sum_{m\in {\cal E}(J_n)}\alpha(f_1)(m)\alpha(f_2)(m)$ for $f_1=\sum_{m\in {\cal E}(J_n)}\alpha(f_1)(m)\tilde{\phi}_{ {m}}$ and similarly for $f_2$.
			Let $D_n({\cal E}(J_n))\equiv \{f\in D({\cal E}(J_n)): f\perp f_{\ell_n}\}$ which is a  $J_n-1$-dimensional subspace of $D({\cal E}(J_n))$.
			Let $\tilde{f}_{\ell_n}=\arg\min_{f\in D_n({\cal E}(J_n))}P_0(f_{\ell_n}^*-f)^2$.
			
			{\bf Assumptions:}
			Covering number of $\{d/d\theta_nL(\theta_n)(f): f\in D_M({\cal E}(J_n)),\theta_n\in D_M({\cal E}(J_n))\}$ is of same order as covering number of $D_M([0,1]^d)$;
			$ {\lVert} \theta_n-\psi_0 {\rVert_{\infty}} {\lVert} f_{\ell_n}^*-\tilde{f}_{\ell_n} {\rVert_{1,P_0}}=o_P(n^{-1/2})$ or $d_0^{1/2}(\theta_n,\psi_0) {\lVert} f_{\ell_n}^*-\tilde{f}_{\ell_n} {\rVert_{P_0}}=o_P(n^{-1/2})$. A conservative bound for $ {\lVert} f_{\ell_n}^*-\tilde{f}_{\ell_n} {\rVert}$ is given by $\min_{m\in {\cal E}(J_n)} {\lVert} \tilde{\phi}_m {\rVert}$ for either the $L^2(P_0)$ or $L_1(P_0)$-norm.

			{\bf Conclusion:} We have $P_n d/d\theta_nL(\theta_n)(f^*)=P_n d/d\theta_nL(\theta_n)(f_{\ell_n}^*-\tilde{f}_{\ell_n})$. Under the above conditions, we have
			\[
			\sup_{f^*\in D_M({\cal E}(J_n)), {\lVert} f^* {\rVert_2}\leq J_n^{-1/2}}\mid P_n \frac{d}{d\theta_n}L(\theta_n)( f^*)\mid =o_P(n^{-1/2}).\]
			
		\end{theorem}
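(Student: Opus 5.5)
The plan follows the PC-HAGL argument in the proof of Theorem \ref{score:gl}, with the constraint gradient $a_1(\alpha_n)$ replaced by the sign vector $\ell_n$ of the active Lasso coefficients.

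\emph{Exact score equations.} On the active set ${\cal E}(J_n)$, $\theta_n$ minimizes $P_nL$ over $\{\alpha:\lVert\alpha\rVert_1\le C_n\}$. Near $\alpha_n$ with fixed signs, the boundary is locally the hyperplane $\sum_m\ell_n(m)\alpha(m)=C_n$. Hence for every direction $h$ supported on ${\cal E}(J_n)$ with $h\perp\ell_n$, the path $\alpha_n+\delta h$ stays feasible for small $|\delta|$. First-order optimality then gives $P_n\frac{d}{d\theta_n}L(\theta_n)(\sum_m h(m)\tilde\phi_m)=0$. This is the analogue of the lemma proved for PC-HAGL, and it shows that the empirical score vanishes on all of $D_n({\cal E}(J_n))$.

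\emph{Reduction to one direction.} For $f^*$ in the ball, decompose it in the coefficient inner product $\langle\cdot,\cdot\rangle_n$ as
\[
f^*=\bigl(f^*-\langle f^*,f_{\ell_n}\rangle_n f_{\ell_n}\bigr)+\langle f^*,f_{\ell_n}\rangle_n f_{\ell_n}.
\]
The first term lies in $D_n({\cal E}(J_n))$, so its score vanishes, and $P_nS_{\theta_n}(f^*)=J_n^{1/2}\langle f^*,f_{\ell_n}\rangle_nP_nS_{\theta_n}(f^*_{\ell_n})$. Cauchy--Schwarz with $\lVert f_{\ell_n}\rVert_2=1$ and $\lVert f^*\rVert_2\le J_n^{-1/2}$ then bounds the supremum by $|P_nS_{\theta_n}(f^*_{\ell_n})|$. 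It therefore suffices to show this single score is $o_P(n^{-1/2})$.

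\emph{Empirical process step.} Subtract the $L^2(P_0)$ projection $\tilde f_{\ell_n}\in D_n({\cal E}(J_n))$, whose score is again zero, to get
\[
P_nS_{\theta_n}(f^*_{\ell_n})=(P_n-P_0)S_{\theta_n}(f^*_{\ell_n}-\tilde f_{\ell_n})+P_0\{S_{\theta_n}-S_{\psi_0}\}(f^*_{\ell_n}-\tilde f_{\ell_n}),
\]
using $P_0S_{\psi_0}(\cdot)=0$. For the first term, $\lVert\beta(f^*_{\ell_n})\rVert_1$ is bounded, so the score class lies in a class with the entropy of $D_M([0,1]^d)$ by the complexity assumption. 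I expect $\lVert S_{\theta_n}(f^*_{\ell_n}-\tilde f_{\ell_n})\rVert_{P_0}\to0$, which makes this term $o_P(n^{-1/2})$ by asymptotic equicontinuity. For the second term, I bound it by $\lVert\theta_n-\psi_0\rVert_\infty\lVert f^*_{\ell_n}-\tilde f_{\ell_n}\rVert_{1,P_0}$ or by $d_0^{1/2}\lVert\cdot\rVert_{P_0}$, which is $o_P(n^{-1/2})$ by assumption.

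\emph{Main obstacle.} The hard part is making $\lVert f^*_{\ell_n}-\tilde f_{\ell_n}\rVert$ small, since $\tilde f_{\ell_n}$ must be orthogonal to $f_{\ell_n}$. I would try the explicit competitor $f_\alpha$ with $\alpha(m)=\ell_n(m)/J_n$ for $m\ne m^*$ and $\alpha(m^*)$ chosen so that $\alpha\perp\ell_n$. This leaves a residual of order $\tilde\phi_{m^*}$, so minimizing over $m^*$ gives the conservative bound $\min_m\lVert\tilde\phi_m\rVert$. Because $\tilde f_{\ell_n}$ is data dependent, the equicontinuity step also needs the $L^2$ smallness to hold uniformly. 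I would handle this by noting that the difference still lies in the bounded-variation class, which is a restriction of the complexity assumption.
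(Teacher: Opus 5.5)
Your proposal is correct at the paper's level of rigor and follows its proof essentially step by step. The shared steps are the exact score equations for directions supported on ${\cal E}(J_n)$ and orthogonal to $\ell_n$, the Cauchy--Schwarz reduction to the single direction $f^*_{\ell_n}$, subtraction of the $L^2(P_0)$ projection $\tilde f_{\ell_n}$, the empirical-process plus second-order split, and the one-coordinate competitor with residual $\ell_n(m^*)\tilde\phi_{m^*}$.

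You raise a concern about the data dependence of $\tilde f_{\ell_n}$ that the paper leaves implicit, but your fix is only asserted. It closes cleanly if you use the competitor $f_\alpha\in D_n({\cal E}(J_n))$ itself in place of $\tilde f_{\ell_n}$. The difference is then exactly $\ell_n(m^*)\tilde\phi_{m^*}$, whose coefficient vector has $L_1$-norm one, so the complexity assumption applies to it directly. Its second-order term is also bounded directly by $\lVert\theta_n-\psi_0\rVert_\infty\lVert\tilde\phi_{m^*}\rVert_{1,P_0}$. By contrast, the $L^2(P_0)$ projection only gives $\lVert f^*_{\ell_n}-\tilde f_{\ell_n}\rVert_{1,P_0}\le\lVert\tilde\phi_{m^*}\rVert_{P_0}$.
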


		\subsection{Analogue result for PC-HAR}
		Let $\alpha_n$ be the PC-HAR estimator where $ {\lVert \alpha_n\rVert_2}=C_n$. 
		Consider a path $\alpha_n+\delta h$ and $ {\lVert}\alpha_n+\delta h {\rVert_2^2}=\sum_m (\alpha_n(m)+\delta h(m))^2$.
		This Euclidean norm equals $\sum_m \alpha_n(m)^2+\sum_m 2\delta \alpha_n(m) h(m)+\sum_m \delta^2 h(m)^2$.
		If we assume that $h$ satisfies $\sum_m h(m)\alpha_n(m)=0$, then $d/d\delta_0\, {\lVert} \alpha_n^h(\delta) {\rVert_2}=0$ at $\delta_0=0$.
		Therefore, it suffices to restrict to paths $\{\alpha_n+\delta h: h\perp \alpha_n\}$.
		Then, $\frac{d}{d\delta_0}P_n L(\theta_{n,\alpha_n^h(\delta_0)})=0$ for all $h\perp \alpha_n$.
		Thus, $\theta_n=\theta_{n,\alpha_n}$ solves the following class of score equations:
		\[
		0=P_n S_{\theta_n}\left(\sum_m h(m)\tilde{\phi}_m\right )\mbox{ for all $h\perp \alpha_n$.}
		\]
		We can now go through same proof as above with $f_{\ell_n}$ replaced by $f_{\alpha_n}\equiv \sum_{m\in {\cal E}(J_n)}\alpha_n(m)\tilde{\phi}_m$.
		
		In the last part of the proof we have to obtain a bound for $ {\lVert} f_{\alpha_n}^*-\tilde{f}_{\alpha_n} {\rVert_{1,P_0}}$.
		For a given $f_{\alpha}=\sum_{m\in {\cal E}(J_n)}\alpha(m)\tilde{\phi}_m$, we have
		$f_{\alpha_n}^*-f_{\alpha}=\sum_{m\in {\cal E}(J_n)}(\alpha_n(m)/J_n -\alpha(m))\tilde{\phi}_m$ for some $\alpha$.
		Suppose we select $\alpha(m)=\alpha_n(m)/J_n$ for all $m\not =m^*$ so that $f_{\alpha_n}^*-f_{\alpha}=(\alpha_n(m^*)/J_n-\alpha(m^*)) \tilde{\phi}_{m^*}$. Then, it remains to set $\alpha(m^*)$ so that 
		$\sum_{m\in {\cal E}(J_n)}\alpha(m)\alpha_n(m)=0$. It follows that $\alpha(m^*)=-
		\alpha_n(m^*)^{-1} \sum_{m\not =m^*}\alpha_n(m)^2 J_n^{-1}$. 
		Then, we obtain
		\[
		\begin{array}{l}
			{f_{\alpha_n}^*}-f_{\alpha}=\left(\alpha_n(m^*)/J_n+\alpha_n(m^*)^{-1}\sum_{m\not =m^*}\alpha_n(m)^2J_n^{-1}\right) \tilde{\phi}_{m^*}\\
			{=}\ \alpha_n(m^*)^{-1} {\lVert \alpha_n\rVert_2^2}\,J_n^{-1}\tilde{\phi}_{m^*}\\
			\equiv c_n(m^*)\tilde{\phi}_{m^*}.
		\end{array}
		\]
		
		For example, if  $ {\lVert} \alpha_n {\rVert_2^2}=O(1/n)$ and $\alpha_n(m^*)\sim 1/n$, then $c_n(m^*)=O(1)$. Thus, we obtain the conservative bound
		\[
		{\lVert} f_{\alpha_n}^*-\tilde{f}_{\alpha_n} {\rVert_{1, P_0}}=O_P(c_n(m^*)\, {\lVert} \tilde{\phi}_{m^*} {\rVert_{1,P_0}}) {.}\]
		
		\begin{theorem}
			{\bf Definitions;}
			Let $f_{\alpha_n}=\sum_{j\in {\cal E}(J_n)}\alpha_n(j)/ {\lVert \alpha_n\rVert_2}\, \tilde{\phi}_j$. 
			Note that $ {\lVert} f_{\alpha_n} {\rVert_2}=1$ is normalized to have norm 1 w.r.t.\ the standard inner product. 
			We also define $f^*_{\alpha_n}=f_{\alpha_n}/J_n^{1/2}= {J_n^{-1/2}\lVert \alpha_n\rVert_2^{-1}}\sum_{j\in {\cal E}(J_n)}{\alpha_n}(j)\tilde{\phi}_j$ so that $ {\lVert} f^*_{\alpha_n} {\rVert_2}=J_n^{-1/2}$. 
			Consider an $f^*\in D_M({\cal E}(J_n))$ satisfying $ {\lVert} \alpha(f^*) {\rVert_2}=O(n^{-1/2})$, which implies $ {\lVert}\beta(\alpha(f^*)) {\rVert_1}=O(1)$.
			Let $\theta_n$ be an PC-HAR constraining the $L_2$-norm  by $C_n$, and let ${\cal E}(J_n)$ be the set of basis functions $\tilde{\phi}_j$ with non-zero coefficients. Then, $\theta_n=\arg\min_{\theta\in D_{2,C_n}({\cal E}(J_n))}P_n L(\theta)$. Due to latter,  $\theta_n$ solves the following score equations:
			\[
			P_n \frac{d}{d\theta_n}L(\theta_n)\left(\sum_{m\in {\cal E}(J_n)}\alpha(m)\tilde{\phi}_m\right )\mbox{ for $\alpha\perp {\alpha_n}$.}\]
			Let's endow $D({\cal E}(J_n))$ with an inner product $\langle f_1,f_2\rangle_n=\sum_{m\in {\cal E}(J_n)}\alpha(f_1)(m)\alpha(f_2)(m)$. Let $D_n({\cal E}(J_n))\equiv \{f\in D({\cal E}(J_n)): f\perp f_{\alpha_n}\}$ which is a  $J_n-1$-dimensional subspace of $D({\cal E}(J_n))$  {.}  Let $\tilde{f}_{\alpha_n}=\arg\min_{f\in  {D_n({\cal E}(J_n))}}P_0(f_{\alpha_n}^*-f)^2$ {.}
			
			{\bf Assumptions:}
			Covering number of $\{d/d\theta_nL(\theta_n)(f): f\in D_M({\cal E}(J_n)),\theta_n\in D_M({\cal E}(J_n))\}$ is of same order as covering number of $D_M([0,1]^d)$;
			%We have\begin{eqnarray*}\mid P_n d/dQ_nL(Q_n)(f^*)\mid &\leq&  \mid (P_n-P_0)d/dQ_nL(Q_n)(f_{\ell_n}^*-\tilde{f}_{\ell_n})\mid \\&&+\mid P_0 (d/dQ_nL(Q_n)-d/dQ_0L(Q_0))(f_{\ell_n}-\tilde{f}_{\ell_n}) \mid .\end{eqnarray*}
			%Under the assumption that $\pl d/dQ_nL(Q_n)( f_{\ell_n}^*-\tilde{f}_{\ell_n})\pl_{P_0}=o_P(1)$, it follows that the empirical process term is $o_P(n^{-1/2})$. For the second term we assume that it can be bounded by $\pl Q_n-Q_0\pl_{\infty}\pl f_{\ell_n}^*-\tilde{f}_{\ell_n}\pl_{1,P_0}$.Alternatively we bound it by $d_0^{1/2}(Q_n,Q_0)\pl f_{\ell_n}^*-\tilde{f}_{\ell_n}\pl_{P_0}$.  For example, for first order HAL, we would have$d_0^{1/2}(Q_n,Q_0)=O^+_P(n^{-2/5})$ so that we only need $\pl f_{\ell_n}^*-\tilde{f}_{\ell_n}\pl_{P_0}=O_P(n^{-1/10-\delta})$ for some $\delta>0$.Finally, we have \[\pl f_{\ell_n}^*-\tilde{f}_{\ell_n}\pl_{P_0}=O_P\left(\min_{j\in {\cal R}_n}\pl \phi_j\pl_{1,P_0}\right).\]
			$ {\lVert} \theta_n-\psi_0 {\rVert_{\infty}} {\lVert} f_{\alpha_n}^*-\tilde{f}_{\alpha_n} {\rVert_{1,P_0}}=o_P(n^{-1/2})$ or $d_0^{1/2}(\theta_n,\psi_0) {\lVert} f_{\alpha_n}^*-\tilde{f}_{\alpha_n} {\rVert_{P_0}}=o_P(n^{-1/2})$. 
			A conservative bound $ {\lVert} f_{\alpha_n}^*-\tilde{f}_{\alpha_n} {\rVert_{1,P_0}}$ for $ {\lVert} f_{\alpha_n}^*-\tilde{f}_{\alpha_n} {\rVert}$ is given by 
			$\min_{m\in {\cal E}(J_n)}c_n(m) {\lVert} \tilde{\phi}_m {\rVert}$, where $c_n(m)\sim \alpha_n(m)^{-1} {\lVert \alpha_n\rVert_2^2}$.  
			
			{\bf Conclusion:} 
			We have $P_n d/d\theta_n L(\theta_n)(f^*)=P_n d/d\theta_nL(\theta_n)(f_{\alpha_n}^*-\tilde{f}_{\alpha_n})$. Under the above conditions, we have
			Then,
			\[
			\sup_{f^*\in D_M({\cal E}(J_n)), {\lVert} f^* {\rVert_2}\leq J_n^{-1/2}}\mid P_n \frac{d}{d\theta_n}L(\theta_n)( f^*)\mid =o_P(n^{-1/2}).\]
			
		\end{theorem}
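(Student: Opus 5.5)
The argument mirrors the PC-HAL case above, with the sign vector $\ell_n$ replaced by $\alpha_n$ itself. \emph{First}, I would derive the exact score equations from the constraint. Since $C_n$ is binding, $\|\alpha_n\|_2=C_n$. Every path $\alpha_n+\delta h$ with $h\perp\alpha_n$ and $h(m)=0$ for $m\notin\mathcal{E}(J_n)$ has $\frac{d}{d\delta}\|\alpha_n+\delta h\|_2^2\big|_{\delta=0}=0$. Combining this with the KKT conditions for the smooth constraint (a Lagrange multiplier along $\alpha_n$ only) gives $P_n\frac{d}{d\theta_n}L(\theta_n)(f)=0$ for all $f\in D_n(\mathcal{E}(J_n))$. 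The space $D_n(\mathcal{E}(J_n))$ is the orthocomplement of $f_{\alpha_n}$ under the coefficient inner product $\langle\cdot,\cdot\rangle_n$. By Lemma \ref{lem:orthogonality}(ii), this inner product coincides with $\langle\beta(\cdot),\beta(\cdot)\rangle_N$.

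\emph{Second}, I would reduce a general direction to the single unsolved one. For $f^*$ with $\|f^*\|_2\le J_n^{-1/2}$, decompose $f^*=(f^*-\langle f^*,f_{\alpha_n}\rangle_n f_{\alpha_n})+\langle f^*,f_{\alpha_n}\rangle_n f_{\alpha_n}$. The first piece lies in $D_n(\mathcal{E}(J_n))$, so its empirical score vanishes. The score is linear in the direction, so
\[
P_n\tfrac{d}{d\theta_n}L(\theta_n)(f^*)=J_n^{1/2}\langle f^*,f_{\alpha_n}\rangle_n\,P_n\tfrac{d}{d\theta_n}L(\theta_n)(f^*_{\alpha_n}).
\]
By Cauchy--Schwarz, $J_n^{1/2}|\langle f^*,f_{\alpha_n}\rangle_n|\le 1$ uniformly in $f^*$. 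Hence the supremum over $f^*$ is bounded by the single quantity $|P_n\frac{d}{d\theta_n}L(\theta_n)(f^*_{\alpha_n})|$. Because $\tilde f_{\alpha_n}\in D_n(\mathcal{E}(J_n))$, I can subtract it for free:
\[
P_n\tfrac{d}{d\theta_n}L(\theta_n)(f^*_{\alpha_n})=(P_n-P_0)\tfrac{d}{d\theta_n}L(\theta_n)(f^*_{\alpha_n}-\tilde f_{\alpha_n})+P_0\tfrac{d}{d\theta_n}L(\theta_n)(f^*_{\alpha_n}-\tilde f_{\alpha_n}).
\]

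\emph{Third}, I would bound the two terms.
\begin{itemize}
\item \textbf{Drift term.} Since $P_0\frac{d}{d\psi_0}L(\psi_0)(g)=0$ for every direction $g$, this term equals $P_0(\frac{d}{d\theta_n}L(\theta_n)-\frac{d}{d\psi_0}L(\psi_0))(f^*_{\alpha_n}-\tilde f_{\alpha_n})$. For score operators Lipschitz in $\theta$ (squared error, logistic), it is at most a constant times $\|\theta_n-\psi_0\|_\infty\|f^*_{\alpha_n}-\tilde f_{\alpha_n}\|_{1,P_0}$, or alternatively $d_0^{1/2}(\theta_n,\psi_0)\|f^*_{\alpha_n}-\tilde f_{\alpha_n}\|_{P_0}$. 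Either product is $o_P(n^{-1/2})$ by the stated assumption.
\item \textbf{Empirical process term.} I would use the covering-number assumption on the score class, which lets the asymptotic equicontinuity argument from the proof of Theorem \ref{score:gl} apply to the random directions. For that I need $\|\frac{d}{d\theta_n}L(\theta_n)(f^*_{\alpha_n}-\tilde f_{\alpha_n})\|_{P_0}=o_P(1)$. This follows from the same $L^2(P_0)$ smallness of $f^*_{\alpha_n}-\tilde f_{\alpha_n}$ together with bounded score multipliers. The conclusion is that this term is $o_P(n^{-1/2})$.
\end{itemize}

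\emph{Finally}, the main obstacle is making the projection error explicitly small, so that the hypothesis is checkable. For that I would use the construction already computed above. Set $\alpha(m)=\alpha_n(m)/J_n$ for $m\ne m^*$, and choose $\alpha(m^*)$ so that $\alpha\perp\alpha_n$. The resulting $f_\alpha$ is feasible in $D_n(\mathcal{E}(J_n))$, and the residual reduces to $c_n(m^*)\tilde\phi_{m^*}$ with $c_n(m^*)=\alpha_n(m^*)^{-1}\|\alpha_n\|_2^2J_n^{-1}$. Since $\tilde f_{\alpha_n}$ is the $L^2(P_0)$ minimizer over that space, its error is no larger. Minimizing over $m^*$ then gives the conservative bound $\min_m c_n(m)\|\tilde\phi_m\|$. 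The delicate point is that $c_n(m^*)$ may blow up if $\alpha_n(m^*)$ is tiny. I would therefore pick $m^*$ as a coordinate with $|\alpha_n(m^*)|\gtrsim\|\alpha_n\|_2^2$, noting that $\|\alpha_n\|_2^2=O_P(1/n)$ holds for PC-HAR with $C_n=O(n^{-1/2})$. With that choice $c_n=O(1)$, and the condition reduces to $\|\tilde\phi_{m^*}\|_{1,P_0}$ decaying fast enough.
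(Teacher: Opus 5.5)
Your outline matches the paper's proof step for step: score equations for $h\perp\alpha_n$; the decomposition along $f_{\alpha_n}$ with Cauchy--Schwarz, reducing everything to the single direction $f^*_{\alpha_n}$; subtracting $\tilde f_{\alpha_n}$; the split into an empirical-process term and a second-order term; and the one-coordinate construction for the conservative bound. Your KKT derivation of the score equations is actually cleaner than the paper's linear path $\alpha_n+\delta h$, which leaves the $L_2$-ball at order $\delta^2$.

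\textbf{Normalization error in the final construction.} The one real error is here, and the paper makes it too. The paper apparently carried the computation over from PC-HAL by replacing $\ell_n$ with $\alpha_n$; for $\ell_n$ the two normalizations coincide because $\lVert\ell_n\rVert_2=J_n^{1/2}$, but not for $\alpha_n$. As defined in the statement, $f^*_{\alpha_n}$ has coefficients $\alpha_n(m)/(J_n^{1/2}\lVert\alpha_n\rVert_2)$, not $\alpha_n(m)/J_n$. So with your choice $\alpha(m)=\alpha_n(m)/J_n$ for $m\neq m^*$, the residual is not supported on $m^*$ unless $\lVert\alpha_n\rVert_2=J_n^{-1/2}$. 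Matching the correct coefficients off $m^*$ and imposing $\alpha\perp\alpha_n$ gives
\[
f^*_{\alpha_n}-f_\alpha=\frac{\lVert\alpha_n\rVert_2}{J_n^{1/2}\,\alpha_n(m^*)}\,\tilde\phi_{m^*},
\]
that is, $c_n(m^*)=\lVert\alpha_n\rVert_2/(J_n^{1/2}\alpha_n(m^*))$. This is invariant under rescaling $\alpha_n$, as it must be since $f^*_{\alpha_n}$ is normalized. Your formula $\alpha_n(m^*)^{-1}\lVert\alpha_n\rVert_2^2J_n^{-1}$ is not scale-invariant, which is a quick way to see the slip.

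With the corrected constant, your ``delicate point'' disappears. Take $m^*=\arg\max_m\lvert\alpha_n(m)\rvert$ and use $\max_m\lvert\alpha_n(m)\rvert\ge\lVert\alpha_n\rVert_2J_n^{-1/2}$; this gives $\lvert c_n(m^*)\rvert\le 1$ without appealing to $\lVert\alpha_n\rVert_2^2=O_P(1/n)$ or to $\lvert\alpha_n(m^*)\rvert\gtrsim 1/n$. The only remaining tension is between $\lvert\alpha_n(m)\rvert$ being large and $\lVert\tilde\phi_m\rVert_{1,P_0}$ being small, which is what $\min_m c_n(m)\lVert\tilde\phi_m\rVert_{1,P_0}$ records.

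\textbf{A smaller point.} You claim that $\lVert\frac{d}{d\theta_n}L(\theta_n)(f^*_{\alpha_n}-\tilde f_{\alpha_n})\rVert_{P_0}=o_P(1)$ follows from the product condition. This is not automatic. It needs a lower bound on the rate of $\theta_n$, uniform boundedness of the $L^2(P_0)$ projection $\tilde f_{\alpha_n}$, and membership of $f^*_{\alpha_n}-\tilde f_{\alpha_n}$ in the entropy-controlled class. The paper simply assumes it, and you should state it as an assumption too.
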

		\section{Proof of asymptotic efficiency of plug-in PC-HA}

		Consider one of our  PC-HA  estimators $\theta_n=\theta_{n,\alpha_n}\in D^{(k)}_M({\cal E}_n)$.
		To analyze the plug-in estimator $\Phi^F(\theta_{n,\alpha_n})$ one needs that
		$P_n S_{\theta_{n,\alpha_n}}(f_{\theta_n,f_n})=o_P(n^{-1/2})$ for some estimator $f_n$ of $f_0$ or one could select $f_n=f_0$.
		Equivalently, we need that $P_n D^*_{\theta_n,f_n}=o_P(n^{-1/2})$ or $P_n D^*_{\theta_n,f_0}=o_P(n^{-1/2})$.
		Above we provided regularity conditions under which $\theta_n$  solves scores $P_n S_{\theta_{n,\alpha_n}}(f)=o_P(n^{-1/2})$ uniformly in 
		$f\in D^{(k)}_M({\cal E}_n)$ for finite $M<\infty$ with $ {\lVert} f {\rVert_2}=O(n^{-1/2})$. Given that $D^{(k)}_M({\cal E}_n)$ approximates 
		$D^{(k)}_M([0,1]^d)$ at a rate $r(n)=O^+(n^{-k^*/(2k^*+1)})$, we should be able to determine a sequence
		$\tilde{f}_n\in D^{(k)}_M({\cal E}(J_n))$ so that $ {\lVert} \tilde{f}_n-f_{\theta_n,f_n} {\rVert}=O_P^+(n^{-k^*/(2k^*+1)})$ while
		$P_n S_{\theta_n}(\tilde{f}_n)=o_P(n^{-1/2})$. 
		Then, it remains to establish that $P_n S_{\theta_n}(\tilde{f}_n-f_{\theta_n,f_n})=o_P(n^{-1/2})$. 
		The latter is shown by noticing that (we can also set $f_n=f_0$ below)
		\begin{eqnarray*}
			P_n S_{\theta_n}(\tilde{f}_n-f_{\theta_n,f_n})&=&(P_n-P_0)S_{\theta_n}(\tilde{f}_n-f_n) \\
			&&+P_0\{S_{\theta_n}(\tilde{f}_n-f_n)-S_{\psi_0}(\tilde{f}_n-f_n)\}.
		\end{eqnarray*}
		The first term is an empirical process term that is $o_P(n^{-1/2})$ under a weak consistency condition on $\tilde{f}_n-f_n$, while the second term is a second order difference that can typically be bounded by product of $ {\lVert} \theta_n-\psi_0 {\rVert}$ and 
		$\tilde{f}_n-f_n$. 
		
		\begin{lemma}
			Suppose $D^*_{\Phi(),P}=S_{\Psi(P)}(f_P)$ with $f_P\in D^{(k)}_M([0,1]^d)$ for all $P\in {\cal M}$.
			Suppose we have $\sup_{f\in D^{(k)}({\cal E}(J_n)) {,\ \lVert} f {\rVert_2^2\leq 1/n}}P_n S_{\theta_n}(f)=o_P(n^{-1/2})$. 
			Let $\tilde{f}_{0,n}=\arg\min_{f\in D^{(k)}({\cal E}(J_n))}d_0(f,f_{P_0})$ be the projection of $f_{P_0}$ onto $D^{(k)}({\cal E}(J_n))$, and assume that
			$ {\lVert} \tilde{f}_{0,n} {\rVert_2^2}=O_P(n^{-1})$. 
			Suppose that $\tilde{f}_{0,n}$ converges to $f_0$ so that $P_n S_{\theta_n}(\tilde{f}_{0,n}-f_0)=o_P(n^{-1/2})$, which follows if $(P_n-P_0)S_{\theta_n}(\tilde{f}_{0,n}-f_0)=o_P(n^{-1/2})$
			and $P_0(S_{\theta_n}-S_{\psi_0})(\tilde{f}_{0,n}-f_0)=o_P(n^{-1/2})$.
			Then, it follows that $P_n S_{\theta_n}(f_0)=o_P(n^{-1/2})$ and thereby $P_n D^*_{\theta_n,f_0}=o_P(n^{-1/2})$.
		\end{lemma}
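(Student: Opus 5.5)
The plan is to split $f_0$ into a piece that lies in the directions where the score equations are already approximately solved, plus a small remainder. Write
\[
P_n S_{\theta_n}(f_0) \;=\; P_n S_{\theta_n}(\tilde f_{0,n}) \;+\; P_n S_{\theta_n}(f_0-\tilde f_{0,n}),
\]
which is valid because the score operator $f\mapsto S_{\theta_n}(f)=\frac{d}{d\theta_n}L(\theta_n)(f)$ is linear in its direction $f$. We then bound the two terms separately and finally translate the result into the statement about $D^*_{\theta_n,f_0}$, using the parametrization $D^*_{\Phi,P}=S_{\Psi(P)}(f_P)$, so that $D^*_{\theta_n,f_0}=S_{\theta_n}(f_0)$.

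For the first term, the hypothesis $\|\tilde f_{0,n}\|_2^2=O_P(n^{-1})$ gives, for every $\epsilon>0$, a constant $K_\epsilon$ with $\|\tilde f_{0,n}\|_2^2\le K_\epsilon/n$ with probability at least $1-\epsilon$. On that event, $\tilde f_{0,n}/K_\epsilon^{1/2}$ belongs to the set $\{f\in D^{(k)}({\cal E}(J_n)):\|f\|_2^2\le 1/n\}$, over which the assumed uniform bound holds. Linearity in the direction then gives
\[
\lvert P_n S_{\theta_n}(\tilde f_{0,n})\rvert \;\le\; K_\epsilon^{1/2}\,\sup_{f\in D^{(k)}({\cal E}(J_n)),\,\|f\|_2^2\le 1/n}\lvert P_n S_{\theta_n}(f)\rvert \;=\; o_P(n^{-1/2}).
\]
Since $\epsilon$ is arbitrary, the first term is $o_P(n^{-1/2})$. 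Two small points need attention here. First, the supremum hypothesis should be read with absolute values; this is automatic because the constraint set is symmetric under $f\mapsto -f$. Second, $\tilde f_{0,n}$ is data dependent through ${\cal E}(J_n)$, and it is exactly the uniformity of the hypothesis that absorbs this randomness.

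For the second term, we use the decomposition stated in the lemma:
\[
P_nS_{\theta_n}(\tilde f_{0,n}-f_0)=(P_n-P_0)S_{\theta_n}(\tilde f_{0,n}-f_0)+P_0(S_{\theta_n}-S_{\psi_0})(\tilde f_{0,n}-f_0)+P_0S_{\psi_0}(\tilde f_{0,n}-f_0).
\]
The first two pieces are $o_P(n^{-1/2})$ by the assumptions as stated. The third piece vanishes because $\psi_0$ minimizes the risk $P_0L(\psi)$ over $D^{(k)}([0,1]^d)$, so $P_0S_{\psi_0}(h)=0$ for every direction $h$ in the span. Alternatively, one can simply read the lemma's hypothesis ``$P_nS_{\theta_n}(\tilde f_{0,n}-f_0)=o_P(n^{-1/2})$'' as given directly, in which case this step is immediate.

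Adding the two bounds yields $P_nS_{\theta_n}(f_0)=o_P(n^{-1/2})$, and hence $P_nD^*_{\theta_n,f_0}=o_P(n^{-1/2})$. The only delicate point in the argument is the rescaling step for the random direction $\tilde f_{0,n}$, that is, converting an $O_P(n^{-1/2})$ norm bound into membership of the fixed-radius ball on which the supremum bound holds. This is handled by the $K_\epsilon$ argument above.
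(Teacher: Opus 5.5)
Your proposal is correct and follows essentially the same route as the paper: split $f_0=\tilde f_{0,n}+(f_0-\tilde f_{0,n})$, control $P_nS_{\theta_n}(\tilde f_{0,n})$ with the uniform score bound on the $\lVert f\rVert_2^2\le 1/n$ ball, and handle $P_nS_{\theta_n}(\tilde f_{0,n}-f_0)$ through the empirical-process plus second-order-difference decomposition. Your $K_\epsilon$ rescaling and your explicit use of $P_0S_{\psi_0}(h)=0$ make rigorous two steps that the paper leaves implicit.
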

		
		It could be that one needs to undersmooth the regularization parameter $M$ in $ {\lVert \alpha\rVert_1}\leq M$ for the PC-HAL relative to its cross-validation selector $M_{n,cv}$ so that the linear span of the $\{\tilde{\phi}_m: \alpha_n(m)\not =0\}$ is rich enough to approximate the $f_{\theta_n,G_n}$ or $f_{\theta_n,G_0}$. It appears that when we use PC-HAR or PC-HAGL with $D^{(k)}({\cal E}_n)$, then for each constraint $M$ on $ {\lVert\alpha\rVert_2}$ or $ {\lVert\beta(\alpha)\rVert_1}$, we end up using all basis functions $\{\tilde{\phi}_m: m=1,\ldots,n\}$ so that the span of scores solved is non-sensitive to the bound $M$ in $ {\lVert \alpha\rVert_2}<M$ or $ {\lVert \beta(\alpha)\rVert_1}<M$.
		However, if one implements these PC-HA estimators with two tuning parameters, one that selects top $m_n$ $\tilde{\phi}_1,\ldots,\tilde{\phi}_{m_n}$ and one that controls the norm of $\alpha$ in the resulting model $D^{(k)}({\cal E}(J_n))$, then some undersmoothing for the first tuning parameter $m_n$ might still be needed so that its choice is not just adaptive to fitting $\psi_0$ but also makes sure allows fitting of $f_{\psi_0,G_0}$.

		Once we have $P_n D^*_{\theta_n,f_n}=o_P(n^{-1/2})$ or $P_n D^*_{\theta_n,f_0}=o_P(n^{-1/2})$, the analysis of $\Phi(\theta_n)$ proceeds as in any TMLE or plug-in HAL analysis. 
		That is, we have $\Phi^F(\theta_n)-\Phi^F(\psi_0)=(P_n-P_0)D^*_{\theta_n,f_n}+o_P(n^{-1/2})+R_{\Phi(),0}(\theta_n,f_n,\psi_0,f_0)$, where $f_n=f_0$ if we have $P_n D^*_{\theta_n,f_0}=o_P(n^{-1/2})$.
		Given the rates $\theta_n-\psi_0$ and $f_n-f_0$, one has to prove that $R_{\Phi(),0}(\theta_n,f_n,\psi_0,f_0)=o_P(n^{-1/2})$, which generally quadratic in the rates  of $\theta_n-\psi_0=O^+(n^{-k^*/(2k^*+1)})$ and $f_n-f_0$.
		In addition, we need that $D^*_{\theta_n,f_n}$ falls in a Donsker class with probability tending to 1, which follows if $\theta_n\in D^{(k)}_M({\cal E}_n)$ and $f_n\in D^{(k)}_M([0,1]^d)$  and that the covering number of the image functions under $(\theta,f)\rightarrow D^*_{\theta,f}$ is bounded accordingly.
		One should also easily establish $P_0\{D^*_{\theta_n,f_n}-D^*_{\psi_0,f_0}\}^2=o_P(1)$. This then proves
		\[
		\Phi^F(\theta_n)-\Phi^F(\psi_0)=P_n D^*_{\Phi(),P_0}+o_P(n^{-1/2}),\]
		where $D^*_{\Phi(),P_0}=S_{\psi_0}(f_0)$
		and thereby establishes asymptotic normality and  efficiency of $\Phi^F(\theta_n)$ as estimator of $\Phi^F(\psi_0)$. 
		
		Suppose that the conclusion of the above lemma applies so that $P_n S_{\theta_n}(f_0)=o_P(n^{-1/2})$ with $f_0\in D^{(k)}_M([0,1]^d)$. and 
		$D^*_{\Phi(),P}=D^*_{\theta(P),f_P}$ so that we have $P_n  D^*_{\theta_n,f_0}=o_P(n^{-1/2})$.
		Assume also that $S_{\theta_n}(f_0)$ is an element of a Donsker class with probability tending to 1 (presumably with covering number of similar order as the one for $D^{(k)}_M([0,1]^d)$; $R(\theta_n,f_0,\theta_0,f_0)=o_P(n^{-1/2})$; and $P_0\{S_{\theta_n}(f_0)-S_{\psi_0}(f_0)\}^2\xrightarrow{P} 0$. 
		Then, $\Phi^F(\theta_n)$ is an efficient estimator of $\Phi^F(\psi_0)$:
		\[
		\Phi^F(\theta_n)-\Phi^F(\theta_0)=P_n D^*_{\Phi(),P_0}+o_P(n^{-1/2}).\]

		\section{Definition of sectional variation norm}\label{app:secv}

		For an index set $s \subseteq \{1,\dots,d\}$, let $x(s) = (x_j)_{j \in s}$ denote the subvector of coordinates in $s$. We define the \textbf{sectioning operator} $x^{(s)}$ as the vector $x$ where all coordinates $j \notin s$ are set to zero:
		\[ x^{(s)} = (x(s), \mathbf{0}_{-s}). \]

		Let $\mathcal{S}^{k+1}(d)$ be the collection of nested $(k+1)$-tuples $\bar{s}(k+1) = (s_1, \dots, s_{k+1})$, such that $s_{k+1} \subseteq s_k \subseteq \dots \subseteq s_1 \subseteq \{1, \dots, d\}$. 
		
		\begin{definition}[$k$-th order recursive derivative]
			For a function $Q: [0,1]^d \to \mathbb{R}$, the recursive derivatives $Q^{(j)}_{\bar{s}(j)}$ are defined as follows:
			
			\begin{enumerate}
				\item \textbf{Initialization ($j=1$):} 
				Define the derivative of the $s_1$-section of $Q$ as:
				\[ Q^{(1)}_{s_1}(x(s_1)) = \left( \prod_{i \in s_1} \frac{\partial}{\partial x_i} \right) Q(x^{(s_1)}). \]
				
				\item \textbf{Recursion ($j = 1, \dots, k$):} 
				Given $Q^{(j)}_{\bar{s}(j)}$, define the next derivative by sectioning the variables in $s_j \setminus s_{j+1}$ to zero and differentiating with respect to the remaining variables $s_{j+1}$:
				\[ Q^{(j+1)}_{\bar{s}(j+1)}(x(s_{j+1})) = \left( \prod_{i \in s_{j+1}} \frac{\partial}{\partial x_i} \right) Q^{(j)}_{\bar{s}(j)}(x^{(s_{j+1})}). \]
				
				\item \textbf{Termination (constant terms):} 
				If a subset $s_m = \emptyset$ is reached, the derivative becomes a constant equal to the previous derivative evaluated at the origin:
				\[ Q^{(m)}_{\bar{s}(m)} = Q^{(m-1)}_{\bar{s}(m-1)}(\mathbf{0}). \]
			\end{enumerate}
		\end{definition}
		
		\subsection*{Concrete example}
		
		Consider a function $Q(x_1, x_2)$ and a nested sequence for $k=1$ defined by $s_1 = \{1, 2\}$ and $s_2 = \{1\}$. We follow the recursive steps to find the second-order derivative $Q^{(2)}_{\bar{s}(2)}$:
		
		\begin{description}
			\item[Step 1: Initialization ($j=1$)] \hfill \\
			We evaluate the mixed partial derivative of the section $s_1$. Since $s_1$ contains all variables:
			\[
			Q^{(1)}_{s_1}(x_1, x_2) = \frac{\partial^2 Q(x_1, x_2)}{\partial x_1 \partial x_2}.
			\]
			
			\item[Step 2: Recursion ($j=1 \to 2$)] \hfill \\
			We now take $Q^{(1)}_{s_1}$ and apply the sectioning operator for $s_2$. Since $s_2 = \{1\}$, we set $x_2 = 0$ and differentiate with respect to $x_1$. Note that this results in $x_1$ being differentiated a second time:
			\[
			Q^{(2)}_{\bar{s}(2)}(x_1) = \frac{\partial}{\partial x_1} \left[ Q^{(1)}_{s_1}(x_1, 0) \right] = \frac{\partial}{\partial x_1} \left[ \left. \frac{\partial^2 Q(x_1, x_2)}{\partial x_1 \partial x_2} \right|_{x_2=0} \right].
			\]
			
			\item[Result] \hfill \\
			This specific nested sequence targets the variation of the interaction term specifically along the $x_1$ axis, effectively involving a 3rd-order derivative $\partial_1^2 \partial_2$. This higher-order behavior is necessary to support "hybrid" basis functions (e.g., linear in $x_1$, jump in $x_2$).
		\end{description}
		
		\begin{definition}[$k$-th order sectional variation norm]
			Let $k \in \{0, 1, \dots\}$ be the order of smoothness. The $k$-th order sectional variation norm $\|Q\|_{v,k}^*$ is the sum of the variation norms of all recursive derivatives across all possible nested sequences:
			
			\[
			\|Q\|_{v,k}^* = \sum_{\bar{s}(k+1) \in \mathcal{S}^{k+1}(d)} \|Q^{(k)}_{\bar{s}(k+1)}\|_v
			\]
			
			where the term $\| \cdot \|_v$ is defined based on the nesting structure of $\bar{s}(k+1) = (s_1, \dots, s_{k+1})$:
			
			\begin{enumerate}
				\item \textbf{Continuous variation (non-empty $s_{k+1}$):} 
				If the final set $s_{k+1} \neq \emptyset$, the norm is the Total Variation of the $k$-th derivative on the unit cube of dimension $|s_{k+1}|$:
				\[
				\|Q^{(k)}_{\bar{s}(k+1)}\|_v = \int_{[0,1]^{|s_{k+1}|}} \left| d Q^{(k)}_{\bar{s}(k+1)}(u_{s_{k+1}}) \right|.
				\]
				
				\item \textbf{Discrete variation (empty $s_{k+1}$):} 
				If $s_{k+1} = \emptyset$, let $m$ be the largest index such that $s_m \neq \emptyset$ (i.e., the step before the sequence terminated). The norm is the absolute value of the constant term at that step:
				\[
				\|Q^{(k)}_{\bar{s}(k+1)}\|_v = | Q^{(m)}_{\bar{s}(m)}(\mathbf{0}) |.
				\]
				\textit{(Note: If $s_1 = \emptyset$, then $m=0$ and this reduces to $|Q(\mathbf{0})|$.)}
			\end{enumerate}
		\end{definition}
		
		\subsection*{Concrete example: $d=2, k=1$}
		
		To illustrate the $k$-th order norm, we must sum the variation over \textbf{all possible nested sequences}. Each sequence $\bar{s}$ generates a distinct recursive derivative $Q^{(1)}_{\bar{s}}$.
		
		Consider the function:
		\[ Q(x_1, x_2) = \underbrace{3 x_1^2 x_2}_{\text{Interaction}} + \underbrace{2 x_1^2}_{\text{Main Effect}} + \underbrace{5}_{\text{Intercept}} \]
		
		We calculate the components for three distinct sequences.
		
		\begin{enumerate}
			\item \textbf{Sequence A: $\bar{s}_a = (\{1, 2\}, \{1, 2\})$ } \\
			This sequence keeps all variables active. We differentiate the full function.
			\begin{itemize}
				\item Derivative: $Q^{(1)}_{\bar{s}_a} = \frac{\partial^2}{\partial x_1 \partial x_2} Q(x_1, x_2) = 6 x_1$.
				\item Variation: 
				\[ V_a = \| Q^{(1)}_{\bar{s}_a} \|_v = \int_0^1 \int_0^1 |6| \, dx_1 dx_2 = \mathbf{6}. \]
			\end{itemize}
			
			\item \textbf{Sequence B: $\bar{s}_b = (\{1\}, \{1\})$} \\
			This sequence sections $x_2$ to zero \textit{before} differentiation.
			\begin{itemize}
				\item Sectioning: $Q(x^{(s_1)}) = Q(x_1, 0) = 2 x_1^2 + 5$. (Note: The interaction $3x_1^2x_2$ is gone).
				\item Derivative: $Q^{(1)}_{\bar{s}_b} = \frac{d}{dx_1} (2 x_1^2 + 5) = 4 x_1$.
				\item Variation: 
				\[ V_b = \| Q^{(1)}_{\bar{s}_b} \|_v = \int_0^1 |4| \, dx_1 = \mathbf{4}. \]
			\end{itemize}
			
			\item \textbf{Sequence C: $\bar{s}_c = (\emptyset, \emptyset)$ } \\
			This sequence sections everything to zero.
			\begin{itemize}
				\item Value: $Q(\mathbf{0}) = 5$.
				\item Variation: 
				$V_c = \| Q^{(0)}_{\bar{s}_c} \|_v = |5| = \mathbf{5}.$
			\end{itemize}
		\end{enumerate}

		The total $k=1$ sectional variation norm is the sum of the variation of every possible sequence:
		\[
		\| Q \|_{v,1}^* = \sum_{\bar{s}} \| Q^{(1)}_{\bar{s}} \|_v = V_a + V_b + V_c + \dots = 6 + 4 + 5 + \dots = \mathbf{15}.
		\]
		\textit{(Note: All the $x_2$ involved sequences contribute zeros to the sum)}

		\section{Minor proofs}
		\subsection{Proof of Proposition 1}
		
		$$
		\begin{aligned}
			\left.\frac{d}{d\delta}\right|_{\delta=0} R_n(\alpha_\delta^h)
			&= \sum_m \left.\frac{\partial R_n}{\partial \alpha(m)}\right|_{\alpha}\cdot
			\frac{d}{d\delta}\Big|_{\delta=0}\alpha_\delta^h(m)\\
			&= \sum_m \frac{\partial R_n}{\partial \alpha(m)}\cdot \big(h(m)\alpha(m)\big)\\
			&= \sum_m h(m)\,\underbrace{\big(\alpha(m)\,\partial R_n/\partial\alpha(m)\big)}_{=D(\alpha)(m)}.
		\end{aligned}
		$$
		
		\subsection{Proof of Proposition \ref{prop:grad}}
		
		(MSE) Let 
		\[
		f_\alpha(x)=\sum_m \alpha(m)\tilde\phi_m(x),
		\qquad 
		r = Y - f_\alpha.
		\]
		The empirical MSE risk is
		\[
		R_n(\alpha)
		=
		P_n\!\left( Y - f_\alpha \right)^2
		=
		P_n(r^2).
		\]
		We compute the directional derivative
		\[
		D(\alpha)(m)
		=
		\frac{\partial}{\partial \alpha(m)} R_n(\alpha).
		\]
		
		We begin by differentiating the pointwise loss. For a single observation,
		\[
		\ell(r)=r^2,
		\qquad
		\ell'(r)=2r.
		\]
		Since 
		\[
		r = Y - f_\alpha,
		\qquad
		\frac{\partial r}{\partial \alpha(m)}
		= -\,\tilde\phi_m,
		\]
		the chain rule gives
		\[
		\frac{\partial}{\partial \alpha(m)}\,\ell(r)
		=
		\ell'(r)\,
		\frac{\partial r}{\partial \alpha(m)}
		=
		2r\,(-\tilde\phi_m)
		=
		-2(Y - f_\alpha)\tilde\phi_m.
		\]
		
		Applying $P_n$ yields the gradient component
		\[
		D(\alpha)(m)
		=
		P_n\!\left[
		-2\,(Y - f_\alpha)\,\tilde\phi_m
		\right].
		\]
		Equivalently,
		\[
		D(\alpha)(m)
		=
		2\,P_n\!\left[
		\tilde\phi_m\,(f_\alpha - Y)
		\right].
		\]
		Substituting $f_\alpha=\sum_m \alpha(m)\tilde\phi_j$ completes the derivation.
		
		(Logistic) Let
		$f_\alpha(x)=\sum_j \alpha(j)\tilde\phi_j(x), z=-Y f_\alpha$.
		The empirical logistic risk is
		$R_n(\alpha)=P_n\ell(z)=P_n\log\big(1+e^{z}\big)$.
		Write $\ell(z)=\log(1+e^{z})$ so that
		$\ell'(z)=\frac{e^{z}}{1+e^{z}}=\sigma(z)$,
		where $\sigma$ denotes the logistic function. Since $z=-Y f_\alpha$ we have
		$\frac{\partial z}{\partial \alpha(m)}=-Y\,\tilde\phi_m$.
		By the chain rule,
		$\frac{\partial}{\partial\alpha(m)}\ell(z)
		=\ell'(z)\cdot\frac{\partial z}{\partial\alpha(m)}
		=\sigma(z)\,(-Y)\tilde\phi_m$.
		Averaging over the empirical distribution gives the ordinary gradient coordinate
		$\frac{\partial R_n(\alpha)}{\partial\alpha(m)}
		= P_n\!\big[\sigma(-Y f_\alpha)\,(-Y)\tilde\phi_m\big]
		= -\,P_n\!\big[Y\,\tilde\phi_m\,\sigma(-Y f_\alpha)\big]$.

		\subsection{Proof of Lemma \ref{lem:sub}}
		We can show $ {\lVert \beta(\alpha)\rVert_2=\lVert \alpha\rVert_2}$ as follows:
		\begin{eqnarray*}
			{\lVert \beta(\alpha_{n,2})\rVert_2^2}&=&\sum_j \beta(\alpha_{n,2})^2(j)\\
			&=&\sum_j\left\{\sum_m \alpha_{n,2}(m)E_N^n(j,m)\right\}^2\\
			&=&\sum_j \sum_{m_1,m_2}\alpha_{n,2}(m_1)\alpha_{n,2}(m_2)E_N^n(j,m_1)E_N^n(j,m_2)\\
			&=&\sum_{m_1,m_2}\alpha_{n,2}(k_1)\alpha_{n,2}(k_2)\sum_j E_N^n(j,m_1)E_N^n(j,m_2)\\
			&=&\sum_{m_1,m_2}\alpha_{n,2}(m_1)\alpha_{n,2}(m_2)I(m_1=m_2)\\
			&=&\sum_m \alpha_{n,2}(m)^2\\
			&=& {\lVert \alpha_{n,2}\rVert_2^2},
		\end{eqnarray*}
		where we used that the eigenvectors $E_N^n(\cdot,m)$ across $m$ are orthonormal.  This result only depends on our chosen standard inner product for $\openr^N$, and applies for different choices of inner products in $\openr^n$. 
		
		Let's now prove the claimed bounding of $ {\lVert \beta(\alpha)\rVert_1=O(\lVert \alpha\rVert_1)}$.  We have \[
		{\lVert \beta(\alpha)\rVert_2^2=\lVert \alpha\rVert_2^2}=\sum_{l=1}^n \alpha(l)^2.\]
		We also know that if $ {\lVert \beta(\alpha)\rVert_2^2}=O(1/N)$, then
		\[
		{\lVert \beta(\alpha)\rVert_1^2}\leq N  {\lVert \beta(\alpha)\rVert_2^2}=O(1).\]
		Thus, to control $ {\lVert\beta(\alpha)\rVert_1}$ it suffices to control $\sum_{l=1}^n \alpha(l)^2=O(1/N)$. 
		In our typical initial working model we have $N\sim n$, so that this corresponds with $\sum_l \alpha(l)^2=O(1/n)$.
		This corresponds with $\alpha(l)\sim 1/n$ and, if we assume $\max_l \mid \alpha(l)\mid =O(1/n)$, then  
		we can conclude that to control $ {\lVert \alpha\rVert_2^2}=O(1/n)$ corresponds with controlling $ {\lVert \alpha\rVert_1}=O(1)$.
		Therefore it is possible to control the $L_1$-norm of $\beta(\alpha)$ by controlling $ {\lVert \alpha\rVert_1} =O(1)$ as long as we do not allow for $\alpha$ whose maximal coefficient is still $O(1/n)$. 
		This proves the statement on bounding $ {\lVert \beta(\alpha)\rVert_1}$ in terms of $ {\lVert \alpha\rVert_1}$.
		
		The other statements are straightforward. $\Box$
		
		\section{Verification of Assumptions \ref{assumption1} and \ref{assumption2} for Common Loss Functions}\label{app:loss}
		
		We verify that the norm equivalence (Assumption \ref{assumption1}) and the i.i.d. sampling condition (Assumption \ref{assumption2}) hold for the standard loss functions discussed in the main text. In all cases, the verification relies on a second-order Taylor expansion of the loss-based dissimilarity $d_0(\psi, \psi_0)$ around the truth $\psi_0$. Under the boundedness constraint $\psi \in D^{(k)}_M([0,1]^d)$, the convexity of the loss ensures that the dissimilarity is equivalent to a weighted $L_2$-norm, which corresponds to the standard $L_2(P_{X,0})$ norm for an appropriate choice of $P_{X,0}$.
		
		\subsection*{Example 1: squared error loss (regression)}
		\textbf{Loss:} $L(\psi)(O) = (Y - \psi(X))^2$. \\
		\textbf{Target:} $\psi_0(X) = E_0[Y|X]$.
		
		\noindent \textit{Verification of Assumption \ref{assumption1}:}
		The exact dissimilarity is given by:
		\begin{align*}
			d_0(\psi, \psi_0) &= P_0 \left[ (Y - \psi(X))^2 - (Y - \psi_0(X))^2 \right] \\
			&= P_0 \left[ (\psi(X) - \psi_0(X))^2 + 2(Y-\psi_0(X))(\psi_0(X)-\psi(X)) \right].
		\end{align*}
		By definition of the conditional mean, $E_0[Y-\psi_0(X) \mid X] = 0$, causing the cross-term to vanish. Thus:
		\[
		d_0(\psi, \psi_0) = \int (\psi(x) - \psi_0(x))^2 dP_0(x) = \|\psi - \psi_0\|_{L_2(P_{X,0})}^2,
		\]
		where $P_{X,0}$ is the marginal distribution of the covariates $X$. The equivalence holds with constants equal to 1.
		
		\noindent \textit{Verification of Assumption \ref{assumption2}:}
		Assumption \ref{assumption2} requires $x^n$ to be an i.i.d. sample from $P_{X,0}$. Since $P_{X,0}$ is the marginal distribution of $X$, setting $x^n = (X_1, \ldots, X_n)$ (the observed covariate values) satisfies this condition.
		
		\subsection*{Example 2: logistic loss (binary classification)}
		\textbf{Loss:} $L(\psi)(O) = -Y\psi(X) + \log(1 + \exp(\psi(X)))$. \\
		\textbf{Target:} $\psi_0(X) = \text{logit}(P_0(Y=1|X))$.
		
		\noindent \textit{Verification of Assumption \ref{assumption1}:}
		Define $p_\psi(x) = (1+\exp(-\psi(x)))^{-1}$. A second-order Taylor expansion of the function $f(u) = -yu + \log(1+e^u)$ around $\psi_0(x)$ yields:
		\[
		L(\psi)(O) \approx L(\psi_0)(O) + \frac{\partial L}{\partial \psi}(\psi_0)(\psi - \psi_0) + \frac{1}{2} \frac{\partial^2 L}{\partial \psi^2}(\psi^*)(\psi - \psi_0)^2.
		\]
		The first-order term has expectation zero at the minimizer $\psi_0$. The second derivative is the variance of the Bernoulli distribution: $\frac{\partial^2 L}{\partial \psi^2} = p_\psi(1-p_\psi)$. Thus:
		\[
		d_0(\psi, \psi_0) \asymp E_0 \left[ p_{\psi^*}(X)(1 - p_{\psi^*}(X)) (\psi(X) - \psi_0(X))^2 \right],
		\]
		where $\psi^*$ lies between $\psi$ and $\psi_0$. Since $\psi \in D^{(k)}_M$, we have $\|\psi\|_\infty \le M$, implying that the probabilities $p_\psi$ are bounded away from 0 and 1. Hence, there exist constants $0 < c < C < \infty$ such that $c \le p_{\psi^*}(1-p_{\psi^*}) \le C$. Therefore:
		\[
		d_0(\psi, \psi_0) \asymp E_0 [(\psi(X) - \psi_0(X))^2] = \|\psi - \psi_0\|_{L_2(P_{X,0})}^2,
		\]
		where $P_{X,0}$ is the marginal distribution of $X$.
		
		\noindent \textit{Verification of Assumption \ref{assumption2}:}
		Similar to the regression case, setting $x^n = (X_1, \ldots, X_n)$ provides the required i.i.d. sample from $P_{X,0}$.
		
		\subsection*{Example 3: log-likelihood with exponential link (density estimation)}
		\textbf{Loss:} $L(\psi)(O) = -\psi(O) + \log \int \exp(\psi(u)) du$. \\
		\textbf{Target:} The true density $p_0(o) = \exp(\psi_0(o)) / \int \exp(\psi_0)$.
		
		\noindent \textit{Verification of Assumption \ref{assumption1}:}
		The loss-based dissimilarity corresponds to the Kullback-Leibler divergence $KL(p_0 \mid\mid p_\psi)$. Under the constraint $\psi \in D^{(k)}_M$, the densities are uniformly bounded away from zero and infinity. For such densities, the KL-divergence is equivalent to the squared Hellinger distance, which is locally equivalent to the squared $L_2$-norm of the log-densities (up to centering). Specifically:
		\[
		d_0(\psi, \psi_0) = KL(p_0 \mid\mid p_\psi) \asymp \int (\psi(o) - \psi_0(o) - c(\psi))^2 dP_0(o),
		\]
		where $c(\psi)$ is a normalization constant. If we restrict the domain to centered functions or consider the quotient space, this is equivalent to:
		\[
		d_0(\psi, \psi_0) \asymp \|\psi - \psi_0\|_{L_2(P_0)}^2.
		\]
		Here, $P_{X,0}$ is identified with the data-generating distribution $P_0$ itself.
		
		\noindent \textit{Verification of Assumption \ref{assumption2}:}
		Since the relevant norm is defined with respect to $P_0$, the observations $x^n = (O_1, \ldots, O_n)$ constitute an i.i.d. sample from $P_{X,0} = P_0$, satisfying the assumption.
		
		\section{Additional simulation plots}\label{supp:simulation_plots}
		
		\subsection{Fast (sinusoidal) target: convergence rates}
		
		\begin{figure}[b!]
			\centering
			\begin{minipage}{\textwidth}
				\centering
				\includegraphics[width=0.3\textwidth]{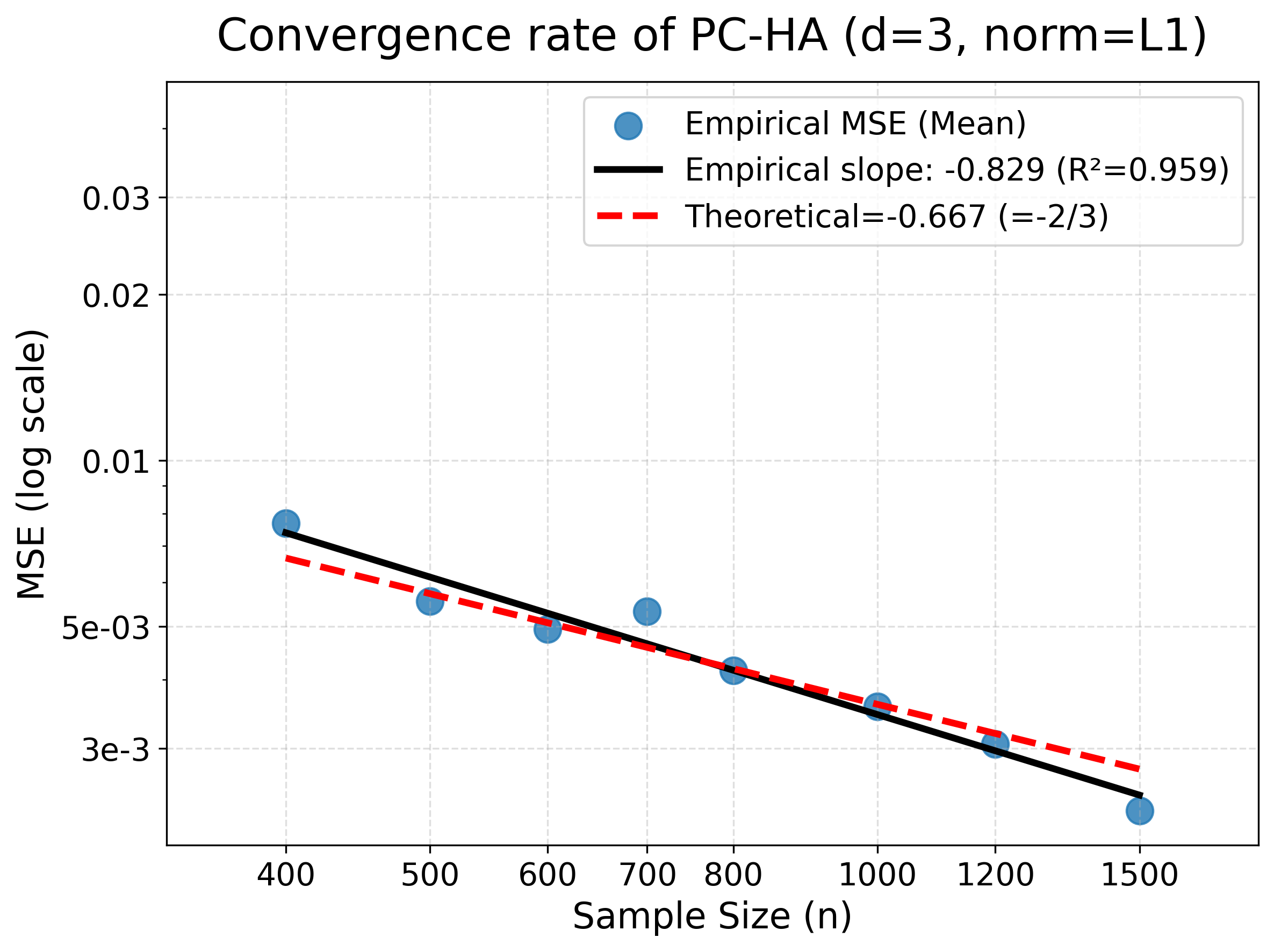}%
				\includegraphics[width=0.3\textwidth]{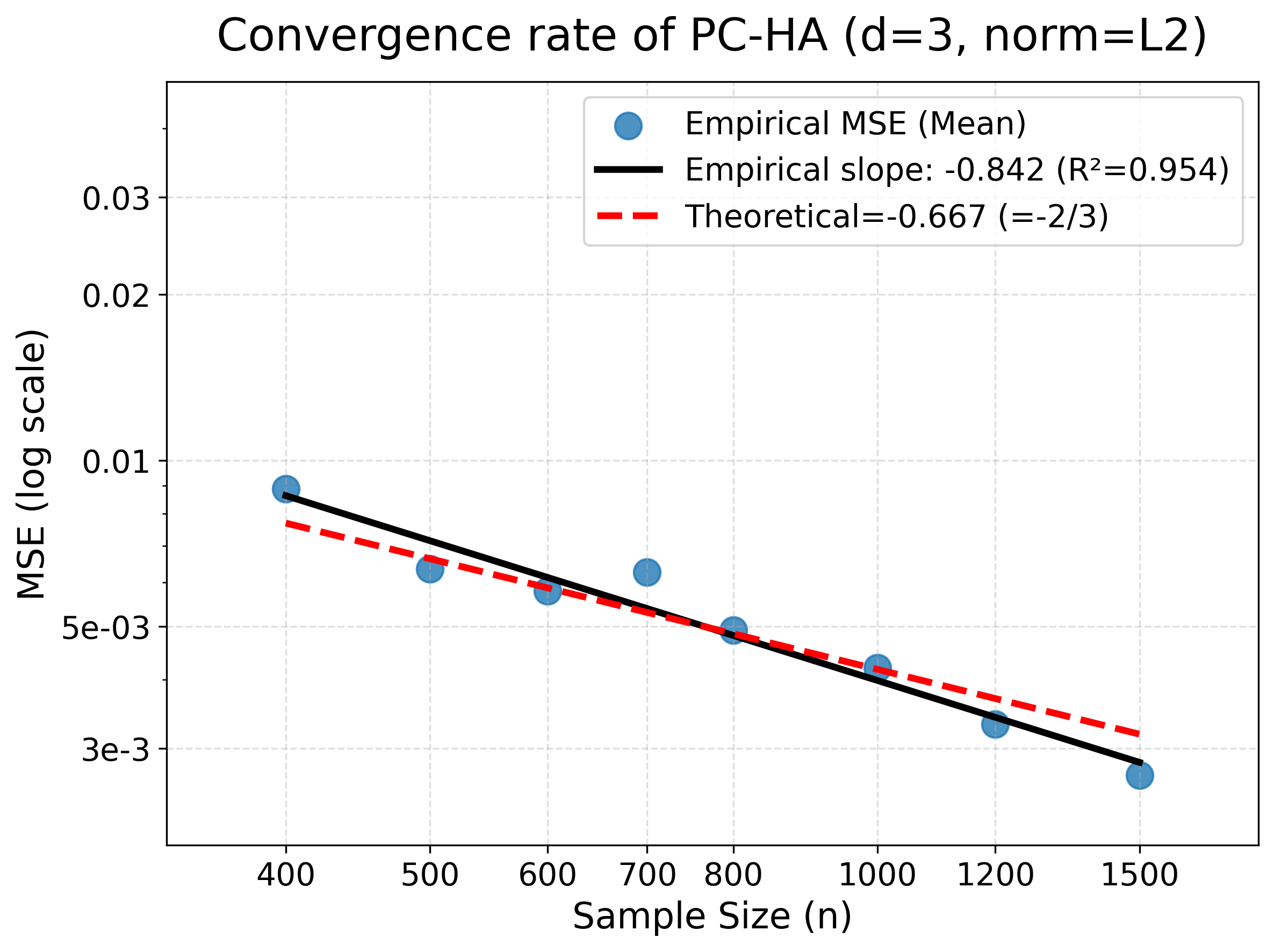}%
				\includegraphics[width=0.3\textwidth]{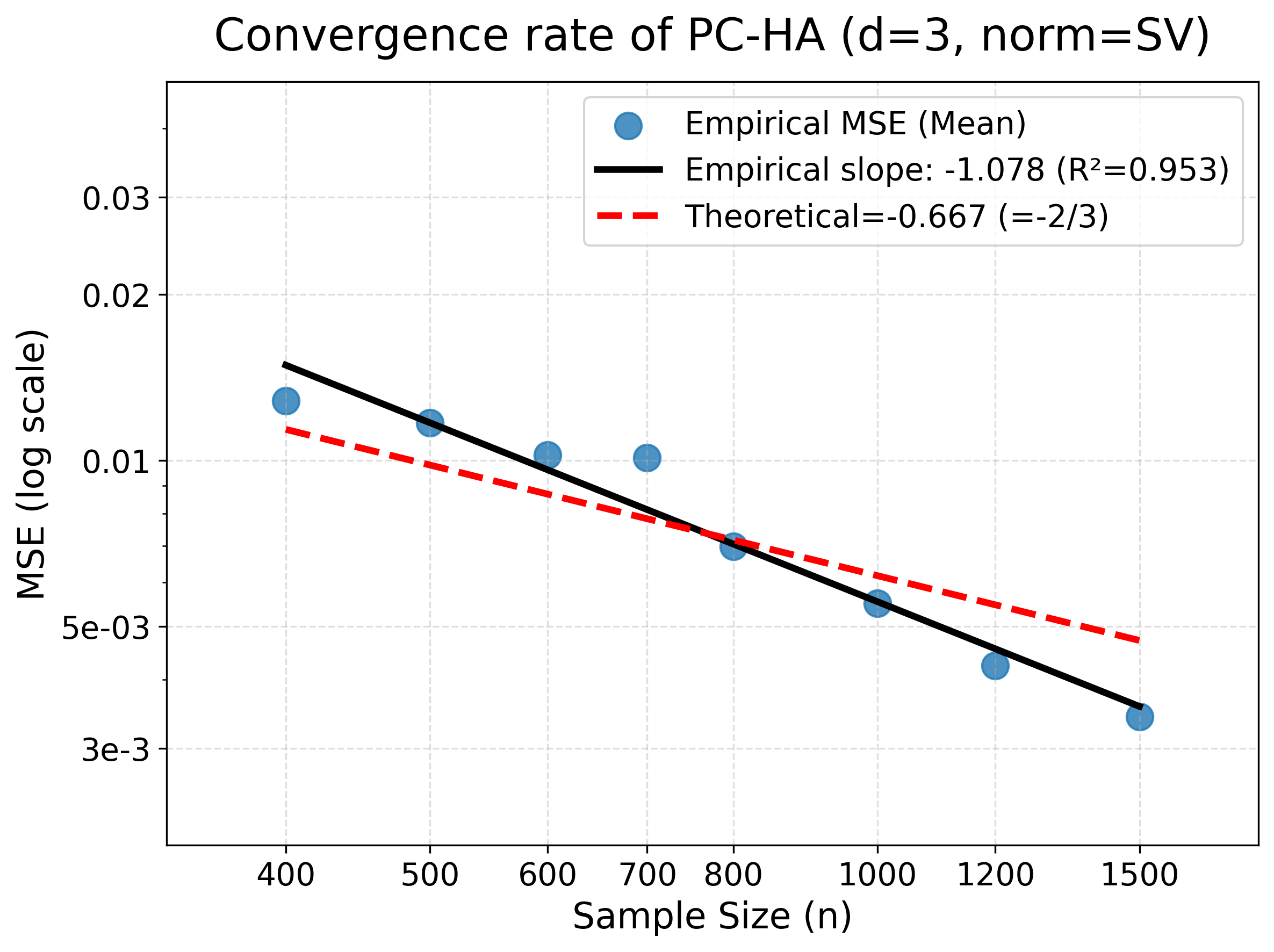}
				\vspace{0.2em}
				\includegraphics[width=0.3\textwidth]{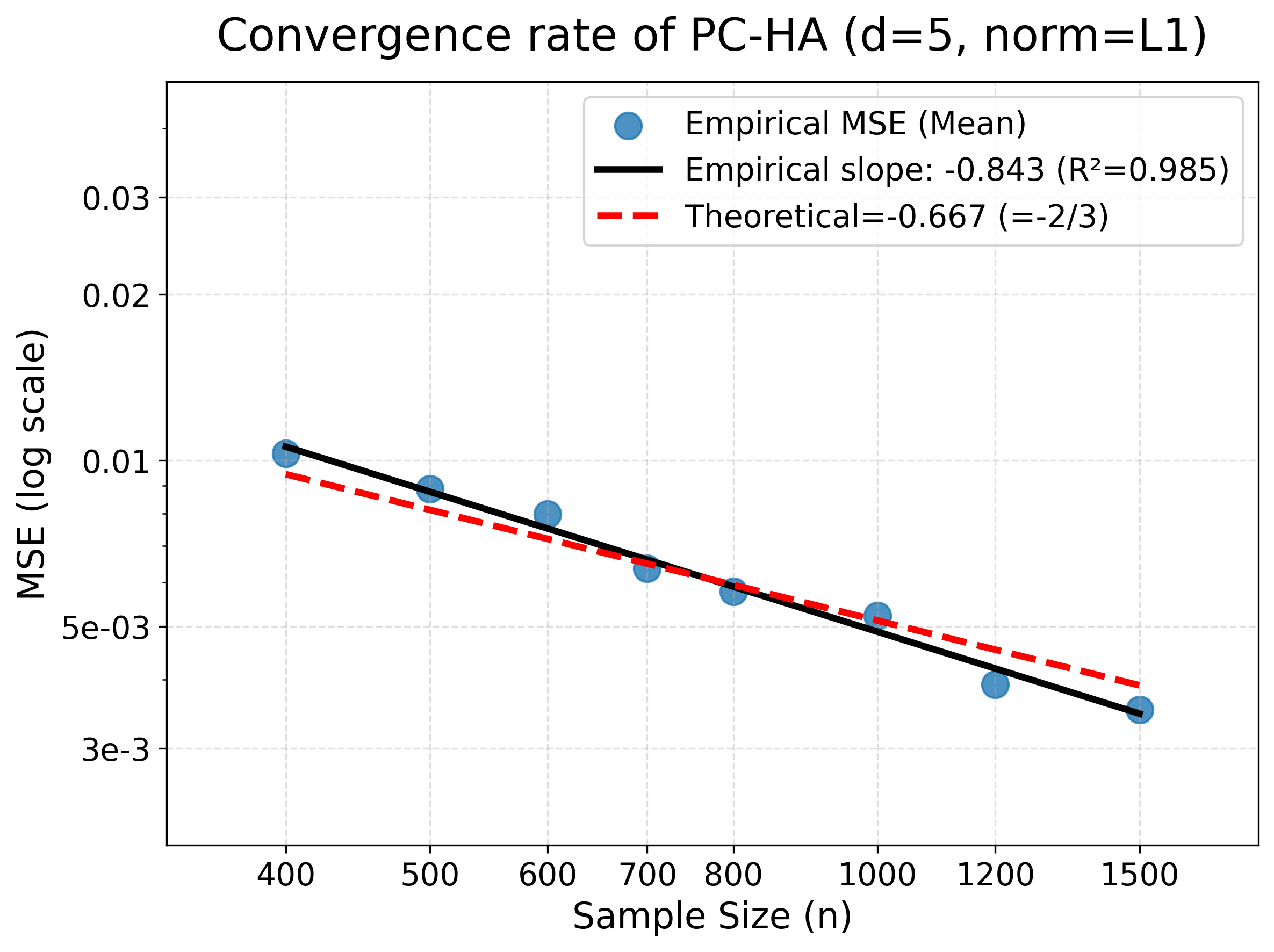}%
				\includegraphics[width=0.3\textwidth]{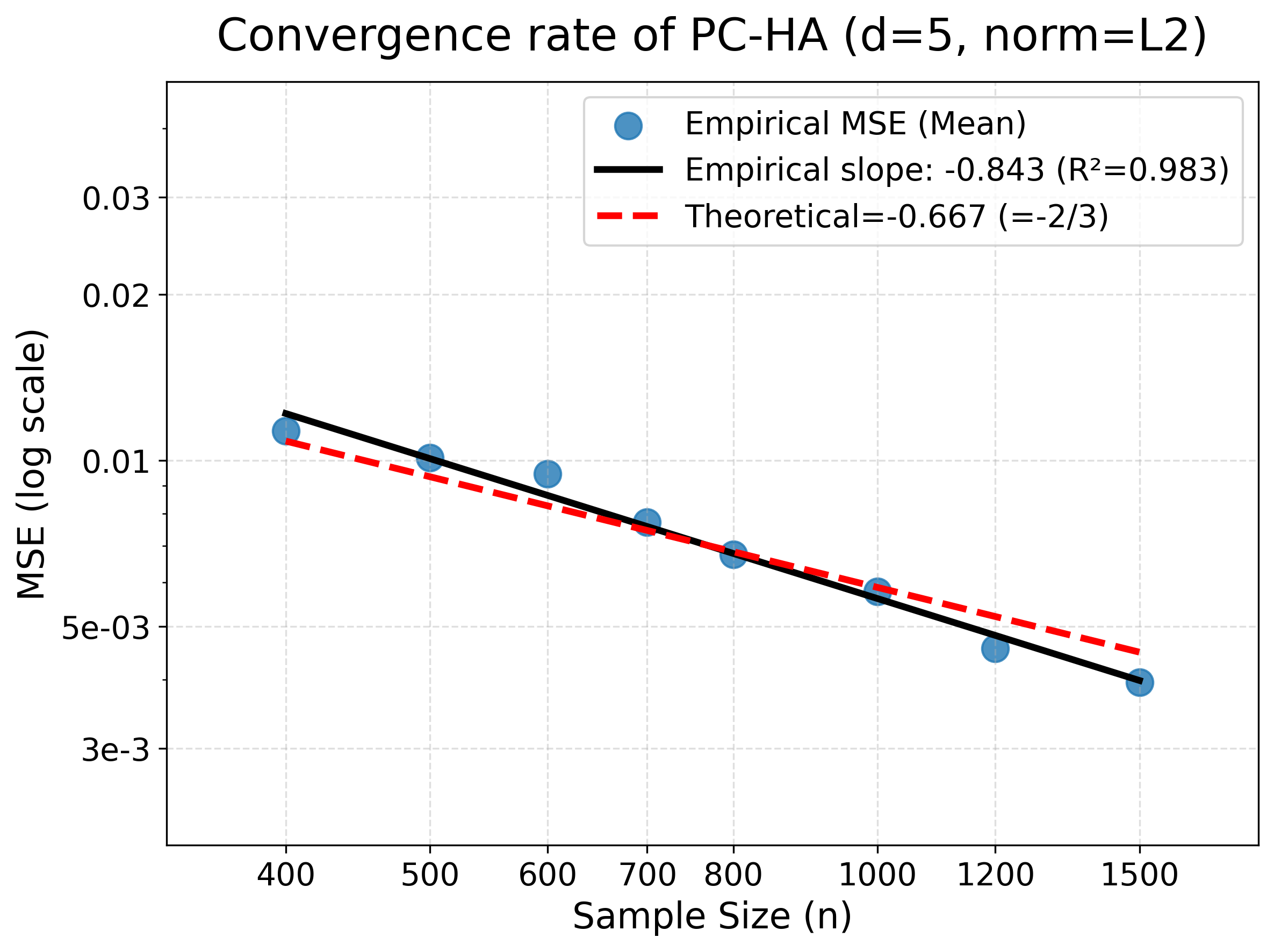}%
				\includegraphics[width=0.3\textwidth]{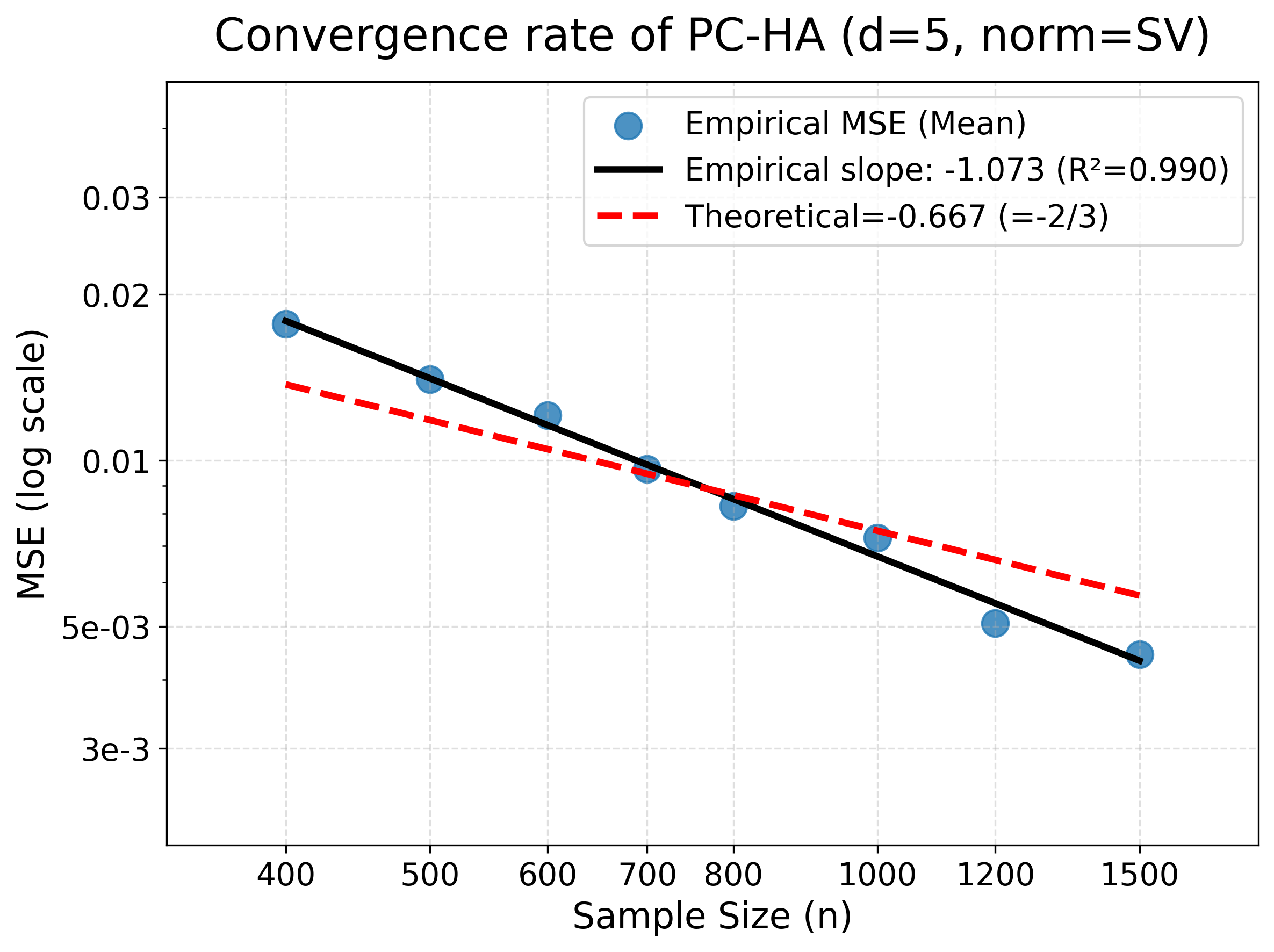}
				\vspace{0.2em}
				\includegraphics[width=0.3\textwidth]{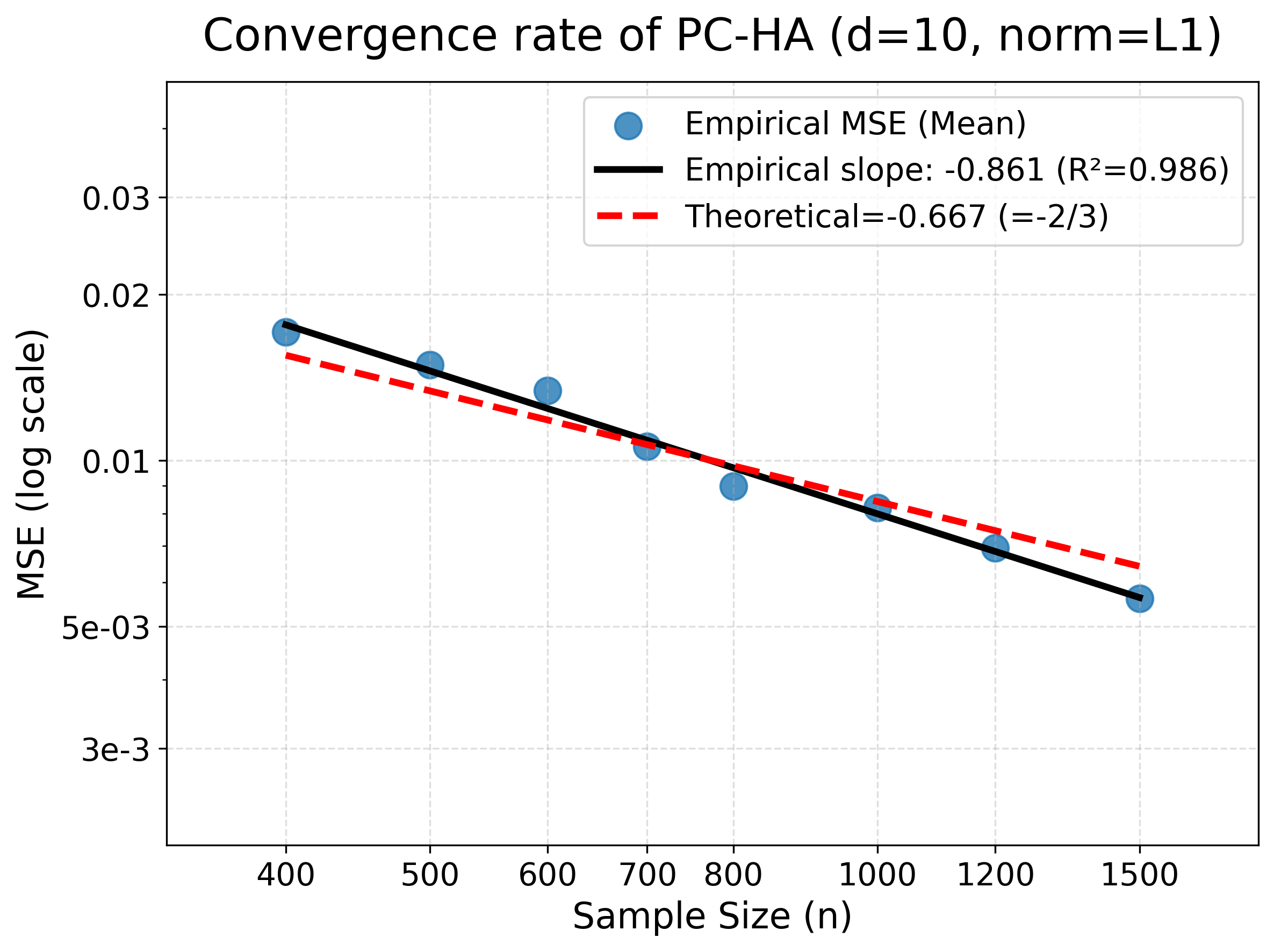}%
				\includegraphics[width=0.3\textwidth]{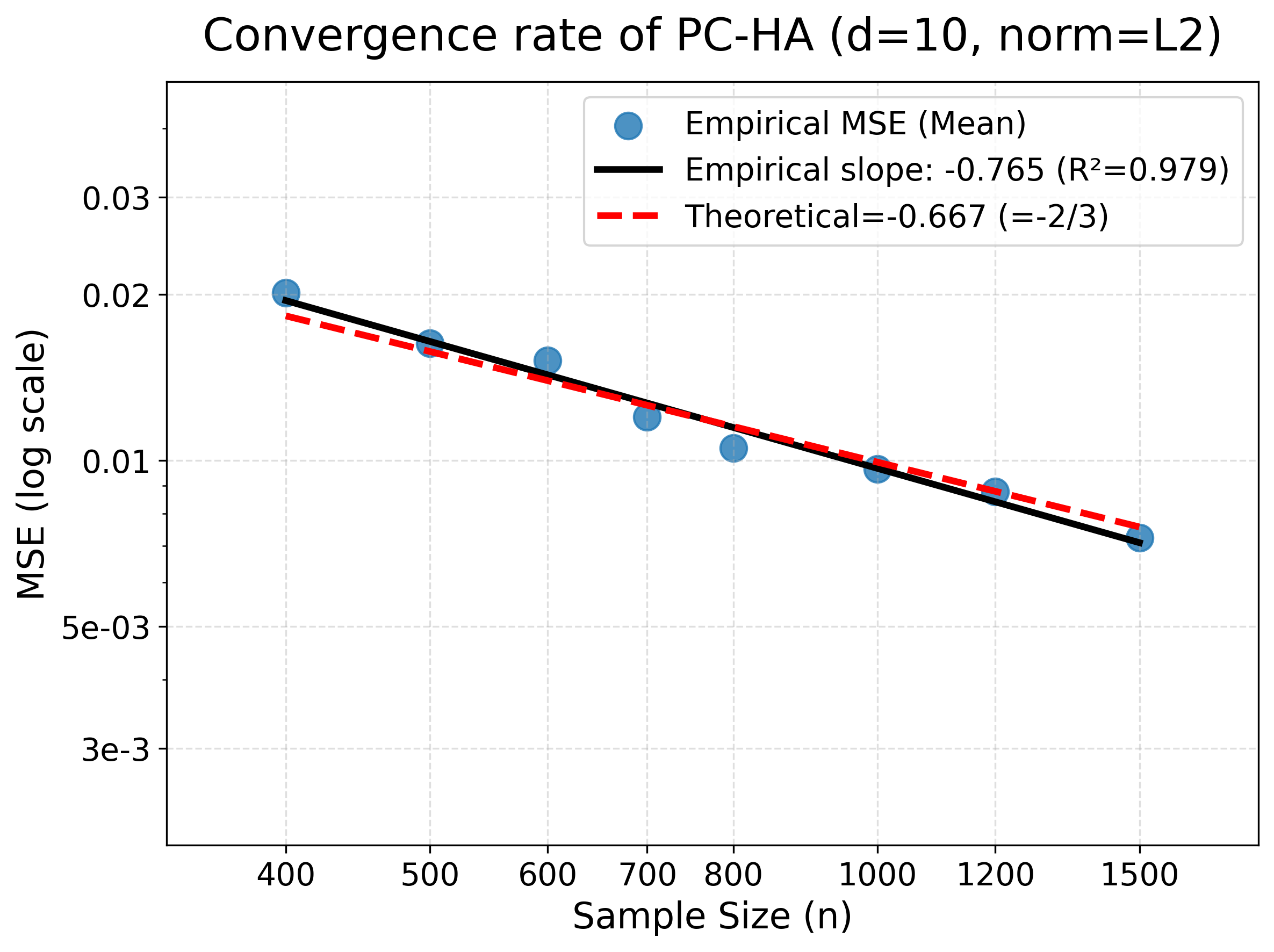}%
				\includegraphics[width=0.3\textwidth]{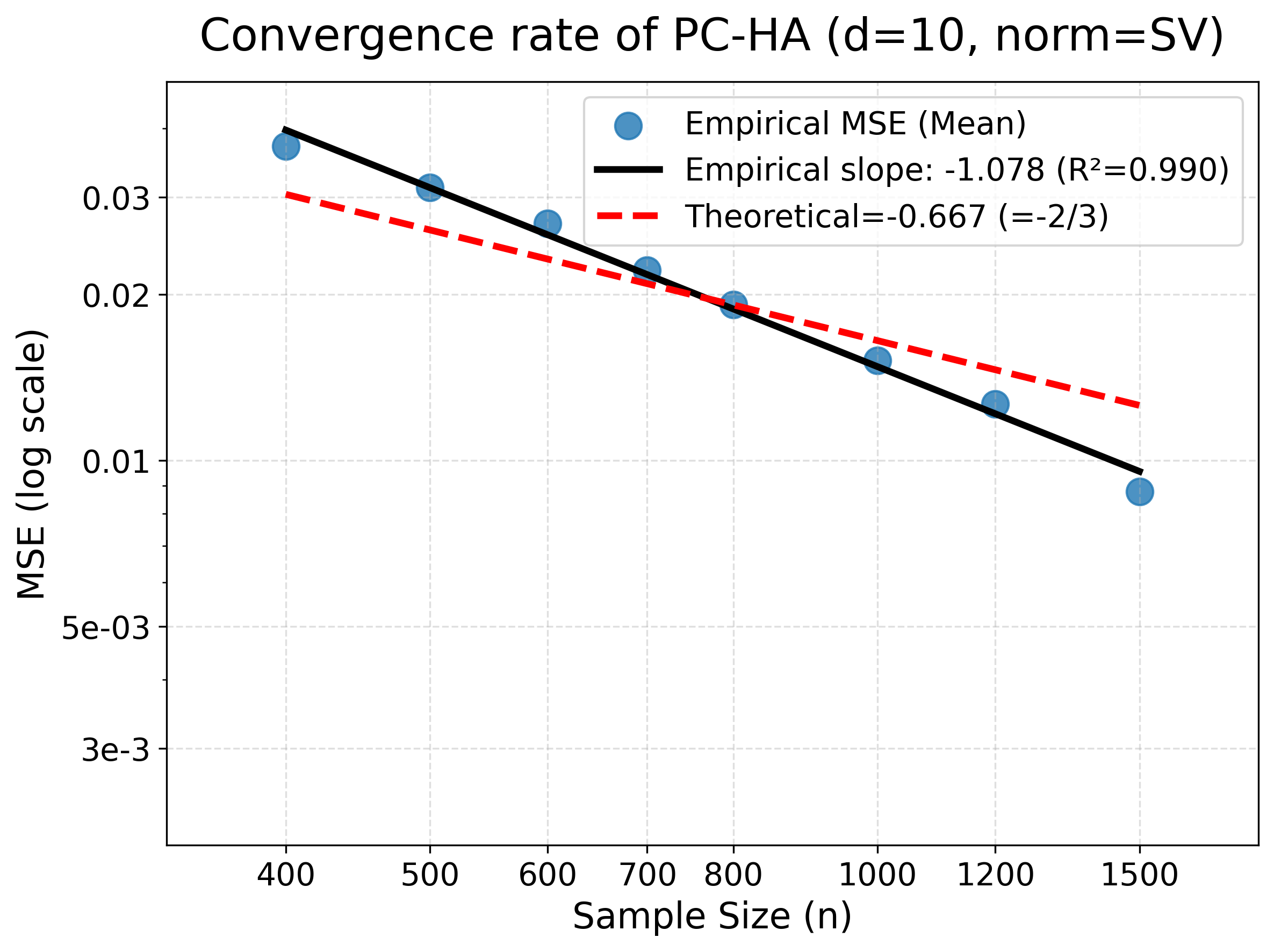}
			\end{minipage}
			\caption{Convergence rate of cross validated PC-HAs for the ``fast'' sinusoidal target function. Note that the slopes are much closer to $-1$.}
			\label{fig:convergence_mean_fast}
		\end{figure}
		
		\clearpage
		\subsection{Scaling of norms and complexity}
		
		\begin{figure}[b!]
			\centering
			\includegraphics[width=1.2\textwidth, height=0.15\textheight, keepaspectratio]{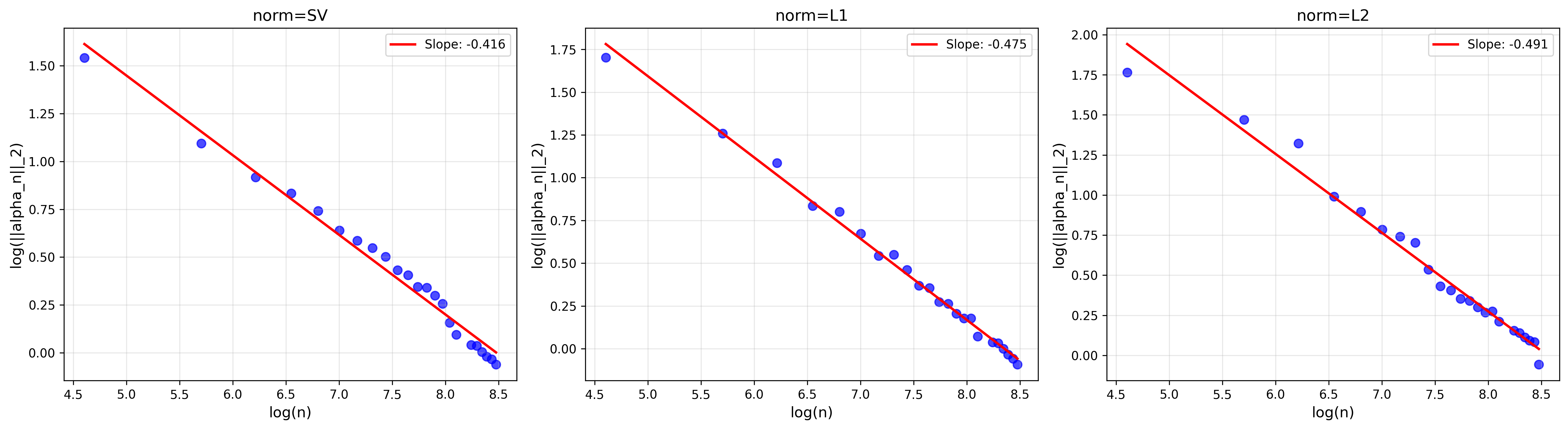}
			\vspace{0.3cm}
			\includegraphics[width=1.2\textwidth, height=0.15\textheight, keepaspectratio]{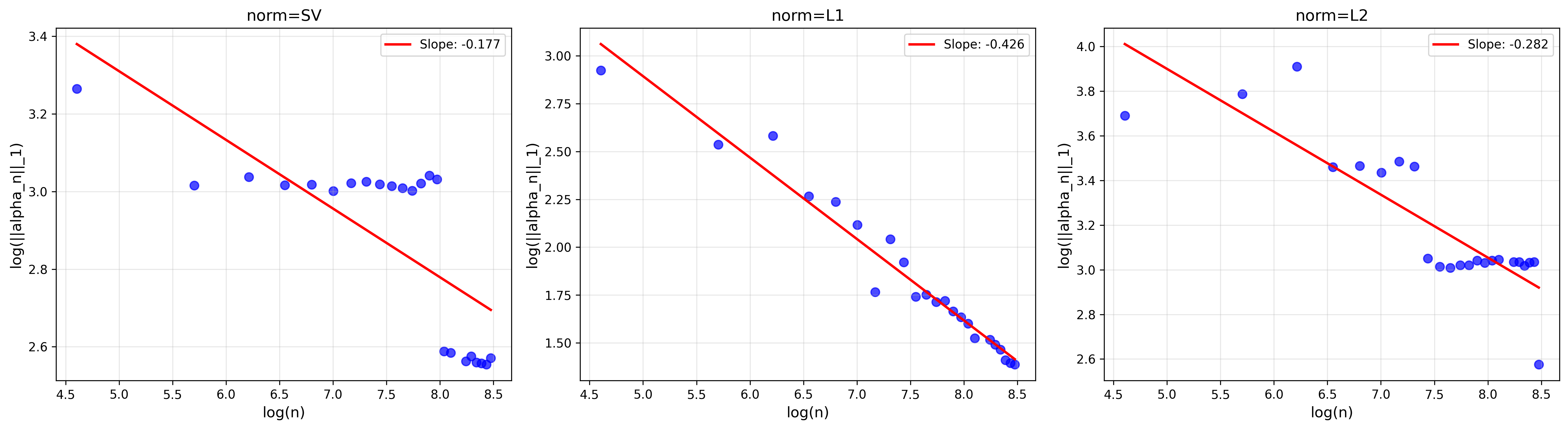}
			\vspace{0.3cm}
			\includegraphics[width=1.2\textwidth, height=0.15\textheight, keepaspectratio]{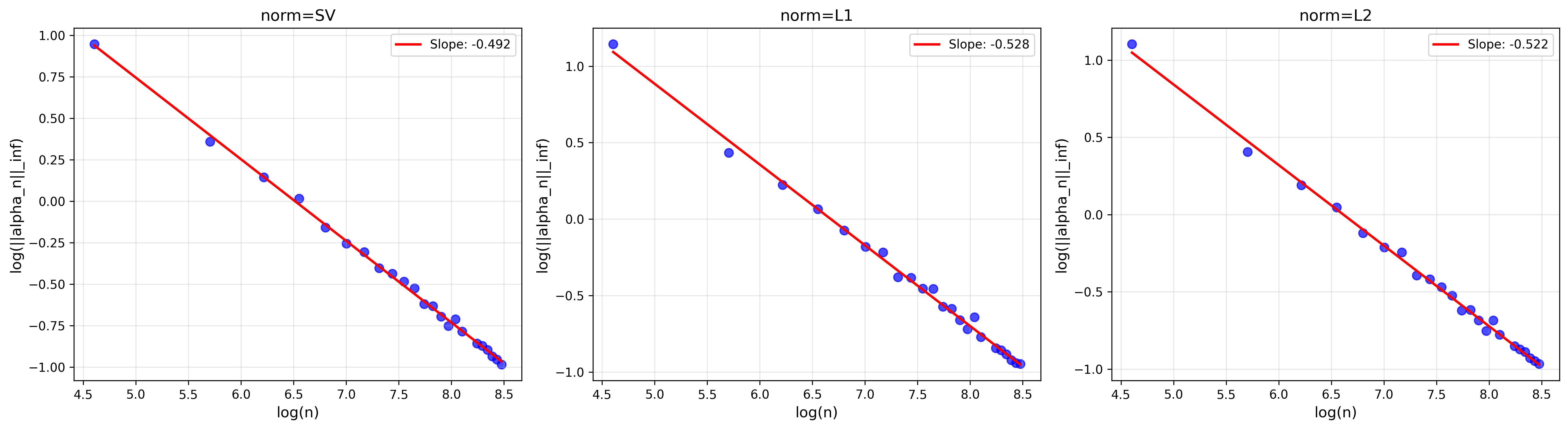}
			\vspace{0.3cm}
			\includegraphics[width=1.2\textwidth, height=0.15\textheight, keepaspectratio]{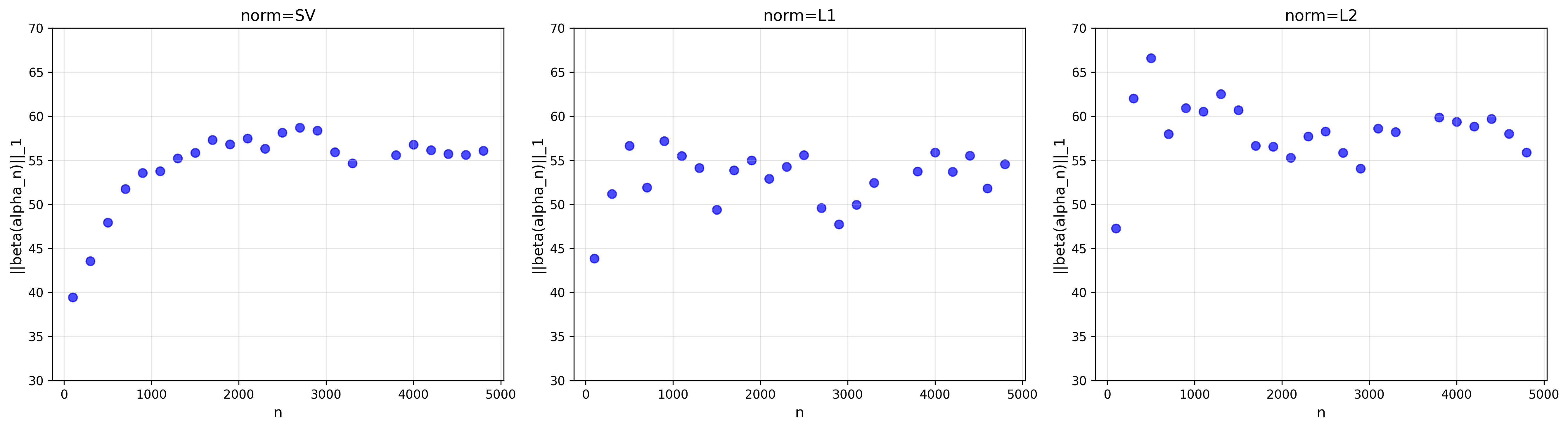}
			\vspace{0.3cm}
			\includegraphics[width=1.2\textwidth, height=0.15\textheight, keepaspectratio]{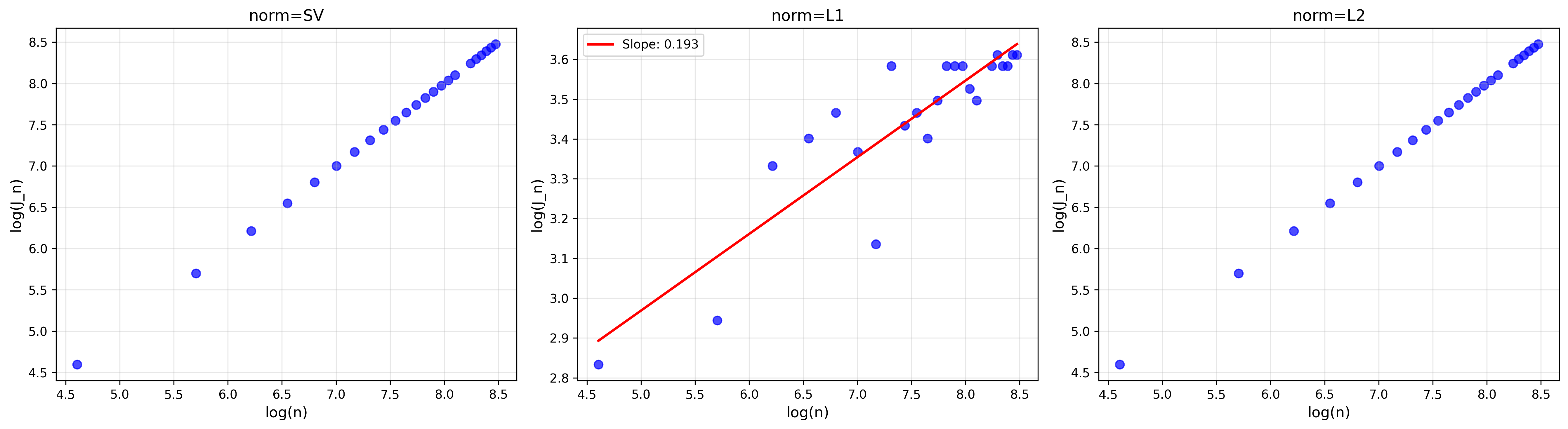}
			\caption{Scaling behavior of different norms and metrics as a function of sample size $n$. Each row corresponds to a different metric: (1) $\|\alpha_n\|_2$ vs $n$, (2) $\|\alpha_n\|_1$ vs $n$, (3) $\|\alpha_n\|_\infty$ vs $n$, (4) $\|\beta(\alpha_n)\|_1$ vs $n$, and (5) $J_n$ (number of selected coefficients) vs $n$. Each column corresponds to a different regularization method (PC-HAGL, PC-HAL, PC-HAR). All plots are log-log.}
			\label{fig:scaling_all}
		\end{figure}
		
		\clearpage
		\subsection{ATE undersmoothing: EIC vs.\ $\lambda$}
		
		\begin{figure}[b!]
			\centering
			\includegraphics[width=1.2\textwidth, height=0.15\textheight, keepaspectratio]{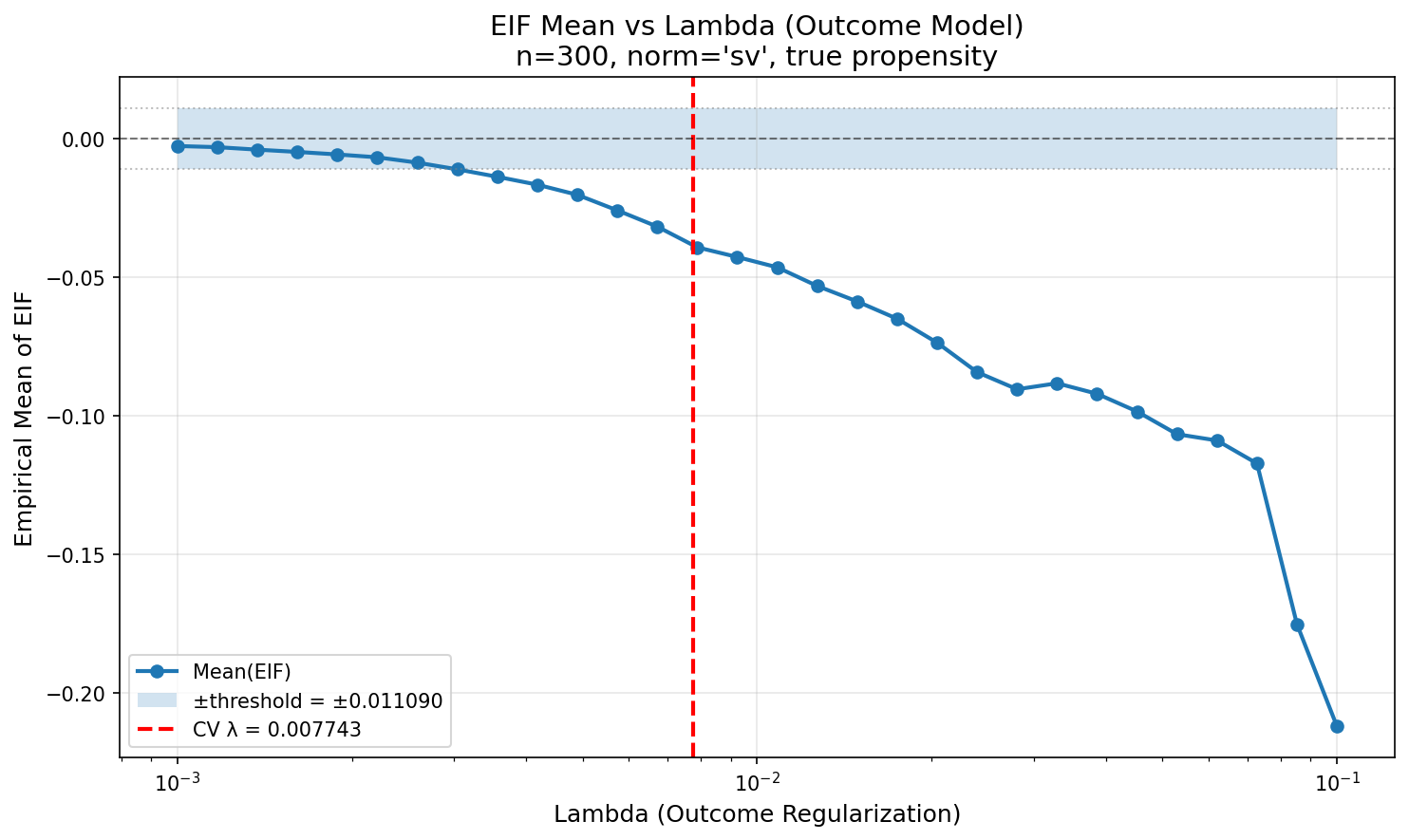}
			\vspace{0.3cm}
			\includegraphics[width=1.2\textwidth, height=0.15\textheight, keepaspectratio]{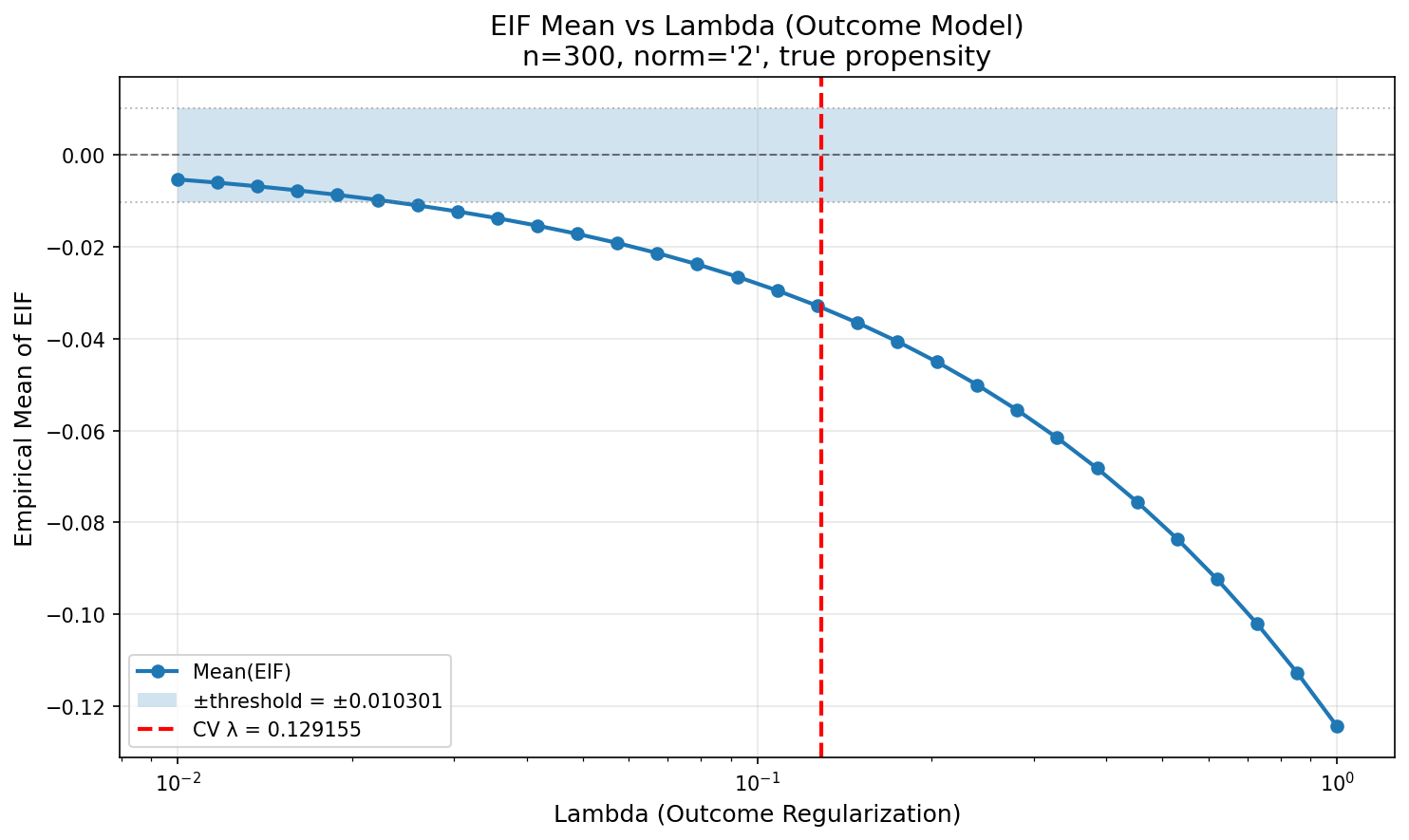}
			\vspace{0.3cm}
			\includegraphics[width=1.2\textwidth, height=0.15\textheight, keepaspectratio]{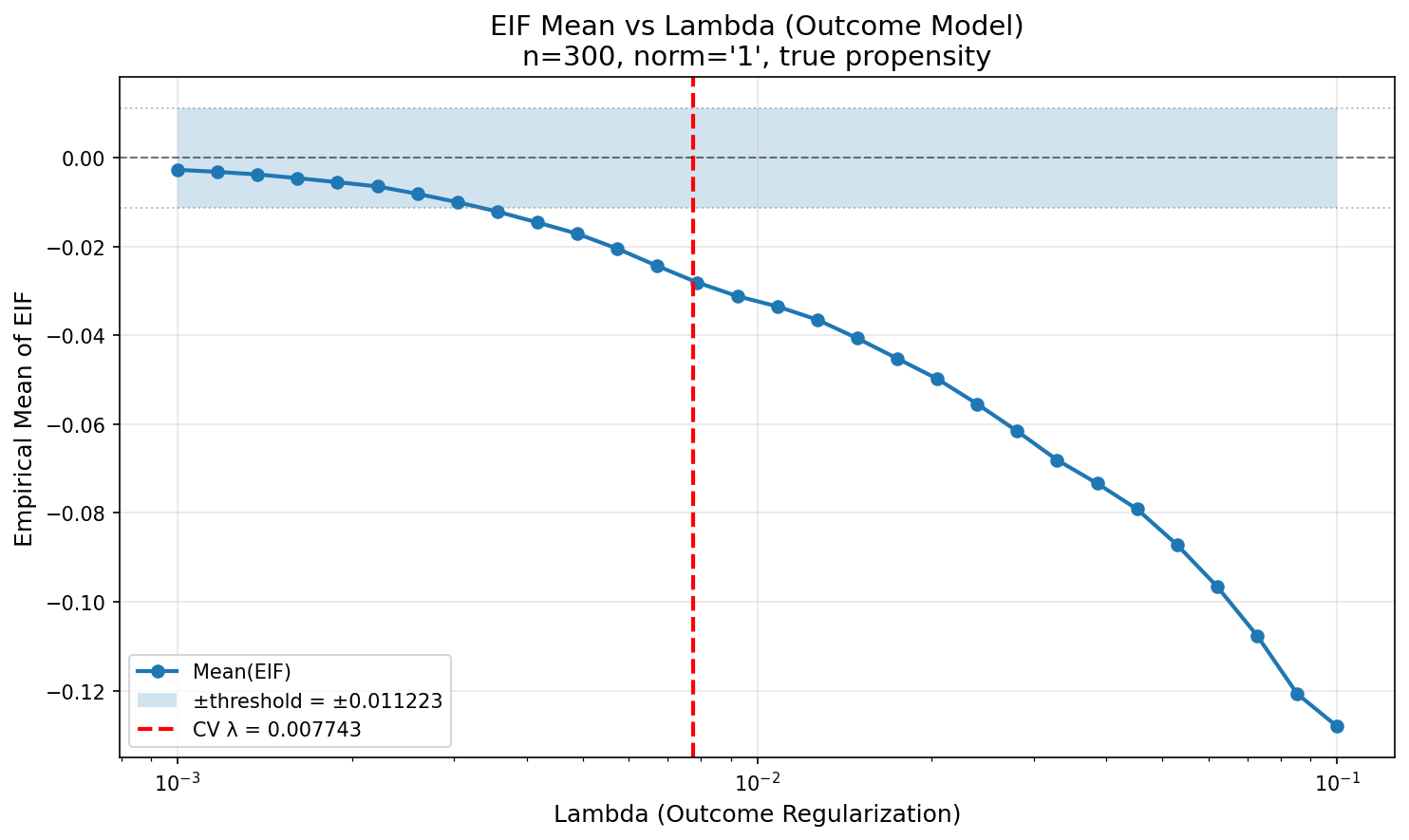}
			\caption{One panel per PC-HA estimator (sectional variation, $L_2$, $L_1$): empirical mean of the efficient influence curve (EIC) versus the outcome regularization parameter $\lambda$. The red vertical line marks the cross-validated choice of $\lambda$ for the outcome fit. The blue curve is the plug-in bias (empirical mean of the EIC) for each value of $\lambda$; the horizontal blue lines correspond to the threshold $\tau$ defined in the main text (Section~\ref{sec:ate}), so that undersmoothing selects the smallest $\lambda$ for which the blue curve lies within $\pm\tau$.}
			\label{fig:ate_eic_lambda}
		\end{figure}
		
		\newpage

		%Note that, given Step 1, Step 2 automatically holds for $\pl \alpha\pl_3=\pl \beta(\alpha)\pl_1$-norm. For the other norms, we already know from Step 1 that $\pl \beta(\alpha_{0,n})\pl_1=O(1)$, so that it is a matter of bounding $\pl \beta(\alpha)\pl_1$ in terms of $\pl \alpha\pl$ so that we know the required $C_{0,n}$, and then one needs to show $\pl \alpha_{0,n}\pl<C_{0,n}$. 
		%For example, if $\pl \beta(\alpha_{0,n})\pl_1>\delta C_{0,n}^{-1} \pl \alpha_{0,n}\pl_1$ for some $\delta>0$, then  $\pl \beta(\alpha_{0,n})\pl_1=O(1)$ implies $\pl \alpha_{0,n}\pl=O(C_{0,n})$, establishing Step 2. 
		%If one can establish a $C_{0,n}$ so that $\pl \beta(\alpha)\pl_1=O(1)$ if and only if $\pl \alpha\pl < C_{0,n}$, then Step 2 automatically holds. 
		
		%\begin{lemma}Given Step 1, a sufficient assumption for Step 2 is that  there exist a rate $r(n)$ so that $\sup_{\alpha} \pl \beta(\alpha)\pl_1/\pl \alpha\pl =O(r(n)^{-1})$ and $\sup_{\alpha} \pl \alpha\pl/\pl \beta(\alpha)\pl_1 =O(r(n))$. Of course, this trivially holds for $\pl \alpha\pl\equiv \pl \beta(\alpha)\pl_1$.Then, $\pl \beta(\alpha_{0,n})\pl_1=O(1)$ implies $\pl \alpha_{0,n}\pl =O(r(n))$ so that Step 2 holds with $C_{0,n}=r(n)$. \end{lemma}
		
		%\section*{???}%% if no title is needed, leave empty \section*{}.
	\end{appendix}
\end{document}